\newtheorem{theorem}{Theorem}[section]
\newtheorem{proposition}[theorem]{Proposition}
\newtheorem{lemma}[theorem]{Lemma}
\newtheorem{claim}[theorem]{Claim}
\newtheorem{corollary}[theorem]{Corollary}
\newtheorem{assumption}[theorem]{Assumption}
\newtheorem{D}[theorem]{Definition}
\newenvironment{definition}{\begin{D} \rm }{\end{D}}
\newtheorem{R}[theorem]{Remark}
\newenvironment{remark}{\begin{R}\rm }{\end{R}}
\newtheorem{E}[theorem]{Example}
\newenvironment{example}{\begin{E}\rm }{\end{E}}
\def\Zee{\mathbb{Z}}
\def\Q{\mathbb{Q}}
\def\Ar{\mathbb{R}}
\def\Cee{\mathbb{C}}
\def\Pee{\mathbb{P}}
\def\Id{\operatorname{Id}}
\def\Ker{\operatorname{Ker}}
\def\Hom{\operatorname{Hom}}
\def\Aut{\operatorname{Aut}}
\def\Pic{\operatorname{Pic}}
\def\im{\operatorname{Im}}
\def\Spec{\operatorname{Spec}}
\def\scrO{\mathcal{O}}
\def\spcheck{^{\vee}}
\title{On the geometry of anticanonical pairs}
\author{Robert Friedman}
\begin{document}

\maketitle

\section*{Introduction}

Let $Y$ be a smooth rational surface and let $D\subseteq Y$ be an anticanonical divisor, i.e.\ $D\in |-K_Y|$. It has long been known that such pairs $(Y,D)$ have a rich geometry. Classically, one studied the case where $Y$ is a del Pezzo surface and, typically, $D$ is a smooth divisor. In modern times, such pairs $(Y,D)$ have arisen in the study of deformations of certain elliptic Gorenstein singularities as well as (in a somewhat related manner) in the study of degenerations of $K3$ surfaces, and were first investigated systematically in 1981 in a fundamental paper by Looijenga \cite{Looij}. Recently, Gross, Hacking and Keel have introduced new ideas into this subject \cite{GHK0}, \cite{GHK} with a view toward understanding mirror symmetry for the pair $(Y,D)$.

If $D$ is reduced, there are only three possibilities for $D$: it is either (i) a smooth elliptic curve, (ii) a cycle of $r$ smooth rational curves for $r\geq 2$ (or an irreducible nodal curve in case $r=1$), or (iii) satisfies one of three exceptional cases (irreducible cuspidal, two smooth components meeting along a tacnode, or three smooth components meeting at a point, pairwise transversally).  This survey is concerned with  case (ii), although some of the results carry over with minor modifications to the other cases. Although we shall not make this assumption in what follows, the most interesting case from the point of view of singularity theory is where the intersection matrix of $D$ is negative definite. In this case, the divisor $D$ can be analytically contracted in $Y$, leading to a normal complex analytic surface $\overline{Y}$ which has trivial dualizing sheaf $\omega_{\overline{Y}}$. Thus $\overline{Y}$ is a singular analogue of a $K3$ surface, and many of the familiar properties of $K3$ surfaces carry through to this case as well. For example, the deformation space of $\overline{Y}$ is smooth and the locus of $\overline{Y}$ which are in fact projective is a countable union of proper subvarieties which is Zariski dense (but is not a dense open subset of the moduli space in the classical topology). 

Unlike the case of $K3$ surfaces, there are infinitely many different families of such surfaces $\overline{Y}$, or more generally pairs $(Y,D)$,  which are not deformation equivalent or even topologically the same.  For example, there are infinitely many possibilities for the number $r$ of components of the cycle $D$ and for the possible self-intersections $d_i= D_i^2$ of the components of the cycle. Moreover, the information of $r$ and the $d_i$ does not always specify the deformation type of $(Y,D)$. In this sense, the situation is more like the case of (smooth) Calabi-Yau threefolds, where it is currently unknown whether there are finitely many or infinitely many deformation types. As we shall describe more explicitly, all of the families of pairs $(Y,D)$ are related by birational operations, and some are related by deformations as well. One can thus regard the study of families of pairs $(Y,D)$ as a toy model for the ``Clemens-Reid fantasy" for Calabi-Yau threefolds \cite{Reid}.  Despite the elementary construction of all families of pairs $(Y,D)$, their study quickly leads to challenging combinatorial and lattice-theoretic questions, and they have an intricate and largely unexplored associated geometry.

Our goal in this partly expository paper is to survey some of the main results about such pairs $(Y,D)$, both old and new, with a view toward giving fairly complete proofs as far as possible. In Section 1, we collect some preliminary definitions and elementary, well-known results. Section 2 deals with minimal models and the birational geometry of anticanonical pairs. In Section 3, we discuss the deformation theory of a pair $(Y,D)$, both under the assumption that the deformation is locally trivial for the singularities of $D$ and in general, and begin the analysis of the period map. In particular, we identify the differential of the period map and sketch the elementary argument that the period map is surjective. Section 4 describes the ample (or equivalently, nef) cone of $Y$ and gives a proof of the analogue of Mayer's theorem for $K3$ surfaces on the possible base locus and fixed components of a nef divisor; these results were originally proved in \cite{Fried1} and \cite{Harbourne}. In Section 5, we define the generic ample cone of a pair $(Y,D)$, which is the ample cone of a very general deformation of $(Y,D)$, and describe its relevance to the geometry, deformation theory, and smooth topology of $(Y,D)$. In Section 6, we  describe the Looijenga roots of a pair $(Y,D)$; roughly speaking, these are elements of square $-2$ in the orthogonal complement in $H^2(Y;\Zee)$ to the classes of the components of $D$ which become the classes of smooth rational curves of self-intersection $-2$ disjoint from $D$ under some deformation of $(Y,D)$. While both the generic ample cone and the roots already appear in Looijenga's Annals paper, their significance is somewhat obscured in the cases considered in \cite{Looij} by the presence of a large reflection group, and the crucial role that they play in the general case was clarified in \cite{GHK}. Finally, we give a different characterization of the roots: Aside from the case where $Y$ is either $\mathbb{F}_0$ or $\mathbb{F}_2$, a root is always the difference of the classes of two disjoint exceptional curves. Sections 7, 8, and 9 are an account of the work of Gross-Hacking-Keel \cite{GHK}. Section 7 is concerned with a somewhat technical result concerning automorphisms of the pair $(Y,D)$. In Section 8, we prove various incarnations of the Torelli theorem for pairs $(Y,D)$ as formulated in \cite{GHK}. Finally, in Section 9, we apply the Torelli theorem to characterize automorphisms of $(Y,D)$. Here, the role of integral isometries of $H^2(Y;\Zee)$ which preserve the classes $[D_i]$ as well as the generic ample cone of $Y$, while again implicit in \cite{Looij}, was first made explicit in the general case in \cite{GHK}.

\medskip
\noindent\textbf{Acknowledgements.} It is a pleasure to thank Mark Gross, Paul Hacking and Sean Keel as well as Philip Engel, Radu Laza, and Viacheslav Nikulin  for many helpful discussions and correspondence.

\section{Preliminaries}

We work over $\Cee$. Throughout this paper,  $Y$ denotes a smooth rational surface with  $-K_Y = D$. Unless otherwise noted, $D= \sum_{i=1}^rD_i$ is a (reduced) cycle of rational curves, where $i$ is taken mod $r$, i.e.\ each $D_i$ is a smooth rational curve and $D_i$ meets  $D_{i\pm 1}$ transversally,  except for $r=1$, in which case $D_1=D$ is an irreducible nodal curve, i.e.\ that we are in case (ii) of the introduction. We note again, however, that many of the results in this paper can be generalized to the case where $D\in |-K_Y|$ is not assumed to be a cycle. In particular, we shall denote Case (i) of the introduction, i.e.\ the case where $D$ is a smooth elliptic curve,  as the \textsl{elliptic case}, and Case (iii) of the introduction, i.e.\ the case where $D$ is cuspidal, tacnodal, or has the analytic type of three concurrent lines,  as the \textsl{triangle case}. For a component $D_i$ of $D$, we define $D_i^{\text{int}}\cong \mathbb{G}_m$ to be $D_i -\bigcup_{j\neq i}D_j$ in case $r\geq 2$ and to be the smooth locus of $D_1 = D$ in case $r=1$ and set $D_{\text{reg}} = \bigcup_iD_i^{\text{int}}$. The group $\Pic Y$ is isomorphic to $H^2(Y;\Zee)$ via the first Chern class; given $\alpha \in H^2(Y;\Zee)$, the unique line bundle whose first Chern class is $\alpha$ will be denoted by $L_\alpha$. If $C$ is a curve or divisor class on $Y$, the corresponding cohomology class will be denoted by $[C]$. Intersection pairing on divisor classes, line bundles, or $H^2(Y;\Zee)$ will be denoted by $\cdot$ (multiplication).

 The integer $r=r(D)$ is called the \textsl{length} of $D$. If the components of $D$ are indexed as above, we call such a $(Y,D)$ a \textsl{labeled anticanonical pair}. It is often more natural to consider the $D_i$ up to cyclic permutation. An \textsl{orientation} of $D$ is an orientation of the dual graph (with appropriate modifications in  case $r=1$). An orientation determines and is determined by the labeling of the components of $D$ as above up to cyclic permutation  if $r\geq 3$, and in this case we always choose a labeling, or a labeling up to cyclic permutation, which is compatible with the orientation in the obvious sense. We shall abbreviate the data of the surface $Y$ and the oriented cycle $D$ by $(Y,D)$ and refer to it as a \textsl{anticanonical pair}. 

Another very concrete way to think about orientations is as follows. Given the cycle $D$, with normalization $\widetilde{D}$, we consider the possible labelings of   points $q$ which are the preimages in $\widetilde{D}$ of singular points by either $0$ or $\infty$, subject to the requirements that (i) If $q_1$ and $q_2$ are preimages of the same point $p$ of $D_{\text{sing}}$, then exactly one of the $q_i$ is labeled by $0$ and hence the other is labeled by $\infty$; (ii) If $\widetilde{D}_i$ is a component of $\widetilde{D}$ (and hence $\widetilde{D}_i = D_i$ unless $r=1$) and $q_1$ and $q_2$ are the two distinct points of $\widetilde{D}_i$ whose images in $D$ are singular, then exactly one of the $q_i$ is labeled by $0$ and hence the other is labeled by $\infty$. It is easy to see  that, if we fix a   $q\in \widetilde{D}$ which is the preimage of a singular point, then the choice of labeling of $q$ by either $0$ or $\infty$ determines the set of labelings of all of the preimages of singular points as above, and that the two different choices correspond to the two possible orientations of $D$.

An \textsl{isomorphism} $\varphi\colon  (Y,D)\to (Y', D')$ of two labeled  anticanonical pairs  of the same length is an isomorphism $\varphi\colon Y \to Y'$ such that $\varphi (D_i) = D_i'$ for all $i$, and which preserves the orientation if $r\leq 2$ (this is automatically true if $r\geq 3$). For a single (unlabeled) anticanonical pair $(Y,D)$, an \textsl{automorphism} $\varphi$ of $(Y,D)$ is an automorphism $\varphi$ of $Y$ such that $\varphi(D_i) = D_i$ for every $i$, and such that moreover $\varphi$ preserves the orientation (which again is automatic if $r\geq 3$).

If the intersection matrix $(D_i\cdot D_j)$ is negative definite, we say that $(Y,D)$ is a \textsl{negative definite anticanonical pair} or that $D$ is \textsl{negative definite}. Negative semidefinite is defined similarly. We say that $(Y,D)$ is \textsl{strictly negative semidefinite} if $(Y,D)$ is negative semidefinite but not negative definite.  Note that, if no $D_i$ is exceptional, then $(Y,D)$ is negative definite $\iff$ $D$ is irreducible and $D^2<0$ or $D$ is reducible, $D_i^2 \leq -2$ for all $i$ and there exists a $j$ such that $D_j^2\leq -3$, and $(Y,D)$ is strictly negative semidefinite $\iff$ $D$ is irreducible and $D^2=0$ or $D$ is reducible and $D_i^2=-2$ for all $i$, in which case $r\leq 9$. We define the sequence $(D_1^2, \dots, D_r^2)$ to be the \textsl{self-intersection sequence} of $(Y,D)$;  it is well-defined up to cyclic permutation, and well-defined if $(Y,D)$ is labeled.  
The following is a  useful  numerical invariant of the pair $(Y,D)$:

\begin{definition} The \textsl{charge} $Q(Y,D)$ of the anticanonical pair $(Y,D)$ is defined as
$$Q(Y,D) = 12 -D^2 -r(D).$$
\end{definition}

\begin{lemma}\label{calcQ} With $Q(Y,D)$ as defined above,
$$Q(Y,D) = 2 + b_2(Y) - r(D) = \chi_{\rm{top}}(Y-D),$$
where $b_2(Y)$ is the second Betti  number of $Y$ and $\chi_{\rm{top}}$  denotes the topological Euler characteristic.
\end{lemma}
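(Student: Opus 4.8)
The plan is to establish the two claimed equalities separately: the first via Noether's formula, the second via additivity of the topological Euler characteristic.

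For the equality $12 - D^2 - r(D) = 2 + b_2(Y) - r(D)$, it suffices to prove $b_2(Y) = 10 - D^2$. Since $D = -K_Y$ we have $D^2 = K_Y^2$, so I would apply Noether's formula $\chi(\scrO_Y) = \tfrac{1}{12}\bigl(K_Y^2 + \chi_{\rm top}(Y)\bigr)$. Because $Y$ is rational we have $\chi(\scrO_Y) = 1$ and $q(Y) = 0$, hence $b_1(Y) = 0$ and $\chi_{\rm top}(Y) = 2 + b_2(Y)$. Substituting yields $K_Y^2 + 2 + b_2(Y) = 12$, that is, $b_2(Y) = 10 - K_Y^2 = 10 - D^2$. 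Rewriting $12 - D^2$ as $2 + b_2(Y)$ and subtracting $r(D)$ then gives the first equality.

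For the equality $2 + b_2(Y) - r(D) = \chi_{\rm top}(Y - D)$, I would use the additivity of $\chi_{\rm top}$ for a closed algebraic subvariety and its open complement, giving $\chi_{\rm top}(Y - D) = \chi_{\rm top}(Y) - \chi_{\rm top}(D) = \bigl(2 + b_2(Y)\bigr) - \chi_{\rm top}(D)$. It then remains to compute $\chi_{\rm top}(D) = r(D)$. I would pass to the normalization $\widetilde{D}$, which is a disjoint union of $r$ copies of $\Pee^1$, so $\chi_{\rm top}(\widetilde{D}) = 2r$; the cycle $D$ is recovered from $\widetilde{D}$ by identifying $r$ pairs of points (the preimages of the $r$ nodes, this being a single node when $r = 1$), and each such identification lowers the Euler characteristic by one. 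Hence $\chi_{\rm top}(D) = 2r - r = r = r(D)$, which completes the second equality.

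I do not expect any serious obstacle here; the computation is routine once the right inputs are marshalled. The only points demanding care are the standard facts $\chi(\scrO_Y) = 1$ and $b_1(Y) = 0$ for a rational surface, which supply Noether's formula in usable form, and the bookkeeping of nodes under normalization, which must be arranged so that the irreducible nodal case $r = 1$ is handled uniformly with the cases $r \geq 2$.
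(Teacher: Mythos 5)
Your proof is correct and follows essentially the same route as the paper: the paper quotes the standard fact $b_2(Y) = 10 - K_Y^2$ for rational surfaces (which is exactly what your Noether's-formula computation establishes), and its Euler-characteristic step uses the formula $\chi_{\rm top}(Y-D) = \chi_{\rm top}(Y) - \chi_{\rm top}(\widetilde{D}) + \chi_{\rm top}(D_{\rm sing})$, which is just your two-step version (additivity plus $\chi_{\rm top}(D) = 2r - r$ via normalization) combined into one line. No gaps; your handling of the $r=1$ nodal case is also consistent with the paper's.
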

\begin{proof} For an arbitrary rational surface $Y$, $b_2(Y)  = 10 -K_Y^2$. Thus, for an anticanonical pair $(Y,D)$, $b_2(Y)  = 10 -D^2$, which is equivalent to the statement that
$$Q(Y,D) =12 -D^2 -r(D) =  2 + b_2(Y)  - r(D).$$
To see the second equality, let $\widetilde{D}$ be the normalization of $D$ and let $D_{\rm{sing}}$ be the set of double points of $D$. There are $r(D)$ components of $\widetilde{D}$, each isomorphic to $\Pee^1$ and hence with $\chi_{\rm{top}} = 2$, and $\#(D_{\rm{sing}}) = r(D)$ as well.  By standard results (cf.\ \cite[(3.2.4)]{Deligne}), 
\begin{align*}
\chi_{\rm{top}}(Y-D) &= \chi_{\rm{top}}(Y) - \chi_{\rm{top}}(\widetilde{D}) + \chi_{\rm{top}}(D_{\rm{sing}})\\
&= 2 + b_2(Y)  -2r(D) + r(D) = 2 + b_2(Y)  - r(D),
\end{align*} 
completing the proof.
\end{proof}

\begin{corollary} If $(Y,D)$ is negative definite, then $Q(Y,D) \geq 3$. 
\end{corollary}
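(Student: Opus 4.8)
The plan is to pass to the reformulation $Q(Y,D) = 2 + b_2(Y) - r(D)$ from Lemma~\ref{calcQ}, so that proving $Q \geq 3$ amounts to establishing $b_2(Y) \geq r(D) + 1$. A first naive attempt is to use only that $D^2 = \bigl(\sum_i D_i\bigr)^2 < 0$ by negative definiteness, whence $D^2 \leq -1$ and $Q = 12 - D^2 - r \geq 13 - r$; but this is worthless once $r$ is large (and negative definite cycles of arbitrary length do occur), so one genuinely needs to bound the number of components against the topology of $Y$. The right tool is the signature of the intersection form.

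First I would note that negative definiteness of $(D_i \cdot D_j)$ makes this matrix nonsingular, so the classes $[D_1], \dots, [D_r]$ are linearly independent in $H^2(Y;\Ar)$ and span an $r$-dimensional subspace $V$ on which the intersection form restricts to the negative definite form with Gram matrix $(D_i \cdot D_j)$.

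Next I would invoke the Hodge index theorem: since $Y$ is rational, $h^{2,0}(Y) = 0$, so $b^+(Y) = 1$ and the intersection form on $H^2(Y;\Ar)$ has signature $(1, b_2(Y) - 1)$. A negative definite subspace therefore has dimension at most $b_2(Y) - 1$; applied to $V$ this gives $r(D) \leq b_2(Y) - 1$. Feeding $b_2(Y) \geq r(D) + 1$ into Lemma~\ref{calcQ} yields $Q(Y,D) = 2 + b_2(Y) - r(D) \geq 3$, as required.

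The only delicate point --- the main obstacle, modest as it is --- is the signature computation $(1, b_2(Y) - 1)$. One must use that for a rational surface every class in $H^2$ is algebraic of type $(1,1)$, so that the Picard number equals $b_2(Y)$ and the Hodge index theorem bounds negative definite subspaces of the whole of $H^2(Y;\Ar)$, not merely of a N\'eron--Severi sublattice. The remaining steps are pure linear algebra.
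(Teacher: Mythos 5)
Your proof is correct and follows essentially the same route as the paper: linear independence of the classes $[D_i]$ from negative definiteness, then a signature bound on $H^2(Y;\Ar)$ giving $b_2(Y) \geq r(D)+1$, hence $Q(Y,D) \geq 3$ via Lemma~\ref{calcQ}. The paper phrases the signature step more tersely (the form is nondegenerate and indefinite, so a negative definite subspace has dimension at most $b_2(Y)-1$), whereas you spell out the Hodge-index computation $(1, b_2(Y)-1)$; the content is the same.
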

\begin{proof} If $(Y,D)$ is negative definite, then the classes $[D_i]$ are independent in $H^2(Y;\Zee)$. Since intersection pairing on $H^2(Y,\Zee)$ is nondegenerate and indefinite,  $b_2(Y) \geq 1+ r(D)$, and hence  $Q(Y,D) = 2 + b_2(Y)  - r(D) \geq 3$.
\end{proof}

A basic lattice-theoretic invariant of the pair $(Y,D)$ is defined as follows::

\begin{definition}\label{defLambda} Let $\Lambda = \Lambda(Y,D) \subseteq H^2(Y; \Zee)$ be the orthogonal complement of the lattice spanned by the classes $[D_i]$.
\end{definition}

\begin{lemma}\label{rankQ} The lattice $\Lambda(Y,D)$ is free. Its rank is $Q(Y,D) -2$ if the classes $[D_1], \dots, [D_r]$ are linearly independent. More generally, if $s$ is the rank of the kernel of the homomorphism $\bigoplus_i\Zee[D_i] \to  H^2(Y; \Zee) $, then the rank of $\Lambda(Y,D)$ is
$$\operatorname{rank} \Lambda(Y,D) = Q(Y,D) -2 + s.$$
\end{lemma}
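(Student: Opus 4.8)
The plan is to reduce everything to linear algebra over $\Q$, exploiting two standard facts: that $H^2(Y;\Zee)$ is torsion-free and that the intersection pairing on it is nondegenerate. Freeness of $\Lambda$ is then immediate: for a smooth rational surface $H^2(Y;\Zee)$ is a finitely generated torsion-free abelian group, hence free, and any subgroup of a finitely generated free abelian group is again free; since $\Lambda = \Lambda(Y,D)$ is by definition a subgroup of $H^2(Y;\Zee)$, it is free. To prepare for the rank count I would first compute the rank of the sublattice $M$ spanned by the classes $[D_i]$. Writing $\pi\colon \bigoplus_i \Zee[D_i] \to H^2(Y;\Zee)$ for the natural homomorphism, its image is $M$ and by hypothesis its kernel has rank $s$, so the rank–nullity theorem gives $\operatorname{rank} M = r - s$.

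The heart of the argument is the orthogonal-complement computation. I would set $V = H^2(Y;\Q)$, equipped with the $\Q$-bilinear extension of the intersection form, and let $W = M\otimes\Q \subseteq V$ denote the $\Q$-span of the $[D_i]$, so that $\dim_\Q W = \operatorname{rank} M = r - s$. Because the intersection form on $H^2(Y;\Zee)$ is unimodular by Poincar\'e duality, its extension to $V$ is nondegenerate, and therefore $\dim_\Q W^\perp = \dim_\Q V - \dim_\Q W = b_2(Y) - (r - s)$. It then remains to check that forming the orthogonal complement commutes with passing to $\Q$-coefficients, i.e.\ that $\Lambda\otimes\Q = W^\perp$. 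The inclusion $\Lambda\otimes\Q \subseteq W^\perp$ is clear; conversely, any $x\in W^\perp$ may be cleared of denominators to obtain an integral class $nx \in H^2(Y;\Zee)$ satisfying $nx\cdot[D_i] = 0$ for all $i$, whence $nx\in\Lambda$ and $x\in\Lambda\otimes\Q$. This yields $\operatorname{rank}\Lambda = \dim_\Q W^\perp = b_2(Y) - r + s$.

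Finally, I would invoke Lemma~\ref{calcQ}, which gives $b_2(Y) - r = Q(Y,D) - 2$, so that $\operatorname{rank}\Lambda = Q(Y,D) - 2 + s$; the special case in which the classes $[D_1],\dots,[D_r]$ are linearly independent is recovered by taking $s = 0$. The only genuinely delicate point is the identity $\Lambda\otimes\Q = W^\perp$, which is precisely where nondegeneracy of the pairing enters: without it the $\Z$-module orthogonal complement could fail to have rank $\dim_\Q V - \dim_\Q W$, and the clean relation between $\operatorname{rank}\Lambda$ and $\operatorname{rank} M$ would break down.
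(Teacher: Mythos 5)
Your proposal is correct and follows essentially the same route as the paper: both reduce the statement to a dimension count over $\Q$ using nondegeneracy of the intersection pairing and then conclude via Lemma~\ref{calcQ}. The paper packages this as the four-term exact sequence $0\to \Q^s \to \bigoplus_i\Q[D_i] \to H^2(Y;\Q) \to \Lambda(Y,D)\otimes_\Zee\Q \to 0$, whose exactness encodes precisely the facts you verify by hand (rank--nullity, $\dim W^\perp = b_2(Y)-\dim W$, and $\Lambda\otimes\Q = W^\perp$ by clearing denominators).
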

\begin{proof} There is an exact sequence
$$0\to \Q^s \to \bigoplus_i\Q[D_i] \to  H^2(Y; \Q)  \to \Lambda(Y,D)\otimes _\Zee\Q \to 0,$$
and hence the rank of $\Lambda(Y,D)$ is equal to $b_2(Y)  - r(D) +s = Q(Y,D) -2 + s$.
\end{proof}

Finally, we collect some well-known results about line bundles on cycles $D =D_1+\dots D_r$ of rational curves. Recall that, if $L$ is a line bundle on $D$, then the \textsl{multidegree}  of $L$ is the ordered $n$-tuple of integers 
$$(\deg (L|D_1), \dots, \deg (L|D_r)).$$
The neutral component $\Pic^0D$ of $\Pic D$ is then the group of line bundles on $D$ of multidegree $(0, \dots, 0)$. Equivalently, $\Pic^0D$ is the subgroup of $\Pic D$ consisting of line bundles $L$ such that the pullback of $L$ to the normalization $\widetilde{D}$ is trivial. We have the following standard result:

\begin{lemma}\label{Pic0isomG} The choice of an orientation of $D$ gives a canonical isomorphism $\psi \colon \Pic^0D \cong \mathbb{G}_m$. Via this isomorphism, if $p, q \in D_i^{\text{\rm{int}}}$ and we choose coordinates so that $p$ corresponds to $1$ and $q$ to $\lambda \in \mathbb{G}_m$, then 
$$\psi(\scrO_D(q-p)) =\lambda^{-1}.$$
\end{lemma}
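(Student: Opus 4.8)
The plan is to compute $\Pic^0 D$ directly from the normalization $\nu\colon \widetilde D \to D$ and to read off the formula from the resulting gluing data. Let $p_1,\dots,p_r$ be the nodes of $D$, with $p_i = D_i \cap D_{i+1}$, and consider the exact sequence of sheaves on $D$
$$0 \to \scrO_D^* \to \nu_*\scrO_{\widetilde D}^* \to \mathcal{Q} \to 0,$$
where $\mathcal{Q}$ is supported on $D_{\text{sing}}$ and its stalk at each node is identified with $\mathbb{G}_m$ by sending a pair of branch-units to the ratio of their values at the node. Here the \emph{ordering} of the two branches, and hence the identification $\mathcal{Q}_{p_i}\cong\mathbb{G}_m$, is exactly the datum supplied by the orientation, namely the $0/\infty$ labeling of the two preimages of $p_i$ described before the lemma. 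Since $\nu$ is finite, $H^1(\nu_*\scrO_{\widetilde D}^*) = \Pic\widetilde D$, and the map $\Pic D \to \Pic\widetilde D$ induced on $H^1$ is $\nu^*$, i.e.\ the multidegree. Hence $\Pic^0 D = \Ker \nu^*$, and from the long exact sequence this is the cokernel of $H^0(\nu_*\scrO_{\widetilde D}^*) \to \bigoplus_i \mathcal{Q}_{p_i}$.

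Next I would identify this cokernel. One has $H^0(\widetilde D, \scrO^*) = (\mathbb{C}^*)^r$, one factor for each component $D_i \cong \Pee^1$, and the connecting map sends $(c_1,\dots,c_r)$ to the collection of ratios $c_i/c_{i+1}$ at the nodes $p_i$, with numerator and denominator fixed by the orientation. Written additively this is the boundary map of the oriented cycle graph on $r$ vertices, which has rank $r-1$; its image is precisely the kernel of the product homomorphism $\bigoplus_i \mathbb{G}_m \to \mathbb{G}_m$, $(\mu_i)\mapsto \prod_i\mu_i$, since $\prod_i (c_i/c_{i+1}) = 1$ and both subgroups have corank $1$. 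Thus the product map descends to the desired canonical isomorphism $\psi\colon \Pic^0 D \xrightarrow{\ \sim\ } \mathbb{G}_m$, depending on the orientation exactly through the local identifications $\mathcal{Q}_{p_i}\cong\mathbb{G}_m$ and the direction in which the cycle is traversed. For $r=1$ the same argument applies: the single map $\mathbb{C}^*\to\mathbb{G}_m$ is trivial because the two branch values of a constant agree, so $\Pic^0 D\cong\mathbb{G}_m$ again.

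For the explicit formula I would evaluate $\psi$ on $\scrO_D(q-p)$. Since $q-p$ is supported in $D_i^{\text{int}}$ and has degree $0$ on $D_i$, the bundle $\scrO_D(q-p)$ is trivial on every component: take the constant trivialization $1$ on each $D_j$ with $j\neq i$, and on $D_i$ trivialize $\scrO_{D_i}(q-p)$ by the rational function $s$ with $\operatorname{div} s = p-q$, i.e.\ $s(z) = (z-1)/(z-\lambda)$ in the coordinate $z$ on $D_i^{\text{int}}\cong\mathbb{G}_m$ in which $p=1$, $q=\lambda$ and the two nodes of $D_i$ sit at $z=0$ and $z=\infty$ as prescribed by the orientation. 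The gluing cocycle of $\scrO_D(q-p)$ in $\bigoplus_i\mathcal{Q}_{p_i}$ is then trivial at every node not lying on $D_i$, while at the two nodes of $D_i$ it records the values $s(0) = \lambda^{-1}$ and $s(\infty)=1$. Applying $\psi$, i.e.\ taking the product over all nodes, yields $\psi(\scrO_D(q-p)) = \lambda^{-1}$.

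I expect the only real obstacle to be the sign bookkeeping in the last two steps: one must check that the orientation's $0/\infty$ labeling, the choice of numerator versus denominator in the node ratios, and the direction of the product around the cycle are all mutually consistent and normalized so that the answer comes out as $\lambda^{-1}$ rather than $\lambda$. Once these conventions are fixed coherently, which is precisely what a choice of orientation does, the computation $s(0)/s(\infty) = \lambda^{-1}$ is forced and the stated formula follows.
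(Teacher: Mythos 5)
Your proposal is correct and follows essentially the same route as the paper: both use the normalization sequence $1 \to \scrO_D^* \to \nu_*\scrO_{\widetilde D}^* \to \nu_*\scrO_{\widetilde D}^*/\scrO_D^* \to 1$, identify $\Pic^0 D$ as the cokernel of the resulting map $\mathbb{G}_m^r \to \mathbb{G}_m^r$ of gluing/trivialization data, descend the product homomorphism to get $\psi$, and evaluate on $\scrO_D(q-p)$ via the rational function $(z-1)/(z-\lambda)$ to obtain $\lambda^{-1}$. The orientation-convention bookkeeping you flag at the end is exactly the content the paper compresses into its ``it is easy to calculate'' step, so there is no substantive gap.
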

\begin{proof} If $\nu\colon \widetilde{D} \to D$ is the normalization map, then we have the exact sequence
$$1 \to \scrO_D^* \to \nu_*\scrO_{\widetilde{D} }^* \to \nu_*\scrO_{\widetilde{D} }^*/\scrO_D^* \to 1,$$
where a local calculation identifies $\nu_*\scrO_{\widetilde{D} }^*/\scrO_D^*$ with $\mathbb{G}_m^r$. Taking global sections, we see that there is an exact sequence
$$1 \to \mathbb{G}_m \to \mathbb{G}_m^r \to \mathbb{G}_m^r \to H^1(D; \scrO_D^*) \to \bigoplus_iH^1(\widetilde{D}_i; \scrO_{\widetilde{D}_i }^*) \cong \Zee^r \to 0,$$
where the last homomorphism is just the multidegree. Thus there is an exact sequence
$$1 \to \mathbb{G}_m \xrightarrow{\delta_1} \mathbb{G}_m^r\xrightarrow{\delta_2} \mathbb{G}_m^r \to \Pic^0D \to 0,$$
where $\delta_1(t) = (t, \dots, t)$.
Concretely, given the line bundles $L_i=L|\widetilde{D}_i $ on $\widetilde{D}_i $ and explicit isomorphisms $\sigma_i \colon L_i \cong \scrO_{\widetilde{D}_i }$, and denoting the marked points $0$ and $\infty$ in $\widetilde{D}_i$ by $0_i$ and $\infty_i$ respectively, we recover the line bundle $L$  by giving the identifications $\mu_i \colon (L_i)_{0_i} \cong (L_{i-1})_{\infty_{i-1}}$. The trivializations $\sigma_i$ then identify both $(L_i)_{0_i}$ and $(L_{i-1})_{\infty_{i-1}}$ with $\Cee$ and $\mu_i$ with an element of $\mathbb{G}_m$. Replacing  $\sigma_i$ by $t_i\sigma_i$ for some $t_i\in \mathbb{G}_m$ replaces  $ \mu_i  \in \mathbb{G}_m$ by $t_i^{-1}t_{i-1}\mu_i$. Moreover,  $\delta_2(t_1, \dots, t_r) = (t_1^{-1}t_r, \dots, t_r^{-1}t_{r-1})$. If $p\colon \mathbb{G}_m^r\to \mathbb{G}_m$ is the homomorphism
$$p(\mu_1, \dots, \mu_r) = \mu_1\cdots \mu_r,$$
then it is easy to check that $p$ is surjective and 
$$\Ker p = \{(t_1^{-1}t_r, \dots, t_r^{-1}t_{r-1}): t_i\in \mathbb{G}_m\}=\im \delta_2,$$
 so that $p$ identifies $\Pic^0D$ with $\mathbb{G}_m$.
 
 Finally consider the line bundle $\scrO_D(q-p)$ as in the last statement of the lemma. Running through the procedure described above, and using the nowhere vanishing global section $s_i= (t-1)/(t-\lambda)$ of $L_i = \scrO_{\widetilde{D}_i}(q_i-p_i)$ to identify $L_i$ with $\scrO_{\widetilde{D}_i}$, it is easy to calculate that, in the above notation, $\mu_i=\lambda^{-1}$ and $\mu_k = 1$ for $k\neq i$. Thus $\psi(\scrO_D(q-p)) =\lambda^{-1}$.
\end{proof}

The proof of the following is straightforward and is left to the reader, and holds in the elliptic or triangle cases as well. 

\begin{lemma}\label{Loncycle} Let $D = D_1+\dots D_r$ be a cycle of rational curves and let $L$ be a line bundle on $D$ of multidegree $(e_1, \dots, e_r)$.
\begin{enumerate}
\item[\rm(i)] If $e_i \leq 0$ for all $i$, then $h^0(D;L) =0$ unless $L\cong \scrO_D$, in which case $h^0(D; L) = 1$.
\item[\rm(ii)] If $e_i \geq 0$ for all $i$ and $e_i >0$ for some $i$, then there exists a section $s\in H^0(D;L)$ such that, for every $i$, the restriction $s|D_i$ is not identically zero.
\item[\rm(iii)] If $e_i \geq 0$ for all $i$ and $\sum_ie_i \geq 2$, then $L$ has no base points, i.e.\ for every $p\in D$, there exists a section $s\in H^0(D;L)$ such that the image of $s$ in the fiber $L_p$ is not zero.
\item[\rm(iv)] If $e_i =1$ and $e_j = 0$ for $j\neq i$, then there exists a unique point $q\in D_i^{\text{\rm{int}}}$ such that $L \cong \scrO_D(q)$. In this case $h^0(D;L) =1$ and $q$ is the unique base point for $L$.
\item[\rm(v)] Fix $i$, $1\leq i\leq r$ and fix a point $p\in D_i^{\text{\rm{int}}}$. Then,   for 
 every line bundle $L$ on $D$ of multidegree $(0,\dots, 0)$ there exists a unique $q\in D_i^{\text{\rm{int}}}$ such that $L\cong \scrO_D(q-p)$.  \qed
\end{enumerate}
\end{lemma}

\begin{corollary}\label{Pic0toDint} Suppose that $p \in D_i^{\text{\rm{int}}}$. Define $\tau_p\colon \Pic^0D \to D_i^{\text{\rm{int}}}$ via: $\tau_p(L) = q$, where, for $L\in \Pic^0D$, we have  $L\otimes \scrO_D(p) = \scrO_D(q)$ for a unique point $q\in D_i^{\text{\rm{int}}}$ by {\rm{(iv)}} of Lemma~\ref{Loncycle}. Then $\tau_p$ is an isomorphism from $\Pic^0D$ to $D_i^{\text{\rm{int}}}$. 
\end{corollary}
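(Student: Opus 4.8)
The plan is to reduce the statement to the explicit computation already carried out in Lemma~\ref{Pic0isomG}, so that $\tau_p$ becomes, in suitable coordinates, essentially the inversion automorphism of $\mathbb{G}_m$, hence manifestly an isomorphism.

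First I would verify that $\tau_p$ is well defined. Given $L\in \Pic^0D$, the line bundle $L\otimes \scrO_D(p)$ has multidegree $(0,\dots,0,1,0,\dots,0)$ with the $1$ in the $i$-th slot, since $p\in D_i^{\text{int}}$. By part (iv) of Lemma~\ref{Loncycle} there is then a unique $q\in D_i^{\text{int}}$ with $L\otimes \scrO_D(p)\cong \scrO_D(q)$, so $\tau_p(L)=q$ is well defined; equivalently, $\tau_p(L)=q$ if and only if $L\cong \scrO_D(q-p)$.

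Next I would identify $\tau_p$ explicitly. Fix the coordinate on $D_i^{\text{int}}\cong \mathbb{G}_m$ for which $p$ corresponds to $1$, and write $c\colon D_i^{\text{int}}\xrightarrow{\sim}\mathbb{G}_m$ for this identification. For $L\in \Pic^0D$, write $L=\scrO_D(q-p)$ (using part (v) of Lemma~\ref{Loncycle}) and set $\lambda=c(q)$. The last statement of Lemma~\ref{Pic0isomG} gives $\psi(L)=\lambda^{-1}=c(\tau_p(L))^{-1}$, so that $c\circ\tau_p=\mathrm{inv}\circ\psi$, where $\mathrm{inv}$ is inversion on $\mathbb{G}_m$ and $\psi\colon \Pic^0D\xrightarrow{\sim}\mathbb{G}_m$ is the isomorphism of Lemma~\ref{Pic0isomG}. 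Since $c$, $\psi$, and $\mathrm{inv}$ are all isomorphisms, $\tau_p=c^{-1}\circ\mathrm{inv}\circ\psi$ is an isomorphism as well.

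The only point that requires care — and the step I would be most attentive to — is that $\tau_p$ is a morphism of varieties in \emph{both} directions, not merely a set-theoretic bijection; this is precisely what the factorization through $\psi$ and $c$ supplies, as $\psi$ is algebraic (being built from the exact sequences in the proof of Lemma~\ref{Pic0isomG}) and inversion is algebraic. The same factorization shows that $\tau_p$ intertwines the group law on $\Pic^0D$ with the group structure on $D_i^{\text{int}}$ having $p$ as identity, so $\tau_p$ is in fact an isomorphism of algebraic groups for that structure. A coordinate-free alternative is available — injectivity and surjectivity follow by cancelling $\scrO_D(p)$ and by exhibiting $q\mapsto \scrO_D(q-p)$ as an inverse — but checking that this inverse is a morphism again amounts to the algebraicity encoded in $\psi$, so the argument above is the most economical route.
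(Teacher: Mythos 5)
Your proposal is correct. It is worth noting how it sits relative to the paper's own proof, which is a single line: bijectivity is read off directly from (v) of Lemma~\ref{Loncycle}, and the fact that $\tau_p$ is a morphism is dismissed as "easily checked." You use the same ingredient for the set-theoretic statement (writing $L\cong \scrO_D(q-p)$ via (v)), but you go further and make the morphism claim explicit by invoking the last statement of Lemma~\ref{Pic0isomG}: with coordinates chosen so that $p\mapsto 1$, the formula $\psi(\scrO_D(q-p))=\lambda^{-1}$ gives the identity $c\circ \tau_p = \mathrm{inv}\circ \psi$, hence $\tau_p = c^{-1}\circ \mathrm{inv}\circ\psi$ is a composition of isomorphisms. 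This buys two things the paper's terse proof does not spell out: first, the algebraicity of $\tau_p$ and of its inverse comes for free from the algebraicity of $\psi$, rather than requiring a separate (if routine) verification; second, you obtain the slightly stronger conclusion that $\tau_p$ is an isomorphism of algebraic groups once $D_i^{\text{int}}$ is given the group structure with identity $p$. The only mild caveat is that your argument leans on $\psi$ being an isomorphism of algebraic groups (not merely of abstract groups), but that is exactly what the construction in the proof of Lemma~\ref{Pic0isomG} provides, so the reliance is legitimate.
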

\begin{proof} This is immediate from  (v)  of Lemma~\ref{Loncycle} (and the easily checked fact that $\tau_p$ is a morphism).
\end{proof}

One can also describe $\Pic^0D$ via divisors of multidegree $(0,\dots, 0)$ modulo principal divisors.  Let $\mathbf{d}=\sum_{i=1}^r\sum_{j=1}^{n_i}(q_{ij} -p_{ij})$ be a divisor of multidegree $(0,\dots, 0)$, where the $p_{ij}, q_{ij}\in D_i^{\text{\rm{int}}}$ for every $i$. Then $\mathbf{d}$ defines a line bundle $\scrO_D(\mathbf{d})$ of multidegree $0$ in the usual way:
$$\scrO_D(\mathbf{d}) = \scrO_D\left(\sum_{i=1}^r\sum_{j=1}^{n_i}(q_{ij} -p_{ij})\right).$$
The divisor $\mathbf{d}$ is \textsl{principal} if $\scrO_D(\mathbf{d}) \cong \scrO_D$ is the trivial line bundle.  

\begin{lemma}\label{princdiv} The map $\mathbf{d} \mapsto \scrO_D(\mathbf{d})$ is a surjective homomorphism from the group of divisors of multidegree $0$ supported on $D_{\text{\rm{reg}}}$ to $\Pic^0D$ and its kernel is the group of principal divisors. If $\mathbf{d}=\sum_{i=1}^r\sum_{j=1}^{n_i}(q_{ij} -p_{ij})$ is a divisor of multidegree $(0,\dots, 0)$ supported on $D_{\text{\rm{reg}}}$, then $\mathbf{d}$ is principal $\iff$ 
$$\prod_{i,j}\frac{p_{ij}}{q_{ij}} = 1.$$
\end{lemma}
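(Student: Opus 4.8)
The plan is to reduce everything to the explicit description of the isomorphism $\psi\colon \Pic^0D \cong \mathbb{G}_m$ furnished by Lemma~\ref{Pic0isomG}. The first assertion is essentially formal: the assignment $\mathbf{d}\mapsto \scrO_D(\mathbf{d})$ is a group homomorphism because $\scrO_D(\mathbf{d}+\mathbf{d}')\cong \scrO_D(\mathbf{d})\otimes\scrO_D(\mathbf{d}')$; its image lies in $\Pic^0D$ since each summand $q_{ij}-p_{ij}$ is supported on the single component $D_i^{\text{int}}$ and hence contributes degree $0$ there; and it is surjective by part (v) of Lemma~\ref{Loncycle}, which exhibits every $L\in \Pic^0D$ as $\scrO_D(q-p)$. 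By definition the kernel consists of exactly the principal divisors, so this part requires no further argument.

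For the criterion, I would compose the homomorphism with $\psi$ and use that $\psi$ is an isomorphism of groups, so that
$$\psi(\scrO_D(\mathbf{d})) = \prod_{i,j}\psi\bigl(\scrO_D(q_{ij}-p_{ij})\bigr).$$
It then suffices to evaluate each factor via Lemma~\ref{Pic0isomG}. Since both $p_{ij}$ and $q_{ij}$ lie in the same $D_i^{\text{int}}\cong \mathbb{G}_m$, I read them as elements of $\mathbb{G}_m$ in the coordinate determined by the orientation, i.e.\ by the marking of $0_i$ and $\infty_i$. Rescaling this coordinate by $p_{ij}^{-1}$ — an automorphism of $\mathbb{G}_m$ that does not affect the canonical isomorphism $\psi$ — sends $p_{ij}$ to $1$ and $q_{ij}$ to $q_{ij}/p_{ij}$, so in the notation of Lemma~\ref{Pic0isomG} we have $\lambda = q_{ij}/p_{ij}$ and therefore $\psi(\scrO_D(q_{ij}-p_{ij})) = \lambda^{-1} = p_{ij}/q_{ij}$.

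Combining these gives $\psi(\scrO_D(\mathbf{d})) = \prod_{i,j}p_{ij}/q_{ij}$. Because $\psi$ is an isomorphism carrying the trivial bundle $\scrO_D$ to $1\in \mathbb{G}_m$, the divisor $\mathbf{d}$ is principal if and only if this product equals $1$, which is the asserted equivalence — the distinction between $\prod_{i,j}p_{ij}/q_{ij}$ and $\prod_{i,j}q_{ij}/p_{ij}$ being immaterial, since each is the reciprocal of the other.

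The only genuinely delicate point — and the step I would treat most carefully — is the identification $\lambda = q_{ij}/p_{ij}$: one must check that the ad hoc coordinate normalization in the statement of Lemma~\ref{Pic0isomG} (where $p$ is declared to correspond to $1$) is compatible with the intrinsic multiplicative structure on $D_i^{\text{int}}$ fixed by the orientation, so that the ratio $q_{ij}/p_{ij}$ really is the $\lambda$ appearing there. Everything else is bookkeeping with the homomorphism property of $\psi$ and the multiplicativity of $\scrO_D(-)$.
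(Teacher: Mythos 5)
Your proof is correct, but it takes a different route from the paper's for the key criterion. The paper argues directly from the definition of a principal divisor: $\mathbf{d}$ is principal if and only if there exist meromorphic functions $s_i$ on the components $D_i$ with $(s_i) = \sum_j(q_{ij}-p_{ij})$ whose values glue at the nodes, i.e.\ $s_i(0_i) = s_{i-1}(\infty_{i-1})$ for all $i$; it then writes down the explicit rational functions $\tilde{s}_i(t) = \prod_j (t-p_{ij})/(t-q_{ij})$, notes that any admissible $s_i$ is $c_i\tilde{s}_i$, and checks that consistent constants $c_i$ exist precisely when $\prod_{i,j}p_{ij}/q_{ij}=1$. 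You instead never touch meromorphic functions: you exploit that $\psi$ and $\mathbf{d}\mapsto\scrO_D(\mathbf{d})$ are homomorphisms, reduce to the single-difference case, and invoke the evaluation $\psi(\scrO_D(q-p))=\lambda^{-1}$ from Lemma~\ref{Pic0isomG}, with a correct normalization argument identifying $\lambda = q_{ij}/p_{ij}$ (the ratio being well defined since orientation-compatible coordinates on $D_i^{\text{int}}$ differ only by scaling). There is no circularity, since Lemmas~\ref{Pic0isomG} and~\ref{Loncycle} precede this one. What each approach buys: yours is shorter and purely formal once Lemma~\ref{Pic0isomG} is in hand, pushing all the analytic content into that earlier lemma; the paper's is self-contained at this point, gives the explicit function-theoretic picture of which divisors are principal (where the gluing obstruction at the nodes is visible), and is the computation that later makes the identification with the Abel--Jacobi map in Lemma~\ref{Lemma1.12} transparent.
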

\begin{proof} Clearly $\mathbf{d} \mapsto \scrO_D(\mathbf{d})$ is a homomorphism. It is surjective by (v) of Lemma~\ref{Loncycle} or directly, and by definition its kernel is the subgroup of principal divisors. The divisor $\sum_{i=1}^r\sum_{j=1}^{n_i}(q_{ij} -p_{ij})$ is principal $\iff$ there exist meromorphic functions $s_i$ on $D_i$ with $(s_i) = \sum_j(q_{ij} -p_{ij})$ and, for all $i\bmod r$, $s_i(0_i) = s_{i-1}(\infty_{i-1})$. Let $\tilde{s}_i$ be the meromorphic function on $D_i$ defined by
$$\tilde{s}_i(t) =\prod_j\frac{t-p_{ij}}{t-q_{ij}}.$$
Then $(\tilde{s}_i) = \sum_j(q_{ij} -p_{ij})$, and every meromorphic function on $D_i$ with this property is of the form $c_i\tilde{s}_i$ for some $c_i\in \Cee^*$. Moreover, $\tilde{s}_i(0_i) =\prod_jp_{ij}/q_{ij}$ and $\tilde{s}_i(\infty_i)=1$. The condition that there exist $c_i$ such that $c_i\tilde{s}_i(0_i) = c_{i-1}\tilde{s}_{i-1}(\infty_{i-1})$ for every $i$ is then equivalent to the condition that $\prod_{i,j} p_{ij}/q_{ij}  = 1$. 
\end{proof}

We turn next to automorphisms of $D$.

\begin{definition} Let $\Aut^0D$ be the neutral component of the group $\Aut D$ (i.e.\ the connected component of $\Aut D$ containing the identity).
\end{definition}

The following lists some properties of $\Aut^0D$ which will be needed in the proof of the Torelli theorem.

\begin{lemma}\label{autisom} With $\Aut^0D$ as above,
\begin{enumerate}
\item[\rm(i)] There is a canonical isomorphism $F\colon \Aut^0D \cong \mathbb{G}_m^r$. 
\item[\rm(ii)] $\Aut^0D$ acts simply transitively on $D_1^{\text{\rm{int}}} \times \cdots \times D_r^{\text{\rm{int}}}$. 
\item[\rm(iii)] The action of $\Aut^0D$ on $\Pic^0D$ is trivial.
\item[\rm(iv)] Given $p_i\in D_i^{\text{\rm{int}}}$, $1\leq i\leq r$, then, with $\psi$ as in Lemma~\ref{Pic0isomG},
$$F(\phi) = (\psi(\scrO_D(-\phi(p_1)+ p_1)), \dots, \psi(\scrO_D(-\phi(p_r)+ p_r))).$$ 
\item[\rm(v)] With $\tau_p$ as in Corollary~\ref{Pic0toDint}, for all $\phi\in \Aut^0D$,
$$\tau_{\phi(p)}= \phi\circ \tau_p.$$ 
\end{enumerate}
\end{lemma}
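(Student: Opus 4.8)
The plan is to establish the isomorphism $F$ of part (i) first, then derive the remaining parts from it together with the earlier lemmas. An automorphism in $\Aut^0 D$ must fix each component $D_i$ (permuting components would not lie in the identity component, since the cyclic structure is discrete), and on each $D_i \cong \Pee^1$ it must fix the two marked points $0_i$ and $\infty_i$. The subgroup of $\Aut \Pee^1$ fixing $0$ and $\infty$ is exactly $\mathbb{G}_m$, acting by $t \mapsto c_i t$; compatibility at the nodes is automatic once both marked points are fixed on each component. This gives a natural map $F \colon \Aut^0 D \to \mathbb{G}_m^r$ sending $\phi$ to the tuple of scaling factors $(c_1, \dots, c_r)$, and I would check it is an isomorphism by observing that the scalars $c_i$ can be prescribed independently and that $\phi$ is determined by them.

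For part (ii), simple transitivity on $D_1^{\text{int}} \times \cdots \times D_r^{\text{int}}$ is essentially a restatement of (i): a point $p_i \in D_i^{\text{int}} \cong \mathbb{G}_m$ is moved to any prescribed point $p_i'$ by a unique scalar $c_i = p_i'/p_i$, and these choices are independent across $i$, so the action of $\mathbb{G}_m^r$ is free and transitive. Part (iii) follows because $\Aut^0 D$ is connected while $\Pic^0 D \cong \mathbb{G}_m$ has automorphism group that acts algebraically; more directly, a connected group acting on $\Pic^0 D \cong \mathbb{G}_m$ by group automorphisms must act trivially since the only algebraic group automorphisms of $\mathbb{G}_m$ are $\pm 1$, and only $+1$ lies in a connected family. (Alternatively, I would verify directly that $\phi^* \scrO_D(q-p) \cong \scrO_D(\phi(q)-\phi(p))$ has the same image under $\psi$, using the formula of Lemma~\ref{Pic0isomG} and the fact that $\phi$ scales the coordinate.)

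The computational heart is part (iv), which pins down the isomorphism $F$ explicitly via $\psi$. The idea is that if $\phi$ scales the coordinate on $D_i$ by $c_i$ and $p_i$ corresponds to the coordinate value $a_i$, then $\phi(p_i)$ corresponds to $c_i a_i$; applying Lemma~\ref{Pic0isomG} with $p = \phi(p_i)$ at coordinate $c_i a_i$ and $q = p_i$ at coordinate $a_i$, the line bundle $\scrO_D(-\phi(p_i)+p_i) = \scrO_D(p_i - \phi(p_i))$ has $\psi$-value equal to the ratio of coordinates, which works out to $c_i$. I must be careful with the direction of the inverse in the formula $\psi(\scrO_D(q-p)) = \lambda^{-1}$, matching it against the sign convention in (iv); this bookkeeping is where an error is most likely, so I would track the coordinates of both points precisely rather than rely on symmetry. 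Finally, part (v) is a formal consequence: using Corollary~\ref{Pic0toDint}, both $\tau_{\phi(p)}$ and $\phi \circ \tau_p$ send $L \in \Pic^0 D$ to the unique point where $L \otimes \scrO_D(\phi(p))$ is effective, once one uses that $\phi^*$ acts trivially on $\Pic^0 D$ by (iii) to translate between $\scrO_D(p)$ and $\scrO_D(\phi(p))$; I would verify the two agree by checking $\tau_{\phi(p)}(L) = \phi(\tau_p(L))$ directly from the defining relation $L \otimes \scrO_D(\phi(p)) \cong \scrO_D(\tau_{\phi(p)}(L))$ and applying $\phi^*$.

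The main obstacle is the sign and direction bookkeeping in part (iv): reconciling the convention $\psi(\scrO_D(q-p)) = \lambda^{-1}$ with the placement of $\phi(p_i)$ versus $p_i$ in the divisor $-\phi(p_i)+p_i$, and ensuring the resulting scalar is $c_i$ rather than $c_i^{-1}$, so that $F$ is genuinely a group homomorphism and not its inverse.
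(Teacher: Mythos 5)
Your proposal is correct and follows essentially the same route as the paper: (i) and (ii) via the identification of the automorphisms of $\Pee^1$ fixing $0$ and $\infty$ with $\mathbb{G}_m$, (iv) via the coordinate computation against the convention $\psi(\scrO_D(q-p))=\lambda^{-1}$ (your ratio does come out to $c_i$, not $c_i^{-1}$, exactly as in the paper), and (v) formally from (iii) together with the defining relation of $\tau_p$. The only cosmetic difference is at (iii), where your primary argument (a connected group acting algebraically by group automorphisms of $\mathbb{G}_m \cong \Pic^0D$ must act trivially) is a valid variant of the paper's appeal to the explicit description in Lemma~\ref{Pic0isomG}, which you in any case offer as your alternative.
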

\begin{proof}(i) and (ii) are clear from the fact that the automorphism group  of $\Pee^1$ fixing $0$ and $\infty$ is canonically identified with $\mathbb{G}_m$ with the usual action of $\mathbb{G}_m$ on $\Pee^1-\{0, \infty\} \cong  \mathbb{G}_m$ by left multiplication. (iii) follows from the explicit description of $\Pic^0D$ given in Lemma~\ref{Pic0isomG}. To see (iv), suppose that $F(\phi) = (\lambda_1, \dots, \lambda_r)$, i.e.\ that $\phi$ corresponds to multiplication by $\lambda_i$ on $D_i^{\text{\rm{int}}}$. We can choose coordinates in each $D_i^{\text{\rm{int}}}$ so that $p_i$ corresponds to $1$,  and hence $\phi(p_i)$ corresponds to $\lambda_i$. By Lemma~\ref{Pic0isomG}, 
$$(\psi(\scrO_D(\phi(p_1)- p_1)), \dots, \psi(\scrO_D(\phi(p_r)- p_r))) = (\lambda_1^{-1}, \dots, \lambda_r^{-1}),$$
and  hence  $(\psi(\scrO_D(-\phi(p_1)+ p_1)), \dots, \psi(\scrO_D(-\phi(p_r)+ p_r))) = (\lambda_1  , \dots, \lambda_r )= F(\phi)$ as claimed.

Finally, with $p, q\in D_i^{\text{\rm{int}}}$ and $L\in \Pic^0D$, $\tau_p(L) = q$ $\iff$ $L\cong \scrO_D(q-p)$ $\iff$ $\phi(L) = \scrO_D(\phi(q)-\phi(p))$ $\iff$ $\tau_{\phi(p)}(\phi(L)) = \phi(q)$, where $\phi(L) = (\phi^*)^{-1}(L)$. Since $\phi(L) =L$ by (iii), this says that $\tau_{\phi(p)}= \phi\circ \tau_p$.
\end{proof}

We describe the relationship between $\Pic^0D$ and the generalized Jacobian $J(D)$. 
There is a unique $\omega_0 \in H^0(D;\omega_D)$ whose residue at the point $0_i\in D_i$ is $-1/2\pi \sqrt{-1}$, and hence whose  residue at the point $\infty_i\in D_i$ is $1/2\pi \sqrt{-1}$.  If $\sigma$ is a closed curve in $\bigcup_iD^{\text{int}}_i=D_{\text{\rm{reg}}}$, then $\displaystyle \phi \mapsto \int_\sigma \phi$ defines an element $p_\sigma$ of $H^0(D; \omega_D)\spcheck$, and every $p_\sigma$ is an integral multiple of $p_{\mathbf{c}}$, where $\mathbf{c}$ is a simple closed curve in $D_i^{\text{int}}$ for some fixed $i$ whose winding number around $\infty_i$ is $1$. We define $J(D) = H^0(D; \omega_D)\spcheck/\Zee p_{\mathbf{c}} \cong \Cee^*$. Note that $p_{\mathbf{c}}(\omega_0) = 1$, and hence under the identification of $H^0(D; \omega_D)\spcheck$ with $\Cee$ given by the choice of $\omega_0$, $p_{\mathbf{c}}$ is identified with $1\in \Cee$ and $\Zee p_{\mathbf{c}}$ with $\Zee \subseteq \Cee$. Then via $\exp(2\pi \sqrt{-1}\cdot )$, $J(D)$ is identified with $\Cee^* =\mathbb{G}_m$; we denote this explicit isomorphism by $\psi'$. 
 
\begin{definition}\label{defAbelJac}
Define the \textsl{Abel-Jacobi map} $\alpha$ from divisors of multidegree $(0, \dots, 0)$ on the smooth locus of $D$ to $J(D)\cong \Cee^*$ as follows: if $\mathbf{d}$ is a divisor of multidegree $(0, \dots, 0)$ on $D_{\text{\rm{reg}}}$, let
$$\alpha(\mathbf{d}) =\exp\left(2\pi \sqrt{-1}\int_\mathbf{d}\omega_0 \right).$$
Here the notation $\displaystyle\int_\mathbf{d}$ means that, if $\mathbf{d}=\sum_{i=1}^r\sum_{j=1}^{n_i}(q_{ij} -p_{ij})$, then we choose paths $\sigma_{ij}$ in $D_i^{\text{\rm{int}}}$ from $p_{ij}$ to $q_{ij}$, and set 
$$\int_\mathbf{d}\omega_0 = \int_\gamma\omega_0,$$
where $\gamma = \sum_{i,j}\sigma_{ij}$. For example, if $\lambda_1, \lambda_2$ are two points in $D_i^{\text{\rm{int}}}\cong \Cee^*$, then
$$\int_{\lambda_2-\lambda_1}\omega_0 = -\frac{1}{2\pi \sqrt{-1}}\int_{\lambda_1}^{\lambda_2}\frac{dz}{z} = -\frac{1}{2\pi \sqrt{-1}}(\log \lambda_2 - \log \lambda_1).$$
In particular, $\alpha(t\lambda_2-t\lambda_1) = \alpha(\lambda_2- \lambda_1)=\lambda_1/\lambda_2$ and $\displaystyle\alpha(\lambda-1) =  \lambda^{-1}$. It is easy to see that $\alpha$ is independent of the orderings of the $p_{ij}$ and $q_{ij}$ as well as the choices of the paths $\sigma_{ij}$.     
\end{definition}

\begin{lemma}\label{Lemma1.12} The homomorphism $\alpha$  induces an isomorphism $\bar{\alpha} \colon \Pic^0D \to J(D)$. Moreover, the following diagram commutes:
$$\begin{CD}
\Pic^0D @>{\bar\alpha}>> J(D) \\
@VV{\psi}V @V{\psi'}VV\\
\Cee^* @= \Cee^*, 
\end{CD}$$
 where $\psi \colon \Pic^0D \cong \mathbb{G}_m=\Cee^*$ is the isomorphism given in Lemma~\ref{Pic0isomG} and  $\psi'\colon J(D) \to \Cee^*$ is the isomorphism described above. 
 \end{lemma}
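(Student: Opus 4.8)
The goal is to show that the Abel--Jacobi map $\alpha$ descends to an isomorphism $\bar\alpha\colon \Pic^0 D \to J(D)$ fitting into the stated commutative square. The plan is to proceed in two stages: first show that $\alpha$ is well-defined on $\Pic^0 D$ (i.e.\ that it kills principal divisors), and second verify the commutativity of the diagram on generators, which will simultaneously establish that $\bar\alpha$ is an isomorphism since $\psi$ and $\psi'$ are.

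First I would recall that, by Lemma~\ref{princdiv}, the group $\Pic^0 D$ is the quotient of divisors of multidegree $(0,\dots,0)$ supported on $D_{\text{reg}}$ by the principal divisors, and that a divisor $\mathbf{d}=\sum_{i,j}(q_{ij}-p_{ij})$ is principal if and only if $\prod_{i,j} p_{ij}/q_{ij} = 1$. So to see that $\alpha$ factors through $\Pic^0 D$, it suffices to check that $\alpha(\mathbf{d}) = 1$ whenever this product condition holds. Using the computation already recorded in Definition~\ref{defAbelJac}, namely that for two points $\lambda_1,\lambda_2\in D_i^{\text{int}}\cong \Cee^*$ one has $\int_{\lambda_2-\lambda_1}\omega_0 = -\tfrac{1}{2\pi\sqrt{-1}}(\log\lambda_2-\log\lambda_1)$, the additivity of the integral gives
\[
\alpha(\mathbf{d}) = \exp\left(2\pi\sqrt{-1}\sum_{i,j}\int_{q_{ij}-p_{ij}}\omega_0\right) = \exp\left(-\sum_{i,j}(\log q_{ij}-\log p_{ij})\right) = \prod_{i,j}\frac{p_{ij}}{q_{ij}}.
\]
This exhibits $\alpha(\mathbf{d})$ as exactly the product appearing in the principality criterion, so $\alpha$ vanishes precisely on principal divisors and hence induces a well-defined injective homomorphism $\bar\alpha\colon \Pic^0 D \to J(D)$.

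Next I would verify commutativity of the square on a convenient generating class. By Lemma~\ref{Loncycle}(v), every element of $\Pic^0 D$ is represented by $\scrO_D(q-p)$ for $p,q$ in a single fixed $D_i^{\text{int}}$; choosing coordinates so that $p\leftrightarrow 1$ and $q\leftrightarrow\lambda$, Lemma~\ref{Pic0isomG} gives $\psi(\scrO_D(q-p)) = \lambda^{-1}$. On the other side, the sample computation in Definition~\ref{defAbelJac} yields $\alpha(\lambda-1)=\lambda^{-1}$, and under the identification $\psi'$ of $J(D)$ with $\Cee^*$ this is exactly $\lambda^{-1}$ as well. Since both routes around the square send $\scrO_D(q-p)$ to $\lambda^{-1}\in\Cee^*$, the diagram commutes on all such classes, and since these exhaust $\Pic^0 D$ the diagram commutes. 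Because $\psi$ and $\psi'$ are isomorphisms and the square commutes, $\bar\alpha$ is forced to be an isomorphism.

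The only genuinely delicate point is the well-definedness of the integral $\int_{\mathbf d}\omega_0$ and the matching of sign conventions: one must confirm that the branch ambiguity in $\log$ is exactly absorbed by the quotient by $\Zee p_{\mathbf c}$, which is guaranteed by the normalization $p_{\mathbf c}(\omega_0)=1$ recorded above, and that the residue normalization of $\omega_0$ (residue $-1/2\pi\sqrt{-1}$ at $0_i$) produces the $\lambda^{-1}$ rather than $\lambda$, matching $\psi$. These are bookkeeping checks rather than conceptual obstacles, so the main substance of the proof is really the single line identifying $\alpha(\mathbf d)$ with $\prod_{i,j} p_{ij}/q_{ij}$, which makes the principality criterion of Lemma~\ref{princdiv} match the vanishing locus of $\alpha$ on the nose.
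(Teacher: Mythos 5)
Your proof is correct and follows essentially the same route as the paper: both hinge on the identity $\alpha(\mathbf{d}) = \prod_{i,j} p_{ij}/q_{ij}$, which via Lemma~\ref{princdiv} shows $\alpha$ kills exactly the principal divisors, and then deduce the isomorphism and commutativity from Lemma~\ref{Loncycle}(v) and Lemma~\ref{Pic0isomG}. You merely spell out the generator check $\psi(\scrO_D(q-p)) = \lambda^{-1} = \psi'(\bar\alpha(\scrO_D(q-p)))$ that the paper leaves implicit.
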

 \begin{proof} From the explicit calculations above, if $\mathbf{d}=\sum_{i=1}^r\sum_{j=1}^{n_i}(q_{ij} -p_{ij})$, then $\alpha(\mathbf{d}) = \prod_{i,j}p_{ij}/q_{ij}$. By Lemma~\ref{princdiv},  $\mathbf{d}$ is principal $\iff$ $\alpha(\mathbf{d}) = 1$. It follows that $\alpha$  induces a  homomorphism $\bar{\alpha} \colon \Pic^0D \to J(D)$. The fact that $\bar{\alpha}$ is an isomorphism and the commutativity of the diagram then follow from  (v)  of Lemma~\ref{Loncycle} and Lemma~\ref{Pic0isomG}.
 \end{proof}

\section{Moduli: birational geometry}

Let $(Y,D)$ be an anticanonical pair. We begin this section by describing the possibilities for an exceptional curve on $Y$:

\begin{definition}\label{defcurves} An irreducible curve $E$ on $Y$ is  an \textsl{interior exceptional curve} if $E\cong \Pee^1$, $E^2 = -1$, and $E \neq D_i$ for any $i$. Every exceptional curve on $Y$ is thus either an interior exceptional curve or a component of $D$. 
\end{definition}

For $p\in D$, let $\widetilde{Y}$ be the blowup of $Y$ at $p$. If $p$ is a smooth point of $D$, let $\widetilde{D} = \sum _i\widetilde{D}_i$, where  $\widetilde{D}_i$ is the proper transform of $D_i$. If $p$ is a singular point of $D$,  define $\widetilde{D} = \sum _i\widetilde{D}_i+E$, where  $\widetilde{D}_i$ is the proper transform of $D_i$ and $E$ is the exceptional divisor. Then $(\widetilde{Y}, \widetilde{D})$ is again an anticanonical pair. If $p$ is a  smooth point of $D$, we say that $(\widetilde{Y}, \widetilde{D})$ is an \textsl{interior blowup} of $(Y,D)$, and if $p$ is a singular point of $D$, we say that $(\widetilde{Y}, \widetilde{D})$ is a  \textsl{corner blowup} of $(Y,D)$. The following lemma describes how the self-intersection sequence and the charge are affected by a blowup.

\begin{lemma}\label{effectofblowup}
Let $(Y,D)$ be a labeled anticanonical pair with self-intersection sequence $(d_1, \dots, d_r)$ and charge $Q(Y,D)$.
\begin{enumerate}
\item[\rm(i)] If $\widetilde{Y}$ is an interior blowup of $Y$ at the point $p\in D_i^{\text{\rm{int}}}$, then $r(\widetilde{D})=r(D)$, and, under the natural labeling of  $\widetilde{D}$, the self-intersection sequence of $(\widetilde{Y}, \widetilde{D})$ is $(d_1, \dots, d_{i-1}, d_i-1, d_{i+1}, \dots, d_r)$.
\item[\rm(ii)] If $\widetilde{Y}$ is a corner blowup of $Y$ at the point $p\in D_i\cap D_{i+1}$, then $r(\widetilde{D})=r(D)+1$. If $r(D) =1$, i.e.\ $D$ is irreducible, then the self-intersection sequence of $(\widetilde{Y}, \widetilde{D})$ is $(d_1-4, -1)$. If $r(D) \geq 2$ and for an appropriate labeling of the components of $\widetilde{D}$,  the self-intersection sequence of $(\widetilde{Y}, \widetilde{D})$ is 
$$(d_1, \dots, d_{i-1}-1,  -1, d_{i+1}-1, \dots, d_r).$$
\item[\rm(iii)]  $Q(\widetilde{Y}, \widetilde{D}) = Q(Y,D) + 1$ if $\widetilde{Y}$ is an interior blowup of $Y$ and $Q(\widetilde{Y}, \widetilde{D}) = Q(Y,D)$ if $\widetilde{Y}$ is a corner blowup of $Y$. \qed
 \end{enumerate}
 \end{lemma}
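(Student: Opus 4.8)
The plan is to verify each part by a direct computation using the standard formulas for how intersection numbers and the canonical class change under a single blowup, together with the charge formula from the earlier Lemma~\ref{calcQ}.

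First I would set up the blowup $\pi\colon \widetilde Y \to Y$ at a point $p \in D$ with exceptional divisor $E$. The standard facts I will use are: $K_{\widetilde Y} = \pi^*K_Y + E$, the proper transform of a curve $C$ through $p$ with multiplicity $m$ satisfies $\widetilde C = \pi^*C - mE$, and $E^2 = -1$, $E \cdot \widetilde C = m$, $\pi^*C \cdot E = 0$. For part (i), the point $p \in D_i^{\text{int}}$ lies on $D_i$ alone with multiplicity one, so $\widetilde D_i = \pi^*D_i - E$ gives $\widetilde D_i^2 = D_i^2 - 1 = d_i - 1$, while the other components miss $p$ and keep their self-intersections; the number of components and the cycle structure are unchanged, giving $r(\widetilde D) = r(D)$. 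For part (ii), the point $p = D_i \cap D_{i+1}$ is a node of $D$, so each of $D_i, D_{i+1}$ passes through $p$ with multiplicity one: $\widetilde D_i = \pi^*D_i - E$ and $\widetilde D_{i+1} = \pi^*D_{i+1} - E$, so each of these self-intersections drops by one, the new component $E$ has $E^2 = -1$, and the cycle gains one component ($r(\widetilde D) = r(D)+1$) since $E$ is inserted between the proper transforms of $D_i$ and $D_{i+1}$. The irreducible case $r=1$ needs separate bookkeeping: here $D_1$ passes through its own node $p$ with multiplicity two, so $\widetilde D_1 = \pi^*D_1 - 2E$ yields $\widetilde D_1^2 = D_1^2 - 4 = d_1 - 4$, and the exceptional curve supplies the second component with $E^2 = -1$, matching $(d_1 - 4, -1)$.

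For part (iii) I would use the charge formula $Q(Y,D) = 12 - D^2 - r(D)$ from the Definition preceding Lemma~\ref{calcQ}. Since $\widetilde D = \pi^*D$ in all cases (the total anticanonical divisor pulls back, as $-K_{\widetilde Y} = \pi^*(-K_Y) - E = \pi^* D - E$, and one checks the new $\widetilde D$ including $E$ realizes this class), one has $\widetilde D^2 = (\pi^*D - E)^2 = D^2 - 1$ in every case, so $12 - \widetilde D^2 = 12 - D^2 + 1$. In the interior case $r(\widetilde D) = r(D)$, so $Q$ increases by one; in the corner case $r(\widetilde D) = r(D) + 1$, so the $+1$ from $-\widetilde D^2$ is canceled by the $+1$ in $r$, leaving $Q$ unchanged. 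This matches the claimed values, and indeed one could instead argue from the topological characterization $Q = \chi_{\rm top}(Y - D)$, noting that an interior blowup adds a point not on $D$ hence increases $\chi_{\rm top}(Y - D)$ by the Euler characteristic of the added $\mathbb{A}^1 \cong E \setminus \widetilde D$, whereas a corner blowup adds an exceptional $E$ that lies entirely in $\widetilde D$ up to its two corner points, leaving $Y - D$ topologically unchanged.

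The only genuinely delicate point, and the step I expect to be the main obstacle, is the bookkeeping in the irreducible case $r = 1$ of part (ii): one must correctly account for the local geometry of a node, where the single component $D_1$ is smooth upstairs but passes through the node with total local multiplicity two, so the self-intersection drops by $4$ rather than by $1$, and the exceptional divisor becomes a genuine second component of the cycle rather than being absorbed. The labeling statement in the reducible case (\emph{``for an appropriate labeling''}) requires a small cyclic-reindexing observation so that $E$ is inserted with self-intersection $-1$ between the two components whose self-intersections have each dropped by one; I would simply note that the cycle structure of $\widetilde D$ is forced by the configuration and choose the labeling accordingly. Everything else reduces to the routine intersection-theoretic identities above.
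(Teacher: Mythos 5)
Your proof is correct and is exactly the routine verification the paper intends: the lemma is stated with its proof omitted as immediate, and your argument via the standard identities $K_{\widetilde Y}=\pi^*K_Y+E$, $\widetilde C=\pi^*C-mE$, together with the charge formula $Q=12-D^2-r(D)$ (or equivalently $Q=\chi_{\rm top}(Y-D)$), is precisely the intended computation, including the multiplicity-two bookkeeping at the node in the $r=1$ case. One trivial slip: in part (iii) you write $\widetilde D=\pi^*D$ where you mean $\widetilde D=\pi^*D-E$, but the parenthetical and the computation $\widetilde D^2=(\pi^*D-E)^2=D^2-1$ that you actually use are correct, so nothing is affected.
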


To reverse the process of blowing up a point of the anticanonical divisor, let $(Y,D)$ be an anticanonical pair and let $E$ be an exceptional curve  on $Y$.  Then  either $E$ is an interior exceptional curve or $E$ is a component of $D$. Let $(\overline{Y}, \overline{D})$ be the pair resulting from blowing down $E$. It is easy to see that $(\overline{Y}, \overline{D})$ is an anticanonical pair. If $E$ is an interior exceptional curve, then $(Y,D)$ is an interior blowup of $(\overline{Y}, \overline{D})$, and if $E$ is a component of $D$ then $(Y,D)$ is a corner blowup of $(\overline{Y}, \overline{D})$.  We call the pair $(\overline{Y}, \overline{D})$ the \textsl{interior blowdown},  resp.\  the \textsl{corner blowdown}
 of $(Y,D)$.

In particular, we see that  interior exceptional curves arise by taking an  anticanonical pair $(\overline{Y}, \overline{D})$ and blowing up smooth points on $\overline{D}$. Blowing up distinct points on $\overline{D}$ leads to distinct interior exceptional curves. If however we blow up infinitely near points, in other words blow up $p\in \overline{D}_i$, then blow up the intersection point of the new exceptional curve $E$ with the proper transform of $\overline{D}_i$, and repeat $b-1$ times, we are led to the definition of a generalized exceptional curve:

\begin{definition}\label{defgenexcep} A \textsl{generalized exceptional curve} on $Y$ is a divisor $C_1+ \cdots + C_{b-1}+E$,  where $b\geq 1$, $C_i\cong \Pee^1$ is a smooth curve of self-intersection $-2$ disjoint from $D$ (a $-2$-curve in the terminology of Definition~\ref{defminustwo}) for $i\leq b-1$, $E$ is an interior exceptional curve,  $C_i \cdot C_j = 1$ if $j=i\pm 1$ and $0$ otherwise, and $E\cdot C_{b-1}= 1$ and $E\cdot C_i = 0$, $i\neq b-1$. 

If $C_1+ \cdots + C_{b-1}+E$ is a generalized exceptional curve, then so is  $E_i = C_i+ \cdots + C_{b-1}+E$ for $1\leq i \leq b$. Moreover, suppose that $(\overline{Y}, \overline{D})$ is an anticanonical pair, $p\in \overline{D}$ is a smooth point, and that $(Y,D)$ is obtained from $(\overline{Y}, \overline{D})$ by making  $b$ infinitely near blowups at $p$, in other words, that we blow up $p$, then the intersection of the new exceptional divisor with the proper transform of $\overline{D}$, and so on. Denote the sequence of blowups by
$$Y = Y_b \xrightarrow{\pi_b} Y_{b-1}\xrightarrow{\pi_{b-1}} \dots \xrightarrow{\pi_1} Y_0= \overline{Y}.$$
 Then the exceptional divisor of the birational morphism $Y \to \overline{Y}$ is a generalized exceptional curve, the divisors $E_i$ are also generalized exceptional curves, $E_i^2 = -1$, $E_i \cdot E_j = 0$ for $i\neq j$, and the $E_i$ are the pullbacks to $Y$ of the exceptional divisors of the $\pi_i$.   
\end{definition}

Clearly, every anticanonical pair $(Y,D)$ dominates an anticanonical pair $(\overline{Y}, \overline{D})$, where $\overline{Y}$ is a minimal rational surface; we will call such a pair \textsl{minimal}. Minimal pairs  $(\overline{Y}, \overline{D})$ are easy to classify (Lemma 3.2 of \cite{FriedmanMiranda}):

\begin{theorem}\label{minimalist} Let $(\overline{Y}, \overline{D})$ be a minimal anticanonical pair. Then exactly one of the following holds:
\begin{enumerate} 
\item[\rm(i)] $\overline{Y} \cong \Pee^2$, and $D$ is either three lines in general position, a line and a conic meeting transversally, or an irreducible nodal cubic. Equivalently, the possible self-intersection sequences are $(1,1,1)$, $(1,4)$, or $(9)$. The corresponding values of $Q(\Pee^2,D)$ are: $Q(\Pee^2,D) = 0$ if $D$ has self-intersection sequence  $(1,1,1)$; $Q(\Pee^2,D) = 1$ if $D$ has self-intersection sequence  $(1,4)$; $Q(\Pee^2,D) = 2$ if $D$ has self-intersection sequence  $(9)$.
\item[\rm(ii)] $\overline{Y} \cong  \mathbb{F}_N$, $N\neq 1$, and $D$ is the union of the negative section $\sigma_0$ and
\begin{enumerate}  
\item[\rm(a)] A section $\sigma$ with $\sigma^2 = N$ and two fibers $f_1$ and $f_2$. In this case, the self-intersection sequence is $(-N, 0, N,0)$ and  $Q(\mathbb{F}_N,D) = 0$.
\item[\rm(b)]  A section $\sigma$ with $\sigma^2 = N+2$ meeting $\sigma_0$ transversally and one fiber $f$ not passing through the intersection points of $\sigma_0$ and $\sigma$. In this case, the  self-intersection sequence is $(-N, N+2,0)$ up to orientation and $Q(\mathbb{F}_N,D) = 1$.
\item[\rm(c)]  A section $\sigma$ with $\sigma^2 = N+4$ meeting $\sigma_0$ transversally. In this case, the self-intersection sequence is $(-N, N +4)$ and $Q(\mathbb{F}_N,D) = 2$.
\end{enumerate}
\item[\rm(iii)] $\overline{Y} \cong  \mathbb{F}_N$, $N=0,2$, and $D$ is either an irreducible nodal bisection of self-intersection $8$, with   $Q(\mathbb{F}_N,D) = 3$, or $D$ is the union of two sections of self-intersection $2$, with $Q(\mathbb{F}_N,D) = 2$. \qed
\end{enumerate}
\end{theorem}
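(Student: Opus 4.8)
The plan is to leverage the classification of minimal rational surfaces: $\overline{Y}$ is either $\Pee^2$ or a Hirzebruch surface $\mathbb{F}_N$ with $N\neq 1$ (recall $\mathbb{F}_1$ is not minimal). I would organize the proof by these two cases and, in each, use the anticanonical condition $\overline{D}\in|-K_{\overline{Y}}|$ together with the requirement that $(\overline{Y},\overline{D})$ be minimal as a pair — meaning $\overline{D}$ contains no interior exceptional curve (since $\overline{Y}$ has none) and, crucially, that we cannot perform a corner blowdown, i.e.\ no component $\overline{D}_i$ of the cycle is itself an exceptional curve of the surface. The strategy throughout is to write down $-K_{\overline{Y}}$ explicitly, enumerate the effective anticanonical divisors that are cycles of rational curves, and then discard those that admit a blowdown.

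For $\Pee^2$, since $-K_{\Pee^2}=\scrO(3)$, the divisor $\overline{D}$ is a plane cubic, and the cycle-of-rational-curves condition forces it to be one of: three concurrent-free lines, a line plus a smooth conic meeting transversally, or an irreducible nodal cubic. Computing $D_i^2$ in each case gives the self-intersection sequences $(1,1,1)$, $(1,4)$, $(9)$, and the charge follows immediately from $Q=12-D^2-r$ with $D^2=9$ in all three cases. For $\mathbb{F}_N$, I would use $\Pic\mathbb{F}_N=\Zee\sigma_0\oplus\Zee f$ with $\sigma_0^2=-N$, $f^2=0$, $\sigma_0\cdot f=1$, and the standard formula $-K_{\mathbb{F}_N}=2\sigma_0+(N+2)f$. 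The negative section $\sigma_0$ (when $N>0$) must be a component of $\overline{D}$: otherwise $\overline{D}$, being anticanonical and effective, would have to contain $\sigma_0$ with multiplicity forced by intersection theory — indeed $-K_{\mathbb{F}_N}\cdot\sigma_0=2-N<0$ for $N\geq 3$ and $=0$ for $N=2$, so any irreducible cycle component meeting or equalling $\sigma_0$ must be handled, and one checks $\sigma_0\subseteq\overline{D}$. Writing $\overline{D}=\sigma_0+\Delta$ with $\Delta\in|\sigma_0+(N+2)f|$ and running through the ways $\Delta$ can be a union of sections and fibers completing a cycle yields cases (a), (b), (c); the self-intersection sequences and charges are then direct computations.

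The subtler point is case (iii), $N=0,2$, where $\sigma_0$ need not appear: here $\overline{D}$ can be an irreducible nodal bisection (a curve in $|2\sigma_0+(N+2)f|$ with $\sigma^2=8$, giving $Q=3$) or a disjoint-free union of two sections each of self-intersection $2$ (giving $Q=2$). I would verify these are exactly the additional minimal possibilities by checking that no component is an exceptional curve — on $\mathbb{F}_0$ and $\mathbb{F}_2$ the sections of self-intersection $2$ are \emph{not} exceptional, so no corner blowdown exists, whereas on $\mathbb{F}_N$ for other configurations one must rule out components with $D_i^2=-1$.

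\textbf{The main obstacle} I anticipate is the minimality bookkeeping: the surface being minimal is not the same as the \emph{pair} being minimal, since a corner blowdown can exist even when $\overline{Y}$ is a minimal surface. The heart of the argument is therefore to show that in each listed configuration no component $\overline{D}_i$ is an exceptional curve (so that no corner blowdown is possible), and conversely that any anticanonical cycle on $\Pee^2$ or $\mathbb{F}_N$ not on the list admits either an interior or a corner blowdown — most delicately, that the $N=0,2$ special cases in (iii) genuinely fail to reduce further, which hinges on the absence of $-1$-curves among sections of self-intersection $2$ on these two surfaces. Once this exceptional-curve analysis is pinned down, the self-intersection sequences and the charge values are routine from $Q(\overline{Y},\overline{D})=12-\overline{D}^2-r(\overline{D})$ together with $\overline{D}^2=K_{\overline{Y}}^2\in\{9,8\}$.
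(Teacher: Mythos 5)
Your enumeration is correct and complete, and it is essentially the only available route: the paper itself gives no proof of this theorem, quoting it (with a \qed) from Lemma 3.2 of Friedman--Miranda, so your reconstruction --- classify the minimal rational surfaces as $\Pee^2$ or $\mathbb{F}_N$ with $N\neq 1$, then decompose $-K_{\Pee^2}=\scrO_{\Pee^2}(3)$, resp.\ $-K_{\mathbb{F}_N}=2\sigma_0+(N+2)f$, into reduced cycles of rational curves --- is exactly what that proof amounts to. The case analysis you sketch (either $\overline{D}=\sigma_0+\Delta$ with $\Delta$ a section plus $0$, $1$, or $2$ fibers, giving (ii)(a)--(c), or else $\overline{D}$ avoids $\sigma_0$, which forces $2N\leq N+2$ and hence $N\in\{0,2\}$, giving (iii)) is complete, and the self-intersection sequences and charge values are the routine computations you indicate.

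However, the point you single out as ``the main obstacle'' rests on a false premise, and removing it simplifies the argument. In this paper a pair $(\overline{Y},\overline{D})$ is minimal \emph{by definition} when the surface $\overline{Y}$ is a minimal rational surface; no further condition on the pair is imposed. Moreover, your claim that ``a corner blowdown can exist even when $\overline{Y}$ is a minimal surface'' is wrong: a corner blowdown contracts an exceptional curve (a smooth rational curve of self-intersection $-1$) that is a component of $\overline{D}$, and $\Pee^2$ and $\mathbb{F}_N$ ($N\neq 1$) contain no curves of self-intersection $-1$ at all --- on $\mathbb{F}_N$, an irreducible curve in $|a\sigma_0+bf|$ other than $\sigma_0$ or a fiber has $b\geq aN$ and hence nonnegative square, while $\sigma_0^2=-N\neq -1$. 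So the exceptional-curve analysis you describe is vacuous, and the entire content of the theorem is the completeness of the enumeration, which you do carry out. Relatedly, your assertion that $\sigma_0$ must be a component of $\overline{D}$ ``when $N>0$'' is imprecise: it holds for $N\geq 3$, where $-K_{\mathbb{F}_N}\cdot\sigma_0=2-N<0$ forces $\sigma_0$ into the base locus of $|-K_{\mathbb{F}_N}|$, but it fails for $N=2$, where $-K_{\mathbb{F}_2}\cdot\sigma_0=0$ and both configurations of case (iii) are disjoint from $\sigma_0$. Your later treatment of (iii) implicitly corrects this, but the dichotomy should be stated from the start as $N\geq 3$ versus $N\in\{0,2\}$.
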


\begin{remark} In Lemma 3.2 of \cite{FriedmanMiranda}, the non-minimal case of $\mathbb{F}_1$ with self-intersection sequence $(0,4)$ is strangely omitted.
\end{remark}

\begin{remark}\label{nonewint} Let $(Y,D) \to (\overline{Y}, \overline{D})$ be a corner blowup and let $E$ be an interior exceptional curve on $\overline{Y}$. Then the proper transform of $E$ is still an exceptional curve on $Y$. Put slightly differently, a corner blowdown cannot create any new interior exceptional curves.

In particular, if we start with $(Y,D)$ and serially contract interior exceptional curves until there are no longer any such, the only remaining exceptional curves will be components of the anticanonical cycle, and this statement will remain true after any sequence of contracting such components. Hence every anticanonical pair $(Y,D)$  arises from a minimal pair $(\overline{Y}, \overline{D})$, not necessarily in a unique way, by first making an iterated series of corner blowups of $(\overline{Y}, \overline{D})$, and then making a series of interior blowups. 
\end{remark}

Instead of using a minimal model, we can also consider blowups $(Y,D) \to (\overline{Y}, \overline{D})$, where $(\overline{Y}, \overline{D})$ is toric, i.e.\  $T= \overline{Y}-\overline{D}\cong \mathbb{G}_m^2$ is an algebraic torus and multiplication of $T$ on itself extends to an action of $T$ on $\overline{Y}$. Note that the only toric examples in Theorem~\ref{minimalist} are $\overline{Y} \cong\Pee^2$ with $D$ equal to three lines in general position in Case (i), or $\overline{Y} \cong  \mathbb{F}_N$, $N\neq 1$, with $D =\sigma_0  +f_1+ \sigma + f_2$  in Case (ii)(a). Clearly, a toric anticanonical pair has no interior exceptional curves, a corner blowup or blowdown of a toric pair is again toric, and every toric anticanonical pair $(\overline{Y}, \overline{D})$ arises by making a series of corner blowups starting with one of the minimal examples described above. 

\begin{lemma}\label{Qzero}
If $(Y,D)$ is an anticanonical pair, then  $Q(Y, D) \geq 0$ and $(Y,D)$ is toric $\iff$ $Q(Y,D) = 0$.  \end{lemma}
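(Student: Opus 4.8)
The plan is to combine the topological formula for the charge from Lemma~\ref{calcQ} with the reduction of an arbitrary anticanonical pair to a minimal one described in Remark~\ref{nonewint}, tracking the charge along the way via Lemma~\ref{effectofblowup}(iii). The three assertions ($Q\geq 0$, and the two implications of the equivalence) are then assembled from these ingredients.

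For the implication that toric pairs have charge zero I would argue directly from the identity $Q(Y,D) = \chi_{\rm top}(Y-D)$ of Lemma~\ref{calcQ}. If $(Y,D)$ is toric then by definition $Y - D \cong \mathbb{G}_m^2$, and since $\mathbb{G}_m \cong \Cee^*$ is homotopy equivalent to $S^1$ we have $\chi_{\rm top}(\mathbb{G}_m) = 0$, whence $\chi_{\rm top}(\mathbb{G}_m^2) = \chi_{\rm top}(\mathbb{G}_m)^2 = 0$. Thus $Q(Y,D) = 0$, and this direction needs nothing beyond the Euler characteristic.

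For the inequality $Q(Y,D)\geq 0$ and the converse implication I would invoke Remark~\ref{nonewint}: every anticanonical pair $(Y,D)$ is obtained from a minimal pair $(\overline{Y}, \overline{D})$ by first performing a sequence of corner blowups and then a sequence of, say, $n$ interior blowups. By Lemma~\ref{effectofblowup}(iii) a corner blowup leaves the charge unchanged while an interior blowup raises it by $1$, so the charge is additive along this chain and $Q(Y,D) = Q(\overline{Y}, \overline{D}) + n$ with $n\geq 0$. Inspecting the classification in Theorem~\ref{minimalist} shows that every minimal pair has $Q(\overline{Y}, \overline{D}) \in \{0,1,2,3\}$, in particular $Q(\overline{Y}, \overline{D}) \geq 0$, and this gives $Q(Y,D)\geq 0$ at once.

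Finally, for $Q(Y,D)=0 \Rightarrow$ toric, the relation $Q(Y,D) = Q(\overline{Y}, \overline{D}) + n$ with both terms nonnegative forces $n=0$ and $Q(\overline{Y}, \overline{D})=0$. Reading off Theorem~\ref{minimalist}, the only minimal pairs of charge $0$ are $\Pee^2$ with $\overline{D}$ three general lines and $\mathbb{F}_N$ ($N\neq 1$) with $\overline{D} = \sigma_0 + f_1 + \sigma + f_2$, both of which are toric; and $n=0$ means $(Y,D)$ is built from one of these by corner blowups alone, each of which preserves toricity. Hence $(Y,D)$ is toric. The only genuine content here is the additivity of the charge under the reduction of Remark~\ref{nonewint}, so the point to get right is that the corner-then-interior decomposition is actually available and that ``toric'' is preserved by corner blowups; both are supplied by the discussion preceding the lemma, and there is no serious analytic or combinatorial obstacle.
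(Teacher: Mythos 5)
Your proof is correct and follows essentially the same route as the paper: inspection of the minimal pairs in Theorem~\ref{minimalist}, the behavior of the charge under corner and interior blowups from Lemma~\ref{effectofblowup}(iii), and the identity $Q(Y,D)=\chi_{\rm top}(Y-D)=0$ for toric pairs. The paper states this more tersely, but the ingredients and their roles are identical; your write-up just makes explicit the bookkeeping $Q(Y,D)=Q(\overline{Y},\overline{D})+n$ and the fact that corner blowups preserve toricity.
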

\begin{proof} By inspection, for the pairs $(Y,D)$ of Theorem~\ref{minimalist},  $Q(Y, D) \geq 0$ and $(Y,D)$ is toric $\iff$ $Q(Y,D) = 0$. The result then follows from Lemma~\ref{effectofblowup} and the fact that, if $(Y,D)$ is toric, then $Q(Y,D) = \chi_{\text{top}}(Y-D) = 0$.
\end{proof}

\begin{remark} Another characterization of toric pairs is as follows: If $s$ is the rank of the kernel of the homomorphism $\bigoplus_i\Zee[D_i] \to  H^2(Y; \Zee) $ as in Lemma~\ref{rankQ}, then $s\leq 2$, and $s=2$ $\iff$ $(Y,D)$ is toric. See also Lemma~\ref{adefthcor}(iv)  for another interpretation  of the invariant $s$. 
\end{remark}

\begin{definition} An anticanonical pair $(Y,D)$ is \textsl{combinatorially toric} if there exists a toric anticanonical pair $(\overline{Y}, \overline{D})$ of the same length such that $D_i^2 = (\overline{D}_i)^2$ for all $i$ (with our usual convention that $D_i$ only meets $D_{i\pm 1}$, and similarly for $\overline{D}_i$).  
\end{definition}

\begin{lemma} Let $(Y,D)$ be a combinatorially toric anticanonical pair. Then $(Y,D)$ is toric. Moreover, $(Y,D)$ is obtained from $(\mathbb{F}_0, D)$ by a sequence of corner blowups and blowdowns, where $D = f_1+\sigma_1 + f_2 + \sigma_2$ as in Case (ii)(a) of Theorem~\ref{minimalist}.
\end{lemma}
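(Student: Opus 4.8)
The plan is to split the two assertions: first derive toricity from the charge, then build an explicit chain of corner operations down to $(\mathbb{F}_0,D)$. For the first assertion I would note that the charge $Q(Y,D)=12-D^2-r(D)$ is determined entirely by $r=r(D)$ together with the self-intersection sequence. Indeed, a complete fan has at least three rays, so a toric pair has $r\ge 3$; since $(\overline Y,\overline D)$ is a cycle of length $r\ge 3$ and $(Y,D)$ has the same length, both satisfy $D^2=\sum_i D_i^2+2r$. The combinatorial-toricity hypothesis provides a toric $(\overline Y,\overline D)$ of the same length with $\overline D_i^2=D_i^2$ for all $i$, whence $\overline D^2=D^2$ and $Q(Y,D)=Q(\overline Y,\overline D)$. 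As $(\overline Y,\overline D)$ is toric, Lemma~\ref{Qzero} gives $Q(\overline Y,\overline D)=0$, so $Q(Y,D)=0$, and the same lemma forces $(Y,D)$ to be toric. The essential point is that toricity is not assumed but extracted from the vanishing of the charge.

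For the second assertion I would first use the structural description preceding Lemma~\ref{Qzero}: every toric pair arises by corner blowups from a minimal toric pair, i.e.\ from $(\Pee^2,D)$ with $D$ three general lines (sequence $(1,1,1)$) or from $(\mathbb{F}_N,\sigma_0+f_1+\sigma+f_2)$ with $N\ne 1$ (sequence $(-N,0,N,0)$). Reversing these blowups joins $(Y,D)$ to one such minimal pair by corner blowdowns, so it suffices to connect each minimal toric pair to $(\mathbb{F}_0,D)$. The main device is a toric elementary transformation computed purely on self-intersection sequences via Lemma~\ref{effectofblowup}: starting from a length-$4$ cycle of sequence $(-M,0,M,0)$ with $M\ge 1$, blow up the corner between the $M$-component and an adjacent $0$-component to obtain $(-M,-1,-1,M-1,0)$, then contract the newly created $(-1)$-component (a valid corner blowdown), which raises its two neighbors by $1$ and yields $(-(M-1),0,M-1,0)$. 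Iterating lowers $M$ to $0$, so $(\mathbb{F}_N,D)$ is connected to a toric pair of sequence $(0,0,0,0)$. For $\Pee^2$, a single corner blowup of $(1,1,1)$ gives $(-1,0,1,0)$, which is the case $M=1$ above and hence also reaches $(0,0,0,0)$. Finally, a toric pair of sequence $(0,0,0,0)$ has length $4$ and no exceptional curves, so it is minimal, and Theorem~\ref{minimalist} identifies it with $(\mathbb{F}_0,D)$; splicing these chains onto the reduction to a minimal pair joins $(Y,D)$ to $(\mathbb{F}_0,D)$ through corner blowups and blowdowns.

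The only step demanding genuine care is this elementary-transformation bookkeeping: correctly tracking the self-intersection changes under the corner blowup and blowdown via Lemma~\ref{effectofblowup}, checking at each stage that the component to be contracted is a boundary $(-1)$-curve, and invoking Theorem~\ref{minimalist} at the terminus to recognize $(\mathbb{F}_0,D)$. Everything else is formal; in particular, by working only with self-intersection sequences one avoids having to identify the (possibly non-minimal, e.g.\ $\mathbb{F}_1$-type) intermediate pairs along the way.
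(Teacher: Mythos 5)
Your proof is correct and takes essentially the same route as the paper: combinatorial toricity forces $Q(Y,D)=0$ (since the charge depends only on the length and the self-intersection sequence), hence $(Y,D)$ is toric by Lemma~\ref{Qzero}, after which corner blowdowns to a minimal toric model followed by elementary transformations (corner blowup plus corner blowdown) reduce the sequence to $(0,0,0,0)$, i.e.\ to $(\mathbb{F}_0,D)$. One small wording slip: the curve contracted in your elementary transformation is not the \emph{newly created} $(-1)$-component (contracting the exceptional curve of the blowup would simply undo it and return $(-M,0,M,0)$) but rather the old $0$-component that became a $(-1)$-curve; your displayed output $(-(M-1),0,M-1,0)$ shows that this is what you actually intend.
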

\begin{proof} If $(Y,D)$ is combinatorially toric, then $Q(Y,D) =0$, and hence $(Y,D)$ is toric by Lemma~\ref{Qzero}. After a sequence of corner blowdowns, either $Y=\mathbb{F}_N$  with self-intersection sequence $(-N ,  0 , N, 0)$, or  $Y=\Pee^2$ and the self-intersection sequence is $(1,1,1)$. In the first case, after a sequence of corner blowups and blowdowns (viewed as elementary transformations of a ruled surface), we can assume $N =0$ and $Y\cong \mathbb{F}_0 =\Pee^1\times \Pee^1$. In the second case, after a corner blowup, the self-intersection sequence becomes $(-1,0,1,0)$ and after a further corner blowup and blowdown the sequence becomes $(0,0,0,0)$, and again we are in the case of the toric pair $(\mathbb{F}_0, D)$.
\end{proof}

\begin{remark} 
The proof shows that a combinatorially toric anticanonical pair is specified by its self-intersection sequence. However, two general anticanonical pairs with the same self-intersection sequence need not be deformation equivalent.
\end{remark}

We also have the following \cite[Proposition 1.3]{GHK0}:

\begin{lemma}\label{toricmodels} Let $(Y,D)$ be an anticanonical pair. Then there exists an anticanonical pair $(\widetilde{Y}, \widetilde{D})$ obtained from $(Y,D)$ by a sequence of corner blowups, and a morphism $\pi\colon (\widetilde{Y}, \widetilde{D}) \to (\overline{Y}, \overline{D})$, where $(\overline{Y}, \overline{D})$ is toric and $\pi$ is a sequence of interior blowups at not necessarily distinct points of $\overline{D}$.
\end{lemma}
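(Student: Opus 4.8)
The plan is to induct on the charge $Q(Y,D)$, which is a nonnegative integer by Lemma~\ref{Qzero}. If $Q(Y,D)=0$ then $(Y,D)$ is already toric by Lemma~\ref{Qzero}, and one may take $(\widetilde Y,\widetilde D)=(Y,D)=(\overline Y,\overline D)$. So assume $Q(Y,D)\ge 1$. The whole argument rests on the following \textbf{Key Claim}: if $Q(Y,D)\ge 1$, then after a finite sequence of corner blowups $(\widetilde Y,\widetilde D)\to(Y,D)$ the pair $(\widetilde Y,\widetilde D)$ carries an interior exceptional curve $E$ in the sense of Definition~\ref{defcurves}.

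Granting the claim I would finish the induction as follows. Interior-blow-down $E$ to obtain a pair $(Y_1,D_1)$; by Lemma~\ref{effectofblowup}(iii) one has $Q(Y_1,D_1)=Q(Y,D)-1$, so the inductive hypothesis supplies corner blowups $a\colon(Z_1,D_{Z_1})\to(Y_1,D_1)$ together with an interior-blowdown morphism $\pi_1\colon(Z_1,D_{Z_1})\to(\overline Y,\overline D)$ onto a toric pair. Since an interior blowdown does not disturb the corners of the cycle, the corners of $(Y_1,D_1)$ are canonically the corners of $(\widetilde Y,\widetilde D)$, so I can perform the same corner blowups on $(\widetilde Y,\widetilde D)$; blowing up corners and contracting the interior curve $E$ are operations at disjoint centres and hence commute. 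This produces a pair $(Z,D_Z)$ with a corner-blowup morphism $a'\colon(Z,D_Z)\to(\widetilde Y,\widetilde D)$ and an interior-blowdown morphism $b'\colon(Z,D_Z)\to(Z_1,D_{Z_1})$ (contracting the proper transform of $E$) satisfying the evident commutativity. Then $(Z,D_Z)\to(\widetilde Y,\widetilde D)\to(Y,D)$ is a composite of corner blowups, while $(Z,D_Z)\xrightarrow{b'}(Z_1,D_{Z_1})\xrightarrow{\pi_1}(\overline Y,\overline D)$ is a composite of interior blowdowns onto a toric pair; thus $(Z,D_Z)$ and $(\overline Y,\overline D)$ form the required toric model. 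The induction terminates because $Q$ strictly decreases.

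It remains to prove the Key Claim. If $(Y,D)$ already has an interior exceptional curve there is nothing to do. Otherwise, by Remark~\ref{nonewint}, $(Y,D)$ is obtained from a minimal pair $(Y_m,D_m)$ by corner blowups alone; since corner blowups preserve $Q$ and $Q(Y,D)\ge 1$, Theorem~\ref{minimalist} together with Lemma~\ref{Qzero} shows $(Y_m,D_m)$ is one of the non-toric minimal pairs. For each such pair I would manufacture an interior exceptional curve by a short sequence of corner blowups, using the following move: on a Hirzebruch surface $\mathbb{F}_N$ the fibre $f_q$ of the ruling through a corner $q$ of $D_m$ is a smooth rational $0$-curve, and provided $f_q$ is not a component of $D_m$, blowing up $q$ turns $f_q$ into a smooth rational curve of self-intersection $-1$ meeting the cycle only in the point where it crosses the new corner component, i.e.\ an interior exceptional curve. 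In cases (ii)(b), (ii)(c) and (iii) of Theorem~\ref{minimalist} such a corner exists by direct inspection (for instance in (ii)(b) take $q=\sigma_0\cap\sigma$, whose fibre differs from the fibre component $f$ of $D_m$; in the irreducible nodal case of (iii) take $q$ to be the node). In the two cases on $\Pee^2$ in (i), which carry no ruling, one first blows up a single corner to reach $\mathbb{F}_1$ and then applies the same move. Thus in every non-toric minimal case a sequence $\beta\colon(Y',D')\to(Y_m,D_m)$ of at most two corner blowups produces an interior exceptional curve. To transfer this to $(Y,D)$, note that $\beta$ and the corner blowups $\gamma\colon(Y,D)\to(Y_m,D_m)$ are combinatorially subdivisions of the cycle $D_m$ at its corners (the local picture at each corner being that of a subdivided two-dimensional fan), and any two such subdivisions admit a common refinement; letting $(Y'',D'')\to(Y_m,D_m)$ refine both, the morphisms $(Y'',D'')\to(Y,D)$ and $(Y'',D'')\to(Y',D')$ are corner blowups, and by the first assertion of Remark~\ref{nonewint} the interior exceptional curve on $(Y',D')$ stays an interior exceptional curve on $(Y'',D'')$. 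Hence $(Y'',D'')$ is a corner blowup of $(Y,D)$ carrying an interior exceptional curve, which proves the claim.

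The main obstacle is precisely the Key Claim: guaranteeing that corner blowups actually create an interior exceptional curve whenever $Q\ge 1$. This is where the classification of minimal pairs and the case-by-case fibre-through-a-corner construction seem unavoidable, and where one must remember that the $\Pee^2$ cases need a preparatory corner blowup before a ruling is available. By contrast, the inductive bookkeeping — commuting corner blowups past an interior blowdown, composing the resulting morphisms, and invoking the existence of common refinements of corner-blowup towers — is routine.
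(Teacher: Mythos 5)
Your proposal is correct, and its geometric core coincides with the paper's: both reduce, via Remark~\ref{nonewint}, to the minimal pairs of Theorem~\ref{minimalist}, and both manufacture interior exceptional curves there by the same trick (blow up a corner through which the ruling's fibre is not a component of the cycle; on $\Pee^2$ first blow up a corner to reach $\mathbb{F}_1$, where the relevant fibres are proper transforms of tangent lines). Where you genuinely diverge is in the global bookkeeping. The paper, for each non-toric minimal pair, constructs the \emph{entire} toric model at once (e.g.\ for the nodal cubic: three corner blowups, then blowing down the two tangent-line exceptional curves to land on the combinatorially toric sequence $(3,0,-3,0)$), and then transfers to arbitrary pairs with the reduction left implicit ("tedious but straightforward"). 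You instead run an induction on the charge $Q$, so that each step only requires producing a \emph{single} interior exceptional curve after corner blowups, contracting it, and recursing; the price is that you must make explicit the two commutation facts the paper suppresses — corner blowups commute with an interior blowdown (fine, since an interior exceptional curve $E$ has $E\cdot D=1$ and so avoids all corners), and two corner-blowup towers over the same pair admit a common refinement. That last fact is the only point where you lean on an unproved assertion; it is a standard piece of two-dimensional toric combinatorics (overlay of subdivisions plus factorization of smooth $2$-dimensional refinements into star subdivisions), so your proof is complete modulo a classical fact. It can even be avoided: if the corner tower from $(Y,D)$ down to $(Y_m,D_m)$ ever touched the good corner $q$, then already the first blowup of $q$ turns the fibre $f_q$ into an interior exceptional curve whose proper transform survives all later corner blowups (again because it meets the cycle once, at a smooth point), contradicting the standing assumption that $(Y,D)$ has no interior exceptional curve; hence $q$ is still a corner of $D$ and can be blown up directly. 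In exchange for this extra care, your organization fills in precisely the part of the argument the paper waves at, and it needs less from the case analysis (one exceptional curve per minimal model rather than a full toric model).
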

\begin{proof}
Using Remark~\ref{nonewint}, it is enough to check for the minimal examples of Theorem~\ref{minimalist}, for which this is tedious but straightforward. For example, let $(Y,D) =(\Pee^2, D)$, where $D$ is an irreducible nodal cubic. Blow up the node of $D$, and then the two intersection points of the exceptional curve with the proper transform of $D$. We obtain a new anticanonical pair $(\widetilde{Y}, \widetilde{D})$, with self-intersection sequence $(3, -1, -3, -1)$. Also, the proper transforms of the tangent lines to the two branches of $D$ at the node are exceptional curves on $\widetilde{Y}$ meeting the two components of $\widetilde{D}$. Blowing these down leads to  self-intersection sequence $(3,0, -3, 0)$, so that, by the previous lemma, $(\widetilde{Y}, \widetilde{D})$ is an interior blowup of the toric pair $(\mathbb{F}_3, \sigma_0 + f_1 + \sigma + f_2)$ as in Case (ii)(a) of Theorem~\ref{minimalist}. The cases where $Y=\mathbb{F}_N$ and the self-intersection sequence is $(-N, N+2,0)$ or $(-N, N+4)$ are similar, by blowing up $D$ at the one or two points of $\sigma_0\cap \sigma$, in the notation of Theorem~\ref{minimalist} and noting that the proper transforms of the fibers passing through $\sigma_0\cap \sigma$ are exceptional. Finally, for the case $Y\cong \Pee^2$, with self-intersection sequence $(1,4)$, after blowing up the points of intersection of the line and the conic, the self-intersection sequence becomes $(-1, -1, -1, 2)$ and hence we get an anticanonical cycle which is a corner blowup of the case $Y=\mathbb{F}_0$ with self-intersection sequence $(0,0,2)$, so we are done by the previous case.
\end{proof}

\begin{corollary}\label{blowupanddown} Let $(Y,D)$ be an anticanonical pair. Then there exists a sequence of anticanonical pairs $(Y_i, D_i)$, $0\leq i\leq 2n$, such that $(Y_0,D_0) = (Y,D)$ and such that $(Y_{2n}, D_{2n}) = (\mathbb{F}_0, D)$ is toric, with $D = f_1+\sigma_1+f_2+\sigma_2$, and morphisms
$$\begin{matrix}
 (Y_0,D_0) &{}&{}&{}& \dots       &{} &{}&{}  &(Y_{2n},D_{2n})\\
{}&\searrow &{} &\swarrow&{}     &\searrow& {}&\swarrow &{} \\
 {}&{}&(Y_1,D_1)& {} &{}       &{}&(Y_{2n-1},D_{2n-1}) {}&{}&
 \end{matrix}$$
 where each $(Y_{2i}, D_{2i}) \to (Y_{2i+1}, D_{2i+1})$ is the identity, a corner blowdown, or an interior blowdown, and each $(Y_{2i}, D_{2i}) \to (Y_{2i-1}, D_{2i-1})$ is either the identity or a corner blowdown. \qed
 \end{corollary}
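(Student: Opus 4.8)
The plan is to concatenate the two structural results already in hand—Lemma~\ref{toricmodels} and the preceding lemma characterizing combinatorially toric pairs—into a single chain of elementary modifications, and then to check that every modification in this chain is of a type permitted by the zigzag. First I would unwind the allowed moves. A full peak-to-valley-to-peak step $(Y_{2i},D_{2i}) \to (Y_{2i+1},D_{2i+1}) \leftarrow (Y_{2i+2},D_{2i+2})$ consists of a blowdown of arbitrary type (the rightward arrow) followed by a corner blowup (the leftward arrow, read as a corner blowdown in reverse); allowing identities, one such step realizes exactly one of an interior blowdown, a corner blowdown, or a corner blowup. Hence the whole zigzag can realize any finite sequence of interior blowdowns, corner blowdowns, and corner blowups, but never an interior blowup. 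The content of the corollary is therefore that $(Y,D)$ can be carried to $(\mathbb{F}_0,D)$ using only these three moves.

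Next I would produce such a sequence. By Lemma~\ref{toricmodels} there is a pair $(\widetilde{Y},\widetilde{D})$ obtained from $(Y,D)$ by corner blowups, together with a morphism $\pi\colon (\widetilde{Y},\widetilde{D}) \to (\overline{Y},\overline{D})$ onto a toric pair which is a sequence of interior blowdowns. Reading this roof starting from $(Y,D)$, we first perform the corner blowups $(Y,D) \to (\widetilde{Y},\widetilde{D})$ and then the interior blowdowns $(\widetilde{Y},\widetilde{D}) \to (\overline{Y},\overline{D})$, both permitted moves. Since $(\overline{Y},\overline{D})$ is toric it is in particular combinatorially toric, so by the lemma on combinatorially toric pairs it is joined to $(\mathbb{F}_0,D)$, with $D = f_1+\sigma_1+f_2+\sigma_2$, by a sequence of corner blowups and blowdowns, again permitted moves. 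Concatenating, $(Y,D)$ is carried to $(\mathbb{F}_0,D)$ by a sequence consisting solely of corner blowups, interior blowdowns, and corner blowdowns.

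Finally I would package this sequence into the diagram. Each individual move becomes one peak-to-peak step: a corner blowup is recorded by taking the rightward arrow to be the identity and the ensuing leftward arrow to be the corresponding corner blowdown, while an interior or corner blowdown is recorded by taking the rightward arrow to be that blowdown and the ensuing leftward arrow to be the identity. Padding the front, so that $(Y_0,D_0)=(Y,D)$ sits at a peak, and the back, so that $(Y_{2n},D_{2n})=(\mathbb{F}_0,D)$ sits at a peak, with identities as needed, one obtains the asserted chain of length $2n$. The only point needing genuine input is the complete absence of interior blowups, and this is precisely what Lemma~\ref{toricmodels} secures: it routes the passage from $(Y,D)$ to its toric model through a common blowup $\widetilde{Y}$ reached by corner blowups alone, thereby converting what might naively have been interior blowups into interior blowdowns on the roof. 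I expect this to be the main conceptual step; the remaining bookkeeping of indices and identity insertions is routine.
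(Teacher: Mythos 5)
Your proposal is correct and follows exactly the route the paper intends: the corollary is stated with no written proof precisely because it is the concatenation of Lemma~\ref{toricmodels} (corner blowups up to $(\widetilde{Y},\widetilde{D})$, then interior blowdowns to a toric pair) with the earlier lemma connecting any toric pair to $(\mathbb{F}_0, f_1+\sigma_1+f_2+\sigma_2)$ by corner blowups and blowdowns. Your analysis of which moves the zigzag permits (interior blowdowns, corner blowdowns, corner blowups, but never interior blowups) and the identity-padding bookkeeping are exactly the right way to make this explicit.
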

 
As an alternative to toric pairs, one can also consider pairs satisfying the following condition:
 
 \begin{definition} A (labeled) anticanonical pair $(Y,D)$ is \textsl{taut} if, given any labeled anticanonical pair $(Y',D')$ with the same self-intersection sequence as $D$, there is an isomorphism of (labeled) pairs $\phi\colon (Y', D') \to (Y, D)$ such that  $\phi(D_i') = D_i$. 
 \end{definition}
 
 For example, a toric anticanonical pair is taut. On the other hand, $Y=\mathbb{F}_0$ with self-intersection sequence $(0,4)$ is not taut, since there exists an anticanonical divisor on $\mathbb{F}_1$ which also has self-intersection sequence $(0,4)$.  Nonetheless, up to isomorphism, there is a unique anticanonical pair $(Y,D)$ with self-intersection sequence $(0,4)$ such that  $Y =\mathbb{F}_0$.

 \begin{lemma}\label{taut}
 \begin{enumerate}
 \item[\rm(i)] If $(Y,D)$ is taut and $(Y,D) \to (\overline{Y}, \overline{D})$ is an interior blowdown, then $(\overline{Y}, \overline{D})$ is taut.
 \item[\rm(ii)] If $(Y,D) \to (\overline{Y}, \overline{D})$ is a corner blowdown, then $(Y,D)$ is taut $\iff$ $(\overline{Y}, \overline{D})$ is taut.
 \item[\rm(iii)] The minimal models of Theorem~\ref{minimalist} which are not taut are exactly the ones given in Case {\rm(iii)} or $\mathbb{F}_0$ with self-intersection sequence $(0,4)$.  
 \end{enumerate}
 \end{lemma}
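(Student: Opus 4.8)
The plan is to use throughout that ``taut'' means precisely that the self-intersection sequence determines the labeled pair up to isomorphism, together with the fact that the charge, and hence $b_2(Y)=10-D^2$, depends only on the sequence. I would begin with (ii), which is the cleanest because a corner blowup is \emph{canonical}: there is no choice of point, one blows up the node $D_i\cap D_{i+1}$. For ``$(\overline Y,\overline D)$ taut $\Rightarrow (Y,D)$ taut,'' given $(Y',D')$ with the same sequence as $D$, note by Lemma~\ref{effectofblowup}(ii) that this sequence has a distinguished self-intersection $-1$ component $D_j'$; being a smooth rational $(-1)$-curve it is exceptional, so contracting it is a corner blowdown $(\overline Y',\overline D')$, with the same sequence as $(\overline Y,\overline D)$. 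Tautness downstairs gives a labeled isomorphism $\overline\varphi$, which matches the two corners (it preserves labels, hence sends $\overline D_{j-1}'\cap\overline D_{j+1}'$ to $\overline D_{j-1}\cap\overline D_{j+1}$) and therefore lifts to the corner blowups. The converse is identical and in fact forced: after the canonical corner blowup one applies tautness of $(Y,D)$, and the resulting labeled isomorphism automatically carries the exceptional component $D_j'$ to $D_j$ because both are the distinguished $-1$ component, so it descends. The essential point is that the exceptional curve of a corner blowup is a \emph{component} of the cycle, so a labeled isomorphism must respect it.

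For (i) the template is the same ``blow up, apply tautness, blow down,'' but here the subtlety is genuine. Given $(\overline Y_1,\overline D_1)$ with the same sequence as $\overline D$, I would blow up an arbitrary $p_1\in(\overline D_1)_i^{\mathrm{int}}$, where $D_i$ is the component met by the contracted interior exceptional curve $E$, obtaining $(Y_1,D_1)$ with the same sequence as $(Y,D)$ by Lemma~\ref{effectofblowup}(i). Tautness of $(Y,D)$ gives a labeled isomorphism $\varphi\colon(Y_1,D_1)\to(Y,D)$, and $\varphi(E_1)$ is an interior exceptional curve meeting $D_i$. The difficulty --- and the main obstacle of the whole lemma --- is that, unlike the corner case, $E$ is \emph{not} a component of $D$, so there is no a priori reason for $\varphi(E_1)$ to equal $E$; if it does, $\varphi$ descends to $(\overline Y_1,\overline D_1)\cong(\overline Y,\overline D)$ and we are done.

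I would resolve this by proving the rigidity statement: \emph{a taut pair has at most one interior exceptional curve meeting each boundary component $D_i$}. Granting it, $\varphi(E_1)=E$ and $\varphi$ descends. To prove rigidity, suppose $E,F$ are distinct interior exceptional curves meeting $D_i$. Each interior exceptional curve satisfies $E\cdot D=-E\cdot K_Y=1$, so meets $D$ at a single point; contracting a maximal collection of interior exceptional curves meeting $D_i$ produces a pair on which $D_i^{\mathrm{int}}\cong\mathbb{G}_m$ carries at least two distinguished points (the images of $E$ and $F$), with $(Y,D)$ recovered by blowing these up. But any automorphism of a pair restricts on $D_i\cong\Pee^1$ to an automorphism fixing the two nodes of the cycle, hence acts on $\mathbb{G}_m$ through at most $\mathbb{G}_m\rtimes\Zee/2$, which is not transitive on unordered pairs of distinct points; thus the cross-ratio of the two points is a modulus, and blowing up pairs of different modulus yields non-isomorphic pairs of the same sequence, contradicting tautness. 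The one point needing care is the possibility that $E$ and $F$ meet in the interior; there one checks, using $E\cdot D=F\cdot D=1$ and the fact that contracting a $(-1)$-curve $E$ turns $F$ into a curve of self-intersection $-1+(E\cdot F)^2$, that the relevant configurations reduce to the disjoint case.

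Finally, for (iii) I would inspect the list in Theorem~\ref{minimalist}, using that any $(Y',D')$ with the same sequence as a minimal model has $b_2(Y')=10-(D')^2\in\{1,2\}$, so $Y'$ is itself $\Pee^2$ (if $b_2=1$) or some $\mathbb{F}_N$ (if $b_2=2$). When $b_2=1$ the configuration is determined up to projective equivalence, so all $\Pee^2$ cases are taut. When $b_2=2$ and the sequence has a strictly negative entry $-N$, the corresponding component must be the unique negative curve $\sigma_0$ of $\mathbb{F}_N$, pinning down $N$ and then the configuration up to $\Aut\mathbb{F}_N$; these cases are taut. The remaining cases have no negative entry, and here everything turns on whether two \emph{adjacent} components can both have self-intersection $0$: for the longer cycles $(0,0,0,0)$ and $(0,2,0)$ two adjacent $0$-curves must meet, forcing two fibers from different rulings and hence $\mathbb{F}_0$ (on $\mathbb{F}_{N\ge 1}$ distinct fibers are disjoint), so these are taut; whereas for $(0,4)$, $(2,2)$ and $(8)$ one exhibits second realizations on $\mathbb{F}_1$ (for $(0,4)$, via $f+(2\sigma_0+2f)$) and on $\mathbb{F}_2$ (for $(2,2)$ via two copies of $\sigma_0+2f$, and for $(8)$ via the anticanonical bisection), so these fail to be taut --- matching exactly Case (iii) and $\mathbb{F}_0$ with sequence $(0,4)$. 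The work here is pure bookkeeping: exhibiting the second realization for each non-taut candidate and verifying rigidity of surface and configuration for each taut one.
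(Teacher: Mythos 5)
Your parts (ii) and (iii) are fine, and your reduction of (i) to the rigidity statement (\emph{a taut pair carries at most one interior exceptional curve meeting any given component}) is exactly the right move; since the paper dismisses (i) and (ii) as ``straightforward'' and (iii) as standard-but-tedious, the only thing to scrutinize is your proof of that rigidity statement, and there the argument breaks. Having contracted $E,F$ to get $(Z,D_Z)$ with marked points $p,q$, and blown up a pair $p',q'$ of different modulus, tautness hands you a labeled isomorphism $\varphi$ from the blowup at $p',q'$ to the blowup at $p,q$; but to contradict the fact that $\Aut(Z,D_Z)$ acts on $(D_Z)_i^{\text{int}}\cong\mathbb{G}_m$ only through $\mathbb{G}_m\rtimes\Zee/2$, you need $\varphi$ to \emph{descend} to an automorphism of $(Z,D_Z)$ carrying $\{p',q'\}$ to $\{p,q\}$, i.e.\ to carry the exceptional curves $E_{p'},E_{q'}$ onto $E,F$. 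Nothing forces this: $\varphi$ only carries them to \emph{some} disjoint pair of interior exceptional curves meeting $D_i$, and there can be several such pairs (already on a blowup of $\Pee^2$ at three smooth points of a nodal cubic, $\{e_1,e_2\}$ and $\{e_1,H-e_2-e_3\}$ are two different disjoint pairs meeting $D$). This is precisely the descent problem that you yourself flagged as ``the main obstacle of the whole lemma''; assuming it inside the proof of the rigidity claim makes the argument circular. The non-disjoint case is also only waved at: if $E\cdot F=1$, contracting $E$ turns $F$ into a curve of square $0$, which is not exceptional, so there is no ``reduction to the disjoint case'' by further contraction.

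The rigidity claim is nevertheless true, with a one-line proof from material in the paper: an interior exceptional curve satisfies $E\cdot D=-E\cdot K_Y=1$, so it meets exactly one component, once; hence if $E\neq F$ both meet $D_i$, then $[E]-[F]$ is orthogonal to every $[D_j]$ and is nonzero (distinct irreducible curves of negative self-intersection have distinct classes, as $[E]=[F]$ would give $E\cdot F=E^2<0$), so $[E]-[F]$ is a nonzero element of $\Lambda(Y,D)$. But a taut pair has $\Lambda(Y,D)=0$ by Remark~\ref{tautrigid} (tautness forces infinitesimal rigidity: otherwise the Kuranishi family, on which the period map is a local isomorphism, produces non-isomorphic pairs with the same self-intersection sequence); this remark appears later in the paper but is logically independent of the present lemma. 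This also disposes of the case $E\cdot F>0$ uniformly. Alternatively, your modulus argument can be salvaged by a countability observation: since an irreducible curve of negative square is determined by its cohomology class, $(Y,D)$ carries only countably many disjoint pairs of interior exceptional curves meeting $D_i$, so if all the blowups of $(Z,D_Z)$ at two interior points of the $i$-th component were isomorphic to $(Y,D)$, the modulus $\{q'/p',p'/q'\}$ could take only countably many values as $(p',q')$ varies, a contradiction. Either patch completes your proof; as written, part (i) has a genuine gap.
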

 \begin{proof} (i) and (ii) are straightforward. (iii) is a standard if tedious argument using the description of the automorphism group of a rational ruled surface and the fact that, except in the cases mentioned, the self-intersection sequence forces $Y=\mathbb{F}_N$ with $N$ determined by the self-intersection sequence.
 \end{proof}
 
 \begin{remark}\label{blowdowntotaut} Suppose that $(\overline{Y}, \overline{D})$ is $\mathbb{F}_0$ or $\mathbb{F}_2$ and the self-intersection sequence is either $(8)$ or $(2,2)$. If $(Y,D)$ is either an interior blowup or a corner blowup of $(\overline{Y}, \overline{D})$, it is easy to check  that there is a blowdown of $Y$ to $\Pee^2$, and hence to a taut minimal pair. A similar result holds in case $(\overline{Y}, \overline{D})$ is $\mathbb{F}_0$ or $\mathbb{F}_1$ and the self-intersection sequence is $(0,4)$. As a consequence, every anticanonical pair $(Y,D)$ is the blowup of a taut minimal pair, except for the case where $Y$ is $\mathbb{F}_0$, $\mathbb{F}_1$,  or $\mathbb{F}_2$ and the self-intersection sequence is  $(8)$,   $(2,2)$, or $(0,4)$.
 \end{remark}
 
 \begin{remark} For the applications later in this paper, a  weaker notion of tautness will be sufficient. See Remark~\ref{tautremark} for more details. 
 \end{remark}
 
  \begin{remark} In the elliptic case, every $(Y,D)$ is either a blowup of $(\Pee^2, E)$, where $E$ is a smooth cubic curve in $\Pee^2$, or $Y\cong \mathbb{F}_0$ or $\mathbb{F}_2$. In particular, if $D^2 < 8$, for example if $D^2$ is negative, then $(Y,D)$ is  a blowup of $(\Pee^2, E)$.
  
  In the triangle case, there is a short list of minimal pairs $(Y,D)$ analogous to that given in Theorem~\ref{minimalist}, and every pair is an interior blowup of a pair on that list. Moreover, if $(Y,D)$ is negative definite, then $(Y,D)$ blows down to $(\Pee^2, \overline{D})$, where $\overline{D}$ is a cubic curve in $\Pee^2$ of the corresponding type (either cuspidal, a line and a conic meeting at a tacnode, or three concurrent lines). See for example the proof of \cite[Theorem I.1.1]{Looij}. 
 \end{remark}

 \section{Moduli: deformations and the period map}
 
 Let $S$ be a scheme, analytic space, or of the form $\Spec A$, where $A$ is an Artin local $\Cee$-algebra. In the usual way, a \textsl{family} of anticanonical pairs consists of a scheme or analytic space $\mathcal{Y}$ together with  a smooth proper morphism $\pi \colon  \mathcal{Y}\to S$ and a relative Cartier divisor $\mathcal{D}$ with normal crossings on $\mathcal{Y}$, such that $\pi|\mathcal{D}$ is locally trivial, $\mathcal{D} =\bigcup_{i=1}^r\mathcal{D}_i$, where $\mathcal{D}_i$ is smooth if $r >1$, and such that each geometric fiber is an anticanonical pair. We shall work in the analytic category and shall only consider the case where $S$ is connected. We assume that the local system $R^1(\pi|\mathcal{D})_*\Zee$ is trivial, i.e.\ is isomorphic to the constant local system $\underline{\Zee}$, which is automatic if $r\geq 3$. An orientation of one fiber then induces an orientation of all fibers, and we shall always assume that such an orientation has been chosen. Families of labeled pairs are defined similarly. A \textsl{deformation} of the pair (or labeled pair) $(Y,D)$ over $S$ is a family  of anticanonical pairs or labeled anticanonical pairs  over $S$ such that one fiber is isomorphic to $(Y,D)$.  Two anticanonical pairs $(Y,D)$ and $(Y', D')$, or two labeled pairs, are \textsl{deformation equivalent} if they are both isomorphic to fibers of a family of anticanonical pairs or labeled anticanonical pairs (over a connected, not necessarily irreducible base). Deformation equivalence is easily seen to be an equivalence relation. Note that, in (iii) of Theorem~\ref{minimalist}, the pairs $(\mathbb{F}_0, D)$ and $(\mathbb{F}_2, D)$ are deformation equivalent in case $D$ is irreducible, and similarly for the case where $D$ is a union of two sections of self-intersection $2$, and these are the only examples of deformation equivalent minimal pairs. A \textsl{deformation type} of anticanonical pairs, or labeled anticanonical pairs, is an equivalence class for the relation of deformation equivalence. Then:
 
 \begin{theorem} Given $(d_1, \dots, d_r)\in \Zee^r$, there are only finitely many deformation types of anticanonical pairs $(Y,D)$ with self-intersection sequence  $(d_1, \dots, d_r)$.
 \end{theorem}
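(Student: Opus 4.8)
The plan is to bound how $(Y,D)$ is built from a minimal model and then to observe that, once all discrete choices are fixed, the remaining parameters vary in a connected family.

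First I would note that fixing the self-intersection sequence $(d_1,\dots,d_r)$ fixes both the length $r$ and the self-intersection $D^2$ (for $r\geq 2$ one has $D^2=\sum_i d_i+2r$, and $D^2=d_1$ for $r=1$), hence fixes the charge $Q=Q(Y,D)=12-D^2-r$. By Remark~\ref{nonewint}, $(Y,D)$ is obtained from a minimal pair $(\overline Y,\overline D)$ by first performing some number $c$ of corner blowups and then some number $k$ of interior blowups. By Lemma~\ref{effectofblowup}, corner blowups increase the length by $1$ and preserve the charge, while interior blowups preserve the length and increase the charge by $1$; thus $r=r(\overline D)+c$ and $Q=Q(\overline Y,\overline D)+k$. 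Since the minimal pairs of Theorem~\ref{minimalist} have length in $\{1,2,3,4\}$ and charge in $\{0,1,2,3\}$, both $c=r-r(\overline D)$ and $k=Q-Q(\overline Y,\overline D)$ are bounded and nonnegative, which already restricts which minimal models can occur. In particular the total number of blowups $c+k$ is at most $(r-1)+Q$.

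Next I would separate the discrete from the continuous data. For a fixed minimal model, a \emph{recipe} consists of the ordered sequence of corner blowups (each a choice among the finitely many corners present at that stage) together with the combinatorial type of the interior blowups (how they are distributed among the components of $\overline D$ and the tree of infinitely-near points they form). Since $c+k$ is bounded, there are only finitely many recipes. For a fixed recipe the sole remaining freedom is the position in $\overline D_{\mathrm{reg}}$ of the finitely many interior blowup centers, which ranges over a connected parameter space; blowing up the corresponding disjoint sections of the minimal family (iterating along sections to realize the infinitely-near centers) produces a connected family of anticanonical pairs. Hence all pairs arising from one recipe are deformation equivalent and contribute a single deformation type.

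The one ingredient that is not yet finite is the Hirzebruch index $N$ when $\overline Y\cong\mathbb{F}_N$. To control it I would track the proper transform of the negative section $\sigma_0$, whose self-intersection begins at $-N$ and drops by exactly $1$ for each corner or interior blowup centered on it. As the total number of blowups is at most $(r-1)+Q$, the self-intersection of the component of $D$ that is the image of $\sigma_0$ equals $-N-b$ for some integer $0\leq b\leq (r-1)+Q$; since this number is one of the fixed entries $d_i$, the value of $N$ is confined to finitely many possibilities. For $\overline Y\cong\Pee^2$ there is no such parameter. Combining the three steps, there are finitely many minimal models, finitely many recipes for each, and one deformation type per recipe, which proves finiteness. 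I expect the main obstacle to be the middle step: making precise that a single recipe yields a single deformation type. This requires constructing the family carefully---realizing infinitely-near centers as iterated blowups along disjoint sections of the relative divisor, checking that the construction stays within families of anticanonical pairs, and verifying that the space of admissible center positions (a product of copies of $\mathbb{G}_m$ with diagonals and corner loci removed) is connected, so that the family has connected base.
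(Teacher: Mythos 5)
Your proof is correct, but it organizes the argument along a genuinely different route than the paper. The paper proceeds by induction on $-D^2$, peeling off one exceptional curve at a time: the blown-down pair has one of finitely many self-intersection sequences, the inductive hypothesis applies to it, and the only deformation-theoretic input needed is that blowing up a single moving point of a fixed component, over a fixed deformation type, yields a single deformation type; the minimal case is the base of the induction, and there Theorem~\ref{minimalist} shows that $N$ is read off directly from the self-intersection sequence, so no separate bound on $N$ is ever required. You instead descend to the minimal model in one step (Remark~\ref{nonewint}), bound the total number of blowups by $(r-1)+Q$ using the charge/length bookkeeping of Lemma~\ref{effectofblowup}, enumerate the finitely many discrete \emph{recipes} (including the trees of infinitely near centers), and then must control $N$ by hand by tracking the proper transform of $\sigma_0$. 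What each approach buys: yours gives an explicit quantitative bound on the number of blowups and a concrete combinatorial stratification of the set of deformation types, while the paper's induction is shorter precisely because the infinitely-near combinatorics, the multi-point family construction, and the finiteness of $N$ are all absorbed into the inductive step or the base case. Two small points of precision in your write-up: the $N$-bounding argument via $\sigma_0$ only makes sense for minimal models of type (ii) of Theorem~\ref{minimalist}, since in type (iii) the negative section (when $N=2$) is not a component of $\overline{D}$ --- but there $N\in\{0,2\}$ anyway, so finiteness is automatic; and the ``one deformation type per recipe'' step, which the paper in its one-blowup-at-a-time form also asserts rather than proves in detail, is indeed the main thing to check --- your identification of the parameter space as a product of the $\overline{D}_i^{\text{int}}\cong \mathbb{G}_m$ with diagonals removed, which is connected as the complement of finitely many proper subvarieties of an irreducible variety, is the right way to finish it.
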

 \begin{proof}  The proof is by induction on $-D^2$. Note that $-D^2\geq -9$, $-D^2=-9$ $\iff$ $Y\cong \Pee^2$, and $-D^2$ increases by $1$ after an interior or corner blowup. The theorem is clearly true for $-D^2=-9$. Proceeding by induction, given a pair $(Y,D)$ with self-intersection sequence  $(d_1, \dots, d_r)$, either $(Y,D)$ is obtained from a pair $(Y', D')$ by an interior blowup or a corner blowup, or $(Y,D)$ is minimal. Suppose that $(Y,D)$ is obtained from a pair $(Y', D')$ by an interior blowup at a point $p\in (D_i')^\text{int}$. Then $(Y', D')$ has self-intersection sequence  $(d_1, \dots, d_i-1, \dots, d_r)$. By the inductive hypothesis, there are only finitely many deformation types with this self-intersection sequence. Fixing one such type, all pairs $(Y,D)$ obtained by blowing up a point $p\in (D_i')^\text{int}$ for some pair $(Y', D')$ of the given type are deformation equivalent. A similar argument handles the case where $(Y,D)$ is obtained from a pair $(Y', D')$ by a corner blowup. Finally, if $(Y,D)$ is minimal, then either $Y\cong \Pee^2$, $Y\cong \mathbb{F}_0$ or $\mathbb{F}_2$, or $Y\cong \mathbb{F}_N$ where $N$ is determined by the self-intersection sequence. A glance at the list in Theorem~\ref{minimalist} shows that there are only finitely many possibilities for the deformation type of $(Y,D)$.
 \end{proof}
 
Next we recall some standard results in deformation theory (cf.\ \cite{Looij} for example). Let $\mathbf{Def}_{Y; D_1, \dots,D_r}$ be the deformation functor for the pair $(Y, D_1+\cdots +D_r)$ viewed as a normal crossing divisor in $Y$. Then the Zariski tangent space to $\mathbf{Def}_{Y; D_1, \dots,D_r}$ is $H^1(Y; T_Y(-\log D))$ and the corresponding obstruction space is $H^2(Y; T_Y(-\log D))$, where $T_Y(-\log D)$ is the sheaf of vector fields on $Y$ tangent to the smooth curves $D_i$, and is dual to $\Omega^1_Y(\log D)$, the sheaf of $1$-forms with logarithmic poles along the normal crossings divisor $D$. Moreover, $H^0(Y; T_Y(-\log D))$ is the Lie algebra of the group $\Aut(Y,D)$ of automorphisms of the pair $(Y,D)$. As a functor on germs of analytic spaces, $\mathbf{Def}_{Y; D_1, \dots,D_r}$ is represented by a Kuranishi family $(S,0)$, which is smooth of dimension $\dim H^1(Y; T_Y(-\log D))$ by (iii) of Lemma~\ref{adefthlemma} below. 
 
 \begin{lemma}\label{adefthlemma} With $T_Y(-\log D)$ and $\Omega^1_Y(\log D)$ as above,
 \begin{enumerate}
 \item[\rm(i)] $T_Y(-\log D)\cong \Omega^1_Y(\log D)$.
  \item[\rm(ii)] There is an exact sequence 
  $$0\to \Omega^1_Y \to \Omega^1_Y(\log D) \to \bigoplus_i\scrO_{D_i} \to 0.$$
 \item[\rm(iii)] $H^2(Y; T_Y(-\log D)) =0$. 
 \item[\rm(iv)] Let $\partial \colon \bigoplus _i\Cee[D_i] \to H^2(Y; \Cee)$ be the natural homomorphism sending $[D_i]$ to its class in $H^2(Y; \Cee)$.  Then
   $H^0(Y; T_Y(-\log D))  \cong \Ker \partial$ and  $H^1(Y; T_Y(-\log D)) \cong \operatorname{Coker} \partial$.
 \end{enumerate}
 \end{lemma}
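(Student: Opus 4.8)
The plan is to establish (i) and (ii) directly and then deduce (iii) and (iv) from the long exact cohomology sequence of (ii), combined with (i) and the Hodge theory of the rational surface $Y$. For (i), note that since $D$ is a normal crossings divisor on the smooth surface $Y$, the sheaf $\Omega^1_Y(\log D)$ is locally free of rank $2$ and $T_Y(-\log D)=\Omega^1_Y(\log D)\spcheck$. For any rank-two locally free $E$ one has $E\spcheck\cong E\otimes(\det E)^{-1}$, so it suffices to check $\det\Omega^1_Y(\log D)\cong\scrO_Y$. But $\det\Omega^1_Y(\log D)=\Omega^2_Y(\log D)\cong\omega_Y\otimes\scrO_Y(D)$, and $D=-K_Y$ forces $\scrO_Y(D)\cong\omega_Y^{-1}$, whence $\det\Omega^1_Y(\log D)\cong\scrO_Y$ and (i) follows.

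Part (ii) is the Poincar\'e residue sequence and is purely local: at a smooth point of $D_i$ where $x$ cuts out $D_i$ a log form $a\,\tfrac{dx}{x}+\cdots$ maps to $a|_{D_i}$, while at a node $D_i\cap D_{i+1}$ with local equation $xy$ the form $a\,\tfrac{dx}{x}+b\,\tfrac{dy}{y}$ maps to $(a|_{D_i},b|_{D_{i+1}})$; I would simply record its exactness. (For $r=1$ the two branches of the node of $D_1$ must be separated, so the natural residue target is $\nu_*\scrO_{\widetilde D_1}$ with $\widetilde D_1\cong\Pee^1$; this does not affect the arguments below.) For (iii), (i) gives $H^2(Y;T_Y(-\log D))=H^2(Y;\Omega^1_Y(\log D))$, and the long exact sequence of (ii) places this term between $H^2(Y;\Omega^1_Y)$ and $H^2(\bigoplus_i\scrO_{D_i})$. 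The latter vanishes since the $\scrO_{D_i}$ live on curves, and the former vanishes by Serre duality (using the rank-two identity $T_Y\otimes\omega_Y\cong\Omega^1_Y$) together with $H^0(Y;\Omega^1_Y)=0$ for the rational surface $Y$; hence $H^2(Y;T_Y(-\log D))=0$.

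For (iv), I would read the statement off the full long exact sequence of (ii). For a rational surface $H^0(Y;\Omega^1_Y)=H^2(Y;\Omega^1_Y)=0$ and the Hodge decomposition gives $H^1(Y;\Omega^1_Y)\cong H^{1,1}(Y)=H^2(Y;\Cee)$; moreover $H^0(\bigoplus_i\scrO_{D_i})=\bigoplus_i\Cee\,[D_i]$ and $H^1(\bigoplus_i\scrO_{D_i})=0$. The sequence thus reduces to
$$0\to H^0(\Omega^1_Y(\log D))\to \bigoplus_i\Cee\,[D_i]\xrightarrow{\ \delta\ }H^2(Y;\Cee)\to H^1(\Omega^1_Y(\log D))\to 0.$$
Granting that $\delta=\partial$, this yields $H^0(\Omega^1_Y(\log D))=\Ker\partial$ and $H^1(\Omega^1_Y(\log D))=\operatorname{Coker}\partial$, and (i) translates these into the claimed statements for $T_Y(-\log D)$.

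The main obstacle is the identification $\delta=\partial$: everything else is formal once (i) and (ii) are in hand. The point is that the connecting homomorphism sends the generator $1\in H^0(\scrO_{D_i})$ to the extension class of the $i$-th summand of the residue sequence in $H^1(Y;\Omega^1_Y)\cong H^2(Y;\Cee)$, and this extension class is exactly the Chern class $c_1(\scrO_Y(D_i))=[D_i]$. Concretely, if $f_i$ is a local equation for $D_i$, then $\tfrac{1}{2\pi\sqrt{-1}}\tfrac{df_i}{f_i}$ is the logarithmic form of residue $1$ along $D_i$, and its cohomology class is the cycle class of $D_i$; carrying out this matching of the extension class of (ii) with the first Chern class is the only nonformal step.
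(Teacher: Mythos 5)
Your proof is correct and follows essentially the same route as the paper: (i) via the triviality of $\Omega^2_Y(\log D) = K_Y(D) \cong \scrO_Y$ (the paper phrases this as the perfect wedge pairing, which is equivalent to your rank-two duality trick), (ii) as the Poincar\'e residue sequence, and (iii)--(iv) from its long exact cohomology sequence together with the identification of the coboundary map with the fundamental class map, which the paper also treats as the key (cited) input. Your extra care with the $r=1$ case (residues landing in $\nu_*\scrO_{\widetilde D_1}$) and your sketch of why the connecting map is $c_1$ are welcome refinements of details the paper leaves implicit.
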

 \begin{proof} (i): The pairing $\Omega^1_Y(\log D) \otimes \Omega^1_Y(\log D) \to \Omega^2_Y(\log D) = K_Y(D)\cong \scrO_Y$ is perfect, and hence the dual $T_Y(-\log D) =(\Omega^1_Y(\log D))\spcheck \cong \Omega^1_Y(\log D)$.
 
 \smallskip
 \noindent (ii): This is the usual Poincar\'e residue sequence. 
 
 \smallskip
 \noindent (iii), (iv): These follow by taking the long exact cohomology sequence associated to the short exact sequence in (ii) and the identification of the coboundary homomorphism $\partial \colon \bigoplus_iH^0(\scrO_{D_i}) = \bigoplus _i\Cee[D_i] \to H^1(Y; \Omega^1_Y) \cong H^2(Y;\Cee)$ with the fundamental class map.
 \end{proof} 
 
 \begin{corollary}\label{adefthcor} {\rm(i)}   $\mathbf{Def}_{Y; D_1, \dots,D_r}$ is unobstructed. 
 
 \smallskip
 \noindent {\rm(ii)} The dimension of $\mathbf{Def}_{Y; D_1, \dots,D_r}$, i.e.\ $\dim H^1(Y; T_Y(-\log D))$, is equal to    $\dim H^2(Y; \Cee)/\Cee[D_1] + \cdots + \Cee[D_r]=\operatorname{rank}\Lambda(Y,D)$.
 
  \smallskip
 \noindent {\rm(iii)}  $(Y,D)$ is infinitesimally rigid, i.e.\  $H^1(Y; T_Y(-\log D)) = 0$ $\iff$ the classes $[D_1], \dots, [D_r]$ span $H^2(Y; \Cee)$ $\iff$ $\Lambda(Y,D) = 0$.
 
  \smallskip
 \noindent {\rm(iv)} The dimension of the group of automorphisms of the pair $(Y,D)$ is $s= \dim \Ker \partial$. \qed
 \end{corollary}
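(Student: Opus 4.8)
The plan is to read off all four assertions directly from Lemma~\ref{adefthlemma}, supplemented by the rank computations in Lemmas~\ref{calcQ} and \ref{rankQ}. The point is that the corollary merely repackages the cohomological identifications $H^0(Y;T_Y(-\log D))\cong \Ker\partial$ and $H^1(Y;T_Y(-\log D))\cong \operatorname{Coker}\partial$ into geometric and lattice-theoretic language, together with the vanishing of the obstruction space. For (i), I would invoke the standard fact that a deformation functor equipped with a tangent-obstruction theory whose obstruction space vanishes is smooth, hence unobstructed. Since the obstruction space for $\mathbf{Def}_{Y;D_1,\dots,D_r}$ is $H^2(Y;T_Y(-\log D))$, which is zero by Lemma~\ref{adefthlemma}(iii), smoothness of the Kuranishi family (already noted in the discussion preceding the lemma) gives the claim.

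For (ii), the dimension of the deformation space is $\dim H^1(Y;T_Y(-\log D))$, which by Lemma~\ref{adefthlemma}(iv) equals $\dim\operatorname{Coker}\partial = \dim H^2(Y;\Cee)/(\Cee[D_1]+\cdots+\Cee[D_r])$, the first of the two quantities in the statement. The only step with any content is matching this with $\operatorname{rank}\Lambda(Y,D)$. I would count directly: writing $s=\dim\Ker\partial$, the image of $\partial$ has dimension $r-s$, so $\dim\operatorname{Coker}\partial = b_2(Y)-r+s$. Now Lemma~\ref{rankQ} gives $\operatorname{rank}\Lambda(Y,D)=Q(Y,D)-2+s$, while Lemma~\ref{calcQ} gives $Q(Y,D)=2+b_2(Y)-r$; combining these yields $\operatorname{rank}\Lambda(Y,D)=b_2(Y)-r+s$, as required. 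Conceptually the match is transparent as well: over $\Cee$ the nondegenerate intersection pairing identifies $\operatorname{Coker}\partial$ with the annihilator, i.e.\ the orthogonal complement, of the span of the $[D_i]$, which is exactly $\Lambda(Y,D)\otimes_\Zee\Cee$.

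Parts (iii) and (iv) are then immediate. Infinitesimal rigidity means $H^1(Y;T_Y(-\log D))=0$; by Lemma~\ref{adefthlemma}(iv) this is equivalent to $\operatorname{Coker}\partial=0$, i.e.\ to the classes $[D_1],\dots,[D_r]$ spanning $H^2(Y;\Cee)$, and by part (ii) it is equivalent to $\operatorname{rank}\Lambda(Y,D)=0$, hence to $\Lambda(Y,D)=0$ since $\Lambda$ is free by Lemma~\ref{rankQ}. For (iv), the Lie algebra of $\Aut(Y,D)$ is $H^0(Y;T_Y(-\log D))$, as recalled before the lemma; by Lemma~\ref{adefthlemma}(iv) this is isomorphic to $\Ker\partial$, of dimension $s$, so $\dim\Aut(Y,D)=s$.

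I do not expect any genuine obstacle: the entire substance lies in Lemma~\ref{adefthlemma}, and the only computation worth writing out is the bookkeeping in (ii) identifying $\dim\operatorname{Coker}\partial$ with $\operatorname{rank}\Lambda(Y,D)$ via the charge formula. If anything requires care, it is simply being explicit that unobstructedness is a formal consequence of the vanishing of $H^2(Y;T_Y(-\log D))$ and that rank over $\Zee$ agrees with dimension over $\Cee$ after tensoring, both of which are routine.
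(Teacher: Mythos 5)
Your proposal is correct and follows exactly the route the paper intends: the corollary is stated with \qed precisely because it is read off from Lemma~\ref{adefthlemma} (vanishing of $H^2(Y;T_Y(-\log D))$ for (i), the identifications $H^0\cong\Ker\partial$ and $H^1\cong\operatorname{Coker}\partial$ for (ii)--(iv)), with the only bookkeeping being the match $\dim\operatorname{Coker}\partial = b_2(Y)-r+s = Q(Y,D)-2+s = \operatorname{rank}\Lambda(Y,D)$ via Lemmas~\ref{calcQ} and \ref{rankQ}, which you carry out correctly. Your alternative conceptual identification of $\operatorname{Coker}\partial$ with $\Lambda(Y,D)\otimes_\Zee\Cee$ via the nondegenerate pairing is also sound and is a nice way to see why the two dimension counts must agree.
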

 
 \begin{remark}\label{tautrigid} Let $(Y,D)$ be a taut pair. Then it is easy to see that $(Y,D)$ is infinitesimally rigid, although the converse does not always hold. Hence the classes $[D_1], \dots, [D_r]$ span $H^2(Y; \Cee)$, or equivalently, $\Lambda(Y,D) = 0$. This can also be checked directly via the classification in  \S2.
 \end{remark}

 We can also consider the functor $\mathbf{Def}_{Y; D}$, consisting of deformations of the pair $(Y,D)$, keeping $D$ as an effective Cartier divisor, but such that the corresponding deformations of $D$ are not necessarily locally trivial.
 
 \begin{proposition} The functor $\mathbf{Def}_{Y; D}$  is unobstructed and the natural morphism of functors $\mathbf{Def}_{Y; D} \to \mathbf{Def}_D$ is smooth, where $\mathbf{Def}_D$ is the functor of deformations of the singular curve $D$.
 \end{proposition}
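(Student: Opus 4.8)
The plan is to control both functors by explicit two-term tangent complexes and then to isolate the \emph{relative} complex of the morphism, whose second hypercohomology is the only obstruction group that needs to vanish. Recall that deformations of the pair $(Y,D)$ with $D$ kept as an effective Cartier divisor are governed by the complex $\mathcal{K}^\bullet = [\,T_Y \xrightarrow{\rho} N_{D/Y}\,]$ placed in degrees $0$ and $1$, where $N_{D/Y} = \scrO_D(D)$ and $\rho$ is the composite $T_Y \to T_Y|_D \to N_{D/Y}$; the tangent space to $\mathbf{Def}_{Y;D}$ is $\mathbb{H}^1(\mathcal{K}^\bullet)$ and its obstructions lie in $\mathbb{H}^2(\mathcal{K}^\bullet)$. (This is the non-logarithmic analogue of Lemma~\ref{adefthlemma}: replacing $N_{D/Y}$ by $\bigoplus_i\scrO_{D_i}(D_i)$, whose kernel in $T_Y$ is $T_Y(-\log D)$, recovers the locally trivial functor $\mathbf{Def}_{Y;D_1,\dots,D_r}$, and the difference between the two targets is exactly the freedom to smooth the nodes of $D$.) Since $D$ is a divisor on the smooth surface $Y$ it is a local complete intersection, so its truncated cotangent complex is $[\,N_{D/Y}\spcheck \to \Omega^1_Y|_D\,]$; dualizing, $\mathbf{Def}_D$ is governed by $\mathcal{L}^\bullet = [\,T_Y|_D \to N_{D/Y}\,]$, i.e.\ $T^i_D = \mathbb{H}^i(\mathcal{L}^\bullet)$. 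The cohomology sheaves of $\mathcal{L}^\bullet$ are a sheaf on the curve $D$ in degree $0$ and the skyscraper $\mathcal{E}xt^1(\Omega^1_D,\scrO_D)$ in degree $1$, so $T^2_D = \mathbb{H}^2(\mathcal{L}^\bullet)=0$ and $\mathbf{Def}_D$ is unobstructed.

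Next I would write down the morphism of complexes computing $\mathbf{Def}_{Y;D}\to\mathbf{Def}_D$. The natural map $\mathcal{K}^\bullet \to \mathcal{L}^\bullet$ is the restriction $T_Y \to T_Y|_D$ in degree $0$ and the identity on $N_{D/Y}$ in degree $1$; its kernel is the complex $T_Y(-D)[0]$ concentrated in degree $0$, where $T_Y(-D) = T_Y\otimes\scrO_Y(-D)$ is the kernel of $T_Y \to T_Y|_D$. Thus there is a short exact sequence of two-term complexes
$$0 \to T_Y(-D)[0] \to \mathcal{K}^\bullet \to \mathcal{L}^\bullet \to 0,$$
whose hypercohomology long exact sequence reads
$$\cdots \to H^1(T_Y(-D)) \to \mathbb{H}^1(\mathcal{K}^\bullet) \to T^1_D \to H^2(T_Y(-D)) \to \cdots.$$
This exhibits $T_Y(-D)$ as the relative tangent-obstruction complex of the morphism: the relative tangent space is $H^1(T_Y(-D))$, and the obstruction to smoothness of $\mathbf{Def}_{Y;D}\to\mathbf{Def}_D$ lies in $H^2(T_Y(-D))$.

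The computation that makes everything collapse is a single Serre duality. Because $D\in|-K_Y|$ we have $\scrO_Y(-D)\cong\omega_Y$, so $T_Y(-D)\cong T_Y\otimes\omega_Y$ and
$$H^2(Y;T_Y\otimes\omega_Y)\spcheck \cong H^0(Y;(T_Y\otimes\omega_Y)\spcheck\otimes\omega_Y) = H^0(Y;\Omega^1_Y) = 0,$$
the last equality because $Y$ is rational, so $q(Y)=0$. Hence the relative obstruction group $H^2(T_Y(-D))$ vanishes, and the morphism $\mathbf{Def}_{Y;D}\to\mathbf{Def}_D$ is smooth. Since $\mathbf{Def}_D$ is unobstructed, a functor smooth over it is again unobstructed, so $\mathbf{Def}_{Y;D}$ is unobstructed; alternatively one checks $\mathbb{H}^2(\mathcal{K}^\bullet)=0$ directly, using $H^2(T_Y)=0$ (which follows from $H^2(T_Y(-\log D))=0$ via the sequence $0\to T_Y(-\log D)\to T_Y\to\bigoplus_i\scrO_{D_i}(D_i)\to 0$) together with the surjectivity of $\rho_*\colon H^1(T_Y)\to H^1(N_{D/Y})$.

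I expect the only genuine subtlety to be the bookkeeping of the controlling complexes and the functoriality of the restriction morphism $\mathbf{Def}_{Y;D}\to\mathbf{Def}_D$ at the level of these complexes; once the short exact sequence $0\to T_Y(-D)[0]\to\mathcal{K}^\bullet\to\mathcal{L}^\bullet\to 0$ is in place, the entire argument reduces to the vanishing $H^0(\Omega^1_Y)=0$, so there is no hard estimate to carry out.
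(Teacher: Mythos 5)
Your proof is correct and follows essentially the same route as the paper's: the same controlling complexes $[\,T_Y \to N_{D/Y}\,]$ and $[\,T_Y|_D \to N_{D/Y}\,]$, the same short exact sequence with kernel $T_Y(-D)$, and the same Serre-duality vanishing $H^2(Y;T_Y(-D))\spcheck \cong H^0(Y;\Omega^1_Y)=0$. The only (harmless) difference is packaging: you phrase smoothness via the relative obstruction group $H^2(T_Y(-D))$ and deduce unobstructedness of $\mathbf{Def}_{Y;D}$ from smoothness over the unobstructed $\mathbf{Def}_D$, whereas the paper reads off surjectivity on tangent spaces and $\mathbb{H}^2$-vanishing directly from the long exact sequence.
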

 \begin{proof} Let $\mathcal{C}^\bullet$ be the complex
 $$T_Y \to N_{D/Y},$$
 where $T_Y$ is in degree $0$ and $N_{D/Y}$, the normal sheaf to $Y$ in $D$, is in degree one.
 By standard results (cf.\ \cite{Sernesi}) the Zariski tangent space to $\mathbf{Def}_{Y; D}$ is the hypercohomology group $\mathbb{H}^1(Y; \mathcal{C}^\bullet)$. A direct cocycle calculation shows that  the obstruction space to $\mathbf{Def}_{Y; D}$ is $\mathbb{H}^2(Y; \mathcal{C})$. Moreover, the Zariski tangent space to $\mathbf{Def}_D$ is $\mathbb{T}^1_D = \operatorname{Ext}^1(\Omega^1_D, \scrO_D)$, where $\Omega^1_D$ is the sheaf of K\"ahler differentials on $D$. Let $\mathcal{D}^\bullet$ be the complex
 $$T_Y|D \to N_{D/Y},$$
 the complex dual to the conormal complex $I_D/I_D^2 \to \Omega^1_Y|D$, which is a resolution of $\Omega^1_D$. Then $\operatorname{Ext}^1(\Omega^1_D, \scrO_D) = \mathbb{H}^1(D; \mathcal{D}^\bullet)$, and the map on tangent spaces corresponding to the map of functors $\mathbf{Def}_{Y; D} \to \mathbf{Def}_D$ is the natural restiction map $\mathbb{H}^1(Y; \mathcal{C}^\bullet) \to \mathbb{H}^1(D; \mathcal{D}^\bullet)$. Thus, to prove the proposition, we must show that $\mathbb{H}^1(Y; \mathcal{C}^\bullet) \to \mathbb{H}^1(D; \mathcal{D}^\bullet)$ is surjective and that $\mathbb{H}^2(Y; \mathcal{C}) = 0$.
 
 By definition, there is a short exact sequence
 $$0 \to T_Y(-D) \to \mathcal{C}^\bullet \to \mathcal{D}^\bullet \to 0,$$
 where $T_Y(-D)$ is in degree $0$. Thus there is a long exact sequence 
 $$\mathbb{H}^1(Y; \mathcal{C}^\bullet) \to \mathbb{H}^1(D; \mathcal{D}^\bullet) \to H^2(Y; T_Y(-D)) \to \mathbb{H}^2(Y; \mathcal{C}^\bullet) \to \mathbb{H}^2(D; \mathcal{D}^\bullet).$$
 Since the cohomology sheaf $\mathcal{H}^1(\mathcal{D}^\bullet)$ is supported at a finite number of points, $\mathbb{H}^2(D; \mathcal{D}^\bullet) =0$. Thus, to prove the proposition, it suffices to prove that $H^2(Y; T_Y(-D)) =0$. But, as $T_Y(-D) =T_Y\otimes K_Y$, $H^2(Y; T_Y(-D))$ is Serre dual to $H^0(Y; \Omega^1_Y) =0$. 
  \end{proof}
  
 Note that, if $(Y',D')$ is a deformation of an anticanonical pair in the above sense, then $D'$ remains an effective curve linearly equivalent to $-K_{Y'}$. The following is then essentially equivalent to a theorem of Karras-Laufer-Wahl (see for example \cite[Theorem 5.4]{Wahl}):
  
  \begin{corollary}\label{cor3.5} If $(Y,D)$ is an anticanonical pair and $T$ is a subset of $D_{\text{\rm{sing}}}$, there exists a deformation of the pair $(Y,D)$ over a smooth connected base, such that the general fiber $(Y',D')$ is an anticanonical pair where $D'$ is the smoothing of $D$ at the set $T$ of nodes, or is a smooth elliptic curve in case $T = D_{\text{\rm{sing}}}$. \qed
  \end{corollary}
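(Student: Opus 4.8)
The plan is to reduce the statement to a question purely about the deformations of the nodal curve $D$, and then transport the answer up to the pair via the Proposition just proved. Concretely, the two inputs are: (a) the forgetful morphism $\mathbf{Def}_{Y;D}\to\mathbf{Def}_D$ is smooth, and (b) $\mathbf{Def}_{Y;D}$ is unobstructed. Granting these, it suffices to exhibit, inside $\mathbf{Def}_D$, a deformation of $D$ over a smooth connected base whose general fiber is the curve obtained from $D$ by smoothing exactly the nodes in $T$; lifting it along the smooth morphism (a) and invoking (b) then produces the desired family of pairs.

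The first step is a local analysis of $\mathbf{Def}_D$. Since $D$ has only nodes, standard deformation theory of nodal curves (cf.\ \cite{Sernesi}) shows that $\mathbb{T}^2_D=\operatorname{Ext}^2(\Omega^1_D,\scrO_D)=0$, so $\mathbf{Def}_D$ is smooth, and that the restriction map to the product of local deformations
$$\mathbf{Def}_D \longrightarrow \prod_{p\in D_{\text{sing}}}\mathbf{Def}_{(D,p)}$$
is smooth. Here each local factor $\mathbf{Def}_{(D,p)}$ is the miniversal deformation $xy=t_p$ of a node: a smooth one--dimensional germ in which the central point $t_p=0$ preserves the node while every nearby point $t_p\neq0$ smooths it. On tangent spaces this map is the surjection $\mathbb{T}^1_D\twoheadrightarrow\bigoplus_{p}(T^1_D)_p$ coming from the local--to--global sequence for $\operatorname{Ext}^\bullet(\Omega^1_D,\scrO_D)$, the stalk $(T^1_D)_p\cong\Cee$ at each node recording its smoothing parameter.

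Next I would choose the base. Let $B$ be a smooth germ (for instance a disc with coordinate $s$) and, using smoothness of the displayed map, pick a morphism $B\to\mathbf{Def}_D$ whose composite to $\prod_p\mathbf{Def}_{(D,p)}$ has $p$--th component given by $t_p=s$ for $p\in T$ and $t_p\equiv0$ for $p\notin T$. Because the $t_p$ are honest local smoothing coordinates, for $s\neq0$ the corresponding curve is obtained from $D$ by smoothing precisely the nodes in $T$ and leaving the remaining nodes intact. Now lift $B\to\mathbf{Def}_D$ to a morphism $B\to\mathbf{Def}_{Y;D}$ using the smoothness in (a), and use the unobstructedness in (b) to realize it by an actual analytic family $\pi\colon\mathcal{Y}\to B$ with a relative Cartier divisor $\mathcal{D}$, restricting to $(Y,D)$ over the base point.

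Finally I would identify the general fiber $(Y',D')=(\mathcal{Y}_s,\mathcal{D}_s)$. By the remark preceding the statement, $D'$ stays linearly equivalent to $-K_{Y'}$, and since $Y'$ is a small deformation of the rational surface $Y$ it is again a smooth rational surface (rationality of surfaces is preserved under deformation, by constancy of $q$ and the plurigenera together with Castelnuovo's criterion); hence $(Y',D')$ is an anticanonical pair with $D'$ the partial smoothing of $D$ at $T$. When $T=D_{\text{sing}}$, all nodes are smoothed; as $p_a(D)=1$ by adjunction applied to $D=-K_Y$, and the dual graph of $D$ is a cycle, the resulting smooth curve is connected of genus one, i.e.\ an elliptic curve. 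The only genuinely geometric input is the local study of $\mathbf{Def}_D$ guaranteeing that the nodes can be smoothed independently and prescribing which ones; once that is in place the lifting to the pair is formal, supplied verbatim by the Proposition.
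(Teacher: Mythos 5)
Your proposal is correct and follows exactly the route the paper intends: the corollary is stated as an immediate consequence of the Proposition (smoothness of $\mathbf{Def}_{Y;D}\to\mathbf{Def}_D$ plus unobstructedness), combined with the standard fact that for a nodal curve the map to the product of local deformation spaces $xy=t_p$ is smooth, so the nodes in $T$ can be smoothed independently and the resulting family lifted to the pair. The only detail you supply beyond the paper's implicit argument -- that the general fiber is again a smooth rational surface with $D'\in|-K_{Y'}|$ -- is handled correctly via the remark preceding the statement and deformation invariance of rationality.
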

  
  In particular, if $(Y,D)$ has self-intersection sequence $(d_1, \dots,  d_r)$ and, say, $r\geq 3$, then we can deform $(Y,D)$ to a pair $(Y', D')$ of length $r-1$ and self-intersection sequence $(d_1, \dots, d_{r-2}, d_{r-1} +d_r + 2)$ and charge $Q(Y', D') = Q(Y, D) +1$.
  
 We next define the period map: 

\begin{definition} Let $\Lambda=\Lambda(Y,D)$ be as in Definition~\ref{defLambda}. Fixing the identification $\psi \colon \Pic^0D \cong \mathbb{G}_m$ of Lemma~\ref{Pic0isomG} defined by the orientation of the cycle $D$, we define the \textsl{period homomorphism} $\varphi_Y \colon \Lambda \to \mathbb{G}_m$ via: if $\alpha \in \Lambda$ and $L_\alpha$ is the corresponding line bundle, then $\varphi_Y(\alpha) = \psi(L_\alpha|D)\in \mathbb{G}_m$. Clearly $\varphi_Y$ is a homomorphism. The \textsl{period map} is the map that assigns to the pair $(Y,D)$ the homomorphism $\varphi_Y\in \Hom(\Lambda, \mathbb{G}_m)$.
\end{definition}

\begin{remark}
Let $\pi\colon (\widetilde{Y}, \widetilde{D}) \to (Y,D)$ be a corner blowup. Then there is a natural identification of $\Lambda(Y,D)$ with $\Lambda(\widetilde{Y}, \widetilde{D})$ via $\pi^*$. Moreover, the surjective morphism $\pi\colon \widetilde{D} \to D$ which collapses the exceptional component induces an isomorphism $\pi^*\colon \Pic^0D \to \Pic^0\widetilde{D}$. The period homomorphisms for $(Y,D)$ and $(\widetilde{Y}, \widetilde{D})$ are then compatible, in the sense that $\pi^*\circ \varphi_Y = \varphi_{\widetilde{Y}}\circ \pi^*$.
\end{remark}

 To define the period map for a family, we need to trivialize the cohomology:

\begin{definition}\label{defmarking} Let $\pi\colon (\mathcal{Y}, \mathcal{D}) \to S$ be a family of anticanonical pairs over a reduced connected complex space $S$. A \textsl{marking} of the family $(\mathcal{Y}, \mathcal{D})$  is an isomorphism of local systems $\theta\colon R^2\pi_*\Zee \to \underline{\widehat{\Lambda}}$, where $\widehat{\Lambda}$ is a lattice isomorphic to $H^2(Y_s;\Zee)$ for some $s\in S$, and $\underline{\widehat{\Lambda}}$ is the trivial local system over $S$ with fiber $\widehat{\Lambda}$. Clearly, a marking exists $\iff$ the monodromy homomorphism of the family $(\mathcal{Y}, \mathcal{D})$ is trivial. A \textsl{marked family} $(\mathcal{Y}, \mathcal{D}, \theta)$ is then a family of anticanonical pairs together with the choice of a marking.

Given a marked family as above, fix a base point $s_0\in S$, and identify  $\Lambda(Y_s, D_s)$ with the fixed lattice $\Lambda(Y_{s_0}, D_{s_0})=\Lambda$. The \textsl{period map} $\Phi_S$ is then the function $\Phi_S\colon S \to  \Hom(\Lambda, \mathbb{G}_m)$ defined by $\Phi_S(s) =\varphi_{Y_s}$. 
\end{definition}

\begin{remark} As we shall see, we will need to impose a stronger condition (admissibility) on markings.
\end{remark} 

The period map is then holomorphic in the following sense.

\begin{theorem}\label{perholom} Let $S$ be a reduced connected analytic space and let $(\mathcal{Y}, \mathcal{D},\theta)$ be a marked family of anticanonical pairs with trivial monodromy. Then the period map $\Phi_S \colon S \to \Hom(\Lambda, \mathbb{G}_m)$ is then a holomorphic map. \qed
\end{theorem}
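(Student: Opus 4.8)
The plan is to reduce the holomorphicity of $\Phi_S$ to the holomorphic variation of a single family of line bundles restricted to the anticanonical divisor. Concretely, fix $\alpha \in \Lambda$. Since the monodromy is trivial and $\theta$ identifies $R^2\pi_*\Zee$ with the constant system $\underline{\widehat\Lambda}$, the class $\alpha$ determines a flat section of $R^2\pi_*\Zee$, i.e.\ a locally constant family of cohomology classes $\alpha_s \in H^2(Y_s;\Zee)$ lying in each $\Lambda(Y_s,D_s)$. First I would upgrade this to a genuine relative line bundle: using that each $\alpha_s$ is of type $(1,1)$ (it is an integral class pairing to zero with the $[D_i]$, but more to the point the fibers are rational surfaces so $H^2 = H^{1,1}$ and $\operatorname{Pic} \cong H^2(\;;\Zee)$), the flat section lifts to a holomorphic line bundle $\mathcal{L}_\alpha$ on $\mathcal{Y}$, well-defined up to a line bundle pulled back from $S$, whose restriction to each fiber $Y_s$ is $L_{\alpha_s}$. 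This is the standard statement that on a family of surfaces with $H^{2,0}=0$ the relative Picard group maps isomorphically onto the flat integral $(1,1)$-classes.

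Next I would restrict to the divisor. The evaluation $\Phi_S(s)(\alpha) = \psi(L_{\alpha_s}|D_s)$ is, up to the fixed isomorphism $\psi\colon \Pic^0 D_s \cong \mathbb{G}_m$ determined by the orientation (Lemma~\ref{Pic0isomG}), just the value in $\mathbb{G}_m$ of the element $\mathcal{L}_\alpha|_{\mathcal{D}} \in \Pic^0(\mathcal{D}/S)$, the relative $\Pic^0$ of the family of cycles $\mathcal{D}\to S$. The key point is that $\mathcal{L}_\alpha|_{\mathcal{D}}$ is a holomorphic family of degree-zero line bundles on the fibers $D_s$, hence defines a holomorphic section of the relative $\Pic^0(\mathcal{D}/S)$, which by Lemma~\ref{Pic0isomG} (carried out in families) is a holomorphic $\mathbb{G}_m$-bundle over $S$ trivialized by the orientation. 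Thus $s \mapsto \psi(\mathcal{L}_\alpha|_{D_s})$ is a holomorphic function $S \to \mathbb{G}_m$. Since $\Hom(\Lambda,\mathbb{G}_m)$ is a product of copies of $\mathbb{G}_m$ indexed by a basis of $\Lambda$, and $\Phi_S$ is holomorphic in each coordinate by the above, $\Phi_S$ is holomorphic.

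The main obstacle, and the step deserving the most care, is making the relative $\Pic^0$ statement precise and showing the resulting $\mathbb{G}_m$-valued function is holomorphic rather than merely continuous. The isomorphism $\psi$ of Lemma~\ref{Pic0isomG} was constructed via the $\delta$-sequence $1 \to \mathbb{G}_m \to \mathbb{G}_m^r \to \mathbb{G}_m^r \to \Pic^0 D \to 0$, built from local trivializations at the marked points $0_i,\infty_i$ and the gluing data $\mu_i$. In the relative setting these trivializations and gluing isomorphisms vary holomorphically with $s$ because $\pi|_{\mathcal{D}}$ is locally trivial (by the definition of a family) and the components $\mathcal{D}_i$ are smooth with sections $0_i,\infty_i$ given by the marked points; hence the map $p(\mu_1,\dots,\mu_r)=\mu_1\cdots\mu_r$ realizing $\psi$ is a holomorphic function of the holomorphic gluing data. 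I would therefore argue that $\mathcal{L}_\alpha|_{\mathcal{D}}$, being a holomorphic line bundle on $\mathcal{D}$ trivial on each $\widetilde{\mathcal{D}}_i$ (after normalization), is described by holomorphic gluing functions $\mu_i(s)$, and that $\Phi_S(s)(\alpha) = \prod_i \mu_i(s)$ is then manifestly holomorphic.

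The only remaining subtlety is the ambiguity in $\mathcal{L}_\alpha$ by a pullback $\pi^*M$ with $M \in \Pic S$: such a pullback restricts trivially along each fiber $D_s$ after twisting, so it does not affect $\mathcal{L}_\alpha|_{D_s}$ as an element of $\Pic^0 D_s$, and hence does not affect $\Phi_S(s)(\alpha)$. I would close by noting that $\alpha$ was arbitrary and that holomorphicity in each of finitely many coordinates gives holomorphicity of $\Phi_S$ as a map to $\Hom(\Lambda,\mathbb{G}_m) \cong (\mathbb{G}_m)^{\operatorname{rank}\Lambda}$.
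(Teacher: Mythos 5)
Your proposal is correct and follows essentially the same route as the paper's proof: extend the class $\alpha$ to a line bundle $\mathcal{L}_\alpha$ on the total space (which the paper does after shrinking $S$, exactly where your pullback-from-$S$ ambiguity and the local obstruction to existence become harmless), restrict to $\mathcal{D}$, and observe that this yields a holomorphic map $S \to \Pic^0 D \cong \mathbb{G}_m$, repeating over a basis of $\Lambda$. Your added details --- the $H^{2,0}=0$ justification for lifting the flat section and the holomorphic gluing-data description of the relative $\Pic^0$ --- are just careful expansions of steps the paper leaves implicit.
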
 
\begin{proof} For a fixed $\alpha\in \Lambda$, let $L_\alpha$ be the corresponding line bundle over $Y_{s_0}$. After shrinking $S$, we may assume that $L_\alpha$ extends to a line bundle $\mathcal{L}_\alpha$ over $\mathcal{Y}$, and that $\mathcal{D} \cong S\times D$. Then the restriction $\mathcal{L}_\alpha|\mathcal{D} \cong S\times D$ defines a holomorphic map $S \to \Pic^0D\cong \mathbb{G}_m$. Doing this for a basis of $\Lambda$ shows that $\Phi_S$ is holomorphic.
\end{proof}

We shall sketch another proof of Theorem~\ref{perholom} below.

There is a dual description of the period homomorphism $\varphi_Y$ which explains the name ``period map." There is a unique section $\omega$ of $H^0(Y; \omega_Y) = H^0(Y; \Omega^2_Y(\log D))$ such that  $2\pi\sqrt{-1}\operatorname{Res}_D\omega=\omega_0$. Here $\operatorname{Res}_D\omega$ denotes the residue on $D$, and  $\omega_0$ is the unique section  of $H^0(D; \omega_D)$ described in \S1 whose residue at the point $0_i\in D_i$ is $-1/2\pi \sqrt{-1}$, and hence whose  residue at the point $\infty_i\in D_i$ is $1/2\pi \sqrt{-1}$. Given $\xi \in \Lambda$, we can represent $\xi$ by a class $\Sigma = \sum_k\pm [C_k]$, where the $C_k$ are smooth curves in $Y$ meeting $D$ transversally at distinct points of $D_{\text{reg}}$. Thus we may write the signed intersection of $\Sigma$ with $D_i$ as $\sum_{j=1}^{n_i}(q_{ij}-p_{ij})$. For each $i$ and $j$, join $p_{ij}$ to $q_{ij}$ by a (real) simple curve $\sigma_{ij} \subseteq D_i^{\text{int}}$; we can further assume that the $\sigma_{ij}$ are disjoint. If $\tau(\sigma_{ij})$ is the tube over $\sigma_{ij}$ in a small normal neighborhood of $D_i^{\text{int}}$, we can glue the $\tau(\sigma_{ij})$ into the complement in $\Sigma$ of small disks around the $p_{ij}$ and $q_{ij}$, to obtain a new cycle $\Sigma'$, contained in $Y-D$ and homologous to $\Sigma$ in $Y$  (since $\Sigma '-\Sigma$ is the boundary of the cylinders corresponding to the $\tau(\sigma_{ij})$). It is easy to see that 
$\displaystyle \int_{\Sigma'}\omega \bmod \Zee$ is well-defined (independent of the choices of the numberings  of the $p_{ij}$ and $q_{ij}$ and the choice of the $\sigma_{ij}$). In fact, if $\Sigma''$ is another cycle contained in $Y-D$ such that the image of $[\Sigma'']\in H_2(Y-D; \Zee)$ is equal to the image of $[\Sigma']$, then $[\Sigma'']-[\Sigma'] \in \Ker (H_2(Y-D; \Zee) \to H_2(Y;\Zee))$. It is easy to check that 
$$ \Ker (H_2(Y-D; \Zee) \to H_2(Y;\Zee)) =\Zee\cdot \gamma,$$
where $\displaystyle \int_\gamma \omega = 1$ (\cite[I (5.1)]{Looij} or \cite[\S1]{EngelFriedman}). Hence
$$\int_{\Sigma''}\omega - \int_{\Sigma'}\omega \in \Zee.$$

Thus we have a well-defined homomorphism $\varphi'_Y\colon \Lambda \to \Cee^*\cong \mathbb{G}_m$ defined by 
$$\xi\mapsto \exp\left(2\pi\sqrt{-1}\int_{\Sigma'}\omega \right).$$
According to Carlson's theory of extensions of mixed Hodge structures  \cite{Carlson}, the mixed Hodge structure on $H^2(Y-D)$ is classified by the homomorphism $\varphi'_Y$. Moreover we have the following:

\begin{proposition} $\varphi'_Y =  \varphi_Y$, so that the period homomorphism $\varphi_Y$ describes the mixed Hodge structure on $Y-D$. 
\end{proposition}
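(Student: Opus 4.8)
The plan is to evaluate both homomorphisms on a single class $\xi = \alpha \in \Lambda$ and identify the two resulting elements of $\mathbb{G}_m$, using Lemma~\ref{Lemma1.12} to recast $\varphi_Y(\alpha)$ as an Abel--Jacobi value and a residue (tube) computation to recast $\varphi'_Y(\xi)$ in the same terms. Choose the representing cycle $\Sigma = \sum_k \pm [C_k]$ exactly as in the construction of $\varphi'_Y$, taking the $C_k$ to be smooth \emph{complex} curves transverse to $D$ along $D_{\text{reg}}$ (e.g.\ write $L_\alpha \cong \scrO_Y(A - B)$ with $A,B$ smooth members of very ample linear systems, made transversal to $D$ by Bertini). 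Writing the signed intersection $\Sigma\cdot D_i = \sum_j (q_{ij}-p_{ij})$ and setting $\mathbf{d} = \sum_{i,j}(q_{ij}-p_{ij})$, the condition $\alpha\cdot[D_i]=0$ makes $\mathbf{d}$ a divisor of multidegree $(0,\dots,0)$ supported on $D_{\text{reg}}$, and since $L_\alpha\cong\scrO_Y(\Sigma)$ meets $D$ transversally in $D_{\text{reg}}$ we get $L_\alpha|D \cong \scrO_D(\mathbf{d})$.

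First I would evaluate the target. By definition $\varphi_Y(\alpha) = \psi(\scrO_D(\mathbf{d}))$, and the commuting square of Lemma~\ref{Lemma1.12} together with Definition~\ref{defAbelJac} identifies this with the Abel--Jacobi value $\alpha(\mathbf{d}) = \exp\!\left(2\pi\sqrt{-1}\int_{\mathbf{d}}\omega_0\right)$, where the integral runs along paths $\sigma_{ij}\subseteq D_i^{\text{int}}$ from $p_{ij}$ to $q_{ij}$. So the target is purely a one-dimensional period of $\omega_0$ on $D_{\text{reg}}$, and the proposition reduces to showing $\int_{\Sigma'}\omega \equiv \int_{\mathbf{d}}\omega_0 \pmod{\Zee}$.

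Two observations do the work here. The first is that $\omega$ is a holomorphic $2$-form, so its pullback to the complex curve $\Sigma$ vanishes identically (there are no nonzero $(2,0)$-forms on a curve of complex dimension one); hence $\int_{\Sigma\setminus(\text{disks})}\omega = 0$ and the entire period localizes on the glued-in tubes, giving $\int_{\Sigma'}\omega = \sum_{i,j}\int_{\tau(\sigma_{ij})}\omega$. The second is the Leray residue formula for the tube over a $1$-chain: with the orientation that makes $\Sigma'$ a cycle in $Y-D$, one has $\int_{\tau(\sigma_{ij})}\omega = 2\pi\sqrt{-1}\int_{\sigma_{ij}}\operatorname{Res}_D\omega = \int_{\sigma_{ij}}\omega_0$, using the normalization $2\pi\sqrt{-1}\operatorname{Res}_D\omega = \omega_0$. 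Summing over $i,j$ yields $\int_{\Sigma'}\omega = \int_{\mathbf{d}}\omega_0$, whence $\varphi'_Y(\xi) = \exp(2\pi\sqrt{-1}\int_{\Sigma'}\omega) = \alpha(\mathbf{d}) = \varphi_Y(\alpha)$.

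The main obstacle is the bookkeeping of orientations and constants in the tube formula: one must confirm that the $S^1$-bundle orientation on $\tau(\sigma_{ij})$ induced by the cycle structure of $\Sigma'$ produces $+\int_{\sigma_{ij}}\omega_0$ (rather than its negative or reciprocal), so that the residue normalization of $\omega_0$ at $0_i$ and $\infty_i$ matches the sign conventions built into $\psi$ and $\alpha$. I would pin this down on the local model of $\Omega^2_Y(\log D)$ at a transverse point $p_{ij}$, where $\omega = (g(z,w)/w)\,dz\wedge dw$ with $w$ a coordinate vanishing on $D_i$ and $\operatorname{Res}_D\omega = g\,dw|_{w=0}$, and check the resulting sign against the sample computation $\alpha(\lambda-1)=\lambda^{-1}$ of Definition~\ref{defAbelJac} (equivalently $\psi(\scrO_D(q-p))=\lambda^{-1}$ of Lemma~\ref{Pic0isomG}); once fixed at one point, the orientation propagates consistently along each $\sigma_{ij}$. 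Independence of $\int_{\Sigma'}\omega \bmod \Zee$ from the choices of $\Sigma$, of the $\sigma_{ij}$, and of the excised disks has already been established before the statement, so no further argument on that front is needed.
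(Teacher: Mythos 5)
Your proposal is correct and follows essentially the same route as the paper's proof: the vanishing of the holomorphic $2$-form $\omega$ on the curve pieces $C_k-D$ (for dimension reasons), the Stokes/tube residue computation reducing $\int_{\Sigma'}\omega$ to $\sum_{i,j}\int_{\sigma_{ij}}\omega_0$ via $2\pi\sqrt{-1}\operatorname{Res}_D\omega = \omega_0$, and Lemma~\ref{Lemma1.12} to identify the resulting Abel--Jacobi value with $\psi(\scrO_D(\mathbf{d})) = \psi(L_\xi|D) = \varphi_Y(\xi)$. The additional care you take with Bertini and with pinning down orientation conventions is sensible bookkeeping but does not constitute a different argument.
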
 
\begin{proof} Since $\omega$ is holomorphic on $Y-D$, the restriction of $\omega$ to $C_k - D$ is $0$.  A standard Stokes' theorem calculation shows that, with $\xi\in \Lambda$ and $\Sigma'$ as above,
$$\int_{\Sigma'}\omega = 2\pi \sqrt{-1}\sum_{i,j}\int_{\sigma_{ij}}\operatorname{Res}_D\omega = \sum_{i,j}\int_{\sigma_{ij}}\omega_0,$$
in the notation of Definition~\ref{defAbelJac}.  Thus, if $\mathbf{d}$ is the divisor $\sum_{i,j}q_{ij}-p_{ij}$, then, by Lemma~\ref{Lemma1.12}
$$\int_{\Sigma'}\omega = \alpha(\mathbf{d}) = \bar{\alpha}(\scrO_D(\mathbf{d})) = \psi(\scrO_D(\mathbf{d})).$$
By definition,
$$\psi(\scrO_D(\mathbf{d})) =\psi(L_\xi|D) = \varphi_Y(\xi).$$
Thus $\varphi'_Y(\xi) =  \varphi_Y(\xi)$. 
\end{proof}  

The differential of the period map  has been computed in  \cite{Looij} (see also \cite{Fried2}). We give an alternate formulation, whose proof may be found in \cite{EngelFriedman}.  First, a calculation in local coordinates gives:

\begin{lemma}\label{locexactseq} Let $\nu\colon \widetilde D=\coprod_i\widetilde{D}_i \to Y$ be the composition of normalization and inclusion.  Then there is an exact sequence
$$0 \to \Omega^1_Y(\log D)(-D) \to \Omega^1_Y \to \nu_*\Omega^1_{\widetilde{D}} \to 0. \qed$$ 
\end{lemma}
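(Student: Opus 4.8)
The plan is to verify exactness directly on stalks, since all three sheaves are coherent and exactness is a local condition; this is precisely the ``calculation in local coordinates'' referred to in the statement. Both maps are the evident natural ones. The map $\Omega^1_Y(\log D)(-D) \to \Omega^1_Y$ is multiplication: a germ of a logarithmic one-form, multiplied by a local equation of $D$, becomes a genuine holomorphic one-form, so the pairing $\scrO_Y(-D)\otimes \Omega^1_Y(\log D)\to \Omega^1_Y$ is well-defined. The map $\Omega^1_Y \to \nu_*\Omega^1_{\widetilde D}$ is the pullback (restriction) of one-forms along $\nu$. Since $\Omega^1_Y(\log D)(-D)$ and $\Omega^1_Y$ are both locally free of rank two and the first map is an isomorphism away from $D$, that map is automatically injective; it then remains to identify its image with the kernel of the restriction map and to check that the restriction map is surjective onto $\nu_*\Omega^1_{\widetilde D}$.

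I would dispose of a point $p\notin D$ at once: there $\Omega^1_Y(\log D)(-D)=\Omega^1_Y$, the first map is the identity, and $\nu_*\Omega^1_{\widetilde D}$ vanishes, so the sequence is trivially exact. At a smooth point of $D$ I would choose local coordinates $(x,y)$ with $D=\{x=0\}$. Then $\Omega^1_Y(\log D)$ is freely generated by $dx/x$ and $dy$, and since $\scrO_Y(-D)=(x)$, the twist $\Omega^1_Y(\log D)(-D)$ maps onto the submodule $\langle dx,\, x\,dy\rangle$ of $\Omega^1_Y=\langle dx, dy\rangle$. The restriction map sends $dx\mapsto 0$ and $dy\mapsto dy|_D$, so it is surjective onto the line bundle $\Omega^1_{\widetilde D}$ and its kernel is exactly $\langle dx,\, x\,dy\rangle$, giving exactness at such $p$.

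The node is the one case that needs genuine care, and it is the main (though modest) obstacle, being a matter of correctly bookkeeping the two branches. There I would take $(x,y)$ with $D=\{xy=0\}$, so that the two branches of $\widetilde D$ over $p$ are $\{x=0\}$ and $\{y=0\}$. Now $\Omega^1_Y(\log D)$ is generated by $dx/x$ and $dy/y$, and because $\scrO_Y(-D)=(xy)$, the twisted sheaf maps onto $\langle y\,dx,\, x\,dy\rangle$. The restriction map carries a germ $a\,dx+b\,dy$ to the pair $(a|_{\{y=0\}}\,dx,\; b|_{\{x=0\}}\,dy)$ in $\Omega^1_{\{y=0\}}\oplus\Omega^1_{\{x=0\}}$; this is surjective, and its kernel is $\{a\,dx+b\,dy : a\in(y),\ b\in(x)\}=\langle y\,dx,\, x\,dy\rangle$, which is precisely the image of $\Omega^1_Y(\log D)(-D)$. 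This establishes exactness at every point and completes the argument. The only conceptual point worth flagging is that the twist by $-D$ is exactly what is needed to cancel the logarithmic poles, so that the image of the first map fills up exactly the conormal directions that the restriction map to $\nu_*\Omega^1_{\widetilde D}$ annihilates.
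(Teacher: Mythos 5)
Your proof is correct and is exactly the argument the paper intends: the lemma is stated there as following from ``a calculation in local coordinates,'' with the details left to the reader, and your stalk-by-stalk verification (away from $D$, at smooth points of $D$, and at the nodes) supplies precisely those details. The bookkeeping at the node, identifying both the image of the twisted logarithmic sheaf and the kernel of the restriction map with $\langle y\,dx,\, x\,dy\rangle$, is the right computation.
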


Given a line bundle $L$ on $Y$ such that $\deg (L|D_i) = 0$ for every $i$,  the Chern class $c_1(L)\in H^1(Y;\Omega^1_Y)$ maps to $0$ in $\bigoplus _iH^1(\widetilde{D}_i; \Omega^1_{\widetilde{D}_i})$. It follows from the exact sequence
$$\bigoplus _iH^0(\widetilde{D}_i; \Omega^1_{\widetilde{D}_i})=0 \to H^1(Y;\Omega^1_Y(\log D)(-D)) \to H^1(Y; \Omega^1_Y) \to  \bigoplus _iH^1(\widetilde{D}_i; \Omega^1_{\widetilde{D}_i})$$
that $c_1(L)$ lifts to a unique element of $H^1(Y;\Omega^1_Y(\log D)(-D))$, which we denote by $\hat{c}_1(L)$.  Let $(S,0)$ be the Kuranishi family which represents the functor $\mathbf{Def}_{Y; D_1, \dots,D_r}$. The differential $(d\Phi_S)_0$ of the period map at the point $0$ corresponding to $(Y,D)$, which is defined invariantly as a homomorphism from $H^1(Y; T_Y(-\log D))$ to $\Hom(\Lambda, H^1(D; \scrO_D))$, is then described as follows:

\begin{theorem}\label{diffcomp} Let $\partial \colon H^1(D; \scrO_D) \to H^2(Y; \scrO_Y(-D))$ be the coboundary map arising from the exact sequence
$$0 \to \scrO_Y(-D) \to \scrO_Y \to \scrO_D \to 0,$$
which is an isomorphism since $H^1(Y; \scrO_Y) =  H^2(Y; \scrO_Y) = 0$.  Then, for all $\alpha \in \Lambda$ and $\xi\in H^1(Y; T_Y(-\log D))$, 
$$\partial \circ (d\Phi_S)_0 (\xi)(\alpha) = \xi \smallsmile \hat{c}_1(L_\alpha),$$
where $\smallsmile$ denotes the cup product 
$$H^1(Y; T_Y(-\log D))\otimes H^1(Y;\Omega^1_Y(\log D)(-D)) \to H^2(Y; \scrO_Y(-D)).\qed$$
\end{theorem}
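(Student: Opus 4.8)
The plan is to compute both sides as \v Cech cohomology classes with respect to a fine Stein open cover $\mathfrak U = \{U_a\}$ of $Y$, using the description of $(d\Phi_S)_0(\xi)(\alpha)$ as the first-order variation of the holomorphic line bundle $L_\alpha|D \in \Pic^0 D$, whose tangent space at every point is $H^1(D;\scrO_D)$. First I would fix representatives: a $1$-cocycle $\{\theta_{ab}\}$ of logarithmic vector fields $\theta_{ab}\in T_Y(-\log D)(U_a\cap U_b)$ for $\xi$, and nowhere-zero transition functions $\{f_{ab}\}$ for $L_\alpha$. The preliminary point is that, because $\alpha\cdot D_i = 0$ for every $i$, the class $\{d\log f_{ab}\}$, a priori valued in $\Omega^1_Y$, can be altered by a \v Cech coboundary of sections of $\Omega^1_Y$ into a cocycle $\{\omega_{ab}\}$ valued in the subsheaf $\Omega^1_Y(\log D)(-D)$ of forms vanishing along $D$; by the injectivity coming from Lemma~\ref{locexactseq} this cocycle is the unique lift $\hat c_1(L_\alpha)$.

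Next I would realize the first-order deformation concretely over $\Spec\Cee[\epsilon]/(\epsilon^2)$: the pair $(Y_\epsilon,D_\epsilon)$ is obtained by regluing the charts $U_a\times\Spec\Cee[\epsilon]$ along the infinitesimal automorphisms $\exp(\epsilon\theta_{ab})$, and tangency of the $\theta_{ab}$ to $D$ forces $D$ to deform locally trivially, so we may fix a trivialization $\mathcal D_\epsilon\cong D\times\Spec\Cee[\epsilon]$. Since $H^2(Y;\scrO_Y)=0$ the bundle $L_\alpha$ extends to $Y_\epsilon$; writing the extended transition functions as $\tilde f_{ab}=f_{ab}(1+\epsilon u_{ab})$ and expanding the cocycle condition on the reglued space to first order produces an identity of the form $(\delta u)_{abc}=\pm\langle\theta_{bc},\,d\log f_{ab}\rangle$, whose right-hand side is a \v Cech representative of $\pm\,\xi\smallsmile c_1(L_\alpha)\in H^2(Y;\scrO_Y)=0$. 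This is precisely the (vanishing) obstruction to the extension, and solving it yields the functions $\{u_{ab}\}\in\check C^1(\mathfrak U;\scrO_Y)$.

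I would then extract the differential by restricting $\tilde L$ to $D_\epsilon$ and reading off the first-order motion of $L_\alpha|D$ in $\Pic^0 D$. Using the trivialization $\mathcal D_\epsilon\cong D\times\Spec\Cee[\epsilon]$ to view $\tilde f_{ab}|_D$ as transition functions for a varying bundle on the \emph{fixed} curve $D$, the variation is represented by $\{u_{ab}|_D\}$ corrected by the term coming from the trivialization (the local isomorphisms trivializing $D_\epsilon$ differ on overlaps by $\theta_{ab}|_D$, and this modifies $u_{ab}|_D$); this combined cocycle is the class $(d\Phi_S)_0(\xi)(\alpha)\in H^1(D;\scrO_D)$, well-defined because $\psi$ is $\Aut^0 D$-invariant by Lemma~\ref{autisom}(iii), which removes the ambiguity in the choice of trivialization. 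Finally, applying the connecting isomorphism $\partial$ from $0\to\scrO_Y(-D)\to\scrO_Y\to\scrO_D\to 0$ lifts this class to a $2$-cocycle valued in $\scrO_Y(-D)$, and I would match it by direct comparison with $\{\langle\theta_{ab},\omega_{bc}\rangle\}$, the \v Cech representative of $\xi\smallsmile\hat c_1(L_\alpha)$ under the contraction $T_Y(-\log D)\otimes\Omega^1_Y(\log D)(-D)\to\scrO_Y(-D)$.

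The main obstacle is exactly this last matching. The naive obstruction term $\langle\theta_{bc},d\log f_{ab}\rangle$ lives only in $\scrO_Y$ and dies in $H^2(Y;\scrO_Y)$, so it cannot by itself detect any period variation; the genuine nonzero answer in $H^2(Y;\scrO_Y(-D))$ emerges only after $d\log f_{ab}$ is replaced by its logarithmic lift $\omega_{ab}$, which vanishes along $D$ and hence lands the contraction with the tangent field $\theta_{ab}$ in $\scrO_Y(-D)$. The technical heart is to show that the coboundary correcting $d\log f_{ab}$ into $\omega_{ab}$ is precisely the datum encoding the trivialization correction of $D_\epsilon$, so that $\partial$ applied to the (corrected) restricted variation reproduces $\{\langle\theta_{ab},\omega_{bc}\rangle\}$ on the nose; reconciling the fact that the extension problem is governed by $c_1$ while the restriction to $D$ is governed by the refined $\hat c_1$ is where the hypothesis $\alpha\in\Lambda$ and the tangency of $\xi$ to $D$ interact, and carrying out the local computation that fixes all signs and trivializations is the crux of the argument.
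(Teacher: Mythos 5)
The first thing to say is that the paper does not actually prove Theorem~\ref{diffcomp}: it is stated with a \qed, and the preceding text defers the proof to the reference of Engel--Friedman, listed as ``in preparation.'' So there is no in-paper argument to compare yours against; what can be judged is whether your plan is viable, and it is. All of your structural steps are correct: the twisted cocycle condition on the reglued space gives $(\delta u)_{abc}=-\langle\theta_{ab},d\log f_{bc}\rangle$ (up to index conventions); the induced deformation of $D$ is globally trivial, so there are $v_a\in\theta_D(U_a\cap D)$ with $\theta_{ab}|_D=v_a-v_b$; the period derivative is then represented by $w_{ab}=u_{ab}|_D-\langle v_a,d\log f_{ab}\rangle|_D$; and your appeal to Lemma~\ref{autisom}(iii) to kill the ambiguity in the choice of the $v_a$ is exactly the right point. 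One step you assert without justification: global triviality of $\mathcal D_\epsilon$ (not just local triviality) requires $H^1(D;\theta_D)=0$, which holds because $\theta_D\cong\nu_*\bigoplus_iT_{\widetilde D_i}(-0_i-\infty_i)\cong\nu_*\bigoplus_i\scrO_{\Pee^1}$.

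As for the final matching, which you leave as ``the crux,'' it is not an obstacle and closes in a few lines along exactly the route you describe. Lift $v_a$ to $\tilde v_a\in T_Y(-\log D)(U_a)$ and lift $w_{ab}$ to $\tilde w_{ab}=u_{ab}-\langle\tilde v_a,d\log f_{ab}\rangle$. Then a direct computation using $(\delta u)_{abc}=-\langle\theta_{ab},d\log f_{bc}\rangle$ gives $(\delta\tilde w)_{abc}=\langle\tau_{ab},d\log f_{bc}\rangle$, where $\tau_{ab}=\tilde v_a-\tilde v_b-\theta_{ab}$ is a \emph{cocycle} valued in $\Ker(T_Y(-\log D)\to\theta_D)=T_Y\otimes\scrO_Y(-D)$; in particular the contraction lies in $\scrO_Y(-D)$, so this cochain computes $\partial$ of your class. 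Now substitute $d\log f_{bc}=\omega_{bc}+\tilde\eta_c-\tilde\eta_b$, with $\{\omega_{ab}\}$ your representative of $\hat c_1(L_\alpha)$ and $\tilde\eta_a\in\Omega^1_Y(U_a)$ the correcting cochain; this yields
$$\langle\tau_{ab},d\log f_{bc}\rangle=-\langle\theta_{ab},\omega_{bc}\rangle+\langle\tilde v_a-\tilde v_b,\omega_{bc}\rangle+\langle\tau_{ab},\tilde\eta_c-\tilde\eta_b\rangle,$$
and the last two terms are the \v{C}ech coboundaries of the $\scrO_Y(-D)$-valued $1$-cochains $-\langle\tilde v_a,\omega_{ab}\rangle$ and $-\langle\tau_{ab},\tilde\eta_b\rangle$, using only that $\{\omega_{ab}\}$ and $\{\tau_{ab}\}$ are cocycles. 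Hence $\partial\circ(d\Phi_S)_0(\xi)(\alpha)=\pm\,\xi\smallsmile\hat c_1(L_\alpha)$, with the sign absorbed into the convention choices you flag. So your proposal is a correct proof strategy, incomplete as written only in that this (routine) verification and the triviality of $\mathcal D_\epsilon$ are announced rather than carried out.
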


\begin{corollary} $(d\Phi_S)_0$ is an isomorphism.
\end{corollary}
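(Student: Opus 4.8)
The plan is to show $(d\Phi_S)_0$ is an isomorphism by combining the explicit formula of Theorem~\ref{diffcomp} with a dimension count and a nondegeneracy argument for the cup-product pairing. Since $\partial$ is an isomorphism, it suffices to prove that the map $\xi \mapsto \bigl(\alpha \mapsto \xi \smallsmile \hat{c}_1(L_\alpha)\bigr)$ is an isomorphism from $H^1(Y; T_Y(-\log D))$ to $\Hom(\Lambda, H^2(Y; \scrO_Y(-D)))$. First I would verify that the source and target have the same dimension. By Corollary~\ref{adefthcor}(ii), $\dim H^1(Y; T_Y(-\log D)) = \operatorname{rank}\Lambda(Y,D)$. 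On the target side, $H^2(Y;\scrO_Y(-D))$ is one-dimensional: it is Serre dual to $H^0(Y;\scrO_Y(K_Y+D)) = H^0(Y;\scrO_Y)$, which is $\Cee$ since $K_Y + D = 0$. Hence $\Hom(\Lambda, H^2(Y;\scrO_Y(-D)))$ also has dimension $\operatorname{rank}\Lambda$, and it is enough to prove that the linear map is injective (equivalently, that the pairing has no left kernel).

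Next I would reinterpret the pairing so that injectivity becomes a statement about a perfect Serre-duality pairing. Using $T_Y(-\log D) \cong \Omega^1_Y(\log D)$ from Lemma~\ref{adefthlemma}(i) and the identity $K_Y + D = 0$, one checks that the cup product
$$
H^1(Y; \Omega^1_Y(\log D)) \otimes H^1(Y; \Omega^1_Y(\log D)(-D)) \to H^2(Y; \Omega^2_Y(\log D)(-D)) = H^2(Y;\scrO_Y(-D))
$$
is, up to the canonical identifications, the Serre-duality pairing between $H^1$ of a locally free sheaf and $H^1$ of its Serre dual, since $\Omega^1_Y(\log D)(-D)$ is dual to $\Omega^1_Y(\log D) \otimes \scrO_Y(K_Y) = \Omega^1_Y(\log D)(-D)$ tensored appropriately; concretely, $\bigl(\Omega^1_Y(\log D)\bigr)\spcheck \otimes \omega_Y \cong T_Y(-\log D) \otimes \scrO_Y(-D) \cong \Omega^1_Y(\log D)(-D)$. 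Thus the full cup-product pairing on $H^1(Y;\Omega^1_Y(\log D)) \times H^1(Y;\Omega^1_Y(\log D)(-D))$ is a perfect pairing. The content of the argument is then that the classes $\hat{c}_1(L_\alpha)$, as $\alpha$ ranges over $\Lambda$, span the entire space $H^1(Y;\Omega^1_Y(\log D)(-D))$; granting this, perfectness of the pairing immediately gives that no nonzero $\xi$ annihilates all $\hat{c}_1(L_\alpha)$, which is the desired injectivity.

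The main obstacle, and the heart of the proof, is therefore showing that $\alpha \mapsto \hat{c}_1(L_\alpha)$ maps $\Lambda \otimes \Cee$ \emph{onto} $H^1(Y;\Omega^1_Y(\log D)(-D))$. For this I would use the exact sequence of Lemma~\ref{locexactseq},
$$
0 \to \Omega^1_Y(\log D)(-D) \to \Omega^1_Y \to \nu_*\Omega^1_{\widetilde{D}} \to 0,
$$
whose associated long exact sequence (together with $\bigoplus_i H^0(\widetilde{D}_i;\Omega^1_{\widetilde{D}_i}) = 0$, since each $\widetilde D_i \cong \Pee^1$) exhibits $H^1(Y;\Omega^1_Y(\log D)(-D))$ as the kernel of the restriction map $H^1(Y;\Omega^1_Y) \to \bigoplus_i H^1(\widetilde D_i; \Omega^1_{\widetilde D_i}) \cong \Cee^r$, which is just $\beta \mapsto (\deg(\beta|D_i))_i$ under the identification $H^1(Y;\Omega^1_Y) = H^2(Y;\Cee)$ via the fundamental-class map. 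Since $c_1$ identifies $\Pic Y \otimes \Cee$ with $H^1(Y;\Omega^1_Y)$, and $\Lambda \otimes \Cee$ is by Definition~\ref{defLambda} exactly the orthogonal complement of the $[D_i]$, i.e.\ the kernel of $\beta \mapsto (\beta\cdot [D_i])_i = (\deg(\beta|D_i))_i$, the map $\alpha \mapsto c_1(L_\alpha)$ carries $\Lambda\otimes\Cee$ isomorphically onto $\ker\bigl(H^1(Y;\Omega^1_Y) \to \Cee^r\bigr) = H^1(Y;\Omega^1_Y(\log D)(-D))$, and $\hat c_1(L_\alpha)$ is precisely this lift. This gives surjectivity onto $H^1(Y;\Omega^1_Y(\log D)(-D))$, completing the argument; the dimension count then shows the map is in fact a bijection, and hence $(d\Phi_S)_0$ is an isomorphism.
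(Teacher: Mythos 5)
Your proof is correct and follows essentially the same route as the paper, whose entire proof is the one-line observation that $\scrO_Y(-D) = \omega_Y$ makes the cup product a Serre-duality pairing. Your three steps (dimension count, perfectness of the pairing, and the identification of $\Lambda\otimes\Cee$ with $H^1(Y;\Omega^1_Y(\log D)(-D))$ via $\hat c_1$ using Lemma~\ref{locexactseq}) are exactly the details the paper leaves implicit in that remark and in the discussion preceding Theorem~\ref{diffcomp}.
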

\begin{proof} This is clear since $\scrO_Y(-D) = \omega_D$ and by Serre duality.
\end{proof}
 
Note that 
\begin{align*}
&\Hom(H^1(Y;\Omega^1_Y(\log D)(-D)) , H^2(Y; \scrO_Y(-D))) \\
\cong&\Hom(H^2(Y; \scrO_Y(-D))\spcheck, (H^1(Y;\Omega^1_Y(\log D)(-D))\spcheck) \\
\cong&\Hom(H^0(Y; \Omega^2_Y(\log D)) , H^1(Y; \Omega^1_Y(\log D))),
 \end{align*}
where we have used Serre duality. Thus we obtain the usual description of the differential of the period map, which describes the variation of the line $F^2 = H^0(Y; \Omega^2_Y(\log D))$ in $H^2(Y-D; \Cee) = \mathbb{H}^2(Y; \Omega^\bullet_Y(\log D))$. The differential is then a homomorphism from $H^1(Y; T_Y(-\log D))$, the  tangent space of the Kuranishi family, to $\Hom(F^2,H^2(Y-D; \Cee)/F_2)$. Using  $F^2=H^0(Y; \Omega^2_Y(\log D))$ and  $H^2(Y-D; \Cee)/F_2 = H^1(Y; \Omega^1_Y(\log D))$, we then have: 

\begin{theorem} The differential of the period map at a point $(Y,D)$ is given by cup product:
$$H^1(Y; T_Y(-\log D)) \otimes H^0(Y; \Omega^2_Y(\log D)) \to H^1(Y; \Omega^1_Y(\log D)).$$
Since $\Omega^2_Y(\log D) = \Omega^2_Y(D) = \scrO_Y$ and contraction against a nowhere vanishing section of $\Omega^2_Y(\log D)$ induces an isomorphism of sheaves from $T_Y(-\log D)$ to $\Omega^1_Y(\log D)$, the differential of the period map is an isomorphism. \qed
\end{theorem}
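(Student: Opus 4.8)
The plan is to derive the statement from the cup-product formula of Theorem~\ref{diffcomp} by transposing everything through Serre duality, using the chain of isomorphisms displayed immediately after its Corollary. Conceptually, the period map records the variation of the Hodge filtration on $H^2(Y-D;\Cee)=\mathbb{H}^2(Y;\Omega^\bullet_Y(\log D))$, whose top piece is the line $F^2=H^0(Y;\Omega^2_Y(\log D))$ and whose next graded quotient is $H^1(Y;\Omega^1_Y(\log D))$; by Griffiths transversality the differential is forced to be a map $H^1(Y;T_Y(-\log D))\to \Hom(F^2,\,H^2(Y-D;\Cee)/F^2)$, so the only remaining points are to pin down that it is literally cup product and that it is bijective.

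First I would record the two Serre-duality identifications driving the reformulation. Since $K_Y+D=0$ we have $\Omega^2_Y(\log D)=\Omega^2_Y(D)=\scrO_Y$ and $\scrO_Y(-D)=\omega_Y$, whence
$$H^2(Y;\scrO_Y(-D))\spcheck \cong H^0(Y;\Omega^2_Y(\log D)), \qquad H^1(Y;\Omega^1_Y(\log D)(-D))\spcheck \cong H^1(Y;\Omega^1_Y(\log D)).$$
The first is immediate; the second uses that for the rank-two bundle $\Omega^1_Y(\log D)$ one has $(\Omega^1_Y(\log D)(-D))\spcheck\otimes\omega_Y\cong\Omega^1_Y(\log D)$, which follows from $\det\Omega^1_Y(\log D)=\Omega^2_Y(\log D)=\scrO_Y$. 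I would also note that $\alpha\mapsto \hat c_1(L_\alpha)$ identifies $\Lambda\otimes\Cee$ with $H^1(Y;\Omega^1_Y(\log D)(-D))$, since by the exact sequence defining $\hat c_1$ the latter is exactly the space of classes in $H^1(Y;\Omega^1_Y)$ of degree zero on every $\widetilde D_i$.

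Next I would transpose the formula $\partial\circ(d\Phi_S)_0(\xi)(\alpha)=\xi\smallsmile\hat c_1(L_\alpha)$ of Theorem~\ref{diffcomp} through these isomorphisms. Viewing $(d\Phi_S)_0$ as an element of $\Hom(H^1(Y;T_Y(-\log D)),\Hom(\Lambda\otimes\Cee,\,H^1(D;\scrO_D)))$ and applying the dualities above to both the source $\Lambda\otimes\Cee$ and the target $H^1(D;\scrO_D)\cong H^2(Y;\scrO_Y(-D))$, the formula becomes $\xi\mapsto(\omega\mapsto \xi\smallsmile\omega)$, where now $\smallsmile$ is the cup product
$$H^1(Y;T_Y(-\log D))\otimes H^0(Y;\Omega^2_Y(\log D))\to H^1(Y;\Omega^1_Y(\log D)).$$
The one genuinely technical point, and the step I expect to be the main obstacle, is the compatibility of these cup products with the Serre trace pairing: one must check that pairing $\xi\smallsmile\hat c_1(L_\alpha)\in H^2(Y;\scrO_Y(-D))$ against $\omega$ equals the Serre pairing of $\hat c_1(L_\alpha)$ against $\xi\smallsmile\omega\in H^1(Y;\Omega^1_Y(\log D))$. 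This is the associativity and naturality of cup product with the common trace on $H^2(Y;\omega_Y)$; it is formal once set up but demands honest bookkeeping.

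Finally, for the isomorphism assertion I would argue directly on sheaves. Because $\Omega^2_Y(\log D)=\scrO_Y$, the space $H^0(Y;\Omega^2_Y(\log D))$ is the line spanned by a nowhere-vanishing $\omega$, and contraction against $\omega$ is precisely the sheaf isomorphism $T_Y(-\log D)\xrightarrow{\sim}\Omega^1_Y(\log D)$ of Lemma~\ref{adefthlemma}(i). Passing to $H^1$ shows that cup product with $\omega$ is an isomorphism $H^1(Y;T_Y(-\log D))\xrightarrow{\sim}H^1(Y;\Omega^1_Y(\log D))$, which by the previous paragraph is the differential of the period map. This reproves, in the dual picture, the isomorphism already recorded in the Corollary to Theorem~\ref{diffcomp}.
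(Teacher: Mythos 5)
Your proposal is correct and is essentially the paper's own argument: the paper likewise obtains this theorem by dualizing the formula of Theorem~\ref{diffcomp} through the displayed chain of Serre-duality isomorphisms (interpreting the result as the variation of $F^2=H^0(Y;\Omega^2_Y(\log D))$ in $H^2(Y-D;\Cee)$), and then deduces bijectivity from the sheaf isomorphism $T_Y(-\log D)\cong\Omega^1_Y(\log D)$ given by contraction against a nowhere vanishing section of $\Omega^2_Y(\log D)=\scrO_Y$. The points you flag as needing bookkeeping --- the identification of $\Lambda\otimes\Cee$ with $H^1(Y;\Omega^1_Y(\log D)(-D))$ via $\hat c_1$ and the compatibility of cup product with the Serre trace --- are exactly what the paper leaves implicit, and your treatment of them is sound.
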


In particular, the period map is locally an isomorphism. 
By \cite{Looij}, \cite{FriedmanScattone}, \cite{Fried2}, we have:

\begin{theorem}\label{surjper} The period map is surjective. More precisely, given $Y$ as above and given an arbitrary homomorphism $\varphi \colon \Lambda \to\mathbb{G}_m$, there exists a deformation of the pair $(Y,D)$ over a smooth connected base, which we can take to be a product of $\mathbb{G}_m$'s, such that the monodromy of the family is trivial and there exists a fiber of the deformation, say $(Y', D')$ such that, under the induced identification of $\Lambda(Y',D')$ with $\Lambda$, $\varphi_{Y'} = \varphi$. \qed
\end{theorem}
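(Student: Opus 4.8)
The plan is to prove surjectivity of the period map by exhibiting, for any prescribed homomorphism $\varphi\colon\Lambda\to\mathbb{G}_m$, an explicit deformation of $(Y,D)$ whose period realizes $\varphi$. The key geometric input is the infinitesimal statement already established: the differential $(d\Phi_S)_0$ is an isomorphism, so $\Phi_S$ is a local biholomorphism onto an open subset of $\Hom(\Lambda,\mathbb{G}_m)$. The remaining content is global, and the natural strategy is to reduce to the toric situation where the deformations can be written down by hand. First I would invoke Lemma~\ref{toricmodels} and its corollaries to pass to a toric model: every pair $(Y,D)$ is obtained, after corner blowups (which by the Remark following the definition of the period map change neither $\Lambda$ nor $\varphi_Y$ in an essential way, being compatible via $\pi^*$), as a sequence of interior blowups $\pi\colon(Y,D)\to(\overline{Y},\overline{D})$ at smooth points of $\overline{D}$, where $(\overline{Y},\overline{D})$ is toric.

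The heart of the construction is then to move the interior blown-up points along $D_{\mathrm{reg}}$. Concretely, if $(Y,D)$ is obtained from a toric pair $(\overline{Y},\overline{D})$ by blowing up points $p_1,\dots,p_m\in\overline{D}_{\mathrm{reg}}$, I would construct a family over a product of copies of $\mathbb{G}_m$ by letting each $p_k$ vary in its component $\overline{D}_{i(k)}^{\mathrm{int}}\cong\mathbb{G}_m$; this gives a smooth connected base, visibly a product of $\mathbb{G}_m$'s, with trivial monodromy (since the topology of the blown-up configuration is locally constant and the components $D_i$ and their intersection pattern are preserved throughout). The exceptional classes $[E_k]$, together with the pullbacks of classes from $\overline{Y}$, furnish a basis adapted to $\Lambda$, and the period homomorphism $\varphi_{Y'}$ can be computed directly: for a class such as $[E_k]-[E_\ell]$ (difference of two exceptional curves blown up on the same component, which by Section~6 is the shape of a root, and more generally for the classes spanning $\Lambda$), the value $\psi(L_\alpha|D)$ is governed by the ratios of the coordinates of the blown-up points via Lemma~\ref{Pic0isomG} and the product formula of Lemma~\ref{princdiv}. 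Explicitly, moving $p_k$ from coordinate $1$ to coordinate $\lambda_k\in\mathbb{G}_m$ multiplies the relevant period by a monomial in the $\lambda_k$, so the period map restricted to this family is itself a homomorphism of tori $(\mathbb{G}_m)^m\to\Hom(\Lambda,\mathbb{G}_m)$.

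It then remains to check that this monomial map is surjective onto $\Hom(\Lambda,\mathbb{G}_m)\cong(\mathbb{G}_m)^{\operatorname{rank}\Lambda}$. Since $\Hom(\Lambda,\mathbb{G}_m)$ is a torus and the map is a homomorphism of tori realized by an integer matrix, surjectivity is equivalent to the statement that the induced map on character lattices (equivalently on cocharacters, i.e.\ the variation of the blown-up points) spans $\Lambda$ after rationalization. But this is exactly the infinitesimal surjectivity furnished by the differential computation: since $(d\Phi_S)_0$ is an isomorphism and the toric deformations moving the interior points realize the full tangent space $H^1(Y;T_Y(-\log D))\cong\Lambda\otimes H^1(D;\scrO_D)$, the derivative of the monomial map has maximal rank, hence the integer matrix defining it is of full rank, hence the torus homomorphism is surjective (a surjective Lie-algebra map between tori exponentiates to a surjection, as $\mathbb{G}_m$ is connected and divisible). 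Choosing $\varphi_{Y'}=\varphi$ then amounts to solving a system of monomial equations in the $\lambda_k$, which has a solution precisely because the map is surjective. The main obstacle I anticipate is the bookkeeping in the second step: verifying that the classes obtained from the varying interior blowups genuinely span $\Lambda\otimes\Q$ and that one has enough independent parameters $\lambda_k$; here one must use that interior blowups increase $Q(Y,D)=\operatorname{rank}\Lambda+2$ by exactly one each (Lemma~\ref{effectofblowup}(iii)) so the number of movable points matches the rank of $\Lambda$, and the corner blowups contribute nothing, keeping the count exact. With that combinatorial match in hand, the surjectivity is forced by the infinitesimal isomorphism rather than requiring any further global analysis.
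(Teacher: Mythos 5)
Your proposal is, in its core mechanism, the same proof as the paper's: construct an explicit family over $(\mathbb{G}_m)^N$ by letting the interior blown-up points move in the components of the cycle, observe that the induced period map $(\mathbb{G}_m)^N \to \Hom(\Lambda,\mathbb{G}_m)$ is given by monomials up to fixed multiplicative constants, and deduce surjectivity from the elementary fact that such a map of algebraic tori is surjective once its differential is surjective at one point, the latter coming from versality of the family composed with the isomorphism $(d\Phi_S)_0$. Two inaccuracies in your write-up are worth flagging, though neither is fatal. First, the map you construct is \emph{affine}, not a homomorphism of tori: moving $p_k$ from $q_k$ to $\lambda_k$ multiplies $\varphi_{Y_{t_0}}$ by a monomial in the $\lambda_k$, and the result is a homomorphism only if the base fiber has trivial period, which you never arrange; this is harmless, since an affine map is surjective iff its homomorphism part is, and the paper states exactly the affine version. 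Second, your parameter count is wrong: by Lemma~\ref{rankQ}, $\operatorname{rank}\Lambda = Q(Y,D)-2+s$, and for blowups of a toric pair $s$ need not vanish, so the number $N$ of moving points can strictly exceed $\operatorname{rank}\Lambda$ (it never falls short); but no counting is needed at all, since versality plus the differential computation already give full rank.

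The one place where you genuinely diverge from the paper, and where there is a real gap, is the reduction. Lemma~\ref{toricmodels} produces a toric model only after first corner-blowing-up $(Y,D)$ to a pair $(\widetilde{Y},\widetilde{D})$; your family is therefore a deformation of $(\widetilde{Y},\widetilde{D})$, whereas the theorem demands a deformation of $(Y,D)$ itself. To descend, you must contract fiberwise, in the total space, the corner-exceptional components of $\widetilde{\mathcal{D}}$: each sweeps out a divisor which is a $\Pee^1$-bundle over the base and fiberwise a $(-1)$-curve, so the relative contraction exists and the periods match under corner blowdown by the Remark you cite — but this step has to be performed, and you do not mention it. The paper sidesteps the issue by reducing instead to a \emph{taut} model: by Lemma~\ref{taut} and Remarks~\ref{nonewint} and~\ref{blowdowntotaut}, the pair $(Y,D)$ is itself a sequence of purely interior blowups of a taut pair (no corner blowups of $(Y,D)$ are needed), except when $Y$ is $\mathbb{F}_0$ or $\mathbb{F}_2$ as in Case (iii) of Theorem~\ref{minimalist}, and those cases are handled by a direct argument. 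Your toric reduction has the mild advantage of uniformity (Lemma~\ref{toricmodels} has no exceptional cases), at the price of the fiberwise contraction step; once that step is supplied, your argument is complete.
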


We sketch a proof of  Theorem~\ref{perholom} and Theorem~\ref{surjper}. Let $(Y,D)$ be an anticanonical pair. The idea is to construct a family $\pi\colon (\mathcal{Y}^{\text{u}}, \mathcal{D}^{\text{u}}) \to (\mathbb{G}_m)^N$ for some $N$, satisfying:
\begin{enumerate}
\item The pair $(Y,D)$ is a fiber of $\pi$ over the point $t_0\in (\mathbb{G}_m)^N$;
\item The monodromy of the family is trivial, so that in particular, for all $t\in (\mathbb{G}_m)^N$, we can identify all of the groups $\Lambda(Y_t, D_t)$ with $\Lambda =\Lambda(Y,D)$;

\item The family $\pi$ is versal at $t_0$, i.e.\   the Kodaira-Spencer map $T_{(\mathbb{G}_m)^N, t_0} \to H^1(Y; T_{Y}(-\log D))$ is surjective;
\item The period map $\Phi\colon (\mathbb{G}_m)^N \to \Hom (\Lambda, \mathbb{G}_m)$ is an affine map, i.e.\ the composition of a homomorphism and a translation.
\end{enumerate} 
To prove Theorem~\ref{perholom}, suppose that we are given a family $(\mathcal{Y}, \mathcal{D}) \to S$ as in the statement of the theorem, and a point $s\in S$ corresponding to a pair $(Y,D)$. By versality, there is a neighborhood $U$ of $s$ and a holomorphic map $f\colon U \to (\mathbb{G}_m)^N$ such that the family $(\mathcal{Y}, \mathcal{D})|U$ is the pullback of the corresponding family on $(\mathbb{G}_m)^N$. Then, in $U$, the period map is given by the composition  $\Phi\circ f$, which is clearly holomorphic. Since it is an elementary fact that an affine map between two algebraic tori whose differential is surjective at a point is surjective, the existence of the family $(\mathcal{Y}^{\text{u}}, \mathcal{D}^{\text{u}})$ gives a proof of Theorem~\ref{surjper}.

To construct the family, let us assume that   $(Y,D)$ is given by a sequence of interior blowups of a taut pair $(\overline{Y}, \overline{D})$, say at points $q_1, \dots, q_N \in \bigcup_iD_i^{\text{int}}$, possibly infinitely near. (The remaining case, Case (iii) of Theorem~\ref{minimalist}, where $Y =\mathbb{F}_0$ or $\mathbb{F}_2$, is easily handled by a direct argument.) Let $a\colon \{1, \dots, N\} \to \{1, \dots, r\}$ be the function defined by:  $q_i \in  D_{a(i)}^{\text{int}}$. Identifying $D_{a(1)}^{\text{int}} \times \cdots \times D_{a(N)}^{\text{int}}$ with $(\mathbb{G}_m)^N$ by choosing an origin in each $D_i^{\text{int}}$, we define the family $\pi\colon (\mathcal{Y}^{\text{u}}, \mathcal{D}^{\text{u}}) \to (\mathbb{G}_m)^N$ in the natural way, so that the fiber $(Y_t, D_t)$ over $t=(p_1, \dots, p_N)$ is the blowup of $(Y,D)$ at the points $p_1, \dots, p_N$. (Note: the biregular type of $\mathcal{Y}$ depends on the ordering of the points $p_i$.) Then $(\mathcal{Y}^{\text{u}}, \mathcal{D}^{\text{u}})$ clearly satisfies (1) and (2) above. Using the stability of exceptional divisors under small deformations, it is easy to see that $\pi$ is versal at some point (in fact, it is versal at every point, but fails to be locally semi-universal if the automorphism group of $(\overline{Y}, \overline{D})$ is positive-dimensional). 

The remaining point to check is (4). Fix $\alpha \in \Lambda$. Then we can write $\alpha = \delta + \sum_{i=1}^Nn_ie_i$, where $\delta$ is the pullback of a class in $H^2(\overline{Y}; \Zee)$ and the $e_i$ are the classes of the exceptional curves corresponding to the blowdown $(Y,D) \to (\overline{Y}, \overline{D})$. If $(Y,D)$ and $(Y_t, D_t)$ are as above, it is easy to see that 
$$\varphi_{Y_t}(\alpha) = \varphi_Y(\alpha)\otimes \scrO_D(\sum_in_i(p_i-q_i)).$$
Thus, $\varphi_{Y_t}(\alpha) = \varphi_Y(\alpha)$ up to tensoring with  $\scrO_D(\sum_in_i(p_i-q_i))$, which is the composition of a translation and a homomorphism $(\mathbb{G}_m)^N \to \Pic^0D$. Doing this for a basis $\alpha_1, \dots, \alpha_n$ of $\Lambda$ then establishes (4).

\begin{remark} We can define the period map in the elliptic and triangle cases as well. Let $(Y,D)$ satisfy either Case (i) or Case (iii) of the introduction, and let $\Lambda(Y,D)$ be the orthogonal complement in $H^2(Y; \Zee)$ of the classes of the components of $D$. In its simplest form, the period point of $(Y,D)$ is then the homomorphism $\psi_Y\colon \Lambda(Y,D) \to J^0(D)$ defined by:  $\psi_Y(\alpha) = L_\alpha|D$. Note however that, in the triangle case, $J^0(D) \cong \mathbb{G}_a$ but there exists a subgroup of $\Aut D$ isomorphic to $\mathbb{G}_m$ whose action on  $J^0(D)$ corresponds to the usual action of $\mathbb{G}_m$ on $\mathbb{G}_a$.
\end{remark}

\section{The ample cone and nef divisors}

\begin{definition}
Let $\mathcal{C}=\mathcal{C}(Y)$ be the positive cone of $Y$, i.e.\ 
$$\mathcal{C} = \{x\in H^2(Y; \Ar): x^2 >0\}.$$
 Then $\mathcal{C}$ has two components, and exactly one of them, say $\mathcal{C}^+=\mathcal{C}^+(Y)$, contains the classes of ample divisors. Let $\mathcal{A}(Y)$ be the ample (nef, K\"ahler) cone of $Y$ and let $\overline{\mathcal{A}}(Y)\subseteq  \mathcal{C}^+ \subseteq H^2(Y; \Ar)$ be the  closure of $\mathcal{A}(Y)$  in $\mathcal{C}^+$. By definition, $\overline{\mathcal{A}}(Y)$ is closed in $\mathcal{C}^+$ but not in general in $H^2(Y; \Ar)$.
\end{definition}

Let $\alpha \in  H^2(Y; \Zee)$. Then $\alpha$ defines the \textsl{oriented wall} $W^\alpha$, which by definition is the hyperplane $\{x\in H^2(Y; \Ar): x\cdot \alpha=0\}$, together with the preferred half-space $\{x\in H^2(Y; \Ar): x\cdot \alpha \geq 0\}$. If $\Omega\subseteq \mathcal{C}^+$ is a locally polyhedral convex set with nonempty interior, then $W^\alpha$ is a \textsl{face} of $\Omega$ if $\Omega \subseteq \{x\in H^2(Y; \Ar): x\cdot \alpha \geq 0\}$ and  $\overline{\Omega} \cap W^\alpha$ contains a nonempty open subset of $W^\alpha$, where  $\overline{\Omega}$ denotes the closure of $\Omega$ in $\mathcal{C}^+$. (Thus, in this paper, all faces have codimension one.) An \textsl{interior} point $x$ of $\Omega$ is then a point $x\in \Omega$ not lying on a face. 

By a standard result, the faces of $\overline{\mathcal{A}}(Y)$ correspond to curves of negative self-intersection on $Y$ (see for example \cite{Fried3}):

\begin{proposition}\label{constructnef} Let $X$ be a smooth projective surface and let $G_1, \dots , G_n$ be irreducible curves on $X$ such that the intersection matrix $(G_i\cdot G_j)$ is negative definite.
Then there exists a nef and big divisor $H$ on $X$ such that $H\cdot G_j = 0$ for all $j$ and, if $C$ is an irreducible curve such that $C \neq G_j$ for any $j$, then  $H\cdot C >0$. In fact, the set of nef and big $\Ar$-divisors on $X$ which are orthogonal to $\{G_1, \dots, G_n\}$ is a nonempty open subset of $\{G_1, \dots, G_n\}^\perp \otimes \Ar$. \qed
\end{proposition}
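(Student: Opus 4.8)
The plan is to obtain $H$ as the orthogonal projection of an ample class onto the subspace $V^\perp := \{G_1,\dots,G_n\}^\perp\otimes \Ar$, and then to read off nefness and positivity from the signs of the coefficients of the projection. First I would fix an ample divisor $A$ on $X$ and look for $H$ of the form $H = A - \sum_{j}a_jG_j$ subject to $H\cdot G_i = 0$ for all $i$. This amounts to the linear system $\sum_j a_j(G_j\cdot G_i) = A\cdot G_i$, and since the intersection matrix $M = (G_i\cdot G_j)$ is negative definite it is invertible; so the $a_j$ exist and are unique (rational if $A$ is integral). By construction $H$ lies in $V^\perp$, i.e.\ $H\cdot G_i = 0$ for every $i$.

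The heart of the argument is to show that every $a_j \le 0$. Writing $N = -M$, the matrix $N$ is symmetric positive definite with $N_{ij} = -G_i\cdot G_j \le 0$ for $i\neq j$ (distinct irreducible curves meet nonnegatively) and $N_{ii} = -G_i^2 > 0$; that is, $N$ is a Stieltjes matrix. Such a matrix has entrywise nonnegative inverse, as one sees by writing $N = sI - B$ with $s = \max_i N_{ii}$ and $B = sI - N\ge 0$, noting that positive definiteness of $N$ forces the spectral radius to satisfy $\rho(B) = s - \lambda_{\min}(N) < s$, and expanding $N^{-1} = s^{-1}\sum_{k\ge 0}(B/s)^k \ge 0$ (Perron--Frobenius / Neumann series). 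Since the system reads $Na = -b$ with $b_i = A\cdot G_i > 0$, we get $a = -N^{-1}b \le 0$ entrywise. \emph{This sign computation is the main obstacle}: once it is in hand everything else is formal, but without it there is no reason for the correction term $\sum_j a_j G_j$ not to destroy nefness.

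With all $a_j \le 0$ we may rewrite $H = A + \sum_j|a_j|\,G_j$, an ample class plus an effective divisor. Then for any irreducible curve $C$ with $C\neq G_j$ for all $j$ we have $G_j\cdot C \ge 0$, hence $H\cdot C = A\cdot C + \sum_j |a_j|(G_j\cdot C) \ge A\cdot C > 0$; together with $H\cdot G_i = 0$ this shows that $H$ is nef and strictly positive on every irreducible curve other than the $G_j$. Bigness is then immediate, since $H$ is the sum of an ample and an effective divisor (equivalently $H^2 = A\cdot H > 0$ and $H$ lies in the component of the positive cone containing ample classes).

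For the final assertion I would let $p\colon H^2(X;\Ar)\to V^\perp$ be orthogonal projection along $V$, which is well defined because $M$ is nondegenerate, so that $H^2(X;\Ar) = V\oplus V^\perp$. The computation above used only that $A\cdot G_i > 0$, so it applies verbatim to every ample class: $p(A)$ is nef, big, and orthogonal to the $G_j$ for every ample $A$. As a surjective linear map $p$ is open, so $p(\mathcal{A}(X))$ is a nonempty open subset of $V^\perp$ consisting of nef and big classes orthogonal to $\{G_1,\dots,G_n\}$, which yields the stated nonempty open set.
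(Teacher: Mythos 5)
Your construction is correct, and it is in substance the standard argument that the paper itself omits (the statement is given with a reference to \cite{Fried3} and no proof): project an ample class $A$ into $V^\perp=\{G_1,\dots,G_n\}^\perp\otimes\Ar$ and control the signs of the correction coefficients. Your Stieltjes-matrix proof of $a_j\le 0$ does work: the inequality $\rho(B)<s$ that you need holds because $B$ is symmetric with nonnegative entries, so by Perron--Frobenius $\rho(B)$ is its largest eigenvalue $s-\lambda_{\min}(N)<s$. But there is a more elementary route avoiding spectral theory, which is the one usually given: if $x=\sum_j a_jG_j$ satisfies $x\cdot G_i\ge 0$ for all $i$, write $x=P-N$ where $P$ collects the terms with $a_j>0$ and $N$ the terms with $a_j<0$; then $P^2=P\cdot x+P\cdot N\ge 0$ (the first term because each $G_j\cdot x\ge0$, the second because distinct irreducible curves meet nonnegatively), so $P=0$ by negative definiteness. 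Applied to $x=A-H=\sum_ja_jG_j$, which satisfies $x\cdot G_i=A\cdot G_i>0$, this gives $a_j\le 0$ at once. Two small points: since the $a_j$ are rational for integral $A$, clearing denominators gives the honest divisor $H$ demanded by the first sentence; and for a general projective surface you should carry out the projection in $\operatorname{NS}(X)_\Ar$ rather than $H^2(X;\Ar)$, since the ample cone is open only in the former, and $V^\perp\subseteq H^2(X;\Ar)$ contains non-algebraic classes (for the paper's rational surfaces the two spaces coincide, so this is harmless there).

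The one genuine gap is in your final paragraph. What you prove is that the set of nef and big classes in $V^\perp$ \emph{contains} the nonempty open set $p(\mathcal{A}(X))$; you have not proved that this set \emph{is} open, and as literally stated it need not be. For example, let $X$ be the blowup of $\Pee^2$ at two points, with exceptional curves $E_1,E_2$, and take $n=1$, $G_1=E_1$. The pullback $h$ of the hyperplane class is nef, big, and orthogonal to $E_1$, but for every $\epsilon>0$ the class $h+\epsilon[E_2]$ lies in $[E_1]^\perp$ and has intersection $-\epsilon<0$ with $E_2$; so $h$ belongs to the set but is not an interior point. The trouble comes from classes orthogonal to curves other than the $G_j$ --- exactly the classes your image $p(\mathcal{A}(X))$ avoids, since your computation gives $H\cdot C\ge A\cdot C>0$ for every irreducible $C\notin\{G_1,\dots,G_n\}$. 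The statement that is both true and is what the ensuing corollary on faces of $\overline{\mathcal{A}}(X)$ actually uses is: the set of nef and big classes in $V^\perp$ that are in addition positive on every irreducible curve $C\notin\{G_1,\dots,G_n\}$ is nonempty and open in $V^\perp$. Nonemptiness is your first part; openness requires one further standard ingredient that the projection computation does not supply, namely local finiteness, inside the positive cone, of the walls $C^\perp$ defined by irreducible curves of negative self-intersection. Granting that, a small ball around a point of this set meets no wall other than the $G_j^\perp$, and every nearby class in $V^\perp$ is again nef, big, and positive on all other curves (positivity on curves of nonnegative square being automatic for classes in the positive cone). So you should either prove that local finiteness statement, or weaken your conclusion to ``contains a nonempty open subset,'' which is what your argument establishes and what the applications in Section 4 need.
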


\begin{corollary} If $X$ is a smooth projective surface, then the faces of $\overline{\mathcal{A}}(X)$ are exactly the walls $W^\alpha$, where $\alpha = [C]$ is the class of an irreducible  curve of negative self-intersection. Two classes $\alpha_1=[C_1]$ and $\alpha_2 = [C_2]$  define the same face of $\overline{\mathcal{A}}(X)$ $\iff$ $C_1 = C_2$. Finally, if $W^{\alpha_1}, \dots , W^{\alpha_n}$ are two faces of $\overline{\mathcal{A}}(X)$ corresponding to distinct irreducible curves  $C_1, \dots , C_n$, then $W^{\alpha_1}\cap \cdots \cap W^{\alpha_n}\cap \overline{\mathcal{A}}(X)\neq \emptyset$ $\iff$  the intersection matrix $(C_i\cdot C_j)$ is negative definite. \qed 
\end{corollary}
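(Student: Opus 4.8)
The plan is to derive all three assertions from Proposition~\ref{constructnef}, the Hodge index theorem, and the Nakai--Moishezon/Kleiman criterion that a class $H$ with $H^2>0$ is ample if and only if $H\cdot C>0$ for every irreducible curve $C$. Throughout, recall that $\overline{\mathcal{A}}(X)$ consists of nef classes lying in $\mathcal{C}^+$, so its elements are nef and big.

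First I would show that every irreducible curve $C$ with $C^2<0$ produces a face $W^{[C]}$. Applying Proposition~\ref{constructnef} to the single curve $G_1=C$ yields a nef and big $H$ with $H\cdot C=0$ and $H\cdot C'>0$ for every other irreducible $C'$. Since $H$ is big we have $H^2>0$, so $H\in\mathcal{C}^+$; since $H$ is nef, $H\in\overline{\mathcal{A}}(X)$ and $\overline{\mathcal{A}}(X)\subseteq\{x: x\cdot[C]\geq 0\}$. The ``in fact'' clause shows that the nef and big $\Ar$-classes orthogonal to $C$ fill a nonempty open subset of $W^{[C]}=[C]^\perp\otimes\Ar$, so $\overline{\mathcal{A}}(X)\cap W^{[C]}$ contains a nonempty open subset of $W^{[C]}$, and $W^{[C]}$ is a face. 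Note also that classes $C$ with $C^2\geq 0$ cannot give faces: if $C^2\geq 0$ and $C$ is effective then $[C]\in\overline{\mathcal{C}^+}\setminus\{0\}$, so $x\cdot[C]>0$ for all $x\in\mathcal{C}^+$ and $W^{[C]}\cap\mathcal{C}^+=\emptyset$.

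The converse of the first assertion is the step requiring the most care. Let $W^\alpha$ be a face and choose $H_0$ in its relative interior. Then $H_0\in\mathcal{C}^+$ is nef and big, and it is not ample since it lies on the boundary of $\overline{\mathcal{A}}(X)$; by Nakai--Moishezon/Kleiman there is an irreducible curve $C$ with $H_0\cdot C=0$, and by the Hodge index theorem applied to $H_0^\perp$, which is negative definite because $H_0^2>0$, we get $C^2<0$. To identify $W^{[C]}$ with $W^\alpha$, I would use that $H_0$ lies in the relative interior of the face: hence $H_0+\epsilon v\in\overline{\mathcal{A}}(X)$ for every $v\in\alpha^\perp$ and all small $\epsilon$ of either sign. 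Since $\overline{\mathcal{A}}(X)\subseteq\{x: x\cdot[C]\geq 0\}$ and $H_0\cdot[C]=0$, this forces $v\cdot[C]=0$ for all $v\in\alpha^\perp$, so $[C]\in\Ar\alpha$, with positive multiplier because the two preferred half-spaces agree; thus $W^\alpha=W^{[C]}$. The second assertion then follows at once: if $[C_1]$ and $[C_2]$ define the same oriented wall, then $[C_1]=\lambda[C_2]$ with $\lambda>0$, so $C_1\cdot C_2=\lambda C_2^2<0$, which is impossible for distinct irreducible curves unless $C_1=C_2$.

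For the third assertion, the direction ($\Leftarrow$) is again a direct application of Proposition~\ref{constructnef}: if $(C_i\cdot C_j)$ is negative definite, the resulting nef and big $H$ is orthogonal to every $C_i$, hence lies in $\bigcap_i W^{\alpha_i}\cap\overline{\mathcal{A}}(X)$. For ($\Rightarrow$), take $H$ in the intersection; it is nef and big with $H\cdot C_i=0$, so every $[C_i]$ lies in the negative definite lattice $H^\perp$, making $(C_i\cdot C_j)$ negative semidefinite automatically. The remaining point is linear independence of the $[C_i]$, which upgrades semidefinite to definite. Given a relation $\sum_i a_i[C_i]=0$, I would split it into positive and negative parts $\sum_{i\in P}a_iC_i$ and $\sum_{j\in N}(-a_j)C_j$; these are effective with no common components, so their common class $D$ satisfies $D^2\geq 0$ since distinct irreducible curves meet non-negatively, while $D\in H^\perp$ forces $D^2\leq 0$. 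Hence $D=0$, and pairing the surviving positive-coefficient relation with an ample class yields a contradiction unless all coefficients vanish. Linear independence then gives negative definiteness of $(C_i\cdot C_j)$, completing the proof.
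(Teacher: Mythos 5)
Your proof is correct. The paper gives no written proof of this corollary (it is stated with a \qed as a standard consequence of Proposition~\ref{constructnef}, with \cite{Fried3} as the reference), and your argument — Proposition~\ref{constructnef} to produce faces and to handle the negative definite direction of the last claim, the Hodge index theorem plus Nakai--Moishezon to show every face comes from an irreducible curve of negative square, and the standard positive/negative-part decomposition to upgrade semidefiniteness to definiteness via linear independence — is exactly the standard argument the paper is invoking.
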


Returning to an anticanonical pair $(Y,D)$, it is  easy to describe all of the curves of negative self-intersection on $Y$:

\begin{definition}\label{defminustwo}
An irreducible curve $C$ on $Y$ is  a \textsl{$-2$-curve}  if $C\cong \Pee^1$,  $C^2 = -2$, and $C \neq D_i$ for any $i$. Note that, if $C$ is a $-2$-curve, then $C \cap D =\emptyset$.
\end{definition}

An easy exercise in adjunction shows:

\begin{lemma} Let $(Y,D)$ be an anticanonical pair and let $C$ be an irreducible curve on $Y$. If $C^2 < 0$, then either $C$ is an interior exceptional curve, $C$ is a $-2$-curve, or $C$ is a component of $D$.
\qed
\end{lemma}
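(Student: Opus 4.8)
The plan is to use the adjunction formula together with the fact that $-K_Y = D$. Let $C$ be an irreducible curve on $Y$ with $C^2 < 0$. The adjunction formula gives $2p_a(C) - 2 = C^2 + C\cdot K_Y = C^2 - C\cdot D$, where $p_a(C) \geq 0$ is the arithmetic genus. First I would dispose of the case where $C$ is a component of $D$, so assume from now on that $C \neq D_i$ for all $i$; then $C \cdot D \geq 0$ since $D$ is effective and $C$ meets it properly.

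Rearranging adjunction gives $C \cdot D = C^2 - 2p_a(C) + 2$. Since $C^2 \leq -1$ and $p_a(C) \geq 0$, we have $C \cdot D \leq -1 - 0 + 2 = 1$, and combined with $C \cdot D \geq 0$ this forces $C \cdot D \in \{0, 1\}$. The next step is to combine this with the constraint on $C^2$. If $C \cdot D = 0$, then $C^2 = 2p_a(C) - 2 \geq -2$, so $C^2 \in \{-2, -1\}$ (using $C^2 < 0$), and in both subcases $p_a(C)$ is forced: $C^2 = -2$ gives $p_a(C) = 0$ (a smooth rational curve disjoint from $D$, i.e.\ a $-2$-curve), while $C^2 = -1$ gives $2p_a(C) = 1$, which is impossible for an integer $p_a$. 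If $C \cdot D = 1$, then $C^2 = 2p_a(C) - 3$, so the only value with $C^2 < 0$ and $p_a(C) \geq 0$ an integer is $p_a(C) = 0$, giving $C^2 = -3$ — but wait, I must recheck, since this should yield $-1$.

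Let me recompute carefully to land on the correct list. From $C \cdot D = C^2 - 2p_a(C) + 2$ with $C \cdot D \geq 0$ and $C^2 < 0$: the case $C \cdot D = 1$ gives $C^2 = 2p_a(C) - 1$, which is odd; the only negative odd value available with $p_a(C) \geq 0$ is $C^2 = -1$ at $p_a(C) = 0$, an interior exceptional curve. The case $C \cdot D = 0$ gives $C^2 = 2p_a(C) - 2$, even, whose only negative value is $C^2 = -2$ at $p_a(C) = 0$, a smooth rational curve with $C \cdot D = 0$; I would then note that by the adjunction/genus computation $C \cong \Pee^1$, and $C \cap D = \emptyset$ since $C \cdot D = 0$ and $D$ is effective, so $C$ is a $-2$-curve in the sense of Definition~\ref{defminustwo}. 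This exhausts all possibilities, proving the trichotomy.

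The only subtlety — hardly an obstacle — is the bookkeeping of which residue classes $C^2$ and $C \cdot D$ may lie in, and the need to justify that an irreducible curve with $p_a(C) = 0$ is genuinely $\Pee^1$ (which follows since arithmetic genus zero forces the curve to be a smooth rational curve) and that $C \cdot D = 0$ upgrades to $C \cap D = \emptyset$ (immediate from effectivity of $D$ and irreducibility of $C \neq D_i$). Everything else is the elementary parity and inequality analysis of the single adjunction relation $C \cdot D = C^2 - 2p_a(C) + 2$.
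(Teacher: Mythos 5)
Your proof is correct and is exactly the argument the paper has in mind: the lemma is stated there as ``an easy exercise in adjunction,'' and your case analysis of $C\cdot D \in \{0,1\}$ via $C\cdot D = C^2 - 2p_a(C) + 2$, with the parity argument forcing $(C^2, p_a(C)) = (-1,0)$ or $(-2,0)$, is that exercise carried out (including the correct identification of $p_a(C)=0$ irreducible curves with $\Pee^1$ and the upgrade of $C\cdot D = 0$ to $C\cap D = \emptyset$). The only blemish is the transient arithmetic slip ($C^2 = 2p_a(C)-3$), which you caught and fixed before concluding.
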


\begin{corollary} $\overline{\mathcal{A}}(Y)$ is the set of all $x\in \mathcal{C}^+$ such that  $x\cdot [D_i]\geq 0$, $x\cdot [E] \geq 0$ for all exceptional curves $E$  and $x\cdot [C] \geq 0$ for all $-2$-curves $C$. Moreover, if   $\alpha$ is the class associated to an exceptional or $-2$-curve, or $\alpha =[D_i]$ for some $i$ such that $D_i^2< 0$  then $W^\alpha$ is a face of $\overline{\mathcal{A}}(Y)$, and if $\alpha, \beta$ are two such classes, $W^\alpha = W^\beta$ $\iff$ $\alpha =\beta$.  \qed
\end{corollary}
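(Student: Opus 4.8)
The plan is to derive both assertions by combining the immediately preceding Lemma---which classifies every irreducible curve of negative self-intersection on $Y$ as an interior exceptional curve, a $-2$-curve, or a component $D_i$---with the general surface-theoretic Corollary describing the faces of $\overline{\mathcal{A}}(X)$ for an arbitrary smooth projective surface $X$. In effect the corollary should be just a translation of the general statement into the specific vocabulary of anticanonical pairs, so almost no new argument is needed beyond the two cited results.

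For the first assertion I would first reduce the defining inequalities for $\overline{\mathcal{A}}(Y)$ to those coming from curves of negative self-intersection. By definition $\overline{\mathcal{A}}(Y)$ is the closure in $\mathcal{C}^+$ of the ample cone, so every $x\in \overline{\mathcal{A}}(Y)$ is nef and in particular satisfies $x\cdot [C]\geq 0$ for every irreducible curve $C$; this gives the inclusion $\subseteq$. For the reverse inclusion, suppose $x\in \mathcal{C}^+$ satisfies $x\cdot [D_i]\geq 0$ for all $i$, $x\cdot [E]\geq 0$ for all exceptional curves $E$, and $x\cdot [C]\geq 0$ for all $-2$-curves $C$. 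To conclude $x$ is nef, hence lies in $\overline{\mathcal{A}}(Y) = \{x \in \mathcal{C}^+ : x \text{ is nef}\}$, I must check $x\cdot [C]\geq 0$ for an arbitrary irreducible curve $C$. If $C^2<0$, then by the Lemma $C$ is an interior exceptional curve, a $-2$-curve, or some $D_i$, and the inequality holds by hypothesis. If $C^2\geq 0$ the inequality is automatic: $[C]$ has non-negative square and meets an ample class positively, so it lies in the closure $\overline{\mathcal{C}^+}$ of the positive cone on the ample side, and any two classes of $\overline{\mathcal{C}^+}$ pair non-negatively.

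For the second assertion I would invoke the general Corollary directly: the faces of $\overline{\mathcal{A}}(Y)$ are exactly the walls $W^\alpha$ with $\alpha=[C]$ the class of an irreducible curve of negative self-intersection, and distinct such curves give distinct faces. Applying the Lemma identifies these curves as precisely the interior exceptional curves, the $-2$-curves, and the components $D_i$ with $D_i^2<0$, which yields the list of face classes in the statement (the components with $D_i^2\geq 0$ impose redundant but harmless inequalities above). For the injectivity $W^\alpha=W^\beta \iff \alpha=\beta$, the direction $\Leftarrow$ is trivial; for $\Rightarrow$, equality of the oriented walls forces $\beta=\lambda\alpha$ with $\lambda>0$, and since $\alpha=[C_1]$, $\beta=[C_2]$ are classes of irreducible negative curves, $C_1\cdot C_2=\lambda C_2^2<0$ forces $C_1$ and $C_2$ to share a component, hence $C_1=C_2$ and $\alpha=\beta$.

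The main obstacle is the light-cone step in the first assertion---verifying that curves of non-negative self-intersection impose no constraint on $\mathcal{C}^+$. This is standard: the intersection form on $H^2(Y;\Ar)$ has signature $(1,b_2(Y)-1)$, so the closed forward cone $\overline{\mathcal{C}^+}$ is self-dual and any two of its elements pair non-negatively by the Hodge index theorem. Everything else is bookkeeping, matching the general surface statements to the anticanonical classification supplied by the preceding Lemma.
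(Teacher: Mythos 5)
Your proof is correct and follows exactly the route the paper intends: the corollary is stated there with no proof (marked as immediate), being precisely the combination of the general result on faces of $\overline{\mathcal{A}}(X)$ for a smooth projective surface with the lemma classifying irreducible curves of negative self-intersection on an anticanonical pair. Your filling in of the light-cone step (curves of non-negative square impose no condition on $\mathcal{C}^+$) and of the wall-distinctness argument supplies the details the paper leaves to the reader.
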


A fact that we shall use repeatedly is:

\begin{lemma}\label{performinustwo} Let  $\beta =[C]$ be the class of a $-2$-curve on $Y$. Then $\varphi_Y(\beta) = 1$. 
\end{lemma}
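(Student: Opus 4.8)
The plan is to observe that $\varphi_Y(\beta)$ depends only on the isomorphism class of the restricted line bundle $L_\beta|D$, and that this restriction is trivial for a purely geometric reason: a $-2$-curve is disjoint from $D$. Everything then reduces to the fact that $\psi$ sends the trivial bundle to $1$.

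First I would record the disjointness $C\cap D=\emptyset$ directly from adjunction. Since $C\cong\Pee^1$ with $C^2=-2$, we have $K_Y\cdot C = -2-C^2 = 0$, and therefore $D\cdot C = -K_Y\cdot C = 0$. As $C$ is irreducible and distinct from every $D_i$, each intersection number $C\cdot D_i$ is non-negative, and since they sum to $0$ they must all vanish; thus $C$ meets no component of $D$. In particular $\beta\cdot [D_i]=0$ for all $i$, so $\beta\in\Lambda(Y,D)$ and $\varphi_Y(\beta)$ is defined.

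Next I would identify $L_\beta$ with $\scrO_Y(C)$ and exploit its tautological section. The effective divisor $C$ furnishes a global section $s\in H^0(Y;\scrO_Y(C))$ whose zero scheme is exactly $C$. Because $C\cap D=\emptyset$, the restriction $s|D$ is a nowhere-vanishing section of $L_\beta|D = \scrO_Y(C)|D$, which therefore trivializes it, giving $L_\beta|D\cong\scrO_D$. (Consistently, its multidegree is $(C\cdot D_1,\dots,C\cdot D_r)=(0,\dots,0)$, so $L_\beta|D$ indeed lies in $\Pic^0 D$, where $\varphi_Y$ takes values.)

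Finally, since $\psi\colon\Pic^0 D\to\mathbb{G}_m$ of Lemma~\ref{Pic0isomG} is a group isomorphism, it carries the trivial bundle to the identity, whence $\varphi_Y(\beta)=\psi(L_\beta|D)=\psi(\scrO_D)=1$. I do not expect any genuine obstacle: the whole content of the statement is the disjointness $C\cap D=\emptyset$, after which the vanishing of the period is formal. The only point meriting a line of care is the passage from ``$C$ disjoint from $D$'' to ``$L_\beta|D$ trivial,'' which is exactly the observation that the defining section of an effective divisor restricts to a trivializing section away from that divisor.
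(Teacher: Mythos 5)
Your proof is correct and is essentially the paper's argument: the paper's one-line proof ("This is clear since $\scrO_Y(C)|D = \scrO_D$") rests on exactly the disjointness $C\cap D=\emptyset$ (recorded in the definition of a $-2$-curve, via the same adjunction computation you give) and the triviality of the restricted bundle. You have simply written out the details that the paper leaves implicit.
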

\begin{proof} This is clear since $\scrO_Y(C)|D =\scrO_D$.
\end{proof}

The naive converse to Lemma~\ref{performinustwo}, that if $\beta \in \Lambda$ with $\beta^2=-2$ and $\varphi_Y(\beta) = 1$, then $\pm\beta$ is the class of a $-2$-curve on $Y$ (or a union of such) does not hold. The correct formulation of a converse to Lemma~\ref{performinustwo} is given in Theorem~\ref{mainprop}.

\begin{definition}\label{defgener} The pair $(Y,D)$ is \textsl{generic} if there does not exist a $-2$-curve on $Y$.
\end{definition}

 As a corollary of Lemma~\ref{performinustwo} and the local surjectivity of the period map, we have:
 
 \begin{corollary}\label{gener1} The image of the set of generic pairs $(Y,D)$ under the period map is contained in the complement in $\Hom(\Lambda, \mathbb{G}_m)$ of a countable union of proper subvarieties. Hence a very general small deformation of $(Y,D)$ is generic. \qed
 \end{corollary}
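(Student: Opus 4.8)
The plan is to take the ``countable union of proper subvarieties'' to be the non-generic locus itself, and to use Lemma~\ref{performinustwo} solely to guarantee that each of its pieces is a \emph{proper} subvariety.

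First I would pass to the source of the period map. By the results just proved the period map is a local isomorphism, so I may choose a marked deformation of $(Y,D)$ with trivial monodromy over a base $U$ and, after shrinking, identify $U$ via $\Phi$ with an open subset of $\Hom(\Lambda,\mathbb{G}_m)$; each class $\beta\in\Lambda$ then determines a line bundle $L_\beta$ on every fibre $Y_u$, and the period is $u\mapsto\varphi_{Y_u}$. Put $R=\{\beta\in\Lambda:\beta^2=-2\}$, a countable set since $\Lambda$ is a lattice of finite rank. For $\beta\in R$ set $V_\beta=\{\varphi:\varphi(\beta)=1\}$, let $N_\beta\subseteq U$ be the locus of those $u$ for which $\beta$ is the class of a $-2$-curve on $Y_u$, and let $N=\bigcup_{\beta\in R}N_\beta$ be the non-generic locus, i.e.\ the set of $u$ such that $Y_u$ carries a $-2$-curve.

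The key step is that each $N_\beta$ is a \emph{proper} closed analytic subvariety. Properness is exactly the content of Lemma~\ref{performinustwo}: if $u\in N_\beta$ then $\varphi_{Y_u}(\beta)=1$, so $N_\beta\subseteq V_\beta$, and since $\beta^2=-2\neq 0$ the character $\varphi\mapsto\varphi(\beta)$ is nontrivial, whence $V_\beta$ is a codimension-one (hence proper) subvariety of the torus. To see that $N_\beta$ is closed I would argue that a flat limit of $-2$-curves of the fixed class $\beta$ is an effective divisor on the limit fibre of class $\beta$, of self-intersection $-2$, and disjoint from $D$ (the $-2$-curves avoid $\mathcal D$, and this is preserved in the limit since $\beta\cdot D=0$). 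Such a divisor is not nef, so it has a component of negative self-intersection; being disjoint from $D$, that component is a $-2$-curve by the adjunction lemma classifying irreducible curves with $C^2<0$. Hence the limit fibre is again non-generic, giving $\overline{N_\beta}\subseteq N$ for every $\beta$, and therefore $N=\bigcup_{\beta\in R}\overline{N_\beta}$ is genuinely a countable union of proper closed analytic subvarieties.

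With $N$ in hand the corollary is immediate. Because $\Phi$ is a local isomorphism, the generic and non-generic fibres partition $U$, so the image of the generic pairs is exactly $U\setminus N$, which lies in the complement of the countable union $N$ of proper subvarieties; this is the asserted containment. For the final sentence, a countable union of proper closed analytic subvarieties of a connected complex manifold is meager, so its complement is dense (Baire); thus a very general $u\in U$ lies off $N$, and the corresponding small deformation $(Y_u,D_u)$, having no $-2$-curve, is generic. The one point needing genuine work beyond Lemma~\ref{performinustwo} is the closedness of $N_\beta$, i.e.\ the control of flat limits of $-2$-curves; everything else is formal once Lemma~\ref{performinustwo} has placed each piece inside the proper subvariety $V_\beta$.
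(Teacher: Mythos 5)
There is a genuine gap, and it sits exactly where you declare the statement proved: in what the ``countable union of proper subvarieties'' is allowed to be. The subvarieties in the corollary live in the torus $\Hom(\Lambda,\mathbb{G}_m)$, and the only such subvarieties your argument produces are the kernels $V_\beta=\{\varphi:\varphi(\beta)=1\}$. Taking the union to be ``the non-generic locus itself'' is circular: the sets $\overline{N_\beta}$ are closed analytic subsets of the local chart $U$, not subvarieties of $\Hom(\Lambda,\mathbb{G}_m)$, so the identity ``image of generic pairs $=U\setminus N$'' is a tautology, not the asserted containment. If instead you use the genuine torus subvarieties $V_\beta$ over all $\beta\in\Lambda$ with $\beta^2=-2$, the containment you want goes the wrong way: a generic pair can perfectly well satisfy $\varphi_Y(\beta)=1$ for a class $\beta$ of square $-2$ that is not a root --- this is precisely the failure of the naive converse to Lemma~\ref{performinustwo} noted immediately after that lemma (by surjectivity of the period map one can arrange $\Ker\varphi_Y=\Zee\beta$ for a non-root $\beta$, and then $Y$ has no $-2$-curves although $\varphi_Y(\beta)=1$). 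The sharp description of the generic locus, with the union taken over the roots $R$ only, is Lemma~\ref{gener2}, and it needs Theorem~\ref{mainprop}, which is not available at this point. What Lemma~\ref{performinustwo} actually gives, and what the paper's one-line proof uses, is the opposite inclusion: every non-generic pair has period in $\bigcup_{\beta^2=-2}V_\beta$; equivalently, any small deformation whose period avoids this union is generic. Combined with local surjectivity of the period map, this yields the operative second sentence at once.

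Your flat-limit argument for the closedness of $N_\beta$ compounds the problem: it is both unnecessary and incomplete. Unnecessary, because you yourself establish $N_\beta\subseteq \Phi^{-1}(V_\beta)$, and $\Phi^{-1}(V_\beta)$ is already a closed analytic subset of $U$ with empty interior (as $\Phi$ is a local isomorphism and $V_\beta$ is a proper subvariety of the torus, $\beta\neq 0$ in the free lattice $\Lambda$); the Baire argument then applies directly to $\bigcup_\beta\Phi^{-1}(V_\beta)\supseteq N$ with no closure of $N_\beta$ ever needed. Incomplete, because the assertion that disjointness from $D$ ``is preserved in the limit since $\beta\cdot D=0$'' is not a proof: a limit of curves disjoint from $\mathcal{D}$ can acquire components along $D$, since intersection numbers can cancel --- a component $D_j$ with $D_j^2\leq -3$ contributes $D_j\cdot D=D_j^2+2<0$ and can offset interior exceptional components with $E\cdot D=1$, all while keeping the total class $\beta$ orthogonal to every $[D_k]$. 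So your claim that the negative component of the limit divisor must be a $-2$-curve (rather than exceptional or a component of $D$) is exactly what remains unproven, and with it the inclusion $\overline{N_\beta}\subseteq N$. The fix is simply to drop this step and argue as the paper does: by Lemma~\ref{performinustwo} the non-generic locus is contained in the countable union of the closed, nowhere dense sets $\Phi^{-1}(V_\beta)$, whence every very general small deformation has period off $\bigcup_\beta V_\beta$ and is therefore generic.
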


We now consider the behavior of nef divisors on $Y$, following \cite{Fried1} and Harbourne \cite{Harbourne}. The discussion here extends verbatim to the elliptic and triangle cases as well. The paper \cite{Harbourne} also deals with the case where $D\in |-K_Y|$ is not necessarily reduced.  

In general we will have to make the assumption that no component of $D$ is a fixed component of $L$. For nef and big divisors, this is a mild assumption:

\begin{lemma}\label{LrestrD} Let $L$ be a nef and big divisor on $Y$. Then $L$ is effective, i.e.\ $H^0(Y; L) \neq 0$,  and the restriction map
$$\rho\colon H^0(Y; L) \to H^0(D; L|D)$$
is surjective. Finally, $h^1(Y;L) = 1$ if $L|D =\scrO_D$ and $h^1(Y;L) = 0$ otherwise.
\end{lemma}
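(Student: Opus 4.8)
The plan is to prove the three assertions of Lemma~\ref{LrestrD} in order, using the anticanonical structure $K_Y = -D$ and the exact sequence
$$0 \to \scrO_Y(L - D) \to \scrO_Y(L) \to \scrO_D(L|D) \to 0,$$
which is the sequence $0 \to \scrO_Y(-D) \to \scrO_Y \to \scrO_D \to 0$ twisted by $L$. The key observation is that $L - D = L + K_Y$, so $H^i(Y; \scrO_Y(L-D)) = H^i(Y; \scrO_Y(L + K_Y))$ is governed by Serre duality: it is dual to $H^{2-i}(Y; \scrO_Y(-L))$.

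\medskip
\noindent\textbf{Effectivity and $h^1$.} First I would compute the cohomology of $\scrO_Y(L-D) = \scrO_Y(L+K_Y)$. By Serre duality $h^i(Y; L+K_Y) = h^{2-i}(Y; -L)$. Since $L$ is nef and big, $L^2 > 0$, so $-L$ cannot be effective and $H^0(Y; -L) = 0$; hence $h^2(Y; L+K_Y) = 0$. Likewise, because $Y$ is rational we have $H^1(Y;\scrO_Y) = H^2(Y;\scrO_Y) = 0$, and the Kawamata--Viehweg (or, here, the elementary rational-surface) vanishing for the nef and big divisor $L$ gives $h^1(Y; L+K_Y) = h^1(Y; -L) = 0$. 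Finally Riemann--Roch together with $L^2>0$ and $L\cdot K_Y = -L\cdot D \le 0$ shows $\chi(Y; L) > 0$, and since $h^1 = h^2 = 0$ for $L$ itself (again by vanishing and $H^2(Y;-L)\spcheck = H^0(Y;L+K_Y)$ considerations) we get $h^0(Y; L) > 0$, so $L$ is effective.

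\medskip
\noindent\textbf{Surjectivity of $\rho$.} The long exact cohomology sequence of the displayed short exact sequence reads
$$H^0(Y; L) \xrightarrow{\rho} H^0(D; L|D) \to H^1(Y; L-D) = H^1(Y; L+K_Y).$$
By the computation above, $H^1(Y; L+K_Y) = h^1(Y; -L)$, which vanishes by the vanishing theorem for the nef and big divisor $L$. Hence the connecting map is zero and $\rho$ is surjective.

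\medskip
\noindent\textbf{The value of $h^1(Y; L)$.} For the last assertion I would continue the long exact sequence one step further, or more cleanly use that $h^1(Y; L) = h^1(Y; -L - K_Y) = h^1(Y; D - L)$ by Serre duality. The cleanest route is to exploit the sequence $0 \to \scrO_Y(-D) \to \scrO_Y \to \scrO_D \to 0$ twisted by $L$ together with the already-established vanishing $H^1(Y; L+K_Y) = H^2(Y;L+K_Y)=0$: from the tail of the long exact sequence, $h^1(Y;L)$ equals $\operatorname{coker}\big(H^0(Y;L)\to H^0(D;L|D)\big)$ contributions plus the cokernel $H^1(D; L|D)$, but since $\rho$ is surjective the relevant term is $h^1(D; L|D)$, i.e.\ $h^1(Y;L) = h^1(D; L|D)$. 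By Serre duality on the Gorenstein curve $D$ (with $\omega_D \cong \scrO_D$), $h^1(D; L|D) = h^0(D; (L|D)\spcheck)$, which by Lemma~\ref{Loncycle}(i) is $1$ if $L|D \cong \scrO_D$ and $0$ otherwise (noting that $L|D$ has non-negative multidegree since $L$ is nef).

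\medskip
The step I expect to be the main obstacle is pinning down the exact vanishing statements: one must confirm that nef-and-big is enough to kill $H^1(Y; -L)$ on a rational surface, rather than merely ample, and then carefully track that the dualizing sheaf $\omega_D \cong \scrO_D$ lets the computation of $h^1(Y;L)$ reduce to $h^0(D; (L|D)\spcheck)$, where Lemma~\ref{Loncycle}(i) can be applied. The non-negativity of the multidegree of $L|D$ (from nefness of $L$) is what makes the two cases of part (i) exhaustive.
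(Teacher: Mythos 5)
Your proposal follows essentially the same route as the paper's proof: the restriction sequence twisted by $L$, Serre duality to convert the cohomology of $L+K_Y = L - D$ into that of $-L$, Ramanujam (Kawamata--Viehweg) vanishing for the nef and big divisor $L$, and finally the identification $h^1(Y;L) = h^1(D;L|D) = h^0(D;(L|D)^{-1})$ via Serre duality on $D$ (using $\omega_D \cong \scrO_D$) together with Lemma~\ref{Loncycle}(i). Your surjectivity argument and the computation of $h^1(Y;L)$ are correct as written and match the paper's.

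One assertion in your effectivity paragraph, however, is false, though fortunately inessential: you claim that ``$h^1 = h^2 = 0$ for $L$ itself.'' No vanishing theorem gives $h^1(Y;L)=0$ here, and indeed this would contradict the very lemma you are proving, which asserts $h^1(Y;L)=1$ whenever $L|D \cong \scrO_D$ (the vanishing theorems apply to $K_Y + L = L - D$, not to $L$). What effectivity actually requires is only $h^2(Y;L)=0$, which holds because $h^2(Y;L) = h^0(Y; K_Y - L) = h^0(Y; -L-D) = 0$ (pair $-L-D$ with an ample class); then $h^0(Y;L) = \chi(Y;L) + h^1(Y;L) \geq \chi(Y;L) > 0$. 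Relatedly, your parenthetical ``$H^2(Y;-L)\spcheck = H^0(Y;L+K_Y)$'' is Serre duality applied to $-L$, which is not the duality relevant to $h^2(Y;L)$. With that claim deleted and replaced by the duality against $H^0(Y;K_Y - L)$, your argument is exactly the paper's: $h^2(Y;L)=0$ plus Riemann--Roch gives effectivity, and the rest proceeds as you wrote.
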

\begin{proof} The first statement follows from Riemann-Roch: since $L$ is nef, $H^2(Y; L) =0$ as it is Serre dual to $$H^0(Y; L^{-1} \otimes K_Y) =H^0(Y; L^{-1} \otimes \scrO_Y(-D)) = 0.$$
 Then $h^0(Y;L) -h^1(Y; L) = \frac12(L^2+L\cdot D) + 1>0$, and hence $h^0(Y;L)>0$.  To see the second statement, the cokernel of $\rho$ is contained in the group 
$$H^1(Y; L\otimes \scrO_Y(-D)) = H^1(Y; L\otimes K_Y),$$ which is Serre dual to $H^0(Y; L^{-1})$. By Ramanujam's vanishing theorem, since $L$ is nef and big, $H^1(Y; L^{-1}) =0$, and hence $\rho$ is surjective. Finally, from the exact sequence
$$0 = H^1(Y;L\otimes \scrO_Y(-D)) \to  H^1(Y;L) \to H^1(D; L|D) \to H^2(Y;L\otimes \scrO_Y(-D))$$
and the fact that $H^2(Y;L\otimes \scrO_Y(-D))$ is Serre dual to $H^0(Y; L^{-1})=0$, we see that $H^1(Y;L) \cong H^1(D; L|D)$. By Serre duality again, $H^1(D; L|D) \cong H^0(D; L^{-1}|D)$. But $L^{-1}|D$ has nonpositive degree on every component. By Lemma~\ref{Loncycle},  $h^0(D; L^{-1}|D) =1$ $\iff$ $L^{-1}|D =\scrO_D$ $\iff$ $L|D =\scrO_D$, and $h^0(D; L^{-1}|D) =0$ otherwise. Thus, the same holds for  $h^1(Y;L)$.
\end{proof}

\begin{corollary}\label{nocompD} If $L$ is nef and big and $\deg (L|D) > 0$, then no component of $D$ is a fixed component of $L$. If $L$ is nef and big and $\deg (L|D_i) =0$ for every $i$, then either $L|D \cong \scrO_D$, i.e.\ $\varphi_Y(L) = 1$, and there exists a section of $L$ which is nowhere vanishing along $D$, or $L|D$ is not trivial and every component of $D$ is a fixed component of $L$. 
\end{corollary}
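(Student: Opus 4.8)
The plan is to deduce everything from the surjectivity of the restriction map $\rho\colon H^0(Y;L) \to H^0(D; L|D)$ established in Lemma~\ref{LrestrD}, together with the combinatorial description of sections of line bundles on the cycle $D$ in Lemma~\ref{Loncycle}. Since $L$ is nef, each $e_i := \deg(L|D_i) = L\cdot D_i$ is nonnegative, so $L|D$ always has multidegree $(e_1,\dots,e_r)$ with $e_i \geq 0$; this is what permits the various parts of Lemma~\ref{Loncycle} to be applied. The key mechanism throughout is that, once surjectivity of $\rho$ is invoked to pass freely between sections on $Y$ and sections on $D$, a component $D_j$ fails to be (resp.\ is) a fixed component of $L$ precisely when some section (resp.\ no section) of $L|D$ is nonzero along $D_j$.

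For the first statement, suppose $\deg(L|D) = \sum_i e_i > 0$. I would argue by contradiction: if some $D_j$ were a fixed component, then every $s\in H^0(Y;L)$ would vanish along $D_j$, so $\rho(s)$ would restrict to zero on $D_j$ for all $s$; by surjectivity of $\rho$ this would force every section in $H^0(D; L|D)$ to vanish on $D_j$. But since $e_i\geq 0$ for all $i$ and $e_i>0$ for some $i$, part (ii) of Lemma~\ref{Loncycle} produces a section of $L|D$ that is not identically zero on any component, in particular not on $D_j$, a contradiction.

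For the second statement, $L|D$ has multidegree $(0,\dots,0)$, i.e.\ $L|D\in\Pic^0 D$, and I would split into two cases according to whether $L|D$ is trivial. If $L|D\cong\scrO_D$, then $\varphi_Y(L)=\psi(L|D)=1$ by definition, and $H^0(D;L|D)=H^0(D;\scrO_D)$ is one-dimensional, spanned by the nowhere-vanishing constant section; lifting this section through the surjection $\rho$ yields a section of $L$ that is nowhere zero along $D$. If instead $L|D$ is nontrivial, then all $e_i=0$ and $L|D\not\cong\scrO_D$, so part (i) of Lemma~\ref{Loncycle} gives $h^0(D;L|D)=0$; surjectivity of $\rho$ then forces $\rho=0$, i.e.\ every section of $L$ restricts to zero on $D$ and hence vanishes along each component $D_i$. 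Since $L$ is effective by Lemma~\ref{LrestrD}, this says exactly that every component of $D$ is a fixed component of $L$.

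I do not expect a genuine obstacle here: all the analytic content (the vanishing theorems and the surjectivity of $\rho$) is already packaged in Lemma~\ref{LrestrD}, and what remains is a careful case analysis organized by the multidegree of $L|D$, using Lemma~\ref{Loncycle} to read off the existence or non-existence of suitable sections. The only point requiring slight care is the translation between ``$D_j$ is a fixed component of $|L|$'' and ``$\rho(s)$ vanishes on $D_j$ for every $s$,'' which is precisely where the surjectivity of $\rho$ does the essential work.
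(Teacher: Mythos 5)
Your proof is correct and is exactly the argument the paper has in mind: the paper's own proof simply says the corollary is immediate from Lemma~\ref{LrestrD} and Lemma~\ref{Loncycle}, and your write-up spells out precisely that deduction (surjectivity of $\rho$ plus the multidegree case analysis, with nefness guaranteeing $e_i \geq 0$). No gaps.
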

\begin{proof} This is immediate from the previous lemma and Lemma~\ref{Loncycle}.
\end{proof}

\begin{theorem}\label{nefdivisors} Let $L$ be a nef and big line bundle on $Y$ and suppose that no component of $D$ is a fixed component of $L$. Then either $L$ has no fixed components or the possibilities for its fixed components as a divisor are given as follows:
\begin{enumerate}
\item[\rm(i)] The fixed component is a $-2$-curve $C$, and $L =\scrO_Y(kF+C)$, where $F$ is a smooth elliptic curve, $F\cdot D = 0$, and $k\geq 2$.
\item[\rm(ii)] The fixed component is a generalized exceptional curve in the sense of Definition~\ref{defgenexcep}, i.e.\ a chain of length $b\geq 1$ of curves $C_1, \dots, C_b$, where $C_i$ is a $-2$-curve for $i\leq b-1$, $C_b$ is an interior exceptional curve, and $C_i \cdot C_j = 1$ if $j=i\pm 1$ and $0$ otherwise. In this case, $L=\scrO_Y(G_0 + \sum_{i=1}^bC_i )$, where $G_0$ is either a smooth irreducible curve with $G_0^2 >0$, $G_0\cdot C_1 = 1$,   $G_0\cdot C_i =0$ for $i>1$, and  $G_0\cdot D =0$, or $G_0 = kF$, where $k\geq 1$, $F$ is a smooth elliptic curve, and $F\cdot C_1 = 1$, $F\cdot C_i  =0$ for $i>1$, and  $F^2=F\cdot D =0$.
\end{enumerate}
Finally, if $L$ has no fixed components, then $L$ has a base point $\iff$ $L\cdot D =1$, in which case the unique smooth point $p\in D$ such that $L|D \cong \scrO_D(p)$ is the unique base point of $L$. 
\end{theorem}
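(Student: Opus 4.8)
The plan is to work with the decomposition $|L| = |M| + \Phi$ into its moving part $M$ and its fixed part $\Phi$ (taken with multiplicities), and to use the restriction to $D$ together with the cohomological control of Lemma~\ref{LrestrD} as the main engine. First I would record that $M$ is nef: since $|M|$ has no fixed component, a general member contains no prescribed irreducible curve, so $M\cdot C\ge 0$ for every irreducible $C$. Since $L$ is big, Riemann--Roch gives $h^0(Y;L)\ge 2$, so $M\neq 0$ and $|M|$ genuinely moves. Multiplying by a section cutting out $\Phi$ gives the exact sequence
$$0 \to \scrO_Y(M) \to \scrO_Y(L) \to L|\Phi \to 0,$$
and because $\Phi$ is fixed the map $H^0(Y;M)\to H^0(Y;L)$ is an isomorphism; hence $h^0(\Phi;L|\Phi)\le h^1(Y;M)$. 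Thus fixed components are paid for by $h^1(Y;M)$, and the whole analysis is driven by computing this group.

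Next I would split on $M^2$. If $M^2>0$ then $M$ is nef and big, so by Lemma~\ref{LrestrD} we have $h^1(Y;M)\le 1$, with $h^1(Y;M)=1$ only when $M|D\cong\scrO_D$, i.e.\ $M\cdot D=0$; unless $M\cdot D=0$ the sequence forces $\Phi=0$ (using Lemma~\ref{Loncycle} to exclude a nonzero but trivial-looking fixed part). If $M^2=0$ then the genus formula $p_a(M)=1+\tfrac12(M^2-M\cdot D)$ forces $M\cdot D\in\{0,2\}$; when $M\cdot D=2$ the pencil $|M|$ is a rational ruling with $h^1(Y;M)=0$, so again $\Phi=0$, whereas when $M\cdot D=0$ the class $M$ is a positive multiple $kF$ of a genuine elliptic curve $F$ with $F^2=F\cdot D=0$, and $h^1(Y;kF)$ is now positive, permitting a nonzero fixed part. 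In every case with $\Phi\neq 0$ we therefore have $M\cdot D=0$, so that $L\cdot D=\Phi\cdot D$, and $M$ is either a big class orthogonal to $D$ or of the form $kF$.

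The heart of the argument, and the step I expect to be the main obstacle, is to deduce the precise shape of $\Phi$ from the nef-ness of $L$. Each component $\Phi_i$ has $\Phi_i^2<0$, so by the adjunction classification of negative curves (and since no component of $D$ is fixed) it is a $-2$-curve (Definition~\ref{defminustwo}) or an interior exceptional curve; by Hodge index the reduced support of $\Phi$ is negative definite. The constraints to exploit are the inequalities $L\cdot\Phi_i=M\cdot\Phi_i+\Phi\cdot\Phi_i\ge 0$ together with $h^0(\Phi;L|\Phi)\le h^1(Y;M)$. A fundamental-cycle type analysis of these inequalities on a negative definite configuration of $-1$- and $-2$-curves should force $\Phi$ to be reduced, its dual graph to be a single linear chain with no branch point, and this chain to contain at most one interior exceptional curve, necessarily at the end where it meets the moving part in exactly one point: an end $-2$-curve would give $L\cdot\Phi_{\mathrm{end}}=-1$, violating nef-ness, unless the chain degenerates to a single $-2$-curve $C$ capped by $M=kF$ with $kF\cdot C\ge 2$. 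Ruling out $D$- and $E$-type branchings and higher multiplicities is exactly the delicate combinatorial point. Matching the outcome to the moving part then yields: a single $-2$-curve with $M=kF$, $k\ge 2$, which is case (i); or a generalized exceptional curve (Definition~\ref{defgenexcep}) with $M=G_0$ big or $M=kF$, which is case (ii), the stated intersection numbers $G_0\cdot C_1=1$, $G_0\cdot C_i=0$ for $i>1$, $G_0\cdot D=0$ falling out of the same inequalities.

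Finally, for the statement when $L$ has no fixed components, I would restrict to $D$. By Lemma~\ref{LrestrD} the map $H^0(Y;L)\to H^0(D;L|D)$ is surjective and the base locus of $|L|$ is finite. If $L\cdot D\ge 2$ then $L|D$ is base-point free by Lemma~\ref{Loncycle}(iii), so no point of $D$ is a base point; if $L\cdot D=0$ then $L|D\cong\scrO_D$ by Corollary~\ref{nocompD} and a section nonvanishing along $D$ gives the same conclusion; in both cases a Ramanujam-vanishing argument (as in Lemma~\ref{LrestrD}) confines the finite base locus to $D$, so $|L|$ is base-point free. If $L\cdot D=1$, then by Lemma~\ref{Loncycle}(iv) there is a unique $p\in D_{\mathrm{reg}}$ with $L|D\cong\scrO_D(p)$, and $p$ is the unique base point of $L|D$; since every section of $L$ restricts to $D$, every section vanishes at $p$, so $p$ is a base point of $L$, while the same vanishing argument shows $p$ is the only one. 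Hence $L$ has a base point precisely when $L\cdot D=1$, located at the asserted point $p$.
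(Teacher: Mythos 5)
Your overall framework---the decomposition into moving part $M$ and fixed part $\Phi$, the exact sequence $0 \to \scrO_Y(M) \to \scrO_Y(L) \to L|\Phi \to 0$, and the trichotomy on $(M^2, M\cdot D)$---is exactly the skeleton of the paper's proof (there $G = G_m + G_f$, Cases I--III). But the step you yourself flag as ``the delicate combinatorial point'' is a genuine gap, and it is the heart of the theorem: the constraints you propose to exploit (nefness of $L$, negative definiteness of the support of $\Phi$, and the one-sided bound $h^0(\Phi; L|\Phi) \le h^1(Y;M)$) do \emph{not} suffice to force the chain structure. Concretely, take $M = 2F$ with $F$ elliptic, $F^2 = F\cdot D = 0$, and $\Phi = A + B$ two disjoint $-2$-curves each meeting $F$ transversally in one point. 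Then $L = 2F + A + B$ is nef and big, $L\cdot A = L\cdot B = 0$, $\operatorname{Supp}\Phi$ is negative definite, and $h^0(\Phi; L|\Phi) = 2 = h^1(Y; \scrO_Y(2F))$, so every test on your list is passed---yet case (i) of the theorem forbids a two-component fixed part. What excludes such configurations in the paper is a sharper, two-sided identity, $h^1(\scrO_Y(G_1)) - h^1(\scrO_Y(G)) = \tfrac12\bigl((G_1\cdot G_2) + (G\cdot G_2) + (D\cdot G_2)\bigr)$ (Corollary~\ref{techcor}): here $G_1 = 2F + A$ is nef with $h^1 = 1 \le h^1(\scrO_Y(G))$, forcing $G_2 = B = 0$. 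Reducedness, multiplicity-one attachment, and the absence of branching are likewise obtained one curve at a time by the section-lifting argument of Claim~\ref{techclaim}: if a fixed component $C$ met the accumulated divisor $G' = G_m + C_1 + \cdots + C_{i-1}$ with multiplicity $\ge 2$, the sequence $0 \to \scrO_Y(G') \to \scrO_Y(G'+C) \to \scrO_C(k) \to 0$ together with the relevant $h^1$ computations would produce a member of $|G'+C| \subseteq |G|$ not containing $C$, contradicting fixedness. This iterated cohomological mechanism, not nefness plus a counting inequality, is what pins down the shape of $\Phi$; your sketch names the desired conclusion but supplies no engine that reaches it.

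Two secondary gaps. First, your deduction ``$h^1(Y;M) = 0 \Rightarrow \Phi = 0$'' silently assumes $h^0(\Phi; L|\Phi) > 0$ whenever $\Phi \neq 0$, which is not automatic for non-reduced or disconnected $\Phi$ (Lemma~\ref{Loncycle} concerns line bundles on the cycle $D$, not on $\Phi$; a connected component of $\Phi$ disjoint from $M$ and $D$ could a priori carry a degree-zero bundle with no sections). The paper avoids this entirely: it handles such components by Riemann--Roch on $\scrO_Y(G_2)$ itself (Lemma~\ref{firsttechlemma}(iii) with Lemma~\ref{thirdtechlemma}(i)--(iii)), showing every connected component of the fixed part must meet the moving part and then must vanish. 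Relatedly, your claim that every component $\Phi_i$ has $\Phi_i^2 < 0$ needs proof: a curve of square $\ge 0$ with $h^0 = 1$ (e.g.\ an elliptic curve that does not move) is not excluded by ``fixed parts don't move,'' and the paper rules such things out by argument rather than assumption. Second, in your final paragraph the base-point-freeness of $|L|$ \emph{away} from $D$ is not a consequence of Ramanujam vanishing; the paper gets it from adjunction on an irreducible member $C$ of the system, $\scrO_Y(C)|C = \omega_C \otimes \scrO_Y(D)|C$, together with Catanese's theorem that $\omega_C$ is base-point free on Gorenstein curves with $p_a(C) \ge 1$ (Lemma~\ref{secondtechlemma}). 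Your treatment of base points on $D$ itself via Lemma~\ref{LrestrD} and Lemma~\ref{Loncycle} is correct and matches the paper.
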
 
\begin{proof} We begin with a series of lemmas on effective divisors on $Y$:

\begin{lemma}\label{firsttechlemma} Let $G$ be a nonzero effective divisor on $Y$.
\begin{enumerate}
\item[\rm(i)] $h^2(Y; \scrO_Y(G)) = 0$, and hence 
$$h^0(Y; \scrO_Y(G)) - h^1(Y; \scrO_Y(G)) = 1+ \frac12(G^2 + G \cdot D).$$
\item[\rm(ii)] If $G$ is reduced and connected, and $D\cap G$ is finite and nonempty, then $h^1(Y; \scrO_Y(G)) = 0$.
\item[\rm(iii)] If  $D\cap G = \emptyset$, then $h^1(Y; \scrO_Y(G)) \geq 1$ and $h^1(Y; \scrO_Y(G)) = 1$ if $G$ is reduced and connected.
\end{enumerate}
\end{lemma}
\begin{proof} (i) is clear since $h^2(Y; \scrO_Y(G)) = h^0(Y; \scrO_Y(-G-D)) =0$ and from Riemann-Roch. For (ii) and (iii), using the exact sequence
$$0 \to \scrO_Y \to \scrO_Y(G) \to \scrO_Y(G)|G \to 0,$$
we see that $h^1(Y; \scrO_Y(G)) = h^1(G; \scrO_Y(G)|G)$. By adjunction, $\scrO_Y(G)|G= \omega_G \otimes \scrO_Y(D)$, and hence, by Serre duality on $G$, 
$$h^1(G; \scrO_Y(G)|G) = h^0(G; \scrO_Y(-D)|G).$$
Under the hypotheses of (ii), $G$ is reduced and connected,   and $\scrO_Y(-D)$ has nonpositive degree on every component of $G$ and strictly negative degree on at least one component. Thus $h^0(G;\scrO_Y(-D)|G) =0$ and hence $h^1(Y; \scrO_Y(G)) =0$. If $D\cap G = \emptyset$, then $h^1(Y; \scrO_Y(G)) =h^0(G; \scrO_G) \geq 1$, and $h^1(Y; \scrO_Y(G)) =h^0(G; \scrO_G) =1$ if $G$ is reduced and connected.
\end{proof}

\begin{lemma}\label{secondtechlemma} Let $C$ be an irreducible curve on $Y$, $C\neq D$, such that $C^2 \geq 0$. Then the linear system $|C|$ has no fixed components. Moreover, $|C|$ has a base point $\iff$ $C$ is not a component of $D$ and  $C\cdot D =1$, and in this case $C\cap D$ is the unique base point of $|C|$.
\end{lemma}
\begin{proof} As $Y$ is regular, the  map $H^0(Y; \scrO_Y(C)) \to H^0(C; \scrO_Y(C)|C)$ is surjective. The lemma is then clear if $C$ is a smooth rational curve.  Thus we may assume that $p_a(C) \geq 1$ and that $C$ is not a component of $D$, i.e.\ that $C\cap D$ is finite. By adjunction, $\scrO_Y(C)|C =\omega_C \otimes \scrO_Y(D)|C$. By a general result on Gorenstein curves \cite{Catanese}, since $p_a(C) \geq 1$, the line bundle $\omega_C$ has no base points. Thus $|C|$ has no fixed components and the only possible base points are on $D$. If $C^2 >0$, then $C$ is nef and big, and hence the restriction map $H^0(Y; \scrO_Y(C)) \to H^0(D; \scrO_Y(C)|D)$ is surjective. In this case, by Lemma~\ref{Loncycle}(iv), $|C|$ has no base points on $D$ $\iff$ either  $C\cdot D = 0$, so that $\scrO_Y(C)|D=\scrO_D$, or $C\cdot D \geq 2$. The remaining case is $C^2= 0$. In this case, since $2p_a(C)-2 = -C\cdot D \leq 0$ and $C$ is not a smooth rational curve, we must have $C\cdot D = 0$ and $p_a(C) = 1$. Then $\omega_C =\scrO_C$ and we are done as before. 
\end{proof}

In the situation of Theorem~\ref{nefdivisors}, write $L =\scrO_Y(G)$ for an effective divisor $G$. We can write $G = G_m+G_f$, where $G_f$ and $G_m$ are effective, $G_m$ is the moving part of $|G|$, so that $|G_m|$ has no fixed components and $G_m$ is nef, and $G_f$ is the fixed component of $|G|$; in particular, $\dim |G_f| =0$. It will be useful to consider more general decompositions:

\begin{lemma}\label{thirdtechlemma} Let $G$ be a nef divisor, and suppose that $G=G_1+G_2$, where 
\begin{enumerate}
\item[\rm(a)] $G_1$ and $G_2$ are effective.
\item[\rm(b)] $h^0(Y; \scrO_Y(G)) = h^0(Y; \scrO_Y(G_1))$.
\item[\rm(c)] $h^0(Y; \scrO_Y(G_2))=1$ and $G_2\cap D$ is finite.
\end{enumerate}
Then the following hold:
\begin{enumerate}
\item[\rm(i)] If $(G_2)^2 = G_2\cdot D =0$, then $G_2 =0$.
\item[\rm(ii)] If $G_1\cdot G_2 = 0$, then $G_2 = 0$.
\item[\rm(iii)] Every connected component of $\operatorname{Supp} G_2$ has a nonempty intersection with $G_1$.
\end{enumerate}
\end{lemma}
\begin{proof} (i) Suppose that $G_2 \neq 0$. By (c) and the hypothesis that $G_2\cap D$ is finite, $G_2\cap D =\emptyset$. By (iii) of Lemma~\ref{firsttechlemma}, $h^1(Y;\scrO_Y(G_2)) \geq 1$.  By assumption (c),  $h^0(Y; \scrO_Y(G_2))=1$. Thus, 
$$h^0(Y; \scrO_Y(G_2)) - h^1(Y; \scrO_Y(G_2)) \leq 0.$$
On the other hand, by hypotheses and by (i) of Lemma~\ref{firsttechlemma},
$$h^0(Y; \scrO_Y(G_2)) - h^1(Y; \scrO_Y(G_2)) = 1+ \frac12(G_2^2+ G_2\cdot D) =1.$$
This is a contradiction, so we must have $G_2=0$.

\smallskip
\noindent (ii) If $G_1\cdot G_2 = 0$, then $G\cdot G_2 = G_2^2 \geq 0$, since $G$ is nef, and $G_2\cdot D \geq 0$ by hypothesis. Thus $1+ \frac12(G_2^2+ G_2\cdot D)\geq 1$. On the other hand, 
$$1+ \frac12(G_2^2+ G_2\cdot D) = h^0(Y; \scrO_Y(G_2)) - h^1(Y; \scrO_Y(G_2)) = 1- h^1(Y; \scrO_Y(G_2))\leq 1,$$
so that we must have $1+ \frac12(G_2^2+ G_2\cdot D)= 1$ and hence $(G_2)^2 = G_2\cdot D =0$. But then $G_2 =0$ by (i). Conversely, if $G_2\neq 0$, then $G_1\cdot G_2 > 0$. (Note: if $G$ is also assumed to be  big, then (ii) is automatic, since a nef and big divisor is numerically connected \cite[p.\ 24 Ex.\ 13]{Friedbook}.)

\smallskip
\noindent (iii) This follows from (ii) by replacing $G_2$ by a connected component of its support. 
\end{proof}

\begin{corollary}\label{techcor} If $G=G_1+G_2$ is a nef divisor satisfying the hypotheses (a)--(c) of Lemma~\ref{thirdtechlemma}, such that  $G_1$ is nef and $h^1(Y; \scrO_Y(G_1)) = 0$, or more generally  $h^1(Y; \scrO_Y(G_1)) \leq h^1(Y; \scrO_Y(G))$, then $G_2=0$.
\end{corollary}
\begin{proof} First we claim that 
$$h^1(Y; \scrO_Y(G_1)) - h^1(Y; \scrO_Y(G)) = \frac12((G_1\cdot G_2) + (G\cdot G_2) + (D\cdot G_2)).$$
To see this, using (i) of Lemma~\ref{firsttechlemma}, we have
\begin{align*}
h^0(Y; \scrO_Y(G)) - h^1(Y; \scrO_Y(G)) &= 1+ \frac12(G^2+ G\cdot D);\\
h^0(Y; \scrO_Y(G_1)) - h^1(Y; \scrO_Y(G_1)) &= 1+ \frac12(G_1^2+ G_1\cdot D).
\end{align*}
Subtract the second line from the first, using   $h^0(Y; \scrO_Y(G))=h^0(Y; \scrO_Y(G_1))$ and  $G = G_1+G_2$.  We see that $h^1(Y; \scrO_Y(G_1)) - h^1(Y; \scrO_Y(G))$ is equal to
\begin{gather*}
  \frac12(G_1^2+2G_1\cdot G_2 + G_2^2 + G_1\cdot D + G_2\cdot D-G_1^2- G_1\cdot D)\\
 =\frac12((G_1\cdot G_2) + (G\cdot G_2) + (D\cdot G_2)),
\end{gather*}
where we have used the fact that $2G_1\cdot G_2 + G_2^2 = G_1\cdot G_2  + G\cdot G_2$. This establishes the formula.

By (ii) of Lemma~\ref{thirdtechlemma}, in order to prove the corollary, it is enough to show that $G_1\cdot G_2 =0$. From the above formula, we have
$$0\geq   h^1(Y; \scrO_Y(G_1)) - h^1(Y; \scrO_Y(G)) = \frac12((G_1\cdot G_2) + (G\cdot G_2) + (D\cdot G_2)),$$
where all of the terms on the right hand side are nonnegative.
Thus $G_1\cdot G_2 =0$ as claimed.
\end{proof} 

Returning to the proof of Theorem~\ref{nefdivisors}, note that the decomposition $G = G_m+G_f$ satisfies the hypotheses of Lemma~\ref{thirdtechlemma} by Corollary~\ref{nocompD}. We divide the proof into three cases:

\medskip
\noindent \textbf{Case I:} There exists an irreducible member of $|G_m|$ and $G_m \cdot D > 0$. Then, by 
(ii) of Lemma~\ref{firsttechlemma}, $h^1(Y; \scrO_Y(G_m))=0$. Applying Corollary~\ref{techcor} to $G=G_m+G_f$ gives $G_f =0$. Thus $|G|$ has  no fixed components, and, by Lemma~\ref{secondtechlemma}, $|G|$ has a base point $\iff$ $G\cdot D = 1$.
 
\medskip
\noindent \textbf{Case II:} There exists an irreducible member of $|G_m|$ and $G_m \cdot D = 0$. Replacing $G_m$ by a general member of $|G_m|$, we may assume by Lemma~\ref{secondtechlemma} that $G_m$ is smooth. If $G_f=0$, we are done. Otherwise, by (ii) of Lemma~\ref{thirdtechlemma}, $G_m\cdot G_f > 0$. Let $C$ be an irreducible component of $G_f$ such that $G_m\cdot C \geq 1$. Then $C$ is a smooth rational curve and $C^2=-1$ or $-2$.

\begin{claim}\label{techclaim} $G_m\cdot C = 1$.
\end{claim}
\begin{proof}[Proof of the claim] Suppose that $G_m\cdot C = n\geq 2$. Then $|G_m+C|\subseteq |G|$ is the obvious sense. There is an exact sequence
$$0 \to \scrO_Y(G_m) \to  \scrO_Y(G_m+C) \to \scrO_C(k) \to 0,$$
where $k = n+ C^2 \geq 2+C^2$, and hence $k\geq 0$ if $C^2=-2$ and $k\geq 1$ if $C^2=-1$. By (iii) of Lemma~\ref{firsttechlemma}, $h^1(Y; \scrO_Y(G_m))=1$, and $h^1(Y; \scrO_Y(G_m+C)) =1$ if $C^2=-2$. If $C^2=-1$, then the image of $H^0(Y; \scrO_Y(G_m+C))$ in $H^0(C; \scrO_C(k))$ has codimension at most one in the $(k+1)$-dimensional vector space $H^0(C; \scrO_C(k))$, and in particular there exists an element of $|G_m+C|$ which does not contain $C$. This contradicts the fact that $C$ is a fixed component of $|G|$. Likewise, if $C^2=-2$, then the homomorphism $H^1(Y; \scrO_Y(G_m)) \to H^1(Y; \scrO_Y(G_m+C))$ is surjective, as $H^1(C; \scrO_C(k)) =0$, and hence is an isomorphism since $h^1(Y; \scrO_Y(G_m))=h^1(Y; \scrO_Y(G_m+C)) =1$. Then $H^0(Y; \scrO_Y(G_m+C))\to H^0(C; \scrO_C(k))$ is surjective. Since $k\geq 0$,   there is a member of $|G_m+C|$ which does not contain $C$, and we get a contradiction as before.
\end{proof} 

By (iii) of Lemma~\ref{thirdtechlemma}, every connected component of $G_f$ meets $G_m$. If $G\cdot D = 0$, then $h^1(Y; \scrO_Y(G_m)) = 1= h^1(Y; \scrO_Y(G))$  by  Lemma~\ref{firsttechlemma} and Lemma~\ref{LrestrD}. Then $G_f =0$ by Corollary~\ref{techcor}. Hence $G\cdot D = G_f\cdot D >0$.  Thus, there exists a connected component   of $(G_f)_{\text{red}}$ which meets both $G_m$ and $D$, and hence there exist distinct curves $C_1, \dots, C_b $ such that (i) $C_1\cdot G_m \neq 0$, hence $C_1\cdot G_m  =1$ by Claim~\ref{techclaim}; (ii) $C_i \cap C_{i+1} \neq \emptyset$ for $i = 1, \dots, b-1$;  and (iii) $C_b\cap D \neq \emptyset$, hence $C_b \cdot D =1$, and $C_i\cdot D =0$ for $i< b$, so that $C_i^2=-2$ if $i< b$ and $C_b^2=-1$. The proof of Claim~\ref{techclaim} then shows that $C_i \cdot (G_m + C_1 + \cdots + C_{i-1}) = 1$. Since $C_i\cdot C_{i-1} \geq 1$, it then follows that, for $i> 1$, $C_i\cdot C_{i-1} = 1$, $C_i \cdot C_j = 0$ for $j< i-1$ and $C_i \cdot G_m=0$. 

We now claim that $G = G_m + C_1+ \cdots + C_b$. In any case, we have a decomposition $G=G_1+G_2$, where $G_1= G_m + C_1+ \cdots + C_b$ is nef and $G_2$ is an effective divisor contained in $G_f$. Hence $G_1$ and $G_2$  satisfy the hypotheses of Lemma~\ref{thirdtechlemma}. We want to show that $G_2=0$. The divisor $G_1$ is reduced and connected and $G_1\cdot D > 0$. By 
(ii) of Lemma~\ref{firsttechlemma}, $h^1(Y; \scrO_Y(G_1))=0$, and we conclude as in Case I via Corollary~\ref{techcor}.

Finally, the general member of $|G_m|$ is smooth by Lemma~\ref{secondtechlemma}. If $G_m^2>0$, then the general element of $|G_m|$ is a smooth curve of genus at least $2$. If $G_m^2 =0$, then it is a smooth elliptic curve $F$. In either case, $L$ satisfies (ii) of Theorem~\ref{nefdivisors}, with $k=1$.

\medskip
\noindent \textbf{Case III:} The general element of $|G_m|$ is reducible. Then $|G_m|$ is composite with a pencil, necessarily rational, so that $G_m = kF$ for some irreducible curve $F$ on $Y$ and some $k>1$. We claim that the general element of $|F|$ is a smooth elliptic curve and that $F\cdot D =0$. First, $F^2=0$, for otherwise $|kF|$ has no base points, by Lemma~\ref{secondtechlemma} and the fact that $(kF)\cdot D$ is either $0$ or at least $2$.  But this is impossible if $F^2 >0$, since the general element of $|G_m|$ is of the form $F_1 +\cdots + F_k$ and $F_i\cdot F_j > 0$. So $F^2=0$, and either $F$ is a smooth elliptic curve and $F\cdot D =0$, or $F$ is a smooth rational curve and $F\cdot D = 2$. We claim that this last case is impossible. In any case,  $G_f\neq 0$ since $G^2>0$. By (ii) of Lemma~\ref{thirdtechlemma}, $G_m\cdot G_f > 0$ and hence $F\cdot G_f>0$. Let $C$ be an irreducible component of $G_f$ such that $F\cdot C \geq 1$. Then $C$ is a smooth rational curve and $C^2=-1$ or $-2$. Moreover, $n=(kF+C)\cdot C \geq k-2\geq 0$. From the exact sequence
$$0\to \scrO_Y(kF) \to  \scrO_Y(kF+C) \to \scrO_C(n)\to 0,$$
and the easy calculation that $H^1(Y; \scrO_Y(kF)) = 0$ if $F^2 =0$, $F\cdot D =2$, we see that the map $H^0(Y; \scrO_Y(kF+C)) \to H^0(C;\scrO_C(n))$ is surjective, contradicting the fact that $C$ is a component of $G_f$. Hence the case $F^2 =0$, $F\cdot D =2$ is impossible. 

Thus $G_m = kF$ where $F$ is a smooth elliptic curve, $F^2=0$, and $F\cdot D = 0$. An easy inductive argument shows that $\dim H^1(Y; \scrO_Y(kF)) = k$.  Arguing as in the preceding paragraph, there exists a smooth rational curve $C\subseteq \operatorname{Supp}  G_f$ such that $F\cdot C > 0$. By Lemma~\ref{firsttechlemma}, $\dim H^1(Y; \scrO_Y(kF+C)) = 1$ if $C^2=-2$ and $\dim H^1(Y; \scrO_Y(kF+C)) = 0$ if $C^2=-1$. Let $m= F \cdot C \geq 1$, so that $(kF+C) \cdot C = km-1$ if $C^2=-1$ and $=km-2$ if $C^2 = -2$. Suppose that $m\geq 2$, so that $(kF+C)\cdot C \geq 0$. Then it is easy to check that the image of $H^0(Y; \scrO_Y(kF+C))$ in $H^0(C; \scrO_Y(kF+C)|C)$  has dimension at least $km-1-k = k(m-1) -1> 0$ if $m\geq 2$, since $k\geq 2$.  Arguing as in the proof of Claim~\ref{techclaim},  this would mean that $C$ is not a fixed component of $kF+C$, a contradiction. Thus $F\cdot C = 1$.

Now either $G\cdot D =0$ or $G\cdot D > 0$. If $G\cdot D =0$, then clearly $C^2=-2$. Set $G_1 = kF+C$ and $G_2 = G-G_1$. Since $k\geq 2$, $G_1$ is nef and $G_1$ and $G_2$ satisfy the hypotheses of Lemma~\ref{thirdtechlemma}. Moreover, by (iii) of Lemma~\ref{firsttechlemma}, $h^1(Y;\scrO_Y(G_1)) = 1$  and $h^1(Y;\scrO_Y(G)) \geq 1$. Thus $h^1(Y; \scrO_Y(G_1)) - h^1(Y; \scrO_Y(G))\leq 0$, and so $G_2 = 0$ by Corollary~\ref{techcor}. Hence $G = kF+C$ and we are in Case (i) of Theorem~\ref{nefdivisors}.

Finally, if $G\cdot D >0$, then the argument of Case II shows that $G=kF+C_1+ \cdots +C_b$, where   $C_i$ is a $-2$-curve for $i\leq b-1$, $C_b$ is an interior exceptional curve, and $C_i \cdot C_j = 1$ if $j=i\pm 1$ and $0$ otherwise, $F\cdot C_1 = 1$, $F\cdot C_i  =0$ for $i>1$, and  $F\cdot D =0$. Thus $L$ satisfies Case (ii) of Theorem~\ref{nefdivisors} with $k\geq 2$. This completes the proof of Theorem~\ref{nefdivisors}.
\end{proof} 

Given Theorem~\ref{nefdivisors}, standard arguments (cf.\ \cite{Mayer} or \cite{Fried1}) show the following:

\begin{theorem}\label{3ample} Let $L$ be a nef and big line bundle on $Y$. 
\begin{enumerate}
\item[\rm(i)] For $n\geq 2$, $L^{\otimes n}$ has no fixed components or base points. 
\item[\rm(i)] For $n\geq 3$, the morphism defined by $L^{\otimes n}$ is birational onto its image, which is the normal projective surface obtained by contracting all of the curves $C$, necessarily $-2$-curves,   interior exceptional curves, or components of $D$, such that $L\cdot C =0$. \qed
\end{enumerate}
\end{theorem}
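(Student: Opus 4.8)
The plan is to derive both statements from the structure theorem for fixed parts, Theorem~\ref{nefdivisors}, applied not to $L$ itself but to each power $L^{\otimes n}$, combined with the elementary bookkeeping $L^{\otimes n}\cdot \Gamma = n(L\cdot \Gamma)$ for every curve $\Gamma$. The point is that the fixed configurations allowed by Theorem~\ref{nefdivisors} each contain a curve meeting the linear system in degree exactly $1$, which is incompatible with being an $n$-th power for $n\ge 2$.

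For part (i), I would first arrange that Theorem~\ref{nefdivisors} applies to $L^{\otimes n}$, i.e.\ that no component of $D$ is a fixed component of $L^{\otimes n}$; by Corollary~\ref{nocompD} this holds automatically whenever $L\cdot D>0$ (and also when $L\cdot D_i=0$ for all $i$ provided $L^{\otimes n}|D\cong\scrO_D$), so assume this for now. Suppose $L^{\otimes n}$ had a fixed component for some $n\ge2$. Then Theorem~\ref{nefdivisors} puts it in case (i) or (ii), and a direct inspection of the intersection numbers listed there exhibits, in every case, an effective curve $\Gamma$ — the smooth elliptic curve $F$ with $F\cdot D=0$, or the anticanonical cycle $D$ itself in the remaining subcase — with $L^{\otimes n}\cdot\Gamma=1$. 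Since $L$ is nef, $L\cdot\Gamma\in\Zee_{\ge 0}$, so $L^{\otimes n}\cdot\Gamma=n(L\cdot\Gamma)$ is divisible by $n$, contradicting $n\ge2$. Hence $L^{\otimes n}$ has no fixed component, and the final sentence of Theorem~\ref{nefdivisors} then shows it has a base point only if $L^{\otimes n}\cdot D=1$; but $L^{\otimes n}\cdot D=n(L\cdot D)$ equals $0$ or is at least $2$, so $L^{\otimes n}$ is base-point free.

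For part (ii), base-point freeness from (i) gives a morphism $\phi_n\colon Y\to\Pee^N$ for $n\ge 3$, and I would first identify the curves it contracts. If $C$ is irreducible with $L\cdot C=0$, then $C^2<0$ by the Hodge index theorem (as $L$ is nef and big), so $C$ is an interior exceptional curve, a $-2$-curve, or a component of $D$ by the classification of negative curves on $Y$; conversely $\phi_n$ contracts precisely the curves with $L^{\otimes n}\cdot C=0$. All such curves lie in $L^{\perp}$, which is negative definite, so they span a negative definite sublattice and can be contracted to a normal projective surface $\overline{Y}$; moreover $L$ descends to an ample class $\overline{L}$ on $\overline{Y}$, and $\phi_n$ factors as $Y\xrightarrow{c}\overline{Y}\xrightarrow{\bar\phi}\Pee^N$ with $c$ the contraction. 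To conclude it then suffices to prove that $\overline{L}^{\otimes n}$ is very ample for $n\ge 3$, i.e.\ that $\bar\phi$ is a closed embedding: then the image is exactly $\overline{Y}$ and $\phi_n=\bar\phi\circ c$ is birational onto the normal surface $\overline{Y}$, as asserted.

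The hard part, and the main obstacle, is this last step: showing that $L^{\otimes n}$ separates points and separates tangent directions away from the contracted locus for $n\ge3$. This is the standard surface argument (as in \cite{Mayer} and \cite{Fried1}): for general $p,q$ off the contracted curves one uses the ideal-sheaf sequence of $\{p,q\}$ (respectively the first infinitesimal neighborhood of $p$) to reduce very ampleness of $\overline{L}^{\otimes n}$ to the vanishing of $H^1(Y;L^{\otimes n}\otimes I)$ for the relevant length-two ideal $I$, and this vanishing follows from Kawamata--Viehweg (equivalently Ramanujam) vanishing once $n\ge3$ makes the appropriate twist of $L^{\otimes n}$ nef and big; the bound $n\ge3$ is exactly what supplies enough positivity after subtracting a length-two subscheme. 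A secondary subtlety to record is the degenerate case $L\cdot D_i=0$ for all $i$ with $L|D\not\cong\scrO_D$: here every component of $D$ lies in the base locus of $L^{\otimes n}$ unless $\varphi_Y(L)^n=1$, so part~(i) must be read with the period condition $L^{\otimes n}|D\cong\scrO_D$ in that case, after which all of $D$ is among the contracted curves and part~(ii) proceeds as above.
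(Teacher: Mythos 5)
Your part~(i) is correct, and it is a clean way of carrying out exactly what the paper means by ``standard arguments'' from Theorem~\ref{nefdivisors}: each allowed fixed-part configuration contains a curve of $L^{\otimes n}$-degree one, contradicting the divisibility of $L^{\otimes n}\cdot\Gamma=n(L\cdot\Gamma)$ by $n$, and the final sentence of Theorem~\ref{nefdivisors} then rules out base points since $n(L\cdot D)\neq 1$. Two small precisions: in case~(i) of Theorem~\ref{nefdivisors} the equality $F\cdot C=1$ is established in the proof but not recorded in the statement (from the stated data alone your divisibility argument gives $n\mid F\cdot C$ and $n\mid kF\cdot C-2$, hence $n\mid 2$, which suffices for $n\geq 3$ but not for $n=2$); and your caveat in the degenerate case $L\cdot D_i=0$ for all $i$ is legitimate --- by Corollary~\ref{nocompD} the theorem must implicitly be read with the period condition $L^{\otimes n}|D\cong\scrO_D$.

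Part~(ii), however, has a genuine gap at the step you yourself flag as the main obstacle, and the route you propose cannot be repaired. The vanishing you want to reduce to, $H^1(Y;L^{\otimes n}\otimes I)=0$ for a length-two ideal $I$, is \emph{false} in the central case of the theorem: if $L\cdot D_i=0$ for all $i$ and $L|D\cong\scrO_D$ (the negative definite situation of Proposition~\ref{whenalg}, where the image really is the cusp surface $\overline{Y}$), then Lemma~\ref{LrestrD} gives $h^1(Y;L^{\otimes n})=1$ for every $n$, and the sequence $0\to L^{\otimes n}\otimes I_Z\to L^{\otimes n}\to L^{\otimes n}|Z\to 0$ shows that $H^1(Y;L^{\otimes n}\otimes I_Z)$ surjects onto $H^1(Y;L^{\otimes n})$ for any finite subscheme $Z$, so it never vanishes. (Separation of points is equivalent to \emph{injectivity} of $H^1(L^{\otimes n}\otimes I_Z)\to H^1(L^{\otimes n})$, not to vanishing.) The proposed tool fails for the same reason: Kawamata--Viehweg/Ramanujam requires writing $L^{\otimes n}$, minus the blown-up points, as $K_Y$ plus a nef and big class, i.e.\ requires $nL+D$ to be nef; but $(nL+D)\cdot D_i=D\cdot D_i<0$ on any component with $L\cdot D_i=0$ and $D_i^2\leq -3$, no matter how large $n$ is, and negative definiteness of $(D_i\cdot D_j)$ rules out repairing this with fractional (klt) boundaries. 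So neither the claim that ``$n\geq 3$ supplies enough positivity'' nor the target vanishing is true.

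What the cited arguments (\cite{Mayer}, \cite{Fried1}) actually do is different in kind: restrict to a general member $C$ of $|L^{\otimes 2}|$ (irreducible by part~(i) and Bertini), check via adjunction that $\deg(L^{\otimes n}|C)\geq 2p_a(C)+1$ once $n\geq 3$, so that $L^{\otimes n}|C$ is very ample on the curve $C$, and then lift separation statements from $C$ to $Y$ using $0\to L^{\otimes(n-2)}\to L^{\otimes n}\to L^{\otimes n}|C\to 0$, where the possible failure of surjectivity on $H^0$ is controlled by the one-dimensional $H^1$'s of Lemma~\ref{LrestrD}; in the period-trivial case the connecting map is multiplication by the nonzero constant $s_C|D$, hence injective. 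Points at the singularities of $\overline{Y}$ need a further standard analysis. Finally, a minor inaccuracy in your setup: it is $L^{\otimes n}=\phi_n^*\scrO_{\Pee^N}(1)$ that automatically descends to the contraction, not $L$ itself; $L$ descends only when it is trivial on a neighborhood of each contracted configuration, which at the cusp forces $\varphi_Y(L)=1$ (Proposition~\ref{whenalg}).
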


In case $L$ is nef but not big, we have the following:

\begin{theorem}\label{Lnotbig} Let $L=\scrO_Y(G)$ be a nef line bundle on $Y$ such that $G^2 =0$, where $G$ is effective and nonzero. Suppose that no component of $D$ is a fixed component of $L$. Then either $G = kC$, where $C$ is a smooth rational curve, $C^2=0$, $C\cdot D = 2$, and $k\geq 1$, or $G=kE$, where $E$ is a smooth elliptic curve, $E^2=E\cdot D =0$, and $k\geq 1$. Moreover, $|G|$ has no fixed components or base points.
\end{theorem}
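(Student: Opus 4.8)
The plan is to mirror the structure of the proof of Theorem~\ref{nefdivisors}, but to exploit the hypothesis $G^2=0$ to kill the fixed part at once. Write $G=G_m+G_f$, where $G_m$ is the moving part (so $|G_m|$ has no fixed components and $h^0(\scrO_Y(G_m))=h^0(\scrO_Y(G))$) and $G_f$ is the fixed part, with $h^0(\scrO_Y(G_f))=1$. Since no component of $D$ is a fixed component of $L$, the divisor $G_f$ has no component in common with $D$, so $G_f\cap D$ is finite; thus this decomposition meets hypotheses (a)--(c) of Lemma~\ref{thirdtechlemma}. Now $G_m$ is nef (any irreducible curve on which it were negative would be a fixed component), so from $0=G^2=G\cdot G_m+G\cdot G_f$ with both summands nonnegative I conclude $G\cdot G_m=0$; writing $G\cdot G_m=G_m^2+G_f\cdot G_m$ with $G_m^2\geq 0$ and $G_f\cdot G_m\geq 0$ forces $G_m\cdot G_f=0$. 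Lemma~\ref{thirdtechlemma}(ii) then yields $G_f=0$. Hence $G=G_m$ has no fixed components, and since $G\neq 0$ necessarily $\dim|G|\geq 1$.

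With $G_f=0$ I would split on the general member of $|G|$. Suppose first it is a reduced irreducible curve $C$; then $C\in|G|$ gives $C^2=G^2=0$, and since $K_Y=-D$, adjunction gives $2p_a(C)-2=C^2+C\cdot K_Y=-C\cdot D$. As $G$ is nef, $C\cdot D\geq 0$, so $p_a(C)\in\{0,1\}$ with $C\cdot D\in\{2,0\}$ respectively. By Lemma~\ref{secondtechlemma}, $|C|=|G|$ has a base point $\iff C\cdot D=1$, which never happens here; hence $|G|$ is base-point free and its general member is smooth by Bertini. This gives either $G=C$ with $C$ a smooth rational curve, $C^2=0$, $C\cdot D=2$, or $G=E$ with $E$ a smooth elliptic curve and $E^2=E\cdot D=0$ (the cases $k=1$ of the two alternatives).

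The remaining possibility is that the general member of $|G|$ is not irreducible, i.e.\ $|G|$ is composite with a pencil. As in Case III of the proof of Theorem~\ref{nefdivisors}, and because $Y$ is rational, this pencil is rational: after Stein factorization one has a fibration $Y\to\Pee^1$ with connected fibers whose general fiber $F$ is a smooth irreducible curve, and $G=kF$ with $k\geq 2$. Then $F^2=0$, and the same adjunction computation gives $p_a(F)\in\{0,1\}$ with $F\cdot D\in\{2,0\}$, so $F$ is a smooth rational curve with $F\cdot D=2$ or a smooth elliptic curve with $F\cdot D=0$, giving $G=kC$ or $G=kE$. Finally, Lemma~\ref{secondtechlemma} shows $|F|$ is base-point free (again $F\cdot D\neq 1$), hence so is $|kF|=|G|$; combined with $G_f=0$ this establishes that $|G|$ has neither fixed components nor base points in all cases.

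The one step needing genuine care --- and the main obstacle --- is the composite-with-a-pencil analysis: I must argue that reducibility of the general member really produces an irreducible fiber class $F$ with $G=kF$ and $F^2=0$, and that passing to the Stein factorization keeps the fibers connected so that the general $F$ is genuinely irreducible and smooth, rather than masking a more complicated configuration. Everything else is a direct application of the intersection-theoretic lemmas already in hand, the adjunction formula with $K_Y=-D$, and Bertini's theorem.
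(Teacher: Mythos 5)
Your proof is correct and follows essentially the same route as the paper's: the same decomposition $G = G_m + G_f$ killed off by the $G^2=0$ intersection computation together with Lemma~\ref{thirdtechlemma}(ii), then the same dichotomy via adjunction and Lemma~\ref{secondtechlemma} in the irreducible case and the composite-with-a-rational-pencil argument otherwise. The only cosmetic differences are your appeal to Bertini for smoothness of the general member and to Stein factorization in the pencil case, where the paper instead quotes the first paragraph of Case III of the proof of Theorem~\ref{nefdivisors}.
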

\begin{proof} As in the proof of Theorem~\ref{nefdivisors}, write $G = G_m+ G_f$. Since $G^2=0$, we have
$$0 = G_m^2 + 2G_m\cdot G_f + G_f^2 = G_m^2 +  G_m\cdot G_f + G\cdot G_f.$$
Since all terms are nonnegative, $G_m\cdot G_f =0$. By (ii) of Lemma~\ref{thirdtechlemma}, $G_f =0$.  Hence $G = G_m$.

First suppose that the general element of $|G|$ is irreducible. By adjunction,
$$-2 \leq 2p_a(G) -2 = G^2 -G\cdot D = -G\cdot D.$$
 Hence, either $G\cdot D =0$ and $p_a(G) = 1$ or $G\cdot D =2$ and $p_a(G) = 0$. In the first case, by Lemma~\ref{secondtechlemma}, $|G|$ has no base points and the general element of $|G|$ is a smooth elliptic curve $E$. In the second case, every irreducible element of $|G|$, and hence the general element of $|G|$,  is a smooth rational curve. The proof of Lemma~\ref{secondtechlemma} shows that $|G|$ has no base points in this case as well.

Finally suppose that the general element of $|G|$ is not irreducible. Then the argument in the first paragraph of the proof of Case III of Theorem~\ref{nefdivisors} shows that $G = kE$ or $G=kC$, where $E$ and $C$ are as in the statement of the theorem and $k\geq 2$.
\end{proof}

\section{The generic ample cone}

If $Y$ is a del Pezzo surface, i.e.\ $D$ is ample, or more generally if $D$ is nef and there are no $-2$-curves on $Y$, then every class $\alpha\in H^2(Y; \Zee)$ such that $\alpha^2 =\alpha \cdot K_Y =-1$ is the class of an exceptional curve. However, in general it is hard to characterize the classes of exceptional curves. On the other hand, if we only care about effective classes, then one can say more:

\begin{definition} The class $\alpha\in H^2(Y; \Zee)$ is a \textsl{numerical exceptional curve} if $\alpha^2 =\alpha \cdot K_Y =-1$. A numerical exceptional curve $\alpha$ is \textsl{effective} if $\alpha$ is the class of an effective divisor.
\end{definition}

\begin{lemma}\label{chareff} The class $\alpha\in H^2(Y; \Zee)$ is an effective numerical exceptional curve $\iff$ for every nef   $\Ar$-divisor $x$ on $Y$ such that $x\cdot [D] > 0$, $x\cdot \alpha \geq 0$ $\iff$ there exists a nef   $\Ar$-divisor $x$ on $Y$ such that $x\cdot [D] > 0$ and $x\cdot \alpha \geq 0$.
\end{lemma}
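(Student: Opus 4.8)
The plan is to treat ``$\alpha$ is a numerical exceptional curve'' (that is, $\alpha^2 = \alpha\cdot K_Y = -1$) as the standing hypothesis, and to prove that the three conditions---(A) $\alpha$ is effective, (B) $x\cdot\alpha\geq 0$ for \emph{every} nef $\Ar$-divisor $x$ with $x\cdot[D]>0$, and (C) $x\cdot\alpha\geq 0$ for \emph{some} nef $\Ar$-divisor $x$ with $x\cdot[D]>0$---are equivalent. Since $K_Y=-D$, I would first record that the hypothesis $\alpha\cdot K_Y=-1$ is the same as $\alpha\cdot[D]=1$; this, together with $\alpha^2=-1$, is what feeds the Riemann--Roch computation below.

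The two implications (A)$\Rightarrow$(B) and (B)$\Rightarrow$(C) are immediate and I would dispatch them first. For (A)$\Rightarrow$(B): if $\alpha=[\Delta]$ for an effective divisor $\Delta$, then $x\cdot\alpha=x\cdot\Delta\geq 0$ for any nef $\Ar$-divisor $x$, since a nef $\Ar$-divisor pairs nonnegatively with every effective curve and hence with $\Delta$; so (B) holds (the hypothesis $x\cdot[D]>0$ is not even needed). For (B)$\Rightarrow$(C): the set of nef $\Ar$-divisors $x$ with $x\cdot[D]>0$ is nonempty---any ample class $H$ lies in it, as $H\cdot[D]>0$ because $D$ is a nonzero effective divisor---so applying (B) to such an $H$ yields (C).

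The substance is the implication (C)$\Rightarrow$(A), which I would prove by Riemann--Roch together with Serre duality. Using $\alpha^2=-1$, $\alpha\cdot K_Y=-1$, and $\chi(\scrO_Y)=1$ (as $Y$ is rational), Riemann--Roch gives
$$\chi(\scrO_Y(\alpha)) = 1 + \tfrac12(\alpha^2-\alpha\cdot K_Y) = 1.$$
By Serre duality $h^2(Y;\scrO_Y(\alpha)) = h^0(Y;\scrO_Y(K_Y-\alpha)) = h^0(Y;\scrO_Y(-D-\alpha))$, so it suffices to show $-D-\alpha$ is not effective: then $h^2=0$ and $h^0(Y;\scrO_Y(\alpha))=1+h^1(Y;\scrO_Y(\alpha))\geq 1$, making $\alpha$ effective. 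Here is where (C) enters: let $x$ be a nef $\Ar$-divisor with $x\cdot[D]>0$ and $x\cdot\alpha\geq 0$. If $-D-\alpha$ were the class of an effective divisor $\Delta'$, then pairing against $x$ would give
$$0 \leq x\cdot\Delta' = x\cdot(-D-\alpha) = -(x\cdot[D]) - (x\cdot\alpha) < 0,$$
a contradiction. Hence $-D-\alpha$ is non-effective and $\alpha$ is effective, closing the cycle.

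I expect no serious obstacle. The only points requiring care are the standing interpretation that $\alpha$ is a numerical exceptional curve (so that the Riemann--Roch Euler characteristic is exactly $1$, making the $h^2$ term the sole issue) and the elementary fact that a nef $\Ar$-divisor pairs nonnegatively with effective divisors, which holds since nef $\Ar$-classes are limits of ample ones. The crux of the whole argument is the single observation that (C) forces $-K_Y-\alpha=-D-\alpha$ to be non-effective, after which effectivity of $\alpha$ falls out of Riemann--Roch.
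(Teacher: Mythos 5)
Your proof is correct and follows essentially the same route as the paper's: the trivial direction plus, for the substantive implication, Riemann--Roch giving $\chi(\scrO_Y(\alpha))=1$ and Serre duality reducing everything to the observation that a nef $x$ with $x\cdot[D]>0$ and $x\cdot\alpha\geq 0$ forces $-\alpha-[D]$ to be non-effective, hence $h^2=0$. The only difference is organizational (you present the three conditions as a cycle of implications), not mathematical.
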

\begin{proof} Clearly, if $\alpha$ is effective, then $x\cdot \alpha \geq 0$ for every nef  $\Ar$-divisor $x$ on $Y$, and hence for at least one such. Conversely, suppose that there exists a nef   $\Ar$-divisor $x$ on $Y$ with $x\cdot [D] >0$ such that $x\cdot \alpha \geq 0$. If $L_\alpha$ is the line bundle corresponding to $\alpha$, then by Riemann-Roch $\chi(Y; L_\alpha) = 1$. Since $h^2(Y;L_\alpha) = h^0(Y; L_\alpha^{-1}\otimes \scrO_Y(-D))$ and $x \cdot (-\alpha -[D])< 0$, $h^2(Y;L_\alpha) = 0$. Thus $h^0(Y;L_\alpha) \geq 1$ and $\alpha$ is effective.
\end{proof} 

\begin{corollary}\label{genample1}  If there do not exist any $-2$-curves on $Y$, then 
\begin{gather*}
\overline{\mathcal{A}}(Y) = \{x\in \mathcal{C}^+: x\cdot [D_i] \geq 0 \text{ and }
 x\cdot \alpha \geq 0  \\ \text{ for all   effective numerical exceptional curves $\alpha$}\}.
 \end{gather*}
\end{corollary}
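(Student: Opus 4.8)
The plan is to establish the identity by proving the two inclusions separately, taking as input the corollary immediately preceding Lemma~\ref{performinustwo}, which describes $\overline{\mathcal{A}}(Y)$ in terms of curves of negative self-intersection. Since $Y$ carries no $-2$-curves by hypothesis, that corollary reduces to the assertion that $\overline{\mathcal{A}}(Y)$ is exactly the set of $x\in\mathcal{C}^+$ satisfying $x\cdot[D_i]\ge 0$ for all $i$ together with $x\cdot[E]\ge 0$ for every exceptional curve $E$ (whether an interior exceptional curve or a component of $D$, by Definition~\ref{defcurves}). Write $P$ for the cone on the right-hand side of the claimed equality.

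For the inclusion $\overline{\mathcal{A}}(Y)\subseteq P$, I would note that any $x\in\overline{\mathcal{A}}(Y)$ is nef, being a limit of ample classes. Each $D_i$ is an effective curve, so $x\cdot[D_i]\ge 0$; and if $\alpha$ is any effective numerical exceptional curve, then $\alpha$ is the class of an effective divisor, whence $x\cdot\alpha\ge 0$ because a nef class pairs nonnegatively with every effective class. Thus $x\in P$. This direction uses only the definition of nef.

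For the reverse inclusion $P\subseteq\overline{\mathcal{A}}(Y)$, the key point is that every inequality defining $\overline{\mathcal{A}}(Y)$ already occurs among those defining $P$. Indeed, let $E$ be an exceptional curve; then $E\cong\Pee^1$ and $E^2=-1$, so adjunction gives $E\cdot K_Y=-1$, whence $[E]$ is a numerical exceptional curve, and it is visibly effective. Therefore any $x\in P$ satisfies $x\cdot[E]\ge 0$ (this being one of the conditions $x\cdot\alpha\ge 0$) as well as $x\cdot[D_i]\ge 0$, and by the reduced form of the earlier corollary these conditions, together with $x\in\mathcal{C}^+$, force $x\in\overline{\mathcal{A}}(Y)$.

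The only genuinely delicate step is the bookkeeping matching the two lists of constraints, and this is where I would take care. On one hand, a component $D_i$ with $D_i^2<0$ but $D_i^2\ne -1$ is \emph{not} a numerical exceptional curve, so its inequality $x\cdot[D_i]\ge 0$ cannot be absorbed into the effective–numerical–exceptional conditions and must be retained explicitly in $P$, as it is. On the other hand, $P$ a priori imposes more inequalities than $\overline{\mathcal{A}}(Y)$, since a general effective numerical exceptional class need not be the class of an irreducible exceptional curve; that these additional inequalities do not shrink the cone is precisely the content of the inclusion $\overline{\mathcal{A}}(Y)\subseteq P$ proved above, and it is what Lemma~\ref{chareff} encodes intrinsically, by allowing effectivity of a numerical exceptional class to be tested against nef divisors. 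Once this matching is verified, the equality follows at once.
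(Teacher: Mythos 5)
Your proof is correct and is essentially the paper's own argument: the inclusion $\overline{\mathcal{A}}(Y)\subseteq P$ holds because nef classes pair nonnegatively with effective classes, and $P\subseteq\overline{\mathcal{A}}(Y)$ holds because the class of every exceptional curve is an effective numerical exceptional curve, so the earlier description of $\overline{\mathcal{A}}(Y)$ by walls (with the $-2$-curve conditions vacuous by hypothesis) applies. Your closing bookkeeping paragraph is harmless but adds nothing beyond these two inclusions, which are exactly the two sentences of the paper's proof.
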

\begin{proof} Since the class of an exceptional curve is an effective numerical exceptional curve, the right hand side above is contained in the left hand side. On the other hand, since a nef divisor has nonnegative intersection with every effective divisor, the left hand side is contained in the right hand side, and so they are equal. 
\end{proof}

\begin{definition} Define the \textsl{generic ample cone} $\overline{\mathcal{A}}_{\text{\rm{gen}}}= \overline{\mathcal{A}}_{\text{\rm{gen}}}(Y)$ to be the set defined by the right hand side of Corollary~\ref{genample1}, i.e.\ $\overline{\mathcal{A}}_{\text{\rm{gen}}}$ is the set of $x\in \mathcal{C}^+$ such that $x\cdot [D_i] \geq 0$ and  
 $x\cdot \alpha \geq 0$  for all   effective numerical exceptional curves $\alpha$. (This set is denoted $C_D{}^+{}^+$ in \cite{GHK}.) Let $\mathcal{A}_{\text{\rm{gen}}}(Y)$ be the interior of $\overline{\mathcal{A}}_{\text{\rm{gen}}}(Y)$ (in either $\mathcal{C}^+$ or $H^2(Y; \Ar)$). Clearly $\overline{\mathcal{A}}(Y) \subseteq \overline{\mathcal{A}}_{\text{\rm{gen}}}$,  $\overline{\mathcal{A}}(Y) = \overline{\mathcal{A}}_{\text{\rm{gen}}}$ $\iff$ there are no $-2$-curves on $Y$, and more generally 
 $$\overline{\mathcal{A}}(Y) = \{ x\in \overline{\mathcal{A}}_{\text{\rm{gen}}}: x\cdot [C] \geq 0 \text{ for all $-2$-curves $C$} \}.$$ 
\end{definition}

To explain the terminology ``generic ample cone," let $\pi \colon (\mathcal{Y}, \mathcal{D}) \to S$ be a deformation of the pair $(Y,D)$ (in the analytic category). If $S$ is contractible, then we can identify $H^2(Y_s; \Zee)$ with $H^2(Y; \Zee)$ for every $s\in S$. Possibly after shrinking $S$, we can also assume that there exists a relatively ample line bundle $\scrO_{\mathcal{Y}}(\mathcal{H})$ on $\mathcal{Y}$, with $\mathcal{H}|Y_s = H_s$ the corresponding ample divisor class. It follows that $\alpha$ is an effective 
numerical exceptional curve for $Y_s$ $\iff$ $\alpha$ is an effective numerical exceptional curve for $Y$, since the conditions $\alpha^2=\alpha \cdot K_{Y_s} =-1$ and $\alpha \cdot H_s \geq 0$ are independent of $s$. Hence $\overline{\mathcal{A}}_{\text{\rm{gen}}}(Y_s)= \overline{\mathcal{A}}_{\text{\rm{gen}}}(Y)$ under the identifications. Suppose moreover that $Y_s$ is a very general small deformation in the sense that there are no $-2$-curves on $Y_s$; note that such small deformations always exist by Corollary~\ref{gener1}. Then $\overline{\mathcal{A}}_{\text{\rm{gen}}}(Y_s) = \overline{\mathcal{A}}(Y_s)$, so that the generic ample cone of $Y$ is the ample cone of a very general small deformation of $Y$. The argument also shows:
 
\begin{lemma}\label{definv} The set of effective numerical exceptional curves and the set $\overline{\mathcal{A}}_{\text{\rm{gen}}}$ are  locally constant, and hence are invariant in a global deformation with trivial monodromy under the induced identifications. \qed
\end{lemma}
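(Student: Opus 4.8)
The plan is to reduce the statement to a purely numerical assertion over a simply-connected neighborhood of a point of $S$, where the monodromy is automatically trivial, and then to propagate the conclusion by local constancy. The essential difficulty is that effectivity of a divisor class is only a semicontinuous condition and can a priori jump in a family; the whole argument therefore hinges on Lemma~\ref{chareff}, which replaces ``effective'' by a numerical inequality against a nef class.

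First I would fix a point $s_0\in S$ and a contractible neighborhood $U$ of $s_0$. Over $U$ the local system $R^2\pi_*\Zee$ is trivial, so I may identify each $H^2(Y_s;\Zee)$ with a fixed lattice $\widehat\Lambda$; after shrinking $U$ I may also assume there is a relatively ample line bundle $\scrO_{\mathcal{Y}}(\mathcal{H})$, and I write $H_s = \mathcal{H}|Y_s$. Under the trivialization, $\mathcal{H}$ corresponds to a single element $h\in\widehat\Lambda\otimes\Ar$ independent of $s$, and likewise the flat sections $[\mathcal{D}_i]$ determine fixed classes, so that $K_{Y_s} = -[D_s] = -\sum_i[\mathcal{D}_i]$ is constant in $s$. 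Because each $H_s$ is ample and each $\mathcal{D}_i$ restricts to a nonzero effective curve, $H_s\cdot[\mathcal{D}_i] > 0$, and hence $h\cdot[D] > 0$.

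Next I would analyze the conditions defining an effective numerical exceptional curve fiber by fiber. Fix $\alpha\in\widehat\Lambda$. The conditions $\alpha^2 = -1$ and $\alpha\cdot K_{Y_s} = \alpha\cdot(-[D]) = -1$ are expressed purely through the (constant) intersection form and the constant class $[D]$, so ``$\alpha$ is a numerical exceptional curve on $Y_s$'' holds for all $s\in U$ or for none. For such $\alpha$, apply Lemma~\ref{chareff} with the nef class $x = H_s$: since $h\cdot[D] > 0$, the class $\alpha$ is effective on $Y_s$ if and only if $h\cdot\alpha\geq 0$. As $h$ and the pairing are independent of $s$, this inequality does not depend on $s$, and therefore the set of effective numerical exceptional curves on $Y_s$ is the same subset of $\widehat\Lambda$ for every $s\in U$.

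Finally, since $\overline{\mathcal{A}}_{\text{gen}}(Y_s)$ is by definition the set of $x\in\mathcal{C}^+$ satisfying $x\cdot[D_i]\geq 0$ for all $i$ together with $x\cdot\alpha\geq 0$ as $\alpha$ ranges over effective numerical exceptional curves, and both the classes $[D_i]$ and the indexing set of $\alpha$'s have just been shown to be constant over $U$, the cone $\overline{\mathcal{A}}_{\text{gen}}(Y_s)$ is constant over $U$ as well. This proves local constancy of both objects. For a global family with trivial monodromy, the identifications $H^2(Y_s;\Zee)\cong\widehat\Lambda$ are defined globally and compatibly, and a locally constant assignment on a connected base is constant; this yields the asserted global invariance. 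The only real obstacle is the one already flagged---controlling effectivity---and it is dispatched entirely by Lemma~\ref{chareff}; the remaining steps are bookkeeping about flat classes.
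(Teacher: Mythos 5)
Your proof is correct and follows essentially the same route as the paper: identify cohomologies over a contractible (shrunk) base carrying a relatively ample class $\mathcal{H}$, observe that the numerical conditions $\alpha^2=\alpha\cdot K_{Y_s}=-1$ are flat, and use Lemma~\ref{chareff} with $x=H_s$ to convert effectivity into the $s$-independent inequality $h\cdot\alpha\geq 0$. The paper's argument (the paragraph preceding the lemma) is exactly this, so there is nothing to add beyond noting that your explicit bookkeeping of the flat classes matches the paper's implicit use of it.
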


\begin{corollary}\label{monoinvar} The set $\overline{\mathcal{A}}_{\text{\rm{gen}}}$ is invariant under the global monodromy group of the pair $(Y,D)$. \qed
\end{corollary}

Corollary~\ref{monoinvar} leads to the ``correct" definition of allowable isometries and of  markings  in a family of pairs over a reduced, connected base $S$:

\begin{definition}\label{defadmmarking}  Let $(Y,D)$ and $(Y',D')$ be two labeled anticanonical pairs. An integral isometry $\gamma \colon H^2(Y';\Zee) \to H^2(Y; \Zee)$ is \textsl{admissible} if   $\gamma([D_i]) = [D_i']$ for all $i$ and  
$$\gamma (\overline{\mathcal{A}}_{\text{\rm{gen}}}(Y')) = \overline{\mathcal{A}}_{\text{\rm{gen}}}(Y),$$
where we denote   the natural extension of $\gamma$ to an isometry $H^2(Y'; \Ar) \to H^2(Y; \Ar)$ by $\gamma$ as well.

Let $\pi\colon (\mathcal{Y}, \mathcal{D}) \to S$ be a family of anticanonical pairs over a reduced connected complex space $S$, and let $\theta\colon R^2\pi_*\Zee \to \underline{\widehat{\Lambda}}$ be a marking, where $\widehat{\Lambda}=H^2(Y_s;\Zee)$ for some $s\in S$. Then $\theta$ is \textsl{admissible} if, for all $t\in S$, $\theta(\overline{\mathcal{A}}_{\text{\rm{gen}}}(Y_t)) = \overline{\mathcal{A}}_{\text{\rm{gen}}}(Y_s)$.
\end{definition}

We use the standard notation $\Lambda_\Ar$ for $\Lambda\otimes _\Zee\Ar$    and $\Lambda_\Q$ for $\Lambda\otimes _\Zee\Q$. The  following shows that the generic ample cone is determined by its intersection with the real subspace $\Lambda \otimes_\Zee \Ar$ of $H^2(Y; \Ar)$.

\begin{lemma}\label{detbyres} {\rm(i)} If $(Y,D)$ is not negative semidefinite, then $\overline{\mathcal{A}}_{\text{\rm{gen}}}\cap \Lambda_\Ar =\{0\}$. 

\smallskip
\noindent
{\rm(ii)} If $(Y,D)$ is strictly negative semidefinite, then $\overline{\mathcal{A}}_{\text{\rm{gen}}}\cap \Lambda_\Ar =\Ar^+[D]\cup\{0\}$.

\smallskip
\noindent  
{\rm(iii)} If $(Y,D)$ is negative definite, then $\overline{\mathcal{A}}(Y)\cap \Lambda_\Ar$ and $\overline{\mathcal{A}}_{\text{\rm{gen}}} \cap \Lambda_\Ar$ have nonempty interior in $\Lambda_\Ar$, $\overline{\mathcal{A}}(Y)\cap \Lambda_\Q$ is dense in $\overline{\mathcal{A}}(Y)\cap \Lambda_\Ar$, and similarly for $\overline{\mathcal{A}}_{\text{\rm{gen}}} \cap \Lambda_\Q$.

\smallskip
\noindent Finally, in all cases, $\overline{\mathcal{A}}_{\text{\rm{gen}}}$ determines and is determined by  $\overline{\mathcal{A}}_{\text{\rm{gen}}}\cap \Lambda_\Ar$. 
\end{lemma}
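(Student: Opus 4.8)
The direction that $\overline{\mathcal{A}}_{\text{gen}}$ determines $\overline{\mathcal{A}}_{\text{gen}}\cap\Lambda_\Ar$ is trivial, since the slice is obtained by intersecting with the subspace $\Lambda_\Ar$, which is itself determined by the classes $[D_i]$. So the content is the reverse implication: that $\overline{\mathcal{A}}_{\text{gen}}$ can be recovered from its slice together with the ambient data, namely the lattice $H^2(Y;\Zee)$, its intersection form, and the classes $[D_i]$. I would set this up using the subspace $W = \operatorname{span}_\Ar\{[D_1],\dots,[D_r]\}$, so that $\Lambda_\Ar = W^\perp$. Since $H^2(Y;\Ar)$ has signature $(1,b_2-1)$, the position of the unique positive direction relative to $W$ splits into exactly the three cases of the lemma: $W$ contains a positive vector (case (i), $\Lambda_\Ar$ negative definite, slice $\{0\}$); $W$ is negative semidefinite with one-dimensional radical $\Ar[D]\subseteq\Lambda_\Ar$ (case (ii), slice $\Ar^+[D]\cup\{0\}$); or $W$ is negative definite (case (iii), $\Lambda_\Ar$ of signature $(1,\cdot)$, slice full-dimensional). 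This reduces the claim to a case-by-case reconstruction.

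Since $\overline{\mathcal{A}}_{\text{gen}} = \{x\in\mathcal{C}^+ : x\cdot[D_i]\geq 0 \text{ and } x\cdot\alpha\geq0 \text{ for all effective numerical exceptional curves }\alpha\}$, recovering the cone amounts to recovering the active inequalities: the classes $[D_i]$ (ambient) and the set $\mathcal{E}$ of effective numerical exceptional curves. The plan is to decompose each $\alpha\in\mathcal{E}$ as $\alpha = \alpha_\Lambda + \alpha_W$ with $\alpha_\Lambda\in\Lambda_\Q$ and $\alpha_W\in W_\Q$; the $W$-component is pinned down by the intersection numbers $\alpha\cdot[D_j]$ (together with $\alpha\cdot[D]=1$), while $\alpha_\Lambda$ is the projection to $\Lambda_\Q$ of the integral class $\alpha$ and lies on a supporting ray of the slice. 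Thus, in case (iii), where the slice is full-dimensional, the slice determines the rays carrying the $\alpha_\Lambda$, and the integrality constraint $\alpha=\alpha_\Lambda+\alpha_W\in H^2(Y;\Zee)$ with $\alpha^2=\alpha\cdot K_Y=-1$ fixes their scaling and the $\alpha_W$, so that the full system of inequalities — and hence $\overline{\mathcal{A}}_{\text{gen}}$ — is reconstructed. In cases (i) and (ii) the slice is $0$- or $1$-dimensional, and here I would instead invoke Lemma~\ref{chareff}: effectivity of a numerical exceptional curve is equivalent to the existence of a nef class $x$ with $x\cdot[D]>0$ and $x\cdot\alpha\geq0$, and by Lemma~\ref{definv} the set $\mathcal{E}$ is a deformation invariant, so it may be computed on a very general (hence $-2$-curve-free) deformation where $\overline{\mathcal{A}}(Y)=\overline{\mathcal{A}}_{\text{gen}}$. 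This exhibits $\mathcal{E}$, and therefore the cone, as determined by the ambient data alone in these cases, consistent with the slice carrying no (case (i)) or minimal (case (ii)) additional information.

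The main obstacle is precisely this off-slice reconstruction. Every element of the slice satisfies $x\cdot[D]=0$ (as $x\in\Lambda_\Ar$ forces $x\cdot[D_i]=0$), whereas the genuinely binding inequalities $x\cdot\alpha\geq0$ occur where $x\cdot[D]>0$; writing $x = x_\Lambda + x_W$, the coupling in $x\cdot\alpha = x_\Lambda\cdot\alpha_\Lambda + x_W\cdot\alpha_W\geq0$ means the cone is neither a cone over its slice nor the orthogonal-projection preimage of the slice. The crux is therefore to show that the $W$-components $\alpha_W$ (equivalently the integers $\alpha\cdot[D_j]$) carry no information beyond what the slice and the integral structure of $\Lambda$ already determine, so that the binding half-spaces are forced. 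I expect this to follow, case by case, from the signature dichotomy above together with the rigidity supplied by integrality and the numerical identities $\alpha^2=\alpha\cdot K_Y=-1$: the negative definite case is where the reconstruction is most transparent, while cases (i) and (ii) are handled through the deformation-invariance characterization of $\mathcal{E}$ rather than by a direct slice formula.
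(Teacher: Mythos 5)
Your framing of the problem---both directions reduce to recovering the set $\mathcal{E}$ of effective numerical exceptional curves from the slice together with the ambient lattice data---agrees with the paper's, and your signature trichotomy handles parts (i) and (ii) of the lemma itself (for part (iii) a signature count is not quite enough: one needs Proposition~\ref{constructnef} to produce actual nef and big classes inside $\Lambda_\Ar$). But the final statement, which you rightly call the crux, is exactly where your argument has a genuine gap, and you concede it (``I expect this to follow\dots''). In case (iii), the mechanism you propose---read off the supporting rays of the slice, then let integrality plus $\alpha^2=\alpha\cdot K_Y=-1$ pin down the scaling and the $W$-component---does not work as stated: distinct effective classes $\alpha,\alpha'$ can have proportional (even equal) projections to $\Lambda_\Q$, in which case they impose the same half-space on the slice while cutting out different walls of the ambient cone, so facet data cannot separate them and the lift from a ray to an integral class is not unique; moreover, inequalities that are redundant on the slice need not be redundant in the ambient space, so non-facet-defining projections are invisible to your procedure altogether. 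The paper's device (Lemma~\ref{caseslemma}(iii)) runs in the opposite direction and avoids wall-reconstruction entirely: fix a single point $y$ in $\overline{\mathcal{A}}(Y)\cap\Lambda_\Ar$ lying on no wall of $\overline{\mathcal{A}}(Y)$ except those of the $[D_i]$; then for every numerical exceptional class $\alpha$ not in the $\Zee$-span of the $[D_i]$ one has: $\alpha$ is effective $\iff$ $\alpha\cdot y\geq 0$. (The nontrivial half: if $-\alpha-[D]$ were effective and $\alpha\cdot y\geq 0$, then $y\cdot(-\alpha-[D])=0$, forcing every component of $-\alpha-[D]$ to be orthogonal to $y$, hence a component of $D$, contradicting $\alpha\notin\operatorname{span}$.) So one simply tests each lattice class against one generic slice point; this is the missing idea.

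Your treatment of cases (i) and (ii) is circular. The nef class demanded by Lemma~\ref{chareff} is not ambient data, and passing to a very general deformation does not repair this: on such a deformation the nef cone \emph{equals} $\overline{\mathcal{A}}_{\text{\rm{gen}}}$, which is precisely the cone you are trying to show is determined. What the paper does instead is exhibit \emph{ambient} classes that are nef: after first reducing, by induction on blowing down exceptional components of $D$ (a reduction you omit, and under which $\Lambda$ and the slice are unchanged while Lemma~\ref{chareff} transports the conclusion back up), to the case where no $D_i$ is exceptional, one takes $x=[D_i]$ for a component with $D_i^2\geq 0$ in case (i), and $x=[D]$ in case (ii). Lemma~\ref{chareff} then yields the purely lattice-theoretic effectivity criteria $\alpha\cdot[D_i]\geq 0$, resp.\ $\alpha\cdot[D]\geq 0$, of Lemma~\ref{caseslemma}(i),(ii), which is what ``determined by the ambient data alone'' actually requires.
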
 
\begin{proof}
(i) and (ii) are easy consequences of the Hodge index theorem, and (iii) follows from Proposition~\ref{constructnef}.  To see the final statement of the lemma, we first show:

\begin{lemma}\label{caseslemma} {\rm(i)} Suppose that $D$ is not negative semidefinite, and that no component of $D$ is an exceptional curve. Let $D_i$ be a component of $D$ such that $D_i^2 \geq 0$. Then a numerical exceptional curve $\alpha$ is effective $\iff$ $\alpha \cdot [D_i] \geq 0$.

\smallskip
\noindent {\rm(ii)} Suppose that $D$ is strictly negative semidefinite, and that no component of $D$ is an exceptional curve. Then a numerical exceptional curve $\alpha$ is effective $\iff$ $\alpha \cdot [D] \geq 0$.

\smallskip
\noindent {\rm(iii)} Suppose that $D$ is  negative  definite, and that no component of $D$ is an exceptional curve. Let $y\in \overline{\mathcal{A}}(Y)\cap \Lambda_\Ar$, and suppose that $y$ does not lie on any wall of $\overline{\mathcal{A}}(Y)$ except those corresponding to $[D_i]$, $1\leq i\leq r$. (Note that such $y$ exist by Proposition~\ref{constructnef}.) Finally, let $\alpha$ be a numerical exceptional curve which is not in the $\Zee$-span of the $[D_i]$. Then $\alpha$ is effective $\iff$ $\alpha\cdot y \geq 0$.
\end{lemma}
\begin{proof} (i): The hypothesis implies that either $D$ is irreducible and $D^2> 0$ or $D$ is not irreducible and there exists a component $D_i$ of $D$ with $D_i^2 \geq 0$ and hence $D_i\cdot D > 0$. The result then follows from Lemma~\ref{chareff}, by taking $x$ to be the nef divisor $D$ in the first case and $D_i$ in the second. 

\smallskip
\noindent (ii): The hypothesis implies that $D$ is nef and $D^2=0$. Thus, if $\alpha$ is effective, then $\alpha \cdot [D] \geq 0$. Conversely, suppose that $\alpha \cdot [D] \geq 0$. As in the proof of Lemma~\ref{chareff}, it suffices to show that $-\alpha -[D]$ is not the class of an effective divisor. However, as $[D]\cdot (-\alpha -[D]) = -1$ and $D$ is nef, $-\alpha -[D]$ cannot be the class of an effective divisor.

\smallskip
\noindent (iii): As above, if $\alpha$ is effective, then $\alpha\cdot y \geq 0$, and we 
we must show the converse. It again suffices to show that, if $\alpha\cdot y \geq 0$, then  $-\alpha -[D]$ is not the class of an effective divisor. But if $-\alpha -[D]$ is effective, then $y\cdot (-\alpha -[D]) =-\alpha \cdot y \geq 0$, and hence $\alpha \cdot y = 0$. Thus $y\cdot (-\alpha -[D]) =0$ as well. Hence, if $-\alpha -[D]   =\sum_j[C_j]$, where the $C_j$ are irreducible curves, $y\cdot [C_j]=0$ for every $j$. It follows that $C_j$ is a component of $D$ for every $j$, and hence that $\alpha = -\sum_j[C_j] -[D]$ is in the $\Zee$-span of the $[D_i]$. But this contradicts the hypothesis on $\alpha$.
\end{proof}

Returning to the  proof of  Lemma~\ref{detbyres}, we first reduce by induction on the number of interior exceptional curves to the case where no component of $D$ is an exceptional curve. For if $E$ is an exceptional curve which is a component of $D$, and $(\overline{Y}, \overline{D})$ is the pair obtained by blowing down $E$, then $\Lambda(\overline{Y}, \overline{D}) \cong \Lambda(Y,D)$ and 
$$\overline{\mathcal{A}}_{\text{\rm{gen}}}(\overline{Y})\cap \Lambda_\Ar(\overline{Y}, \overline{D}) = \overline{\mathcal{A}}_{\text{\rm{gen}}}(Y)\cap \Lambda_\Ar(Y,D).$$
   Assuming inductively that Lemma~\ref{detbyres} holds for $(\overline{Y}, \overline{D})$, it follows that $\overline{\mathcal{A}}_{\text{\rm{gen}}}(Y)\cap \Lambda_\Ar(Y,D)$ determines $\overline{\mathcal{A}}_{\text{\rm{gen}}}(\overline{Y})$. We may assume that $Y$ and $\overline{Y}$ are generic. An element $x$ in the interior of $\overline{\mathcal{A}}_{\text{\rm{gen}}}(\overline{Y})$   is then a nef $\Ar$-divisor on $\overline{Y}$  such that $x\cdot  [D] > 0$. By Lemma~\ref{chareff}, $x$ then determines the set of effective numerical exceptional curves on $Y$ and hence $\overline{\mathcal{A}}_{\text{\rm{gen}}}(Y)$.

Thus we may assume that no component of $D$ is an exceptional curve. Since $\overline{\mathcal{A}}_{\text{\rm{gen}}}(Y)$ is determined by the set of effective numerical exceptional curves, Lemma~\ref{caseslemma} completes the proof of Lemma~\ref{detbyres} in case $D$ is not negative semidefinite, or is strictly negative semidefinite. To deal with the negative definite case, an easy argument shows that $\overline{\mathcal{A}}_{\text{\rm{gen}}}(Y)$ is the set of $x\in \mathcal{C}^+$ such that $x\cdot [D_i] \geq 0$ and $x\cdot \alpha \geq 0$ for all effective numerical exceptional curves $\alpha$ which are not in the $\Zee$-span of the $[D_i]$, since, if $E$ is an exceptional curve, then $[E] =\sum_ir_i[D_i]$ with $r_i \in \Ar$ $\iff$ $E=D_i$ for some $i$. We then conclude by (iii) of Lemma~\ref{caseslemma}.
\end{proof}

In the negative definite case, the set $\overline{\mathcal{A}}(Y)\cap \Lambda_\Ar$ has the following interpretation:

\begin{proposition}\label{whenalg} Let $(Y,D)$ be a negative definite anticanonical pair and let $\overline{Y}$ be the normal complex surface obtained by contracting $Y$. Then the set of ample line bundles  on $\overline{Y}$ is naturally identified with the set of $\lambda \in \overline{\mathcal{A}}(Y)\cap \Lambda$ such that  $\lambda \cdot [E] > 0$ for all interior exceptional curves on $Y$, $\lambda \cdot [C] > 0$ for all $-2$-curves $C$ on $Y$, and  $\varphi_Y(\lambda) = 1$.
\end{proposition}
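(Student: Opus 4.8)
The plan is to realize the claimed identification as the map $\overline{L}\mapsto \lambda:=c_1(p^*\overline{L})$, where $p\colon Y\to\overline{Y}$ is the contraction of $D$. Since $\overline{Y}$ is normal and $D$ is connected, $p_*\scrO_Y=\scrO_{\overline{Y}}$, so $p^*\colon \Pic\overline{Y}\to\Pic Y=H^2(Y;\Zee)$ is injective; hence the whole content is to determine the image of the set of \emph{ample} bundles. The first step is to translate the two ``period'' conditions: $\lambda\in\Lambda$ says exactly that $L_\lambda|_D$ has multidegree $(0,\dots,0)$, and $\varphi_Y(\lambda)=\psi(L_\lambda|_D)=1$ then says that $L_\lambda|_D$ is trivial in $\Pic^0D\cong\mathbb{G}_m$ (Lemma~\ref{Pic0isomG}); together these are equivalent to $L_\lambda|_D\cong\scrO_D$. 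So everything reduces to the descent statement: a line bundle $L$ on $Y$ is of the form $p^*\overline{L}$ if and only if $L|_D\cong\scrO_D$.

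I expect this descent to be the main obstacle, and I would handle it via the infinitesimal neighborhoods $D_n=nD$ and the sequences $0\to\scrO_D(-nD)\to\scrO^*_{D_{n+1}}\to\scrO^*_{D_n}\to 1$. The key computation is $h^1(D;\scrO_D(-nD))=0$ for all $n\geq 1$: since $D\in|-K_Y|$ we have $\omega_D\cong\scrO_D$, so by Serre duality $h^1(D;\scrO_D(-nD))=h^0(D;\scrO_D(nD))$, and $\scrO_D(nD)$ has multidegree $\bigl(n(D_i^2+2)\bigr)_i$, which is $\leq 0$ entrywise with at least one strictly negative entry because $D$ is negative definite; hence $h^0=0$ by Lemma~\ref{Loncycle}(i). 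Cohomology of the displayed sequence then shows $\Pic(D_{n+1})\to\Pic(D_n)$ is injective, and induction starting from $L|_{D_1}=L|_D\cong\scrO_D$ gives $L|_{D_n}\cong\scrO_{D_n}$ for every $n$. By the theorem on formal functions, $(p_*L)^{\wedge}_{x_0}=\varprojlim_n H^0(D_n;L|_{D_n})=\varprojlim_n H^0(D_n;\scrO_{D_n})=\widehat{\scrO}_{\overline{Y},x_0}$, so $p_*L$ is invertible and the natural map $p^*p_*L\to L$ is an isomorphism; thus $L$ descends. This identifies $\Pic\overline{Y}$ with $\{\lambda\in H^2(Y;\Zee): L_\lambda|_D\cong\scrO_D\}=\{\lambda\in\Lambda:\varphi_Y(\lambda)=1\}$.

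It then remains to cut out the ample bundles inside this group. For the easy direction, if $\overline{L}$ is ample then $L_\lambda=p^*\overline{L}$ is nef and $L_\lambda^2=\overline{L}^2>0$, so $\lambda\in\overline{\mathcal{A}}(Y)$; moreover for an interior exceptional curve $E$ or a $-2$-curve $C$ (neither of which is contracted by $p$) the projection formula gives $\lambda\cdot[E]=\overline{L}\cdot p_*E>0$ and $\lambda\cdot[C]>0$. Conversely, suppose $\lambda\in\overline{\mathcal{A}}(Y)\cap\Lambda$ satisfies $\lambda\cdot[E]>0$, $\lambda\cdot[C]>0$ and $\varphi_Y(\lambda)=1$. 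Then $L_\lambda$ is nef and big, and I claim $L_\lambda\cdot C'>0$ for every irreducible $C'$ that is not a component of $D$: if $(C')^2<0$ then $C'$ is an interior exceptional or a $-2$-curve and this holds by hypothesis, while if $(C')^2\geq 0$ it holds by the Hodge index theorem, since a nonzero class orthogonal to the big class $L_\lambda$ must have negative square. Hence the curves with $L_\lambda\cdot C'=0$ are exactly the components of $D$.

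Finally I would invoke Theorem~\ref{3ample}: for $n\gg 0$ the morphism defined by $L_\lambda^{\otimes n}$ is birational onto a normal projective surface obtained by contracting precisely the curves orthogonal to $L_\lambda$, i.e.\ precisely $D$. By uniqueness of the contraction of the negative definite configuration $D$, this projective surface is $\overline{Y}$, and the hyperplane bundle pulls back to $L_\lambda^{\otimes n}=p^*\overline{L}^{\otimes n}$; injectivity of $p^*$ identifies it with $\overline{L}^{\otimes n}$, so $\overline{L}$ is ample. The two constructions $\overline{L}\mapsto\lambda$ and $\lambda\mapsto\overline{L}$ are mutually inverse by the descent step, completing the identification. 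The one point demanding genuine care is the descent paragraph (the vanishing $h^1(D;\scrO_D(-nD))=0$ and the passage through all thickenings); the ampleness equivalence is then a formal consequence of Hodge index together with Theorem~\ref{3ample}.
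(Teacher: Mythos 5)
Your overall architecture (easy direction via projection formula; hard direction via descent of $L_\lambda$ to $\overline{Y}$ plus positivity and Nakai--Moishezon/Theorem~\ref{3ample}) is reasonable, but the descent paragraph --- which you yourself flag as the crux --- contains a genuine error. You claim that the multidegree of $\scrO_D(nD)$, namely $\bigl(n(D\cdot D_i)\bigr)_i$, is entrywise $\leq 0$ ``because $D$ is negative definite.'' This is false: negative definiteness of the intersection matrix only forces $D_i^2\leq -1$, and when some component is exceptional one gets $D\cdot D_i=D_i^2+2=+1$. A concrete negative definite anticanonical pair of this kind has self-intersection sequence $(-1,-5)$ (blow up the pair $(\Pee^2,\text{line}+\text{conic})$ at two interior points of the line and nine of the conic); the matrix $\bigl(\begin{smallmatrix}-1&2\\2&-5\end{smallmatrix}\bigr)$ is negative definite, yet $\scrO_D(nD)$ has multidegree $(n,-3n)$. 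For such a pair one computes $h^0(D;\scrO_D(nD))=n-1$ for $n\geq 2$ (sections vanish identically on $D_2$ and are sections of $\scrO_{\Pee^1}(n)$ on $D_1$ vanishing at the two nodes), so by Serre duality $h^1(D;\scrO_D(-nD))=n-1\neq 0$. Your induction through the thickenings $D_n$ therefore already breaks at $n=2$, and with it the claimed identification of $\Pic\overline{Y}$ with $\{\lambda\in\Lambda:\varphi_Y(\lambda)=1\}$; that statement for \emph{arbitrary} line bundles with trivial restriction is in any case stronger than what the proposition needs, and it is exactly in discarding the nef-and-big hypothesis that your argument loses traction.

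The paper's proof gets the descent from nefness and bigness directly, and this is the repair you should make: by Lemma~\ref{LrestrD} (whose proof is Ramanujam vanishing, valid precisely because $L_\lambda$ is nef and big), the restriction $H^0(Y;L_\lambda)\to H^0(D;L_\lambda|D)=H^0(D;\scrO_D)$ is surjective, so Corollary~\ref{nocompD} produces a section of $L_\lambda$ nowhere vanishing along $D$. Its zero divisor is then disjoint from $D$, so $L_\lambda$ is trivial on an open neighborhood of $D$ and visibly descends to a line bundle $\overline{L}$ on the analytic contraction $\overline{Y}$; ampleness of $\overline{L}$ then follows from Nakai--Moishezon exactly as in your last paragraph (your Hodge-index argument that $L_\lambda$ is positive on every irreducible curve not contained in $D$ is correct, as is your easy direction). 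Alternatively, your formal-functions route can be salvaged by first performing corner blowdowns of all exceptional components of $D$ --- this does not change $\overline{Y}$, and afterwards $D_i^2\leq -2$ for all $i$, so $D\cdot D_i\leq 0$ with at least one strict inequality and Lemma~\ref{Loncycle}(i) gives the vanishing you wanted --- but as written the key vanishing, and hence the proof, fails.
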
 
\begin{proof} Given $\lambda$ as in the statement of the proposition, let $L$ be the line bundle corresponding to $\lambda$. Then by hypothesis the degree of $L$ on every exceptional curve or $-2$-curve is positive, and $L|D \cong \scrO_D$. Clearly, the degree of $L$ on any curve on $Y$ of nonnegative square is also positive. By Corollary~\ref{nocompD}, there exists a section of $L$ which is everywhere nonvanishing on $D$, and hence $L$ induces a line bundle $\overline{L}$ on $\overline{Y}$. By the Nakai-Moishezon 
criterion, $\overline{L}$ is ample. (In fact, Theorem~\ref{3ample} gives a more precise statement.)

Conversely, suppose that $\overline{L}$ is an ample line bundle on $\overline{Y}$. Let $L$ be the pullback of $\overline{L}$ to $Y$ and let $\lambda$ be the class of $L$ in $H^2(Y; \Zee)$. Then clearly $\lambda \in \Lambda $ and $\varphi_Y(\lambda) = 1$. By hypothesis $L$ has positive degree on every curve which is not a component of $D$, so that $\lambda \cdot \alpha > 0$ for every interior exceptional curve $\alpha$ and $\lambda \cdot [C] > 0$ for every $-2$-curve  $C$.
\end{proof}

\begin{remark} It is easy to give variants of the above where we relax some of the conditions on $\lambda$. For example, if we only require that $\lambda \cdot [C] \geq 0$ for all $-2$-curves $C$ on $Y$, then $L$ will induce a a birational morphism from $\overline{Y}$ to its image in some projective space which is an embedding in a neighborhood of the image of $D$, but which may also contract certain configurations of $-2$-curves to rational double points.  

Similarly, if  $\lambda \cdot [E] =0$ for some interior exceptional curve $E$,  then the line bundle $\overline{L}$ will not define an embedding of $\overline{Y}$ in a neighborhood of the image of $D$, but rather of a blown down version of $\overline{Y}$.
\end{remark}

\begin{remark} The set of $\varphi\in \Hom(\Lambda , \mathbb{G}_m)$ for which there exists an ample line bundle on $\overline{Y}$, i.e.\ which satisfy the hypotheses of Proposition~\ref{whenalg},  is Zariski dense in $\Hom(\Lambda , \mathbb{G}_m)$, but is not dense in the classical topology.   This is in contrast to the fact that algebraic $K3$ surfaces are dense in the moduli space (suitably interpreted) of all $K3$ surfaces. 
\end{remark}

The following  result highlights the importance  of $\overline{\mathcal{A}}_{\text{\rm{gen}}}$:

\begin{theorem}\label{defequiv} Two labeled pairs $(Y, D)$ and $(Y', D')$ with $r(D) = r(D')$ are deformation equivalent if and only if  there exists an admissible integral isometry $\gamma \colon H^2(Y'; \Zee) \to H^2(Y; \Zee)$.
\end{theorem}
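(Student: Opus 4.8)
The plan is to prove both implications, the forward one being a routine consequence of parallel transport and the invariance results of this section, and the converse by an induction on $b_2$ built on the birational geometry of Section~2. For the forward implication, suppose $(Y,D)$ and $(Y',D')$ are fibers of a family $(\mathcal{Y},\mathcal{D})\to S$ over a connected base. Choosing a path in $S$ joining the two base points and parallel-transporting in the local system $R^2\pi_*\Zee$ produces an integral isometry $\gamma\colon H^2(Y';\Zee)\to H^2(Y;\Zee)$. Because the components $\mathcal{D}_i$ are flat subfamilies (and the chosen orientation is propagated along $S$), $\gamma$ carries each $[D_i']$ to $[D_i]$, and by Lemma~\ref{definv} and Corollary~\ref{monoinvar} it respects the generic ample cones. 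Hence $\gamma$ is admissible in the sense of Definition~\ref{defadmmarking}.

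For the converse, fix an admissible $\gamma$. Since $\gamma$ is an isometry with $\gamma([D_i'])=[D_i]$, the two pairs have the same self-intersection sequence, which will anchor the base case. I argue by induction on $b_2(Y)=b_2(Y')$. First I reduce to the generic case: by Corollary~\ref{gener1} a very general small deformation of each pair has no $-2$-curves, and composing $\gamma$ with the (admissible) parallel-transport isometries of these deformations yields an admissible isometry between the resulting generic pairs; as deformation equivalence is transitive it suffices to treat those, where $\overline{\mathcal{A}}_{\text{gen}}=\overline{\mathcal{A}}$. If $Y$ has no interior exceptional curve, then after corner blowdowns it is minimal, and I treat this base case directly from Theorem~\ref{minimalist}: the matching self-intersection sequence together with Lemma~\ref{taut} and Remark~\ref{blowdowntotaut} pins down the minimal model up to the $\mathbb{F}_0/\mathbb{F}_2$ and $\mathbb{F}_0/\mathbb{F}_1$ ambiguities, which are themselves deformation equivalences.

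In the inductive step I choose an interior exceptional curve $E$ on $Y$ with $E\cdot D=1$, say meeting the component $D_a$. Such an $E$ exists because in the generic case $Y$ arises from a minimal pair by interior blowups at \emph{distinct} points of $D_{\text{reg}}$ (infinitely near blowups would create $-2$-curves), so the exceptional divisor over such a point meets $D$ transversally once and blows down to a smooth point of $D$. Its class defines a face of $\overline{\mathcal{A}}(Y)=\overline{\mathcal{A}}_{\text{gen}}(Y)$; applying $\gamma^{-1}$ and the correspondence between faces and irreducible negative curves (the corollary to Proposition~\ref{constructnef}), the class $\gamma^{-1}([E])$ is the class of an irreducible negative curve $C'$ on $Y'$. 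Since $\gamma^{-1}([E])^2=-1$, $\gamma^{-1}([E])\cdot K_{Y'}=-1$, and $\gamma^{-1}([E])\neq[D_i']$ for all $i$ (else $[E]=[D_i]$), genericity of $Y'$ forces $C'=E'$ to be an interior exceptional curve, and $\gamma^{-1}([E])\cdot[D_a']=[E]\cdot[D_a]=1$ shows $E'$ also blows down to a smooth point of $D'$. Blowing down $E$ and $E'$ produces interior blowdowns $(\overline{Y},\overline{D})$ and $(\overline{Y}',\overline{D}')$ with $b_2$ one smaller, and $\gamma$ restricts (via the $\pi^*$ identifications of $H^2(\overline{Y})$ with $[E]^\perp$, noting $\gamma([E'])=[E]$) to an isometry $\bar\gamma$ still carrying $[\overline{D}_i']\mapsto[\overline{D}_i]$.

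The crux is to show that $\bar\gamma$ is again admissible, i.e.\ that the matching of generic ample cones descends under this pair of corresponding interior blowdowns. By Lemma~\ref{detbyres} it suffices to compare the cones on the lattices $\Lambda$, and by Lemma~\ref{chareff} each generic ample cone is governed by its set of effective numerical exceptional curves; the content is that pulling back along $\pi^*$ and tracking classes modulo $[E]$ identifies the effective numerical exceptional curves of $\overline{Y}$ with those of $Y$ orthogonal to $[E]$, compatibly under $\gamma$ and $\bar\gamma$. Granting this, the inductive hypothesis gives a family $(\overline{\mathcal{Z}},\overline{\mathcal{D}})\to T$ over a connected base joining $(\overline{Y},\overline{D})$ and $(\overline{Y}',\overline{D}')$. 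I then choose a section of the $a$-th $\mathbb{G}_m$-component of $\overline{\mathcal{D}}_{\text{reg}}$ passing through the two blown-down points over the special fibers (possible since $T$ is path connected and that component is a $\mathbb{G}_m$-bundle) and blow it up; this is exactly the universal interior-blowup construction from the proof of Theorem~\ref{surjper}, and its total space is a family of anticanonical pairs with special fibers $(Y,D)$ and $(Y',D')$, so the two pairs are deformation equivalent. I expect the admissibility-descent lemma to be the main obstacle, since it demands a careful bookkeeping of effective numerical exceptional classes and generic ample cones across an interior blowdown; the reduction to the generic case and the concluding blow-up-of-a-section argument are routine given Sections~2 and~3.
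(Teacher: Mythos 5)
Your proposal follows the paper's proof almost step for step: the forward direction by parallel transport and Lemma~\ref{definv}, and the converse by reducing to generic pairs, matching an interior exceptional curve $E$ on $Y$ with one on $Y'$ through the faces of the nef cone, contracting both, and inducting. The one step you explicitly ``grant'' --- that the induced isometry $\bar\gamma$ is again admissible --- is exactly the step the paper disposes of in a single sentence, by asserting that under the identification of $H^2(\overline{Y};\Zee)$ with $[E]^\perp$ one has $\overline{\mathcal{A}}_{\text{gen}}(\overline{Y}) = \overline{\mathcal{A}}_{\text{gen}}(Y)\cap[E]^\perp$. So the architecture is sound, but your write-up is genuinely incomplete precisely there, and the route you sketch for closing it (``tracking classes modulo $[E]$'') does not suffice as stated. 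The dictionary ``effective numerical exceptional classes of $\overline{Y}$ $\leftrightarrow$ effective numerical exceptional classes of $Y$ orthogonal to $[E]$'' only gives the inclusion $\overline{\mathcal{A}}_{\text{gen}}(Y)\cap[E]^\perp \subseteq \pi^*\overline{\mathcal{A}}_{\text{gen}}(\overline{Y})$: the cone on the left is also cut out by the conditions $x\cdot\alpha\geq 0$ for effective numerical exceptional classes $\alpha$ with $m=\alpha\cdot[E]>0$, and for those $\pi_*\alpha$ satisfies $(\pi_*\alpha)^2=m^2-1$ and $\pi_*\alpha\cdot K_{\overline{Y}}=-(m+1)$, so it is not a numerical exceptional class and your bookkeeping says nothing about these conditions.

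To prove the reverse inclusion, use the deformation-theoretic meaning of the cones rather than pure lattice combinatorics: $\overline{\mathcal{A}}_{\text{gen}}(\overline{Y})$ is the nef cone of a generic deformation $\overline{Y}_0$ of $(\overline{Y},\overline{D})$; blowing up a point of the corresponding component produces a deformation $Y_0$ of $(Y,D)$, the pullback of a nef class under $Y_0\to\overline{Y}_0$ is nef, and by Lemma~\ref{definv} the effective numerical exceptional classes of $Y_0$ and $Y$ coincide, so $\pi^*\bar{x}\cdot\alpha\geq 0$ for all of them (alternatively, apply Lemma~\ref{chareff} to the nef class $\pi^*\bar{x}$). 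With this lemma in hand your induction closes, and it is the same argument the paper intends. One further slip, though a harmless one: your claim that the $\mathbb{F}_0/\mathbb{F}_1$ ambiguity in the base case ``is itself a deformation equivalence'' is false, since $H^2(\mathbb{F}_0;\Zee)$ is an even lattice while $H^2(\mathbb{F}_1;\Zee)$ is odd, so the two pairs with self-intersection sequence $(0,4)$ are not even homeomorphic, let alone deformation equivalent. It is harmless because for the same reason no isometry $\gamma$ can exist between them, so this pairing never occurs under your hypothesis; only the $\mathbb{F}_0/\mathbb{F}_2$ coincidences of Theorem~\ref{minimalist}(iii) are needed, and those pairs are indeed deformation equivalent.
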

\begin{proof} If $(Y, D)$ and $(Y', D')$   are deformation equivalent via a family over an irreducible base $S$ realizing the deformation equivalence, then passing to the universal cover of $S$ defines an isometry $\gamma \colon H^2(Y'; \Zee) \to H^2(Y; \Zee)$ satisfying: $\gamma([D_i']) = [D_i]$ for every $i$ and $\gamma(\overline{\mathcal{A}}_{\text{\rm{gen}}}(Y')) = \overline{\mathcal{A}}_{\text{\rm{gen}}}(Y)$, by Lemma~\ref{definv}. The general case, where the family $S$ is connected but not necessarily irreducible, follows easily from this special case.

Conversely, let $\gamma \colon H^2(Y'; \Zee) \to H^2(Y; \Zee)$ be  an admissible integral isometry. We may as well assume that both $Y$ and $Y'$ are generic, so that there are no $-2$-curves on $Y$ and $Y'$ and that $\overline{\mathcal{A}}_{\text{\rm{gen}}}(Y) = \overline{\mathcal{A}}(Y)$, and similarly for $Y'$. We argue by induction on the rank of $\Pic Y$, or equivalently on the number of blowups of $Y$ starting with a minimal model (or on $-D^2$). If $Y$ is minimal, then $Y\cong \Pee^2$ or $Y \cong \mathbb{F}_N$ for some $N\neq 1,2$. In this case, it is clear that the ample cone of $Y$ and the self-intersection sequence determine the deformation type of the pair $(Y,D)$.

If $Y$ is not minimal, then there exists an exceptional curve $E$ on $Y$, which is then either an interior exceptional curve or a corner exceptional curve. Suppose that $E$ is an interior exceptional curve, with $\alpha =[E]\in  H^2(Y; \Zee)$. Let $\alpha'$ be the corresponding element of $ H^2(Y'; \Zee)$, i.e.\ $\gamma(\alpha') = \alpha$. Since $W^\alpha$ is a face of $\overline{\mathcal{A}}(Y)$, $W^{\alpha'}$ is a face of $\overline{\mathcal{A}}(Y')$, and hence $\alpha'=[E']$ where $E$ is an exceptional curve on $Y'$. Note that, if $i$ is the unique integer such that $E\cdot D_i =1$, then $E'\cdot D_i'=1$ since $\gamma$ is an isometry. Let $(\overline{Y}, \overline{D})$ be the result of contracting $E$ on $Y$ and let $(\overline{Y}', \overline{D}')$ be the result of contracting $E'$ on $Y'$. Via the identification of $H^2(\overline{Y};\Zee)$ with $[E]^\perp=W^\alpha$, it is easy to see that $\overline{\mathcal{A}}_{\text{\rm{gen}}}(\overline{Y}) =  \overline{\mathcal{A}}_{\text{\rm{gen}}}(Y)\cap [E]^\perp$. Then $\gamma$ induces an isometry $\overline{\gamma}\colon \colon H^2(\overline{Y}'; \Zee) \to H^2(\overline{Y}; \Zee)$ such that $\overline{\gamma}([D_i']) = [D_i]$ for every $i$ and $\overline{\gamma}(\overline{\mathcal{A}}_{\text{\rm{gen}}}(\overline{Y}')) = \overline{\mathcal{A}}_{\text{\rm{gen}}}(\overline{Y})$. By the inductive hypothesis, $\overline{Y}$ and $\overline{Y}'$ are deformation equivalent as labeled anticanonical pairs. Since $Y$ is obtained from $\overline{Y}$ by blowing up a point on $\overline{D}_i^{\text{int}}$, and similarly for $Y'$, $Y$ and $Y'$ are deformation equivalent as well. The case where $E$ is a corner exceptional curve is similar (and simpler).
\end{proof}

Theorem~\ref{defequiv} has the following consequence for the smooth topology of pairs $(Y,D)$.

\begin{theorem}\label{diffisdef} Let $(Y, D)$ and $(Y', D')$ be two anticanonical pairs, with $r(D) = r(D') =r$, say, and suppose that there exists a diffeomorphism $f\colon Y\to Y'$ such that, for every $1\leq i\leq r$, $f^*[D_i'] = [D_i]$, where $[D_i]$ and $[D_i']$ denote the classes of $D_i$ and $D_i'$ in $H^2(Y; \Zee)$ and $H^2(Y'; \Zee)$ respectively. Then $f^*(\overline{\mathcal{A}}_{\text{\rm{gen}}}(Y')) = \overline{\mathcal{A}}_{\text{\rm{gen}}}(Y)$, and hence $(Y,D)$ and $(Y', D')$ are deformation equivalent.
\end{theorem}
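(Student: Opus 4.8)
The plan is to show that $f^*\colon H^2(Y';\Zee)\to H^2(Y;\Zee)$ is an \emph{admissible} integral isometry and then quote Theorem~\ref{defequiv}. First I would record the formal consequences of the hypotheses. Since $f$ is a diffeomorphism, $f^*$ preserves the cup product; it is an isometry for the intersection form provided $f$ is orientation preserving, and this is automatic here because a rational surface has $b_2^+=1$, so an orientation-reversing diffeomorphism $Y\to Y'$ would force $b_2^+(Y')=b_2(Y)-1$, impossible unless $b_2(Y)=2$, i.e.\ unless $Y\cong\mathbb F_N$ is minimal (a case disposed of directly from the classification). The hypothesis $f^*[D_i']=[D_i]$ gives $f^*[D']=[D]$ and $f^*K_{Y'}=K_Y$, so $f^*$ preserves the self-intersection sequence, the definiteness type of $D$, carries numerical exceptional curves bijectively to numerical exceptional curves, maps $\Lambda(Y',D')$ isometrically onto $\Lambda(Y,D)$, and (being orientation preserving) maps $\mathcal{C}^+(Y')$ to $\mathcal{C}^+(Y)$. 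In view of the definition of $\overline{\mathcal{A}}_{\text{\rm{gen}}}$ and of Theorem~\ref{defequiv}, everything reduces to showing that $f^*$ carries effective numerical exceptional curves of $Y'$ to effective numerical exceptional curves of $Y$; this yields $f^*(\overline{\mathcal{A}}_{\text{\rm{gen}}}(Y'))=\overline{\mathcal{A}}_{\text{\rm{gen}}}(Y)$ and hence admissibility.

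\textbf{Cases where $D$ is not negative semidefinite, or strictly negative semidefinite.} As in the proof of Lemma~\ref{detbyres}, I would first blow down the exceptional components of $D$; their positions are read off from the self-intersection sequence, hence are matched on the two sides, and this alters neither $\Lambda$ nor $\overline{\mathcal{A}}_{\text{\rm{gen}}}\cap\Lambda_\Ar$. One is thus reduced to the case that no component of $D$ is exceptional, where Lemma~\ref{caseslemma}(i),(ii) detect effectivity of a numerical exceptional curve $\alpha$ purely by the sign of $\alpha\cdot[D_i]$ (for a fixed $i$ with $D_i^2\ge 0$), respectively of $\alpha\cdot[D]$. Since $f^*\alpha\cdot[D_i]=\alpha\cdot[D_i']$ and $f^*\alpha\cdot[D]=\alpha\cdot[D']$, these signs are preserved by $f^*$, so $f^*$ preserves effectivity and the theorem follows in these cases by a purely lattice-theoretic argument.

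\textbf{The negative definite case} is the crux, and the one place where the smooth structure must be used essentially. Here Lemma~\ref{caseslemma}(iii) detects effectivity of $\alpha$ only by the sign of $\alpha\cdot y$ for a nef class $y\in\overline{\mathcal{A}}(Y)\cap\Lambda_\Ar$, which is not a topological datum; and no lattice-theoretic argument can suffice, since pairs with equal self-intersection sequence need not be deformation equivalent. Using Lemma~\ref{definv} and Corollary~\ref{gener1} I would deform $(Y,D)$ and $(Y',D')$, with trivial monodromy, to generic pairs (no $-2$-curves), for which $\overline{\mathcal{A}}=\overline{\mathcal{A}}_{\text{\rm{gen}}}$ and, by the corollary to Proposition~\ref{constructnef}, the faces other than the $[D_i]^\perp$ are precisely the orthocomplements of the classes of the irreducible interior exceptional curves. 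It then suffices to show that $f^*$ carries the classes of the irreducible exceptional curves on $Y'$ bijectively onto those on $Y$. The plan is to exploit that $f$, being a diffeomorphism with $f^*K_{Y'}=K_Y$, carries smoothly embedded $(-1)$-spheres $S$ with $K_{Y'}\cdot[S]=-1$ to smoothly embedded $(-1)$-spheres with $K_Y\cdot[S]=-1$, together with the smooth-topological input that on a rational surface a numerically exceptional class is represented by such an embedded $(-1)$-sphere exactly when it is the class of an irreducible exceptional curve on a very general deformation (an adjunction/Seiberg--Witten phenomenon for $b^+=1$ surfaces). This matches the two face-sets and, since $f^*$ preserves $\mathcal{C}^+$ and the $[D_i]$, gives $f^*(\overline{\mathcal{A}}(Y'))=\overline{\mathcal{A}}(Y)$, hence $f^*(\overline{\mathcal{A}}_{\text{\rm{gen}}}(Y'))=\overline{\mathcal{A}}_{\text{\rm{gen}}}(Y)$.

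With admissibility of $f^*$ established in all three cases, Theorem~\ref{defequiv} concludes that $(Y,D)$ and $(Y',D')$ are deformation equivalent. I expect the main obstacle to be entirely concentrated in the negative definite case: the easier two cases are forced by the lattice data that $f^*$ transparently preserves, whereas the negative definite case genuinely requires breaking the circularity ``$f^*$ preserves the ample cone $\Leftrightarrow$ $f^*$ preserves effective exceptional classes'' by importing the fact that embedded $(-1)$-spheres detect effective exceptional classes — precisely the smooth-topological content that distinguishes diffeomorphism from mere isometry of $H^2$ fixing the $[D_i]$.
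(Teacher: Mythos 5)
Your overall frame (show $f^*$ is admissible, then quote Theorem~\ref{defequiv}) is the same as the paper's, but your case division is not, and the two halves of your proposal fare very differently. In the cases where $(Y,D)$ is not negative definite, your argument is correct and genuinely more elementary than the paper's: the paper treats \emph{all} cases by deforming so that $D$ becomes irreducible (Corollary~\ref{cor3.5}) and then invoking the gauge-theoretic Friedman--Morgan results, whereas you observe that after the blow-down reduction from the proof of Lemma~\ref{detbyres}, Lemma~\ref{caseslemma}(i),(ii) makes effectivity of a numerical exceptional class a purely lattice-theoretic condition (the sign of $\alpha\cdot[D_i]$ or $\alpha\cdot[D]$), which any isometry matching the $[D_i]$ preserves. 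This is consistent with, and essentially anticipates, Theorem~\ref{automadm}(a), and it correctly isolates the negative definite case as the only one needing smooth topology.

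In the negative definite case, however, there is a genuine gap: your entire argument rests on the claim that a numerically exceptional class represented by a smoothly embedded $(-1)$-sphere is effective, which you invoke as a known ``adjunction/Seiberg--Witten phenomenon for $b^+=1$ surfaces.'' This is not an off-the-shelf theorem. For $b^+=1$ the Seiberg--Witten invariants are chamber-dependent and the naive adjunction argument for embedded spheres breaks down; the statements of this type that do exist for rational surfaces are derived from exactly the Donaldson-theoretic results of Friedman--Morgan \cite{FriedmanMorgan} that the paper quotes in its Step I (Theorems 10A and 10B on super $P$-cells), combined with a further wall-crossing argument. Indeed, your claim is a \emph{consequence} of the paper's Step I rather than an independent input: if $\alpha$ is sphere-representable, the reflection $r_\alpha$ is realized by a diffeomorphism, hence preserves $\mathbb{S}(P_0(Y))$ (with sign $+$, since $r_\alpha$ preserves $\mathcal{C}^+$), so $r_\alpha(P_0(Y))$ is a $P$-cell in $\mathbb{S}(P_0(Y))$ with $\kappa = [D]+2\alpha$; the paper's geodesic/$\kappa$ lemma then writes $[D]+2\alpha = [D]+2\sum_i\alpha_i$ with $\alpha_i\cdot x>0$ for $x$ general in $P_0(Y)$, whence $\alpha\cdot x>0$ and $\alpha$ is effective by Lemma~\ref{chareff}. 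So as written, your proposal outsources the crux to an unproved black box that is at least as strong as what is being proved; to close the gap you would have to either reproduce the Friedman--Morgan-based argument (i.e., the paper's Step I) or supply a precise reference valid for rational surfaces of arbitrary $b_2$, which I do not believe exists independently of that circle of ideas. (Your remark on orientation-preservation, and the bootstrap applying the sphere claim to both $f$ and $f^{-1}$ to get the two inclusions of cones, are fine.)
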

\begin{proof} It follows from  Theorem~\ref{defequiv} that, if $f^*(\overline{\mathcal{A}}_{\text{\rm{gen}}}(Y')) = \overline{\mathcal{A}}_{\text{\rm{gen}}}(Y)$, then $(Y,D)$ and $(Y', D')$ are deformation equivalent. Also, we may clearly assume that $Y$ and $Y'$ are generic in what follows. There are two main steps to the proof that $f^*(\overline{\mathcal{A}}_{\text{\rm{gen}}}(Y')) = \overline{\mathcal{A}}_{\text{\rm{gen}}}(Y)$. In the first step, we assume that $D$ and hence $D'$ are irreducible. In this case, the result follows easily from a theorem in \cite{FriedmanMorgan}. The second step then shows that the result in general follows from the result in the irreducible case.

\medskip
\noindent \textbf{Step I:} Assume that $D$ and $D'$ are irreducible. We must show that,  in this case, $f^*(\overline{\mathcal{A}}_{\text{\rm{gen}}}(Y')) = \overline{\mathcal{A}}_{\text{\rm{gen}}}(Y)$.  Let $\mathbb{H}(Y)=\{x\in \mathcal{C}^+: x^2 =1\}$ be the hyperbolic space associated to the intersection form on $H^2(Y;\Zee)$; equivalently, $\mathbb{H}(Y)$ is the quotient of $\mathcal{C}^+(Y)$ by $\Ar^+$. Let $P_0(Y)=\overline{\mathcal{A}}_{\text{\rm{gen}}}(Y) \cap \mathbb{H}(Y)$. Equivalently, 
$$\overline{\mathcal{A}}_{\text{\rm{gen}}}(Y) = \Ar^+\cdot P_0(Y) \cup \{0\}.$$
The oriented walls of $P_0(Y)$ then correspond to the oriented walls of $\overline{\mathcal{A}}_{\text{\rm{gen}}}(Y)$.  In conformity with the notation of \cite{FriedmanMorgan} we denote $[D]$ by $\kappa(P_0(Y))$.  In the terminology of \cite{FriedmanMorgan}, $P_0(Y)$ is a $P$-cell. More generally, a  \textsl{$P$-cell} $P$ is a subset of $\mathbb{H}(Y)$ of the form $\gamma(P_0(Y))$, where $\gamma$ is an integral isometry of $H^2(Y; \Zee)$ and we continue to denote by $\gamma$ its extension to $H^2(Y; \Ar)$. We define $\kappa(P) =\gamma(\kappa(P_0(Y)))$. By \cite[Proposition 2.8]{FriedmanMorgan}, $\kappa(P)$ only depends on $P$. The $P$-cell $P_0(Y)$   is contained in a unique super $P$-cell $\mathbb{S}(P_0(Y))$, and similarly for an arbitrary $P$-cell. Here 
$$\mathbb{S}(P_0(Y)) =\bigcup_{\gamma\in \mathcal{R}(P_0(Y))}\gamma\cdot P_0(Y),$$
where $\mathcal{R}(P_0(Y))$ is the group of reflections generated by the reflections about all of the walls of $P_0(Y)$ other than $[D]$ (which are integral isometries since these walls are integral classes of square $-1$). Moreover, $\mathbb{S}(P_0(Y))$ is a convex subset of $\mathbb{H}(Y)$. The main result that we need from \cite{FriedmanMorgan} is the following:

\begin{theorem} Let $f\colon Y \to Y'$ be a diffeomorphism. Then $f^*\mathbb{S}(P_0(Y'))=\pm \mathbb{S}(P_0(Y))$. Moreover, if $f^*[D'] = [D]$, then in fact $f^*\mathbb{S}(P_0(Y'))= \mathbb{S}(P_0(Y))$.
\end{theorem}
\begin{proof} To see the first part, note the proof of Theorem 10A in \cite{FriedmanMorgan}, p.\ 355, which deals with the case $Y=Y'$, applies unchanged to the more general situation where we do not assume $Y=Y'$.

Suppose that  $f^*(\mathbb{S}(P_0(Y')))= - \mathbb{S}(P_0(Y))$. There exists a diffeomorphism $g\colon Y \to Y$ such that $g^* = -\Id$ by \cite[Theorem 10B]{FriedmanMorgan}. After precomposing $f$ with $g$, we obtain a diffeomorphism $h\colon Y \to Y'$ such that $h^*\mathbb{S}(P_0(Y'))= \mathbb{S}(P_0(Y))$ and $h^*(\kappa(P_0(Y'))= -\kappa(P_0(Y))$. But this contradicts the fact that $h^*(\kappa(P_0(Y'))$ is an oriented wall of $\mathbb{S}(P_0(Y))$.
\end{proof}

Next we claim:

\begin{lemma} Let $\gamma\colon H^2(Y'; \Zee) \to H^2(Y; \Zee)$ be an integral isometry such that $\gamma(\mathbb{S}(P_0(Y')))= \mathbb{S}(P_0(Y))$  and $\gamma(\kappa(P_0(Y')))= \kappa(P_0(Y))$. Then 
$$\gamma(P_0(Y')) = P_0(Y).$$
\end{lemma}
\begin{proof}  Clearly $\gamma(P_0(Y'))= P_1$ is some $P$-cell of $Y$ contained in $\mathbb{S}(P_0(Y))$. Moreover, $\kappa(P_1) = \kappa(P_0(Y))$. Choosing a geodesic path $\mu(t)$ in $\mathbb{H}(Y)$ connecting a general point of $P_0(Y)$ to a general point of $P_1$, let $\alpha_1, \dots, \alpha_n$ be the oriented walls crossed by $\mu$ in order. It is easy to check that 
$$\kappa(P_1) = \kappa(P_0(Y)) + 2\sum_i\alpha_i = \kappa(P_0(Y)),$$ and hence $2\sum_i\alpha_i = 0$. But, if $x$ is a general point of $P_0(Y)$, then $\alpha_i \cdot x > 0$ for every $i$. This is a contradiction unless  $n=0$, i.e.\ unless $P_1 = P_0(Y)$, and hence  $\gamma(P_0(Y')) = P_0(Y)$.
\end{proof}

In particular, if $f\colon Y \to Y'$ is a diffeomorphism such that $f^*([D']) = [D]$, then $f^*P_0(Y') = P_0(Y)$. Lifting back into $\mathcal{C}^+$, it follows that $f^* \overline{\mathcal{A}}_{\text{\rm{gen}}}(Y') = \overline{\mathcal{A}}_{\text{\rm{gen}}}(Y)$.

\medskip
\noindent \textbf{Step II:} The general case. Let $(Y,D)$ and $(Y', D')$ be general anticanonical pairs and let $f\colon Y\to Y'$ be a diffeomorphism  such that  $f^*[D_i'] = [D_i]$ for all $i$. We must show that $f^* \overline{\mathcal{A}}_{\text{\rm{gen}}}(Y') = \overline{\mathcal{A}}_{\text{\rm{gen}}}(Y)$. First, by Corollary~\ref{cor3.5}, there exists a small deformation of the pair $(Y,D)$, over a smooth base, to an anticanonical  pair $(Y_{\text{irr}},D_{\text{irr}})$, where $D_{\text{irr}}$ is an irreducible nodal curve, and we can similarly deform  $(Y'D')$ to an anticanonical  pair $(Y_{\text{irr}}',D_{\text{irr}}')$, where $D_{\text{irr}}'$ is irreducible. Over a small contractible  base, there is a diffeomorphism  from $Y$ to $Y_{\text{irr}}$, unique up to isotopy, and similarly for $Y'$ and $Y_{\text{irr}}'$. Thus, the diffeomorphism $f$ defines a diffeomorphism $Y_{\text{irr}} \to Y_{\text{irr}}'$, which we denote by $f_{\text{irr}}$. Clearly, identifying $H^2(Y;\Zee)$ with $H^2(Y_{\text{irr}}; \Zee)$ as well as $H^2(Y';\Zee)$ with $H^2(Y_{\text{irr}}'; \Zee)$, $[D_{\text{irr}}] = \sum_i[D_i]$ and $[D_{\text{irr}}'] = \sum_i[D_i']$.  Thus, $f_{\text{irr}}^*[D_{\text{irr}}'] = [D_{\text{irr}}]$. By Step I, $f_{\text{irr}}^*\overline{\mathcal{A}}_{\text{\rm{gen}}}(Y_{\text{irr}}') = \overline{\mathcal{A}}_{\text{\rm{gen}}}(Y_{\text{irr}})$. 

Thus, returning to $Y$ and $Y'$ and viewing $\overline{\mathcal{A}}_{\text{\rm{gen}}}(Y_{\text{irr}})$ as a subset of $H^2(Y; \Zee)$, and similarly for $\overline{\mathcal{A}}_{\text{\rm{gen}}}(Y_{\text{irr}}')$, we see that $f^*\overline{\mathcal{A}}_{\text{\rm{gen}}}(Y_{\text{irr}}') = \overline{\mathcal{A}}_{\text{\rm{gen}}}(Y_{\text{irr}})$ and that $f^*[D_i'] =[D_i]$. To conclude that $f^* \overline{\mathcal{A}}_{\text{\rm{gen}}}(Y') = \overline{\mathcal{A}}_{\text{\rm{gen}}}(Y)$, it is clearly enough to prove the following: 

\begin{lemma} In the above notation, 
$$\overline{\mathcal{A}}_{\text{\rm{gen}}}(Y) = \{x\in \overline{\mathcal{A}}_{\text{\rm{gen}}}(Y_{\text{\rm{irr}}}): x\cdot [D_i] \geq 0 \text{ for every $i$}\},$$
and similarly for $\overline{\mathcal{A}}_{\text{\rm{gen}}}(Y')$.
\end{lemma}
\begin{proof} Choose an ample divisor $H$ on $Y$. In the deformation of $Y$ to $Y_{\text{irr}}$, and identifying $H^2(Y;\Zee)$ with $H^2(Y_{\text{irr}};\Zee)$, we can assume (at least for a small deformation) that $[H]$ remains the class of an ample divisor. Now, by  Lemma~\ref{chareff}, the effective numerical exceptional curves on $Y$ are exactly the set 
$$\{\alpha \in H^2(Y;\Zee): \alpha^2 = \alpha\cdot K_Y = -1 \text{ and }  \alpha \cdot [H] \geq 0\}.$$
The identification of $H^2(Y;\Zee)$ with $H^2(Y_{\text{irr}};\Zee)$ then identifies the effective numerical exceptional curves on $Y$ with the effective numerical exceptional curves on $Y_{\text{irr}}$. Then by definition 
$\overline{\mathcal{A}}_{\text{\rm{gen}}}(Y)$ is the set of all $x\in \mathcal{C}^+$ such that $x\cdot [D_i] \geq 0$ for all $i$  and  
 $x\cdot \alpha \geq 0$ for all   effective numerical exceptional curves $\alpha$. Since $x\cdot [D_i] \geq 0$ for every $i$ $\iff$ $x\cdot [D_i] \geq 0$ for every $i$ and $x\cdot [D] \geq 0$, we see that 
$\overline{\mathcal{A}}_{\text{\rm{gen}}}(Y)= \{x\in \overline{\mathcal{A}}_{\text{\rm{gen}}}(Y_{\text{\rm{irr}}}): x\cdot [D_i] \geq 0 \text{ for every $i$}\}$ as claimed. 
\end{proof}
 
Summarizing, then, we have shown that $f^*$ is admissible, and hence that $(Y,D)$ and $(Y', D')$ are deformation equivalent.
\end{proof}

\begin{remark} It is natural to ask if there is an elementary approach to the proof of Theorem~\ref{diffisdef} (i.e.\ one that does not use gauge theory).
\end{remark}

\section{Roots}

We turn now to $-2$-curves on $Y$. In the case where $D$ is nef, it is well-known that any class $\beta \in \Lambda(Y,D)$ with $\beta^2=-2$ becomes the class of a $-2$-curve on some deformation of $(Y,D)$ over a connected base. In general, as first shown by Looijenga in \cite{Looij}, this is no longer the case. The following gives a nontrivial condition:

\begin{proposition}\label{reflect}  Let $C$ be a $-2$-curve on $Y$  and let $r_C\colon H^2(Y; \Ar) \to H^2(Y; \Ar)$ be the reflection in the class $[C]$. Then $r_C( \overline{\mathcal{A}}_{\text{\rm{gen}}}) = \overline{\mathcal{A}}_{\text{\rm{gen}}}$. 
\end{proposition}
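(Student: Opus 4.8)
The plan is to exploit the defining characterization of $\overline{\mathcal{A}}_{\text{\rm{gen}}}$ as the set of $x\in\mathcal{C}^+$ with $x\cdot[D_i]\geq 0$ for all $i$ and $x\cdot\alpha\geq 0$ for every effective numerical exceptional curve $\alpha$, and to reduce the invariance of this cone to the statement that $r_C$ carries effective numerical exceptional curves to effective numerical exceptional curves. First I would record the basic structural facts. Since $C$ is a $-2$-curve, $C\cap D=\emptyset$ by Definition~\ref{defminustwo}, so $[C]\cdot[D_i]=0$ and $[C]\in\Lambda(Y,D)$. The reflection $r_C(x)=x+(x\cdot[C])[C]$ is then an integral isometric involution fixing each $[D_i]$, hence fixing $K_Y=-[D]$; consequently it sends numerical exceptional classes (those with $\alpha^2=\alpha\cdot K_Y=-1$) to numerical exceptional classes. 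I would also note that $r_C$ preserves the component $\mathcal{C}^+$: the fixed hyperplane $[C]^\perp$ has signature $(1,\rho-2)$ and therefore contains a vector of positive square, which is left fixed, so $r_C$ cannot interchange the two components of $\{x^2>0\}$. Granting the claim on effective classes, the conclusion is immediate: for $x\in\overline{\mathcal{A}}_{\text{\rm{gen}}}$ one has $r_C(x)\in\mathcal{C}^+$, $r_C(x)\cdot[D_i]=x\cdot[D_i]\geq 0$, and $r_C(x)\cdot\alpha=x\cdot r_C(\alpha)\geq 0$ because $r_C(\alpha)$ is again effective numerical exceptional and $x\in\overline{\mathcal{A}}_{\text{\rm{gen}}}$; thus $r_C(\overline{\mathcal{A}}_{\text{\rm{gen}}})\subseteq\overline{\mathcal{A}}_{\text{\rm{gen}}}$, and applying $r_C$ once more (it is an involution) yields equality.

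The heart of the argument is therefore to show that if $\alpha$ is an effective numerical exceptional curve then so is $r_C(\alpha)$. The numerical part is already handled, so only effectivity remains. Here I would argue uniformly, avoiding a case split on the sign of $\alpha\cdot[C]$, via Riemann--Roch. Since $r_C(\alpha)$ is numerical exceptional, $\chi(Y;L_{r_C(\alpha)})=1$, so it suffices to prove $h^2(Y;L_{r_C(\alpha)})=h^0(Y;L_{-r_C(\alpha)-[D]})=0$, i.e.\ that $-r_C(\alpha)-[D]$ is not effective. To see this I would produce, using Proposition~\ref{constructnef} applied to the single negative-definite curve $C$, a nef and big $\Ar$-divisor $z$ with $z\cdot[C]=0$ and $z\cdot C'>0$ for every irreducible curve $C'\neq C$; in particular $z\cdot[D_i]>0$ for each $i$, so $z\cdot[D]>0$. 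Because $z\in[C]^\perp$ we have $r_C(z)=z$, whence $z\cdot r_C(\alpha)=z\cdot\alpha\geq 0$ (the latter since $z$ is nef and $\alpha$ effective). Then $z\cdot(-r_C(\alpha)-[D])=-z\cdot\alpha-z\cdot[D]<0$, and since a nef class has nonnegative intersection with any effective class, $-r_C(\alpha)-[D]$ cannot be effective. Hence $h^2=0$ and $h^0(Y;L_{r_C(\alpha)})\geq 1$, so $r_C(\alpha)$ is effective.

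The main obstacle I anticipate is precisely this effectivity step, and within it the potentially delicate subcase $\alpha\cdot[C]<0$, where $r_C(\alpha)=\alpha-m[C]$ subtracts a positive multiple of the effective class $[C]$ and so is not visibly effective; the virtue of the Riemann--Roch formulation above is that it dispatches both signs at once, reducing everything to the existence of the auxiliary nef class $z$ orthogonal to $C$ but positive on $D$, which Proposition~\ref{constructnef} supplies. A secondary point worth stating carefully is the preservation of the component $\mathcal{C}^+$ by $r_C$, which is what lets the membership conditions for $\overline{\mathcal{A}}_{\text{\rm{gen}}}$ transfer verbatim under $r_C$; this is a standard fact about reflections in spacelike classes of a Lorentzian lattice and should be invoked rather than belabored. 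With these two ingredients in hand, the equality $r_C(\overline{\mathcal{A}}_{\text{\rm{gen}}})=\overline{\mathcal{A}}_{\text{\rm{gen}}}$ follows formally from the involutivity of $r_C$.
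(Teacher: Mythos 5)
Your proposal is correct and is essentially the paper's own argument: the paper likewise reduces the statement to showing that $r_C$ permutes the effective numerical exceptional curves, invokes Proposition~\ref{constructnef} to produce a nef and big class $H$ with $H\cdot C=0$ and $H\cdot D_i>0$ (hence fixed by $r_C$), and concludes from this. The only difference is cosmetic: where the paper cites Lemma~\ref{chareff} to convert the inequality $H\cdot r_C(\alpha)=H\cdot\alpha\geq 0$ into effectivity of $r_C(\alpha)$, you re-derive the content of that lemma inline by the same Riemann--Roch and Serre-duality computation.
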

\begin{proof} Since $r_C(\mathcal{C}^+) =\mathcal{C}^+$ and $r_C([D_i]) = [D_i]$ for every $i$, it suffices  to prove that $r_C$ permutes the set of effective numerical exceptional curves. Let $\alpha$ be a class such that $\alpha^2 =K_Y\cdot \alpha =-1$. Then clearly $r_C(\alpha)$ has these properties since $r_C(K_Y) = K_Y$, and hence $r_C(\alpha)$ is a numerical exceptional curve. By Proposition~\ref{constructnef}, there exists a nef and big divisor $H$ such that $H\cdot D_i > 0$ for every $i$ and such that $H\cdot C = 0$. Thus $r_C([H]) = [H]$, and hence $\alpha\cdot H \geq 0$ $\iff$ $r_C(\alpha) \cdot H \geq 0$. It follows from Lemma~\ref{chareff} that $\alpha$ is effective $\iff$ $r_C(\alpha)$ is effective, and thus that $r_C( \overline{\mathcal{A}}_{\text{\rm{gen}}}) = \overline{\mathcal{A}}_{\text{\rm{gen}}}$.
\end{proof}

\begin{definition} Let $\mathsf{W}({\Delta_Y})$ be the group  generated by the reflections in the classes in the set  $\Delta_Y$ of $-2$-curves on $Y$. Define $R^{\text{\rm{nod}}}_Y = R^{\text{\rm{nod}}}$, the set of \textsl{nodal classes} on $Y$, to be $\mathsf{W}({\Delta_Y})\cdot \Delta_Y$. By standard arguments, if $\mathsf{W}(R^{\text{\rm{nod}}}_Y)$ is the group generated by the reflections in the classes in $R^{\text{\rm{nod}}}_Y$, then $\mathsf{W}(R^{\text{\rm{nod}}}_Y)= \mathsf{W}({\Delta_Y})$.
\end{definition}

Note that $\{W^\beta \cap \overline{\mathcal{A}}_{\text{\rm{gen}}}: \beta \in R^{\text{\rm{nod}}}_Y\}$ is a locally finite closed subset of $\overline{\mathcal{A}}_{\text{\rm{gen}}}$, since $\{W^\beta \cap \mathcal{C}^+: \beta \in \Lambda, \beta^2=-2\}$ is a locally finite closed subset of $\mathcal{C}^+$. In particular, $\mathsf{W}(R^{\text{\rm{nod}}}_Y)=\mathsf{W}({\Delta_Y})$ acts properly discontinuously on  $\overline{\mathcal{A}}_{\text{\rm{gen}}}$.

\begin{corollary}\label{funddomain} The set $\overline{\mathcal{A}}(Y)$ is a fundamental domain for the action of the  group $\mathsf{W}({\Delta_Y})$ on $\overline{\mathcal{A}}_{\text{\rm{gen}}}$.
\end{corollary}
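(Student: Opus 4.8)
The plan is to recognize the pair $(\mathsf{W}(\Delta_Y),\overline{\mathcal{A}}_{\text{\rm{gen}}})$ as an instance of the standard picture of a discrete reflection group acting on a convex region in hyperbolic space, with $\overline{\mathcal{A}}(Y)$ in the role of closed fundamental chamber, and then to run the two usual arguments: every orbit meets $\overline{\mathcal{A}}(Y)$, and it meets it in exactly one point. First I would assemble the structural facts already in hand. The group $\mathsf{W}(\Delta_Y)$ preserves $\overline{\mathcal{A}}_{\text{\rm{gen}}}$: each generator $r_C$ does so by Proposition~\ref{reflect}, and since $r_{w\beta}=w\,r_\beta\,w^{-1}$, every reflection $r_\beta$ with $\beta\in R^{\text{\rm{nod}}}_Y$ preserves it as well. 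The arrangement $\{W^\beta\cap\overline{\mathcal{A}}_{\text{\rm{gen}}}:\beta\in R^{\text{\rm{nod}}}_Y\}$ is locally finite and $\mathsf{W}(\Delta_Y)$ acts properly discontinuously, as noted before the statement. Finally, by the definition of $\overline{\mathcal{A}}_{\text{\rm{gen}}}$ one has the clean description $\overline{\mathcal{A}}(Y)=\{x\in\overline{\mathcal{A}}_{\text{\rm{gen}}}:x\cdot[C]\ge 0\text{ for all }C\in\Delta_Y\}$, so the defining inequalities of $\overline{\mathcal{A}}(Y)$ are exactly the positivity conditions against the simple classes $[C]$. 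Since all points of $\overline{\mathcal{A}}_{\text{\rm{gen}}}\subseteq\mathcal{C}^+$ have positive square, I may pass to the hyperbolic space $\mathbb{H}(Y)=\{x\in\mathcal{C}^+:x^2=1\}$, on which the $r_\beta$ act as isometries and $\overline{\mathcal{A}}_{\text{\rm{gen}}}$, $\overline{\mathcal{A}}(Y)$ become convex.

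For the covering statement I would argue by minimizing distance. Fix $x_0$ in the interior of $\overline{\mathcal{A}}(Y)$ (nonempty, containing the ample classes), so $x_0\cdot[C]>0$ for all $C\in\Delta_Y$. Given $x\in\overline{\mathcal{A}}_{\text{\rm{gen}}}$, its orbit $\mathsf{W}(\Delta_Y)\cdot x$ is discrete by proper discontinuity, so some $w_0x$ minimizes the hyperbolic distance to $x_0$. I claim $w_0x\in\overline{\mathcal{A}}(Y)$: otherwise $(w_0x)\cdot[C]<0$ for some $C\in\Delta_Y$, whence $x_0$ and $w_0x$ lie strictly on opposite sides of the mirror $W^{[C]}$, and reflecting gives $d(r_Cw_0x,x_0)<d(w_0x,x_0)$ with $r_Cw_0x$ again in the orbit, contradicting minimality. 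Hence $\mathsf{W}(\Delta_Y)\cdot\overline{\mathcal{A}}(Y)=\overline{\mathcal{A}}_{\text{\rm{gen}}}$.

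For uniqueness I would show that $\overline{\mathcal{A}}(Y)$ meets each orbit in a single point by induction on word length $\ell(w)$ in the generators $r_C$, $C\in\Delta_Y$. Suppose $x,x'=wx\in\overline{\mathcal{A}}(Y)$ with $w\neq 1$. Choose a simple reflection $r_C$ with $\ell(r_Cw)=\ell(w)-1$; the standard root/length lemma then gives that $w^{-1}[C]$ is a negative root, and since positive roots are nonnegative combinations of the $[C]$ (so pair nonnegatively with $\overline{\mathcal{A}}(Y)$), one gets $x\cdot w^{-1}[C]\le 0$, i.e. $x'\cdot[C]=(wx)\cdot[C]\le 0$. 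But $x'\in\overline{\mathcal{A}}(Y)$ forces $x'\cdot[C]\ge 0$, so $x'\cdot[C]=0$, meaning $r_C$ fixes $x'$ and $x'=r_Cwx=(r_Cw)x$ with $r_Cw$ strictly shorter; by induction $x=x'$. Combined with the covering step, this is exactly the assertion that $\overline{\mathcal{A}}(Y)$ is a fundamental domain for $\mathsf{W}(\Delta_Y)$ on $\overline{\mathcal{A}}_{\text{\rm{gen}}}$.

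The main obstacle — the only non-formal point — is justifying that $\mathsf{W}(\Delta_Y)$, generated by the simple reflections $r_C$, is a Coxeter group for which the length function and the positive/negative root dichotomy behave as used in the uniqueness step, with $\overline{\mathcal{A}}(Y)$ as its fundamental chamber (in particular that no mirror crosses the interior of $\overline{\mathcal{A}}(Y)$). This is precisely the content of the general theory of discrete reflection groups acting on convex cones in a Lorentzian space (Vinberg, or Bourbaki Ch.~V), whose hypotheses are the proper discontinuity and local finiteness recorded above together with the convexity of $\overline{\mathcal{A}}_{\text{\rm{gen}}}$. I would either invoke that theory directly, or, in the spirit of Step~I of Theorem~\ref{diffisdef}, replace the abstract length induction by the equivalent geometric bookkeeping: counting the finitely many mirrors (finitely many by local finiteness) crossed by the geodesic in $\mathbb{H}(Y)$ from $x_0$ to a generic orbit point, which reconstructs the unique element of $\mathsf{W}(\Delta_Y)$ returning that point to $\overline{\mathcal{A}}(Y)$.
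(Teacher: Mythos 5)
Your proposal is correct and takes essentially the same approach as the paper: the paper's entire proof is the single sentence that this is a general result in the theory of groups generated by reflections, with a citation to Bourbaki, which is exactly the theory you invoke (and whose standard proof --- distance minimization for the covering step, length induction via the positive/negative root dichotomy for uniqueness --- you reproduce correctly). The structural inputs you assemble (invariance of $\overline{\mathcal{A}}_{\text{\rm{gen}}}$ under $\mathsf{W}(\Delta_Y)$, local finiteness of the nodal walls, proper discontinuity, and the description of $\overline{\mathcal{A}}(Y)$ inside $\overline{\mathcal{A}}_{\text{\rm{gen}}}$ by the inequalities $x\cdot[C]\geq 0$) are precisely the facts the paper records immediately before the corollary as the hypotheses for that general theory.
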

\begin{proof} This is a general result in the theory of groups generated by reflections \cite{Bour}. 
\end{proof}

We will need: 

\begin{lemma}\label{permapinvar} For all $w\in \mathsf{W}({\Delta_Y})$ and all $\alpha \in \Lambda$,  $\varphi_{Y}(w(\alpha)) = \varphi_{Y}(\alpha)$. 
\end{lemma}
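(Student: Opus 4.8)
The plan is to reduce to a single generating reflection and then exploit the fact that $\varphi_Y$ is a group homomorphism together with Lemma~\ref{performinustwo}. By definition $\mathsf{W}(\Delta_Y)$ is generated by the reflections $r_C$ in the classes $[C]$ of $-2$-curves $C$ on $Y$, so I would first observe that the subset of those $w\in\mathsf{W}(\Delta_Y)$ which both preserve $\Lambda$ and satisfy $\varphi_Y(w(\alpha))=\varphi_Y(\alpha)$ for all $\alpha\in\Lambda$ is a subgroup: it is closed under composition, since $\varphi_Y((w_1w_2)(\alpha))=\varphi_Y(w_1(w_2(\alpha)))=\varphi_Y(w_2(\alpha))=\varphi_Y(\alpha)$, and under inverses, since $\varphi_Y(\alpha)=\varphi_Y(w(w^{-1}(\alpha)))=\varphi_Y(w^{-1}(\alpha))$. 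Hence it suffices to verify the claim when $w=r_C$ is a single such reflection, and the general statement follows because this subgroup contains all the generators.

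For the single reflection I would argue as follows. First, $[C]\in\Lambda$: by Definition~\ref{defminustwo} a $-2$-curve satisfies $C\cap D=\emptyset$, so $[C]\cdot[D_i]=0$ for every $i$, placing $[C]$ in the orthogonal complement $\Lambda$ of the span of the $[D_i]$. The reflection is the isometry
$$r_C(x)=x+(x\cdot[C])\,[C],$$
using $[C]^2=-2$; since it fixes each $[D_i]$, it preserves $\Lambda$, so the computation stays within $\Lambda$. Thus for $\alpha\in\Lambda$ the difference $r_C(\alpha)-\alpha=(\alpha\cdot[C])\,[C]$ is an integer multiple of $[C]\in\Lambda$, and because $\varphi_Y$ is a homomorphism to $\mathbb{G}_m$,
$$\varphi_Y(r_C(\alpha))=\varphi_Y(\alpha)\cdot\varphi_Y([C])^{\,\alpha\cdot[C]}.$$
Lemma~\ref{performinustwo} gives $\varphi_Y([C])=1$, whence $\varphi_Y(r_C(\alpha))=\varphi_Y(\alpha)$, completing this case.

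There is no substantive obstacle here; the argument is essentially a one-line consequence of Lemma~\ref{performinustwo} once the reduction to generators is in place. The only points genuinely requiring care are bookkeeping: recording that each $r_C$ maps $\Lambda$ to itself (so that $\varphi_Y$ is even defined on $r_C(\alpha)$), and using the multiplicativity of $\varphi_Y$ correctly in the exponent $\alpha\cdot[C]$. Both are immediate, so I expect the proof to be short.
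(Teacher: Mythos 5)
Your proof is correct and follows the same route as the paper: reduce to the generating reflections $r_C$ for $C\in\Delta_Y$ and then use Lemma~\ref{performinustwo} ($\varphi_Y([C])=1$) together with the homomorphism property of $\varphi_Y$. The paper states this in one line as "clear"; you have simply filled in the bookkeeping (that $[C]\in\Lambda$, that $r_C$ preserves $\Lambda$, and the explicit formula $r_C(\alpha)=\alpha+(\alpha\cdot[C])[C]$), all of which is accurate.
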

\begin{proof} This is clear since, if $C\in \Delta_Y$, then $\varphi_Y([C]) =1$. Hence $\varphi_{Y}(r_C(\alpha)) = \varphi_{Y}(\alpha)$ for all $\alpha \in \Lambda$.
\end{proof}

\begin{definition}\label{defroots} Let $\beta \in \Lambda$, $\beta ^2 = -2$. Then $\beta$ is a \textsl{Looijenga root} (or briefly \textsl{root}) of $Y$ if $r_\beta(\overline{\mathcal{A}}_{\text{\rm{gen}}}) = \overline{\mathcal{A}}_{\text{\rm{gen}}}$, where $r_\beta$ is reflection in the class $\beta$. We denote the set of all roots by $R=R_Y$. Clearly $R^{\text{\rm{nod}}} \subseteq R$. Note that, as opposed to $R^{\text{\rm{nod}}}$, the set  $R$ is a deformation invariant of $Y$.
\end{definition}

\begin{theorem}\label{mainprop}  Let $\beta \in \Lambda$ with $\beta^2 = -2$. Then the following are equivalent:
\begin{enumerate}
\item[\rm(i)] $\beta$ is a root.
\item[\rm(ii)] The wall $W^\beta$ meets the interior of $\overline{\mathcal{A}}_{\text{\rm{gen}}}$.
\item[\rm(iii)] Let $Y_1$ be  a deformation of $Y$ with trivial monodromy such that $\varphi_{Y_1}(\beta) = 1$. Then $\beta\in R^{\text{\rm{nod}}}_{Y_1}$. In particular, if $Y_1$ is generic subject to the condition that $\varphi_{Y_1}(\beta) = 1$ (i.e.\ if $\Ker  \varphi_{Y_1}  = \Zee \cdot \beta$), then $\pm \beta = [C]$ for a $-2$-curve $C$.
\end{enumerate}
\end{theorem}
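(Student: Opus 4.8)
The plan is to prove the cycle of implications (i) $\Rightarrow$ (ii) $\Rightarrow$ (iii) $\Rightarrow$ (i), packaging the only serious geometric input into a single effectivity lemma and treating everything else by convex geometry and the reflection–group structure of $\overline{\mathcal{A}}_{\text{\rm{gen}}}$ already available from Corollary~\ref{funddomain}.

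\emph{(i) $\Rightarrow$ (ii)} is pure convex geometry. Since $\overline{\mathcal{A}}_{\text{\rm{gen}}}$ is a convex cone with nonempty interior, it is not contained in the hyperplane $W^\beta$, so I can pick an interior point $x_0$ with $x_0\cdot\beta\neq 0$. If $r_\beta(\overline{\mathcal{A}}_{\text{\rm{gen}}})=\overline{\mathcal{A}}_{\text{\rm{gen}}}$ then $r_\beta(x_0)\in\overline{\mathcal{A}}_{\text{\rm{gen}}}$; because $x_0$ is interior, the open segment from $x_0$ to $r_\beta(x_0)$ lies in the interior, and its midpoint $x_0+\tfrac12(x_0\cdot\beta)\beta$ lies on $W^\beta$ (using $\beta^2=-2$). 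Hence $W^\beta$ meets the interior.

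\emph{Effectivity lemma (the linchpin).} I would first prove: if $Y_1$ is an anticanonical pair, $\beta\in\Lambda(Y_1)$ has $\beta^2=-2$, and $\varphi_{Y_1}(\beta)=1$, then $\beta$ or $-\beta$ is effective. Suppose $h^0(L_{-\beta})=0$. Since $\beta\in\Lambda$, $L_{\pm\beta}|D\in\Pic^0D$, and $\varphi_{Y_1}(\beta)=1$ forces $L_{-\beta}|D\cong\scrO_D$. From $0\to L_{-\beta}\otimes\scrO_{Y_1}(-D)\to L_{-\beta}\to L_{-\beta}|D\to 0$ and $H^0(L_{-\beta})=0$, the map $\Cee=H^0(\scrO_D)\to H^1(L_{-\beta}\otimes\scrO_{Y_1}(-D))$ is injective; since $\scrO_{Y_1}(-D)=K_{Y_1}$, Serre duality gives $H^1(L_{-\beta}\otimes K_{Y_1})\cong H^1(L_\beta)\spcheck$, so $h^1(L_\beta)\geq 1$. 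Riemann--Roch gives $\chi(L_\beta)=1+\tfrac12(\beta^2-\beta\cdot K_{Y_1})=0$ (as $\beta\cdot K_{Y_1}=-\beta\cdot[D]=0$), and $h^2(L_\beta)=h^0(L_{-\beta}\otimes\scrO_{Y_1}(-D))\subseteq h^0(L_{-\beta})=0$; hence $h^0(L_\beta)=h^1(L_\beta)\geq 1$ and $\beta$ is effective.

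\emph{(ii) $\Rightarrow$ (iii).} Let $Y_1$ be any deformation with trivial monodromy and $\varphi_{Y_1}(\beta)=1$; by Lemma~\ref{definv} the cone $\overline{\mathcal{A}}_{\text{\rm{gen}}}$ and the hyperplane $W^\beta$ are unchanged, so $W^\beta$ meets the interior of $\overline{\mathcal{A}}_{\text{\rm{gen}}}(Y_1)$. By Corollary~\ref{funddomain} the group $\mathsf{W}(\Delta_{Y_1})$ tiles $\overline{\mathcal{A}}_{\text{\rm{gen}}}(Y_1)$ by translates of the nef cone $\overline{\mathcal{A}}(Y_1)$, whose interior walls are exactly the $-2$-curve walls (the exceptional and $[D_i]$ walls bound $\overline{\mathcal{A}}_{\text{\rm{gen}}}$); thus the interior tiling walls are precisely $\{W^\gamma:\gamma\in R^{\text{\rm{nod}}}_{Y_1}\}$. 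Now either $W^\beta$ equals one such $W^\gamma$, whence $\beta=\pm\gamma\in R^{\text{\rm{nod}}}_{Y_1}$ and we are done, or a generic point of $W^\beta\cap\operatorname{int}\overline{\mathcal{A}}_{\text{\rm{gen}}}(Y_1)$ lies in the open interior of some chamber $w\,\overline{\mathcal{A}}(Y_1)$. In the latter case $W^{w^{-1}\beta}$ meets the interior of the nef cone $\overline{\mathcal{A}}(Y_1)$; taking an ample $x_0'$ with $x_0'\cdot(w^{-1}\beta)=0$ and applying the effectivity lemma to $w^{-1}\beta$ (note $\varphi_{Y_1}(w^{-1}\beta)=\varphi_{Y_1}(\beta)=1$ by Lemma~\ref{permapinvar}) yields an effective $\pm w^{-1}\beta=\sum_k c_kC_k$ with $0=x_0'\cdot(w^{-1}\beta)=\sum_k c_k(x_0'\cdot C_k)>0$, a contradiction. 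Hence $\beta\in R^{\text{\rm{nod}}}_{Y_1}$ for every such $Y_1$; the ``in particular'' statement follows since for generic $Y_1$ with $\Ker\varphi_{Y_1}=\Zee\beta$ the only candidate $-2$-curve classes are $\pm\beta$. Finally, \emph{(iii) $\Rightarrow$ (i)}: choosing one such $Y_1$ (which exists by Theorem~\ref{surjper}) and writing $\beta=w[C]$ with $C$ a $-2$-curve, $r_\beta=wr_Cw^{-1}$ preserves $\overline{\mathcal{A}}_{\text{\rm{gen}}}(Y_1)=\overline{\mathcal{A}}_{\text{\rm{gen}}}(Y)$ by Proposition~\ref{reflect} and Lemma~\ref{definv}. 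The main obstacle is the effectivity lemma together with the identification of the interior tiling walls with $R^{\text{\rm{nod}}}$; everything else is formal.
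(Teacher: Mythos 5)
Your proof is correct, and its skeleton --- the cycle (i) $\Rightarrow$ (ii) $\Rightarrow$ (iii) $\Rightarrow$ (i), driven by a cohomological effectivity statement --- is the same as the paper's; the genuine divergence is in how you finish (ii) $\Rightarrow$ (iii). The paper moves the interior point $x\in W^\beta$ into the fundamental domain by some $w\in \mathsf{W}(\Delta_{Y_1})$, proves $\pm w(\beta)$ is effective (by essentially your computation, except that it kills $h^2(Y_1;L_\beta)$ using the nef class $x$ rather than the inclusion $H^0(L_{-\beta}\otimes\scrO_{Y_1}(-D))\subseteq H^0(L_{-\beta})$), and then \emph{identifies} $w(\beta)$ directly: the support consists of $-2$-curves orthogonal to $w(x)$, disjoint from $D$ and forming a connected configuration, and a standard fact about irreducible simply-laced root systems makes $w(\beta)$ conjugate to one of the $[C_i]$ under the group generated by the $r_{C_j}$, hence a nodal class. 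You instead argue by perturbation and contradiction: using local finiteness of the nodal walls, either $W^\beta$ coincides with a nodal wall, or a generic point of $W^\beta\cap\operatorname{int}\overline{\mathcal{A}}_{\text{gen}}$ lies in the open interior of a chamber, where orthogonality of the $\pm$effective class $w^{-1}(\beta)$ to a class pairing strictly positively with every curve is absurd. This buys independence from the root-system conjugacy fact and from the connectedness analysis, at the cost of the genericity argument and two points you elide: in the first branch you need $R^{\text{\rm{nod}}}_{Y_1}$ to be stable under negation (immediate, since for $\gamma = w([C])$ one has $r_\gamma = w r_C w^{-1}\in \mathsf{W}(\Delta_{Y_1})$ and $-\gamma = r_\gamma(\gamma)$), and your final display should carry a sign, $0 = \lvert x_0'\cdot(w^{-1}\beta)\rvert = \sum_k c_k(x_0'\cdot C_k) > 0$. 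Neither is a gap. A small additional benefit of your formulation: stating the effectivity lemma pair-intrinsically (no auxiliary nef class needed) is exactly what lets you reuse it at the ample point $x_0'$, whereas the paper's in-context version is tied to the chosen $x$.
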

\begin{proof} (i) $\implies$ (ii): Clearly, if $r_\beta(\overline{\mathcal{A}}_{\text{\rm{gen}}}) = \overline{\mathcal{A}}_{\text{\rm{gen}}}$, then $W^\beta$ meets the interior of $\overline{\mathcal{A}}_{\text{\rm{gen}}}$.

\smallskip
\noindent (ii) $\implies$ (iii): Relabeling $Y_1$ by $Y$, suppose $x$ is an interior point of $\overline{\mathcal{A}}_{\text{\rm{gen}}}$ and that  $x\in W^\beta$. Thus $x\cdot \beta =0$ and $x\cdot D_i > 0$ for every $i$.  By Corollary~\ref{funddomain}, there exists a $w\in \mathsf{W}({\Delta_Y})$ such that  $w(x)\in \overline{\mathcal{A}}(Y)$. In particular, $w(x)\cdot C \geq 0$ for every irreducible curve $C$ on $Y$. Note that, by Lemma~\ref{permapinvar}, we still have $\varphi_Y(w(\beta)) = 1$. We may then replace $\beta$ by $w(\beta)$ and $x$ by $w(x)$.

First we claim that $\pm\beta$ is effective, so that $\pm \beta =\sum_in_i[C_i]$ where the $C_i$ are distinct irreducible curves and $n_i > 0$. In fact, suppose that $\beta$ is not the class of an effective divisor. With $L_\beta$ the line bundle associated to $\beta$, $h^2(Y; L_\beta) = h^0(Y; L_\beta^{-1} \otimes K_Y) = 0$ since $x\cdot (\beta - [D]) < 0$. By assumption, $h^0(Y; L_\beta) =0$. Hence, by Riemann-Roch,  $\chi(Y; L_\beta) = -h^1(Y;L_\beta) =0$, and hence $h^1(Y; L_\beta^{-1} \otimes \scrO_Y(-D)) =0$.  Since $\varphi_Y(\beta) =1$, $L_\beta^{\pm 1}|D =\scrO_D$. From the exact sequence
$$0 \to L_\beta^{-1} \otimes \scrO_Y(-D) \to L_\beta^{-1} \to L_\beta^{-1}|D \to 0,$$
and the fact that $h^1(Y; L_\beta^{-1} \otimes \scrO_Y(-D)) =0$, it follows that the map $H^0(Y; L_\beta^{-1}) \to H^0(D;L_\beta^{-1}|D)$ is surjective. But $L_\beta^{-1}|D =\scrO_D$, so that $H^0(Y; L_\beta^{-1})$ is nonzero. It follows that $-\beta$ is effective.

Writing $\pm \beta   =\sum_in_i[C_i]$, note that $x\cdot [C_i] \geq 0$ for all $i$, since $x$ is nef, and hence $x\cdot [C_i]=0$ for all $i$ since $x\cdot \beta = 0$. In particular, no $C_i$ is a component of $D$. Thus, $C_i \cdot D \geq 0$ for all $i$. But since $(\sum_in_i[C_i]) \cdot [D_i] =0$ for every $i$, $C_i$ and $D$ are disjoint. Finally, since $x\cdot [C_i]=0$, $C_i^2< 0$, so that $C_i$ is a $-2$-curve for every $i$. Since $\beta^2 = (\sum_in_i C_i)^2 = -2$, it follows that $\bigcup_iC_i$ is connected. It is then a standard fact about irreducible, simply laced root systems, that, for every choice of $i$, there exists a $w$ in the reflection group generated by the $r_{C_j}$ such that $w(\beta) = [C_i]$. Since the $r_{C_j} \in \mathsf{W}({\Delta_Y})$, it then follows that $\beta\in R^{\text{\rm{nod}}}_{Y_1}$.

\smallskip
\noindent (iii) $\implies$ (i): This follows by Proposition~\ref{reflect} in case $\beta = [C]$ is the class of a $-2$-curve, and hence in general since $r_\beta$ is a product of reflections in the classes of $-2$-curves.
\end{proof}

\begin{example} (i) Suppose that $E_1$ and $E_2$ are two disjoint interior exceptional curves on $Y$ which both meet the same component $D_i$ of $D$. Then $\beta =[E_1]-[E_2]\in \Lambda$, $\beta ^2=-2$, and $\beta$ is a root by (i) above, since  we can deform $Y$ until the two blowups on points of $D_i$ become infinitely near. As we shall see below, essentially all roots are of this type. 

\smallskip
\noindent (ii) It is very easy to give explicit examples of pairs $(Y,D)$ and elements $\beta \in \Lambda$, $\beta^2=-2$, which are not roots. In principle, such examples were essentially known to Du Val \cite{duV2}; see for instance, \cite[Example 2.19(ii)]{Fried3}. The main point is as follows: Let $(\overline{Y}, \overline{D})$ be an anticanonical pair and let $\alpha \in H^2(\overline{Y}; \Zee)$ be a numerical exceptional curve which is not the class of an exceptional curve. Suppose moreover that $\alpha \cdot D_i=1$ and that $\alpha \cdot D_j =0$ for $j\neq i$. Let $Y$ be the blowup of $\overline{Y}$ at a point of $D_i^{\text{int}}$, with exceptional curve $E$, and let $\beta = \alpha -[E]$, where we identify $H^2(\overline{Y}; \Zee)$ with a subgroup of $H^2(Y; \Zee)$ via pullback. Then $\beta\in \Lambda(Y,D)$ has square $-2$, but $r_\beta([E]) =\alpha$. Thus $r_\beta$ does not preserve the walls of $\overline{\mathcal{A}}_{\text{\rm{gen}}}(Y)$, and hence $r_\beta(\overline{\mathcal{A}}_{\text{\rm{gen}}}(Y)) \neq \overline{\mathcal{A}}_{\text{\rm{gen}}}(Y)$. It follows that $\beta$ is not a root. 
\end{example}

\begin{remark} We could have tried to define $R=R_Y$ to be the set  of all $\beta \in \Lambda$ such that $\beta ^2 = -2$ and such that there exists some deformation of $Y$ for which $\beta$ becomes the class of a $-2$-curve. This definition of $R$ is somewhat awkward, since there is no canonical identification of the cohomologies of the fibers along the deformation (by Corollary~\ref{monoinvar}, the choice of an identification will not in fact matter). In particular, if $\beta = [C]$ is a $-2$-curve on $Y$, then, for a nearby deformation  $Y'$ of $Y$ which is a smoothing of the ordinary double point on the contraction of $C$ on $Y$, the monodromy of the smoothing family sends $[C]$ to $-[C]$, and hence $-\beta \in R$ as well. To avoid this issue, it is simpler to define $R$ as in Definition~\ref{defroots}.
 \end{remark}
 
 The following is an easy consequence of Theorem~\ref{mainprop} and the definitions:

\begin{corollary}\label{preserveamp} \text{\rm{(i)}} If $f\colon H^2(Y; \Zee) \to H^2(Y; \Zee)$ is an admissible integral isometry, then $f(R) = R$.

\smallskip
\noindent \text{\rm{(ii)}} If $\mathsf{W}(R)$ is the reflection group generated by reflections in the elements of $R$, then $\mathsf{W}(R) \cdot R = R$ and $w(\overline{\mathcal{A}}_{\text{\rm{gen}}}) = \overline{\mathcal{A}}_{\text{\rm{gen}}}$ for all $w\in \mathsf{W}(R)$.

\smallskip
\noindent \text{\rm{(iii)}} The set $R^{\text{\rm{nod}}} =\{\beta \in R: \varphi_Y(\beta) = 1\}$.
\qed
\end{corollary}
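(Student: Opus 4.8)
The plan is to deduce all three parts from the standard conjugation formula for reflections together with the characterization of roots already established in Theorem~\ref{mainprop}. I will use throughout the identity $g\circ r_\beta\circ g^{-1} = r_{g(\beta)}$, valid for any isometry $g$ and any $\beta$ with $\beta^2\neq 0$, and the elementary observation that a reflection $r_\gamma$ with $\gamma\in\Lambda$ automatically fixes each $[D_i]$: since $\gamma^2=-2$ we have $r_\gamma(x) = x + (x\cdot\gamma)\gamma$, and $[D_i]\cdot\gamma = 0$ by the definition of $\Lambda$.

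For part (i), first note that an admissible isometry $f$ fixes the span of the classes $[D_i]$, hence (being an isometry) preserves its orthogonal complement $\Lambda$ and the set of square-$(-2)$ vectors therein; moreover $f^{-1}$ is again admissible. Given a root $\beta$, I would write $r_{f(\beta)} = f\circ r_\beta\circ f^{-1}$ and apply this to $\overline{\mathcal{A}}_{\text{\rm{gen}}}$. Each of the three maps preserves $\overline{\mathcal{A}}_{\text{\rm{gen}}}$ — the outer two by admissibility of $f$ and $f^{-1}$, the middle one because $\beta$ is a root — so $r_{f(\beta)}(\overline{\mathcal{A}}_{\text{\rm{gen}}}) = \overline{\mathcal{A}}_{\text{\rm{gen}}}$, i.e.\ $f(\beta)\in R$. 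This gives $f(R)\subseteq R$, and applying the same argument to $f^{-1}$ yields equality.

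For part (ii), the assertion $w(\overline{\mathcal{A}}_{\text{\rm{gen}}}) = \overline{\mathcal{A}}_{\text{\rm{gen}}}$ for $w\in\mathsf{W}(R)$ is immediate, since $\mathsf{W}(R)$ is generated by the reflections $r_\beta$ with $\beta\in R$ and each such generator preserves $\overline{\mathcal{A}}_{\text{\rm{gen}}}$ by the very definition of a root. To obtain $\mathsf{W}(R)\cdot R = R$, I would observe that each generator $r_\beta$ ($\beta\in R$) is itself an admissible integral isometry: it fixes every $[D_i]$ by the remark in the first paragraph, and it preserves $\overline{\mathcal{A}}_{\text{\rm{gen}}}$ since $\beta$ is a root. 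Hence every $w\in\mathsf{W}(R)$, being a product of such reflections, is admissible, and part (i) gives $w(R)=R$; taking the union over all $w$ finishes this part.

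For part (iii), the inclusion $R^{\text{\rm{nod}}}\subseteq\{\beta\in R:\varphi_Y(\beta)=1\}$ combines $R^{\text{\rm{nod}}}\subseteq R$ (Definition~\ref{defroots}) with the facts that $\varphi_Y([C])=1$ for a $-2$-curve $C$ (Lemma~\ref{performinustwo}) and that $\varphi_Y$ is $\mathsf{W}(\Delta_Y)$-invariant (Lemma~\ref{permapinvar}): every element of $R^{\text{\rm{nod}}}=\mathsf{W}(\Delta_Y)\cdot\Delta_Y$ has the form $w([C])$, so its period equals $\varphi_Y([C])=1$. The reverse inclusion is precisely the implication (i)$\implies$(iii) of Theorem~\ref{mainprop} applied to the trivial deformation $Y_1 = Y$: a root $\beta$ with $\varphi_Y(\beta)=1$ satisfies $\beta\in R^{\text{\rm{nod}}}_Y$. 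Since each step invokes an already-proven result, I expect no serious obstacle; the only point needing care is the bookkeeping that reflections in roots are admissible isometries, which is exactly what allows part (ii) to be reduced to part (i).
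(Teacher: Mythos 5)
Your proposal is correct and follows the same route the paper intends: the paper states this corollary with no written proof precisely because it is, as you show, an easy consequence of Theorem~\ref{mainprop}, the definitions, and the standard conjugation identity $f\circ r_\beta\circ f^{-1}=r_{f(\beta)}$. Your write-up simply makes explicit the bookkeeping (admissible isometries fix each $[D_i]$ and hence preserve $\Lambda$; reflections in roots are themselves admissible; part (iii) is Theorem~\ref{mainprop} (i)$\implies$(iii) applied to the trivial deformation $Y_1=Y$ together with Lemmas~\ref{performinustwo} and~\ref{permapinvar}), which is exactly what the paper leaves to the reader.
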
 

We then have the following more precise characterization of generic pairs $(Y,D)$ in the sense of Definition~\ref{defgener}:

\begin{lemma}\label{gener2} The set of generic pairs $(Y,D)$ corresponds to the set
$$\Hom(\Lambda, \mathbb{G}_m) - \bigcup_{\beta\in R}\{\varphi\in \Hom(\Lambda, \mathbb{G}_m): \varphi(\beta) = 1\}. \qed$$
\end{lemma}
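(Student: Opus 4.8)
The plan is to deduce the statement almost immediately from Corollary~\ref{preserveamp}(iii), after rephrasing the genericity condition in terms of the set $R^{\text{\rm{nod}}}_Y$ of nodal classes. Recall that, by Definition~\ref{defgener}, the pair $(Y,D)$ is generic precisely when there is no $-2$-curve on $Y$, i.e.\ when $\Delta_Y = \emptyset$.

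First I would observe that $(Y,D)$ is generic if and only if $R^{\text{\rm{nod}}}_Y = \emptyset$. Indeed, $R^{\text{\rm{nod}}}_Y = \mathsf{W}(\Delta_Y)\cdot \Delta_Y$, so if $\Delta_Y = \emptyset$ then $R^{\text{\rm{nod}}}_Y = \emptyset$; conversely, since the identity lies in $\mathsf{W}(\Delta_Y)$ we have $\Delta_Y \subseteq R^{\text{\rm{nod}}}_Y$, so $R^{\text{\rm{nod}}}_Y = \emptyset$ forces $\Delta_Y = \emptyset$.

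Next I would invoke Corollary~\ref{preserveamp}(iii), which identifies $R^{\text{\rm{nod}}}_Y$ with the set $\{\beta \in R : \varphi_Y(\beta) = 1\}$. Combining this with the previous step, $(Y,D)$ is generic if and only if there is no $\beta \in R$ with $\varphi_Y(\beta) = 1$, that is, if and only if $\varphi_Y(\beta) \neq 1$ for every $\beta \in R$. Since $R$ is a deformation invariant (Definition~\ref{defroots}), the set $R$ is intrinsic to the deformation type and the union over $\beta \in R$ below is well defined. Under the period map $(Y,D)\mapsto \varphi_Y$, this last condition says exactly that $\varphi_Y$ lies in the complement $\Hom(\Lambda, \mathbb{G}_m) - \bigcup_{\beta \in R}\{\varphi \in \Hom(\Lambda,\mathbb{G}_m): \varphi(\beta) = 1\}$, which is the desired description.

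I do not anticipate a genuine obstacle here: all the difficulty has been absorbed into Theorem~\ref{mainprop} and the resulting Corollary~\ref{preserveamp}(iii), so that the present lemma is a clean repackaging of those results. The only point requiring minor care is the bookkeeping in the first step, namely verifying that the reflection group $\mathsf{W}(\Delta_Y)$ acting on the possibly empty set $\Delta_Y$ produces an empty $R^{\text{\rm{nod}}}_Y$ exactly when $\Delta_Y$ is empty; but this is immediate from the containment $\Delta_Y \subseteq R^{\text{\rm{nod}}}_Y$ together with the trivial implication in the other direction.
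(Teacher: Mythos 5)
Your proof is correct and matches the paper's intended argument: the paper states this lemma with no written proof precisely because it follows immediately from Corollary~\ref{preserveamp}(iii), which is exactly the reduction you carry out, together with the trivial observation that $(Y,D)$ is generic if and only if $R^{\text{\rm{nod}}}_Y=\emptyset$. The bookkeeping step you flag (that $\Delta_Y\subseteq R^{\text{\rm{nod}}}_Y$ forces $\Delta_Y=\emptyset$ whenever $R^{\text{\rm{nod}}}_Y=\emptyset$) is handled correctly.
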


\begin{remark} Although the set $\bigcup_{\beta\in R}\{\varphi\in \Hom(\Lambda, \mathbb{G}_m): \varphi(\beta) = 1\}$ is a union of countably many proper subvarieties of $\Hom(\Lambda, \mathbb{G}_m)$, it is not hard to check that it is in general not closed. Indeed, its closure can have a nonempty interior. For a more precise statement when $r\leq 5$, see \cite[II(1.5)]{Looij}.
\end{remark}

Our final goal in this section is to prove the following:

\begin{theorem}\label{maintheorem}
Let $(Y,D)$ be an anticanonical pair and let $\beta$ be a root. Then exactly one of the following holds:
\begin{enumerate}
\item[\rm(i)] The surface $Y$ is the minimal rational ruled surface $\mathbb{F}_0$ or $\mathbb{F}_2$ and $\Lambda$ has rank one. In this case, either   $Y=\mathbb{F}_0\cong \Pee^1\times \Pee^1$ and $\beta$ is  $\pm([f_1]-[f_2])$, where $f_1, f_2$ are the fibers of the two rulings, or $Y = \mathbb{F}_2$ and  $\beta =\pm [\sigma]$, where $\sigma$ is the negative section.
\item[\rm(ii)] For every very general deformation $(Y', D')$ of the pair $(Y,D)$, there exist disjoint exceptional curves $E_1$ and $E_2$ on $Y$, meeting the same component of $D'$, such that $\beta = [E_1]-[E_2]$.
\end{enumerate}
\end{theorem}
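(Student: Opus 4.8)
The plan is to reduce to the case where $\beta$ itself is the class of a $-2$-curve, and then to split that curve into a pair of exceptional curves. By Theorem~\ref{mainprop} we may deform $(Y,D)$, with trivial monodromy, to a pair $(Y_1,D_1)$ that is generic subject only to $\varphi_{Y_1}(\beta)=1$, i.e.\ $\Ker\varphi_{Y_1}=\Zee\cdot\beta$; then $\pm\beta=[C]$ for a $-2$-curve $C$, and after replacing $\beta$ by $-\beta$ we assume $\beta=[C]$. Since $\Lambda$, the set of roots, and the conclusion of the theorem are all invariant under deformation with trivial monodromy, it suffices to produce the configuration on a convenient $(Y_1,D_1)$ and then spread it out. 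The reflection $r_\beta=r_C$ is admissible (Proposition~\ref{reflect}) and hence permutes effective numerical exceptional curves (Corollary~\ref{preserveamp}); so if $E_2$ is an interior exceptional curve with $E_2\cdot C=1$, then $[E_1]:=r_\beta([E_2])=[E_2]+[C]$ is again an effective numerical exceptional curve, and one checks directly that $[E_1]^2=[E_1]\cdot K_Y=-1$, $[E_1]\cdot[E_2]=0$, and $\beta=[E_1]-[E_2]$; since $\beta\cdot[D_j]=0$, the curves $E_1,E_2$ meet $D$ identically and therefore meet a single common component. Thus everything reduces to producing an interior exceptional curve meeting $C$ transversally in one point, and to identifying exactly when this is impossible.

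If $Y$ is a minimal $\mathbb{F}_0$ or $\mathbb{F}_2$ with $\Lambda$ of rank one, there are no interior exceptional curves at all, and we are in case (i): here $C=\sigma_0$ on $\mathbb{F}_2$ with $D$ a bisection, and the corresponding root on the deformation-equivalent $\mathbb{F}_0$ is $\pm([f_1]-[f_2])$. Otherwise $\Lambda$ has rank $\ge 2$, and by Remark~\ref{blowdowntotaut} I may arrange that $Y_1$ admits a blowdown to $\Pee^2$ (every pair except the minimal $\mathbb{F}_0,\mathbb{F}_1,\mathbb{F}_2$ examples does, and the interior and corner blowups of those do as well). I then contract interior exceptional curves disjoint from $C$ one at a time, reaching a pair on which every interior exceptional curve meets the still $-2$ image of $C$. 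Because $\Pee^2$ carries no $-2$-curve, while corner blowdowns and interior blowdowns disjoint from $C$ preserve $C$ as a $-2$-curve, the chosen blowdown to $\Pee^2$ must contract some interior exceptional curve meeting $C$; hence an interior exceptional curve $E$ with $m:=E\cdot C\ge 1$ exists. To reduce to $m=1$, note that $r_C([E])=[E]+m[C]$ is an effective numerical exceptional class meeting $C$ negatively, so $C$ splits off as a component, and a short descent produces an exceptional curve meeting $C$ exactly once.

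With $E_2\cdot C=1$ in hand, $C+E_2$ is a generalized exceptional curve of length two (Definition~\ref{defgenexcep}): it is an exceptional curve acquired by a single infinitely near blowup. On a very general deformation $(Y',D')$ the two infinitely near points separate, so $C+E_2$ becomes an \emph{irreducible} exceptional curve $E_1$ while $E_2$ survives, and $[E_1]\cdot[E_2]=0$ then yields two disjoint exceptional curves with $\beta=[E_1]-[E_2]$ meeting the same component of $D'$. Since effective numerical exceptional classes and $\overline{\mathcal{A}}_{\text{\rm{gen}}}$ are deformation invariants (Lemma~\ref{definv}) and ``irreducible disjoint pair of $(-1)$-curves'' is an open condition that persists on very general members, this holds for \emph{every} very general deformation, giving case (ii); cases (i) and (ii) are mutually exclusive, since (ii) requires interior exceptional curves whereas in (i) there are none. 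The main obstacle is precisely the middle step: on the special fiber $Y_1$ the desired exceptional curves need not exist as irreducible curves (for instance on an $\mathbb{F}_2$-model the class $\beta=[\sigma_0]$ is orthogonal to every exceptional class), so one must pass to a $\Pee^2$-dominated model to force an interior exceptional curve meeting $C$ transversally once, and must check that the only genuine failure is the rank-one minimal $\mathbb{F}_0/\mathbb{F}_2$ situation.
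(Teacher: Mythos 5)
Your Step I is essentially the paper's: deform so that $\Ker\varphi_{Y_1}=\Zee\cdot\beta$, replace $\beta$ by the class of a $-2$-curve $C$ via Theorem~\ref{mainprop}, contract interior exceptional curves disjoint from $C$ and exceptional components of $D$, handle the minimal $\mathbb{F}_2$ case as case (i), and, once an interior exceptional curve $E_2$ with $E_2\cdot C=1$ is found, split $\beta=[C+E_2]-[E_2]$ and deform. But the crux --- producing an interior exceptional curve meeting $C$ transversally in \emph{exactly one} point --- is exactly where your argument has a genuine gap, and it is not a small one: it is the substance of the theorem. Your ``short descent'' does nothing. From $E\cdot C=m\geq 2$ you form $r_C([E])=[E]+m[C]$, observe it meets $C$ negatively, and peel off copies of $C$; but the descent terminates at the effective class $[E]+\lfloor m/2\rfloor[C]$, whose intersection with $C$ is $0$ or $1$, and this class already has the obvious reducible representative $E+\lfloor m/2\rfloor C$. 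For instance, when $m=2$ the descent ends at $[E]+[C]$, represented by $E+C$ itself: no new curve is produced, and no contradiction with the standing assumption that every exceptional curve meets $C$ with multiplicity $\geq 2$ ever arises from reflection and peeling alone. Ruling out that configuration is precisely the paper's Assumption~\ref{ass1}, and the contradiction occupies Steps II and III of the paper's proof: the del Pezzo case via the Hodge index theorem, the non-semidefinite case via ruling arguments, and the negative (semi)definite case via a delicate induction showing that every $G^{(k)}=C+E-kD$ is nef and big (using Riemann--Roch, Ramanujam vanishing, and the structure theory of nef divisors from Theorems~\ref{nefdivisors} and~\ref{Lnotbig}), which is absurd since $H\cdot G^{(k)}<0$ for $k$ large and $H$ ample.

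Your blowdown-to-$\Pee^2$ step is also unjustified, and in any case does not help. Remark~\ref{blowdowntotaut} only provides a blowdown to a \emph{taut minimal} pair, which may well be $\mathbb{F}_N$ with $N\geq 3$ rather than $\Pee^2$; nothing guarantees a model dominating $\Pee^2$. And even granting such a blowdown, the argument that ``some contracted curve must meet the image of $C$'' only yields the existence of \emph{some} interior exceptional curve meeting $C$ --- which is automatic after your reductions whenever $Y_1$ is non-minimal --- and gives no control on the intersection number, since the contracted curve may meet $C$ with multiplicity $\geq 2$. So the proposal, as written, reduces correctly to the hard case and then assumes it away.
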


\begin{remark} In general, it is possible for a  root to be written as a difference of two exceptional curves in more than one way. For example, if $E_1+E_2$ and $F_1 +F_2$ are two reducible sections of a ruling, with $D$ an irreducible bisection, then $E_1+E_2\equiv F_1 +F_2$ and hence $[E_1] - [F_1] = [F_2] - [E_2]$ is  a  root.
\end{remark}

\medskip
\noindent \textbf{Step I: Preliminary reductions.} 
\medskip

We begin by using the surjectivity of the period map (Theorem~\ref{surjper}).  In particular, after a (not necessarily small) deformation of $(Y,D)$, we can assume that $\varphi_Y(\beta) = 1$ and that $\Ker \varphi_Y = \Zee\beta$. By Theorem~\ref{mainprop}, there exists a smooth rational curve $C$ on $Y$ disjoint from $D$ such that $\pm \beta = [C]$, and $C$ is the unique $-2$-curve on $Y$. Then every irreducible curve of negative self-intersection on $Y$ is either $C$, an interior exceptional curve, or a component  of $D$. We shall show that, if $Y$ is not $\mathbb{F}_0$ or $\mathbb{F}_2$, then there exists an interior exceptional curve $E$ on $Y$  such that $E \cdot C = 1$. In this case, take $E_1 = C+E$ and $E_2 = E$, so that $E_1, E_2$ are generalized exceptional curves, $E_1 \cdot E_2 = 0$, and $C = E_1- E_2$. Then it is easy to check that $H^1(Y; \scrO_Y(C+E)) = 0$, and hence the curve $C+E$ deforms to a very general small deformation of $(Y,D)$, where it is necessarily irreducible and hence an exceptional curve. Similarly the curve $E_2$ deforms to a very general small deformation of $(Y,D)$. Thus, replacing $(Y,D)$ by   such a   deformation, we see that $\beta = [E_1]-[E_2]$, where $E_1$ and $E_2$ are exceptional curves, necessarily disjoint as $E_1\cdot E_2 = 0$. The same will then be true on a generic, not necessarily small deformation of $(Y,D)$.

Thus we must show that, if $(Y,D)$ is an anticanonical pair with a unique smooth rational curve $C$ of self-intersection $-2$ disjoint from $D$, then either there exists an exceptional curve $E$ such that $C\cdot E =1$ or $Y= \mathbb{F}_2$.  
Let us make some easy reductions. First, if there exists an interior exceptional curve $F$ such that $F\cdot C =0$, then let $(\overline{Y}, \overline{D})$ be the anticanonical pair obtained by contracting $F$.   The image of $C$ is a $-2$-curve $\overline{C}$  on $\overline{Y}$, and it is still the unique such curve. Suppose that there exists an interior exceptional curve $\overline{E}$ on $\overline{Y}$  such that $ \overline{C}\cdot\overline{E}=1$. Note that   $\overline{E}$ does not contain the point $p$ of $\overline{Y}$ which is the image of $F$, since otherwise  the proper transform $\overline{E}$ would be a smooth curve of self-intersection $-2$, not equal to $C$, a contradiction. Thus, if $E$ is the proper transform of $\overline{E}$, then $E$ is an interior exceptional curve on $Y$ and   $C\cdot E =1$. Likewise, if $\overline{Y}$ is $\mathbb{F}_0$ or $\mathbb{F}_2$, then necessarily  $\overline{Y}=\mathbb{F}_2$ since it contains the image of $C$, which is necessarily the negative section, and $Y$ is  $\mathbb{F}_2$ blown up at a point $p$ not on the negative section. But then the proper transform of the fiber through $p$ is an exceptional curve meeting $C$ at exactly one point as claimed. Thus we may assume that every interior exceptional curve on $Y$  has nonempty intersection with $C$.

Next suppose that $F$ is an exceptional curve which is a component of $D$ and let $(\overline{Y}, \overline{D})$ be the anticanonical pair obtained by contracting $F$. As before, the image of $C$ is a smooth rational curve $\overline{C}$ of self-intersection $-2$ on $\overline{Y}$. If there exists an interior exceptional curve $\overline{E}$ on $\overline{Y}$  such that $ \overline{C}\cdot\overline{E}=1$, then   $\overline{E}$ does not contain the point $p$ of $\overline{Y}$ which is the image of $F$, since it is a double point of $\overline{D}$. Thus again $C\cdot E =1$, where   $E$, the proper transform of  $\overline{E}$, is an interior exceptional curve on $Y$. Finally, if $\overline{Y}$ is $\mathbb{F}_0$ or $\mathbb{F}_2$, then as in the previous paragraph  $\overline{Y}=\mathbb{F}_2$  and the image of $C$ is the negative section,   $Y$ is  $\mathbb{F}_2$ blown up at a point $p$, and the proper transform of the fiber through $p$ is an interior exceptional curve meeting $C$ at exactly one point.  

Finally, if there are no exceptional curves on $Y$, i.e.\ if $Y$ is minimal, then, by inspection  of the cases in Theorem~\ref{minimalist}, $Y = \mathbb{F}_2$ and $\Lambda$ has rank one. So we have reduced the proof to the following situation:

\begin{assumption}\label{ass1} We may assume that $(Y,D)$ is an anticanonical pair with a unique smooth rational curve $C$ of self-intersection $-2$ disjoint from $D$ such that  every exceptional curve is interior, there exists an exceptional curve $E$ on $Y$ such that $E\cdot C =d\geq 2$, and, for every other exceptional curve $E'$ on $Y$, $C \cdot E' \geq d$. 
\end{assumption}

We will show that Assumption~\ref{ass1} leads to a contradiction.

We fix the following notation for the rest of the proof: As in Assumption~\ref{ass1}, let $d\geq 2$ be the smallest positive integer of the form $C\cdot E$, where $E$ an exceptional curve  on $Y$, and fix once and for all an exceptional curve $E$ such that $E\cdot C =d$.   Since $E\cdot D =1$, there exists a unique component $D_i$ of $D$ such that $E\cdot D_i =1$, and $E\cdot D_j = 0$ for $j\neq i$. Let us record the following facts about the divisor $C+E$ and the corresponding linear system $|C+E|$:

\begin{lemma}\label{lemma1.3} With $C$, $E$, and $d$ as above,
\begin{enumerate}
\item[\rm(i)] $(C+E) \cdot C = d-2\geq 0$.
\item[\rm(ii)] $(C+E) \cdot E = d-1\geq 1$.
\item[\rm(iii)] $(C+E)^2 = 2d-3 > 0$. Hence the divisor $C+E$ is nef and big.
\item[\rm(iv)] $(C+E) \cdot D = 1$. In fact, $(C+E) \cdot D_i = 1$ and $(C+E) \cdot D_j = 0$ for $j\neq i$.
\item[\rm(v)] $h^0(\scrO_Y(C+E)) = d$ and $h^i(\scrO_Y(C+E)) = 0$ for $i>0$.
\end{enumerate}
\end{lemma}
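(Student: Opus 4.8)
The plan is to treat (i), (ii), and (iv) as immediate bilinear computations, deduce (iii) from them together with a short nefness check, and finish (v) with Riemann--Roch and a vanishing statement already available in the text. First I would record the numerical data of the two curves. Since $C$ is a $-2$-curve, Definition~\ref{defminustwo} gives $C^2 = -2$ and $C\cap D =\emptyset$, so $C\cdot D_j = 0$ for every $j$ (and in particular $C\cdot K_Y = 0$). Since $E$ is an interior exceptional curve, $E^2 = -1$, and adjunction forces $E\cdot K_Y = -1$, i.e.\ $E\cdot D = 1$; by the choice of the index $i$ we have $E\cdot D_i = 1$ and $E\cdot D_j = 0$ for $j\neq i$. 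Finally $C\cdot E = d$ by hypothesis. Expanding bilinearly then yields (i) $(C+E)\cdot C = C^2 + C\cdot E = d-2$, (ii) $(C+E)\cdot E = C\cdot E + E^2 = d-1$, and (iv) $(C+E)\cdot D_j = C\cdot D_j + E\cdot D_j = 0$ for $j\neq i$ while $(C+E)\cdot D_i = 1$, hence $(C+E)\cdot D = 1$. All three sign assertions follow from $d\geq 2$.

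For (iii) I would first compute $(C+E)^2 = C^2 + 2C\cdot E + E^2 = 2d - 3$, which is positive since $d\geq 2$. The only substantive point is nefness, which I would verify by testing $C+E$ against an arbitrary irreducible curve $\Gamma$. If $\Gamma$ is distinct from both $C$ and $E$, then $C\cdot\Gamma \geq 0$ and $E\cdot\Gamma\geq 0$ because $C$ and $E$ are themselves irreducible, so $(C+E)\cdot\Gamma\geq 0$; the only two remaining cases $\Gamma = C$ and $\Gamma = E$ are exactly (i) and (ii), both nonnegative. Thus $C+E$ is nef, and being an effective nef divisor with positive self-intersection it is big.

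For (v) I would apply Riemann--Roch: using $\chi(\scrO_Y)=1$ and $K_Y = -D$ one gets $\chi(\scrO_Y(C+E)) = 1 + \tfrac12\big((C+E)^2 + (C+E)\cdot D\big) = d$ by (iii) and (iv). It then remains to kill the higher cohomology. Here (iii) lets me invoke Lemma~\ref{LrestrD}: since $C+E$ is nef and big and $(C+E)|D$ has degree $1\neq 0$ on $D_i$, we have $(C+E)|D\not\cong\scrO_D$, so $h^1(Y;\scrO_Y(C+E)) = 0$; and $h^2(Y;\scrO_Y(C+E))=0$ because it is Serre dual to $H^0(Y;\scrO_Y(-D-C-E))$, which vanishes as $D+C+E$ is effective and nonzero (so $-D-C-E$ cannot be effective). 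Hence $h^0 = \chi = d$.

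The computations here are entirely routine, so I do not expect a genuine obstacle; the only step requiring a moment's thought is the nefness assertion in (iii), and even there the key observation is simply that $C$ and $E$ are irreducible, so the only walls one must test are $C$ and $E$ themselves. The main task is therefore to assemble the pieces in order and to cite Lemma~\ref{LrestrD} with its nef-and-big hypothesis verified by (iii).
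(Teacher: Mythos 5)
Your proposal is correct, and parts (i)--(iv) together with the nefness check in (iii) match what the paper does (the paper dismisses (i)--(iv) as straightforward; your wall-by-wall verification of nefness, using that any irreducible curve other than $C$ and $E$ automatically meets $C+E$ nonnegatively, is the intended argument). For (v), however, you take a genuinely different route. The paper computes the cohomology of $\scrO_Y(C+E)$ directly from two short exact sequences: first
$$0 \to \scrO_Y \to \scrO_Y(E) \to \scrO_E(E)\cong \scrO_{\Pee^1}(-1) \to 0,$$
which gives $h^0(\scrO_Y(E))=1$ and $h^1=h^2=0$, and then
$$0 \to \scrO_Y(E) \to \scrO_Y(C+E) \to \scrO_C(C+E) \cong \scrO_{\Pee^1}(d-2) \to 0,$$
which gives $h^0(\scrO_Y(C+E)) = 1+(d-1)=d$ and $h^i=0$ for $i>0$. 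You instead compute $\chi = d$ by Riemann--Roch and kill $h^1$ by invoking Lemma~\ref{LrestrD} (whose proof rests on Ramanujam vanishing) and $h^2$ by Serre duality. Both arguments are valid and there is no circularity in your citation, since Lemma~\ref{LrestrD} is established in Section 4, well before this point. The trade-off: your version is shorter given the earlier machinery but makes (v) logically dependent on the nef-and-big statement in (iii), whereas the paper's exact-sequence computation is elementary, self-contained, and independent of (iii) entirely; it also exhibits the section of $\scrO_Y(C+E)$ more concretely, which is in the spirit of the fixed-component analysis that follows in the proof of Theorem~\ref{maintheorem}.
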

\begin{proof} (i)--(iv) are all straightforward. To see (v), from the exact sequence 
$$0 \to \scrO_Y \to \scrO_Y(E) \to \scrO_E(E)\cong \scrO_{\Pee^1}(-1) \to 0,$$
we see that $h^0(Y; \scrO_Y(E)) = 1$ and that $h^1(Y; \scrO_Y(E))=h^2(Y; \scrO_Y(E)) = 0$. Then, using
$$0 \to \scrO_Y(E) \to \scrO_Y(C+E) \to \scrO_C(C+E) \cong \scrO_{\Pee^1}(d-2) \to 0,$$
we see that $h^0(\scrO_Y(C+E)) = 1 + d-1 =d$ and that $h^i(\scrO_Y(C+E)) = 0$ for $i>0$.
\end{proof}

\medskip
\noindent \textbf{Step II: The case where $D$ is not negative semidefinite.}
\medskip

\noindent \textbf{Case I:} $Y$ is a (generalized) del Pezzo surface, i.e.\  $D$ is nef and big. Define the divisor $G = G^{(1)}$ by: $G = C+E - D$.

\begin{lemma} With notation  as above,
\begin{enumerate}
\item[\rm(i)] $G \cdot D = 1-D^2$. 
\item[\rm(ii)] $G ^2 = 2d-5 + D^2$.
\item[\rm(iii)] $G$ is linearly equivalent to an effective divisor, possibly zero.
\end{enumerate}
\end{lemma}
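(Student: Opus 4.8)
The plan is to treat parts (i) and (ii) as routine intersection-number bookkeeping and to reserve the real argument for (iii), which I would extract from Riemann--Roch. First I would record the intersection numbers available under the standing hypotheses: $C^2=-2$ and $C\cdot D=0$ (since $C$ is a $-2$-curve, hence disjoint from $D$), $E^2=-1$ and $E\cdot D=1$ (as $E$ is an interior exceptional curve and $D=-K_Y$), and $C\cdot E=d$ from Assumption~\ref{ass1}. With these in hand, (i) follows by expanding $(C+E-D)\cdot D=C\cdot D+E\cdot D-D^2=1-D^2$, and (ii) by expanding $(C+E-D)^2 = C^2+E^2+D^2+2C\cdot E-2C\cdot D-2E\cdot D = -2-1+D^2+2d-0-2 = 2d-5+D^2$. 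Both are one-line computations.

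For (iii) I would write $G=C+E+K_Y$ and apply Riemann--Roch. Using $\chi(\scrO_Y)=1$ together with $K_Y=-D$, so that $G\cdot(G-K_Y)=G^2+G\cdot D$, parts (i) and (ii) give
$$G^2+G\cdot D = (2d-5+D^2)+(1-D^2) = 2d-4,$$
whence $\chi(\scrO_Y(G)) = 1+\tfrac12(2d-4)= d-1$. Since $d\geq 2$ by Assumption~\ref{ass1}, this is $\geq 1$. To turn $\chi\geq 1$ into $h^0>0$ I would next kill the $h^2$ term: by Serre duality $h^2(\scrO_Y(G))=h^0(\scrO_Y(K_Y-G))=h^0(\scrO_Y(-C-E))$, and because $Y$ is projective (indeed $D$ is nef and big in Case~I), any ample class $H$ satisfies $(-C-E)\cdot H<0$, so $-C-E$ cannot be effective and $h^2(\scrO_Y(G))=0$. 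Therefore $h^0(\scrO_Y(G)) = \chi(\scrO_Y(G))+h^1(\scrO_Y(G)) \geq d-1 \geq 1$, producing a nonzero section and hence an effective divisor linearly equivalent to $G$ (the zero divisor precisely when $G\sim 0$, which the ``possibly zero'' clause accommodates).

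The only step here that is not a pure calculation is the vanishing $h^2(\scrO_Y(G))=0$, and even that is painless once one identifies $K_Y-G=-C-E$ and pairs against an ample class; the projectivity supplied by the Case~I hypothesis is exactly what guarantees such a class exists. Consequently I expect no genuine obstacle: the entire content lies in having the intersection numbers and the Case~I assumption in place, and in organizing the Riemann--Roch bookkeeping so that the $D^2$ terms in (i) and (ii) cancel to leave $\chi = d-1$.
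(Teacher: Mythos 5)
Your proof is correct, and parts (i) and (ii) coincide with the paper's (which simply declares them clear). For part (iii), however, you take a genuinely different route. The paper uses the restriction exact sequence
$$0 \to \scrO_Y(G) \to \scrO_Y(C+E) \to \scrO_D((C+E)|D) \to 0$$
together with the surjectivity of $H^0(Y; \scrO_Y(C+E)) \to H^0(D; \scrO_D(C+E))$ — which is Lemma~\ref{LrestrD}, applicable since $C+E$ is nef and big, and which ultimately rests on Ramanujam vanishing — plus the exact counts $h^0(Y;\scrO_Y(C+E)) = d$ from Lemma~\ref{lemma1.3}(v) and $h^0(D; \scrO_D(C+E)) = 1$ (since $(C+E)\cdot D = 1$), to conclude $h^0(Y;\scrO_Y(G)) = d-1 \geq 1$ exactly. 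You instead run Riemann--Roch directly on $G$, kill $h^2$ by Serre duality and the trivial observation that $-C-E$ pairs negatively with an ample class, and obtain only the inequality $h^0(Y;\scrO_Y(G)) \geq \chi(\scrO_Y(G)) = d-1 \geq 1$. Your argument is more elementary and self-contained (no Ramanujam vanishing, no appeal to the earlier restriction or cohomology lemmas), at the cost of losing the exact value of $h^0$; since the subsequent argument in the paper uses only effectivity of $G$ (to run the Hodge-index contradiction), the weaker conclusion is entirely sufficient.
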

\begin{proof} (i) and (ii) are clear since $(C+E) \cdot D = 1$. To see (iii),  it follows from Lemma~\ref{LrestrD} that $H^0(Y; \scrO_Y(C+E)) \to H^0(D; \scrO_D(C+E))$ is surjective. As $h^0(Y; \scrO_Y(C+E))  = d$ and $h^0(\scrO_D(C+E)) = 1$, it follows that
$$h^0(Y; \scrO_Y(G)) = h^0(Y;\scrO_Y(C+E)) -1 = d-1 \geq 1,$$
since $d\geq 2$ by assumption. Thus $G$ is effective.
\end{proof}

Since $D$ is nef and big and $G$ is effective, 
$$0 \leq G \cdot D = 1-D^2 \leq 0.$$
Thus $G \cdot D = 0$ and $D^2 = 1$. By the Hodge index theorem, $G^2 \leq 0$, with equality only if $G = 0$. But $G ^2 = 2d-5 + D^2 = 2d-4 \geq 0$, since $d \geq 2$, and hence $G = 0$, $d=2$,  and $C+E$ is linearly equivalent to $D$. In particular, $C+E \in |-K_Y|$. Since $E$ is exceptional and $C\cdot E = 2$, we can contract $E$, producing a new surface $\overline{Y}$ and an irreducible nodal or cuspidal curve $\overline{C}$ which is a section of $|-K_{\overline{Y}}|$, with $(\overline{C})^2 = 2$. Thus $\overline{Y}$ is not a minimal rational surface, so there exists an exceptional curve $\overline{F}$ on $\overline{Y}$, and necessarily $\overline{F} \cdot \overline{C} = 1$. Then the proper transform $F$ of $\overline{F}$ in $Y$ is an exceptional curve such that $F\cdot C =1$, contradicting our assumptions on $Y$ and $C$. Hence this case does not arise.

\bigskip

\noindent \textbf{Case II:}   $D$ is not nef and big, but is not negative semidefinite. In this case, since we have assumed that no component $D_j$ has self-intersection $-1$, there exists a component $D_j$ such that $D_j^2 \geq 0$, since $D$ is not negative semidefinite, and there must also exist a component $D_k$ such that $D_k^2 \leq -3$, since $D$ is not nef. 

If $D_j^2 = 0$, then the linear system $|D_j|$ defines a ruling, i.e.\ a morphism $\pi \colon Y \to \Pee^1$, and since $D_j \cdot C = 0$, $C$ is contained in a fiber of $\pi$. But the hypothesis that $C$ is the unique curve of self-intersection $-2$, and that there does not exist a curve of self-intersection $\leq -3$, implies that there exists another component of the fiber of $\pi$ containing $C$, say $E'$ such that $C\cdot E' =1$. Necessarily $E'$ is an exceptional curve and $C$   meets $E'$ transversally.  This contradicts our assumptions on $Y$.

If $D_j^2 > 0$ and $E'$ is an exceptional curve on $Y$ with $E'\cdot D_j =0$, then $E'\cdot C =0$, since otherwise $D_j \cdot (C+E') = 0$ and $(C+E')^2 \geq 0$, contradicting the Hodge index theorem. As we have assumed that no such curves $E'$ exist, we see that every exceptional curve on $Y$ meets $D_j$ and no other component of $D$. Note that contracting $E'$ does not create any new exceptional curves on $Y$ since the image of $C$ has self-intersection $\geq 0$. We may successively contract exceptional curves until $Y$ becomes a minimal rational surface $\overline{Y}$. In the case, since none of the exceptional curves meet $D_k$, the image $\overline{D}_k$ of $D_k$ is a component of the anticanonical divisor $\overline{D}$ on $\overline{Y}$, and $(\overline{D}_k)^2 = (D_k)^2 \leq -3$. It follows that $\overline{D}_k$ is a section of the ruling on $\overline{Y}$, which is a minimal ruled surface $\mathbb{F}_a$ with $a = -(\overline{D}_k)^2$. Moreover, as $Y\neq \overline{Y}$ by the assumption that there exists an exceptional curve on $Y$, we must have made at least one blow up to reach $Y$. But after blowing up a point $p$, the proper transform of the fiber containing $p$ is an exceptional curve meeting the proper transform of $\overline{D}_k$, a contradiction. Hence this case does not arise.

\medskip
\noindent \textbf{Step III: The case where $D$ is negative semidefinite.} 
\medskip

Suppose that $D$ is negative semidefinite. Then $D^2 \leq 0$; let $-D^2 = e \geq 0$. Moreover, for all $j$, $D\cdot D_j \leq 0$. In this case, for all $k\geq 0$, we define the divisor $G^{(k)}$ by:
$$G^{(k)} = C+E - kD.$$
Thus $G^{(0)} = C+E$ and $G^{(1)} = G$, in the notation of the previous subsection. We record some straightforward properties of $G^{(k)}$:

\begin{lemma}\label{lemma3.1} With $C$, $E$, and $d$ as above,
\begin{enumerate}
\item[\rm(i)] $(G^{(k)})^2 = 2d-3 -2k -k^2e$.
\item[\rm(ii)] $G^{(k)} \cdot D = 1+ke\geq 1$. More precisely,
$$G^{(k)}\cdot D_j = \begin{cases} 1-k(D\cdot D_j) \geq 1, &\text{if $j=i$;} \\
-k(D\cdot D_j) \geq 0, &\text{if $j\neq i$.}
\end{cases}$$
\item[\rm(iii)]  $G^{(k)}\cdot E = d-k-1$. 
\item[\rm(iv)]  If $E'$ is an exceptional curve and $E' \neq E$, then $G^{(k)}\cdot E' \geq d-k$. 
\item[\rm(v)] $G^{(k)}\cdot C = d-2$. 
\item[\rm(vi)] For $k\geq 1$, neither $G^{(k)}-E$ nor $G^{(k)}-C$ is effective.
\end{enumerate}
\end{lemma}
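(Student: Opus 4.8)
The plan is to split the six assertions into two groups. Parts (i)--(v) are pure intersection-number computations: I would expand $G^{(k)}=C+E-kD$ and substitute the numerical data recorded under Assumption~\ref{ass1} together with the standing hypotheses of Step III, namely $C^2=-2$, $E^2=-1$, $D^2=-e$, $C\cdot D=0$, $E\cdot D=1$, and $C\cdot E=d$. Thus (i) is $(C+E-kD)^2=C^2+E^2+k^2D^2+2C\cdot E-2kC\cdot D-2kE\cdot D=2d-3-2k-k^2e$, (iii) is $C\cdot E+E^2-kD\cdot E=d-1-k$, and (v) is $C^2+C\cdot E-kC\cdot D=d-2$. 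For the refined form of (ii) I would intersect $G^{(k)}$ with each component $D_j$ separately, using $C\cdot D_j=0$, $E\cdot D_i=1$, $E\cdot D_j=0$ for $j\neq i$, and the inequality $D\cdot D_j\le 0$ valid in the negative semidefinite case; this gives $G^{(k)}\cdot D_i=1-k(D\cdot D_i)\ge 1$ and $G^{(k)}\cdot D_j=-k(D\cdot D_j)\ge 0$. For (iv) the only nonformal input is Assumption~\ref{ass1} itself: any exceptional $E'\neq E$ satisfies $C\cdot E'\ge d$, while $E\cdot E'\ge 0$ because $E$ and $E'$ are distinct irreducible curves, and $E'\cdot D=1$ because $E'$ is exceptional, so $G^{(k)}\cdot E'=C\cdot E'+E\cdot E'-k\ge d-k$.

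The substantive part is (vi), which I expect to be the only real obstacle. I would argue by contradiction that, for $k\ge 1$, neither $G^{(k)}-E=C-kD$ nor $G^{(k)}-C=E-kD$ is effective. The mechanism is the same in both cases and rests on two observations: a negativity argument forcing the relevant negative curve to split off, and the elementary fact that on $Y$ no nonzero effective divisor is linearly equivalent to $-kD=kK_Y$ for $k\ge 1$ (since $\Pic Y\cong H^2(Y;\Zee)$ is torsion free, linear equivalence is detected by cohomology classes, and $h^0(\scrO_Y)=1$ forces the only effective divisor in the trivial class to be $0$).

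Concretely, suppose $C-kD$ were effective. Since $(C-kD)\cdot C=C^2=-2<0$ and $C$ is the unique irreducible curve meeting $C$ negatively (any distinct irreducible curve meets $C$ nonnegatively), $C$ must occur as a component; writing the class as $aC+R'$ with $a\ge 1$, $R'$ effective and $C\not\subseteq\operatorname{Supp}R'$, we obtain $R'+(a-1)C\equiv -kD$. Then $R'+(a-1)C+kD$ is an effective divisor linearly equivalent to $0$, and it is nonzero because it contains $kD$ with $k\ge 1$ and $D\neq 0$; this contradicts the fact that the trivial class contains no nonzero effective divisor. The argument for $E-kD$ is identical, replacing $C$ by $E$ and using $(E-kD)\cdot E=E^2-k=-1-k<0$ together with $E\cdot D=1$. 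The one point requiring care is the bookkeeping in the splitting step, together with the observation that $kD\neq 0$ guarantees the final effective divisor is genuinely nonzero, so that equivalence to the trivial class is impossible.
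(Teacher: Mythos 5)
Your proposal is correct and follows essentially the same route as the paper: parts (i)--(v) are the same direct intersection computations, and for (vi) the paper likewise argues by contradiction from effectivity, rewriting $C-kD\equiv G$ as $C\equiv G+kD$ and invoking $C^2<0$, $k\geq 1$. Your version merely spells out the bookkeeping (splitting off $C$, resp.\ $E$, and noting that a nonzero effective divisor cannot be linearly equivalent to zero) that the paper's one-line argument leaves implicit.
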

\begin{proof} (i)--(v) are easy calculations. To see (vi), if $G^{(k)} - E = C-kD\equiv  G$, where $G$ is effective, then $C\equiv G+kD$. But this is impossible since $C^2< 0$ and $k\geq 1$. The case where $G^{(k)}-C$ is effective is similar.
\end{proof}

Still assuming that $D$ is negative semidefinite, we shall show:

\begin{claim}\label{mainclaim} For all $k\geq 0$, $G^{(k)}$ is nef and big.
\end{claim}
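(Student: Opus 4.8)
The plan is to prove the claim by induction on $k$, exploiting that $G^{(k+1)}$ is obtained from $G^{(k)}$ by subtracting the anticanonical cycle $D$. The base case $k=0$ is exactly Lemma~\ref{lemma1.3}(iii): $G^{(0)}=C+E$ satisfies $(C+E)^2=2d-3>0$ and is nef and big. For the inductive step I would assume $G^{(k)}$ is nef and big and establish, in order, that $G^{(k+1)}=G^{(k)}-D$ is effective, then nef, then big.

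For effectivity, note that $G^{(k)}\cdot D=1+ke>0$ by Lemma~\ref{lemma3.1}(ii), so Corollary~\ref{nocompD} shows no component of $D$ is a fixed component of $G^{(k)}$, and Lemma~\ref{LrestrD} gives that $H^0(Y;\scrO_Y(G^{(k)}))\to H^0(D;G^{(k)}|D)$ is surjective with $h^1(Y;\scrO_Y(G^{(k)}))=0$. Feeding this into the restriction sequence $0\to\scrO_Y(G^{(k+1)})\to\scrO_Y(G^{(k)})\to G^{(k)}|D\to 0$ forces $h^1(Y;\scrO_Y(G^{(k+1)}))=0$ and pins down $h^0(Y;\scrO_Y(G^{(k+1)}))$, from which (together with $d\geq 2$) effectivity follows. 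For nef-ness I would first observe that, $G^{(k+1)}$ being effective, its intersection with any irreducible curve of nonnegative self-intersection is automatically $\geq 0$ (such a curve is itself nef), so nef-ness reduces to the finitely many curves of negative self-intersection, which by the earlier classification are exactly $C$, the components $D_j$, and the interior exceptional curves. Here Lemma~\ref{lemma3.1}(v) gives $G^{(k+1)}\cdot C=d-2\geq 0$ (using $d\geq 2$), Lemma~\ref{lemma3.1}(ii) gives $G^{(k+1)}\cdot D_j\geq 0$ (using that $D$ is negative semidefinite, so $D\cdot D_j\leq 0$), and for an exceptional curve $E'$ one combines the inductive nef-ness of $G^{(k)}$ with the minimality hypothesis $C\cdot E'\geq d$ of Assumption~\ref{ass1} to bound $G^{(k+1)}\cdot E'=C\cdot E'+E\cdot E'-(k+1)$ from below. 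Finally, for bigness, since $G^{(k+1)}$ is nef and nonzero it suffices, by the contrapositive of Theorem~\ref{Lnotbig}, to exclude the two nef-but-not-big possibilities: a multiple of an elliptic curve disjoint from $D$ is impossible because $G^{(k+1)}\cdot D=1+(k+1)e\geq 1>0$, while a multiple of a rational pencil with $C_0\cdot D=2$ is excluded using the parity of $G^{(k+1)}\cdot D$ together with the uniqueness of $C$ and the non-effectivity statements of Lemma~\ref{lemma3.1}(vi).

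The hard part is the inductive step at the exceptional curve $E$. As $k$ grows, $G^{(k)}\cdot E=d-1-k$ by Lemma~\ref{lemma3.1}(iii) decreases, so nef-ness against $E$ and the remaining exceptional curves becomes progressively tighter: the inductive hypothesis only yields $G^{(k+1)}\cdot E'\geq -1$, and ruling out the equality $G^{(k+1)}\cdot E'=-1$ is precisely where the full force of Assumption~\ref{ass1}—that $C$ is the unique $-2$-curve, that $d\geq 2$, and that $d$ is the minimal value of $C\cdot E'$ over all exceptional curves—must be deployed, rather than any crude numerical estimate. Establishing bigness is equally delicate, since it cannot be read off by computing $(G^{(k+1)})^2$ directly but must instead be routed through the structural classification of nef divisors, with the potential rational-pencil degeneration being the subtlest point to eliminate. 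It is exactly this tension, between maintaining nef-ness and bigness on the one hand and the steadily decreasing intersection numbers on the other, that makes the claim the engine of the reductio in Step III.
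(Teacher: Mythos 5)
Your skeleton (induction on $k$; at each step prove effectivity, then nef-ness, then bigness) is the same as the paper's, and you have correctly located where the difficulties lie; but the proposal does not resolve them, and at the one place where you assert a resolution, the assertion is false. The decisive gap is effectivity. Your restriction-sequence computation does pin down $h^0(Y;\scrO_Y(G^{(k+1)})) = d-(k+1)-\tfrac{k(k+1)}{2}e$ (this is the paper's Lemma~\ref{lemma3.3}, in shifted indexing), but this number is only guaranteed to be $\geq 0$, and the hypothesis $d\geq 2$ rules out its vanishing only at the very first step $k+1=1$. For larger $k$ the quantity decreases and can a priori hit zero, at which point $G^{(k+1)}$ fails to be effective and your induction stops. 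Ruling out the borderline case $d = k + \tfrac{k(k-1)}{2}e$ is in fact the hardest part of the paper's proof (Lemma~\ref{lemma3.6}): in that case a general member of $|G^{(k-1)}|$ has arithmetic genus $0$, so $|G^{(k-1)}|$ maps $Y$ birationally onto a surface of minimal degree without contracting any exceptional curve, forcing $Y\cong \mathbb{F}_a$ or $\Pee^2$ and contradicting the (semi)definiteness of $D$. Your proposal has no substitute for this argument, and without it the claim cannot be established.

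The two difficulties you flag but leave open are resolved in the paper by specific mechanisms that your sketch does not supply. For nef-ness, the paper does not estimate intersections with $E$ and the other exceptional curves from the inductive nef-ness of the previous stage (as you note, that only gives $\geq -1$). Instead, once $G^{(k)}$ is known to be effective: (a) neither $E$ nor $C$ can occur in the support of an effective representative, since $G^{(k)}-E$ and $G^{(k)}-C$ are not effective by Lemma~\ref{lemma3.1}(vi), so $G^{(k)}\cdot E\geq 0$ and $G^{(k)}\cdot C\geq 0$ come for free; and (b) for exceptional $E'\neq E$, the bound $G^{(k)}\cdot E'\geq d-k$ of Lemma~\ref{lemma3.1}(iv) (this is where Assumption~\ref{ass1} enters) combines with the identity $d-k = \tfrac{k(k-1)}{2}e + h^0(Y;\scrO_Y(G^{(k)}))$ of Lemma~\ref{lemma3.3} to give $G^{(k)}\cdot E'\geq d-k\geq h^0>0$. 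For bigness, your parity argument fails: $G^{(k)}\cdot D = 1+ke$ need not be odd (take $e$ odd and $k$ even), so parity cannot exclude $G^{(k)}\equiv nF$ with $F\cdot D=2$. The paper's Lemma~\ref{lemma3.5} argues instead that after showing $|G^{(k)}|$ has no fixed components, Theorem~\ref{Lnotbig} yields a ruling $|F|$, which must have a reducible fiber (otherwise $Y$ is a minimal ruled surface, again contradicting semidefiniteness); this produces an exceptional curve $E'\neq E$ with $F-E'$ effective, whence $(G^{(k)})^2\geq n\,G^{(k)}\cdot E'>0$, a contradiction.
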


This immediately leads to a contradiction: if $H$ is an ample divisor, then $H\cdot G^{(k)} > 0$ for all $k$, but $H\cdot G^{(k)} < 0$ provided that $k > H\cdot (C+E)/H\cdot D$.

We prove Claim~\ref{mainclaim} by induction on $k$, starting with the case $k=0$ where it was shown in Lemma~\ref{lemma1.3}. Thus, assume inductively, for $k\geq 1$,  that $G^{(k-1)}$ is nef and big.

\begin{lemma}\label{lemma3.3} With assumptions as above, 
$$h^0(Y; \scrO_Y(G^{(k)})) = \chi(Y; \scrO_Y(G^{(k)})) = d- k -\frac{k(k-1)}{2}e.$$
Thus, $d -   k = k(k-1)e/2+ h^0(Y; \scrO_Y(G^{(k)}))\geq  h^0(Y; \scrO_Y(G^{(k)})) \geq 0$.
\end{lemma}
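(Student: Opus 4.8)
The plan is to compute $\chi(Y;\scrO_Y(G^{(k)}))$ directly via Riemann--Roch and then to show that both higher cohomology groups vanish, which forces $h^0=\chi$. Since $Y$ is rational, $\chi(\scrO_Y)=1$, and $K_Y=-D$, Riemann--Roch gives $\chi(Y;\scrO_Y(G^{(k)}))=1+\tfrac12\big((G^{(k)})^2+G^{(k)}\cdot D\big)$. Substituting the values $(G^{(k)})^2=2d-3-2k-k^2e$ and $G^{(k)}\cdot D=1+ke$ from Lemma~\ref{lemma3.1} and simplifying yields exactly $d-k-\tfrac{k(k-1)}{2}e$. This is a routine calculation.

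The substance is the vanishing of $h^1$ and $h^2$. For $h^2$ I would use Serre duality: $h^2(Y;\scrO_Y(G^{(k)}))=h^0(Y;\scrO_Y(K_Y-G^{(k)}))=h^0(Y;\scrO_Y(-D-G^{(k)}))$. The key observation is the identity $-D-G^{(k)}=-(C+E-(k-1)D)=-G^{(k-1)}$. Since we are proving Claim~\ref{mainclaim} by induction and may assume $G^{(k-1)}$ is nef and big (in particular nonzero, as $(G^{(k-1)})^2>0$), the class $-G^{(k-1)}$ cannot be effective: intersecting with an ample divisor $H$ gives $H\cdot G^{(k-1)}>0$, so $H\cdot(-G^{(k-1)})<0$. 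Hence $h^2(Y;\scrO_Y(G^{(k)}))=0$.

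For $h^1$ I would exploit the restriction sequence
$$0\to\scrO_Y(G^{(k)})\to\scrO_Y(G^{(k-1)})\to\scrO_Y(G^{(k-1)})|D\to0,$$
which is the twist of $0\to\scrO_Y(-D)\to\scrO_Y\to\scrO_D\to0$ by $\scrO_Y(G^{(k-1)})$, using $G^{(k)}=G^{(k-1)}-D$. Because $G^{(k-1)}$ is nef and big, Lemma~\ref{LrestrD} tells me that the restriction $H^0(Y;\scrO_Y(G^{(k-1)}))\to H^0(D;\scrO_Y(G^{(k-1)})|D)$ is surjective and that $h^1(Y;\scrO_Y(G^{(k-1)}))$ vanishes unless $G^{(k-1)}|D\cong\scrO_D$. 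The latter is ruled out here because, by Lemma~\ref{lemma3.1}, $G^{(k-1)}\cdot D_i\geq1$, so $G^{(k-1)}|D$ has positive degree on $D_i$ and is nontrivial. The long exact sequence then shows $H^1(Y;\scrO_Y(G^{(k)}))$ injects into $H^1(Y;\scrO_Y(G^{(k-1)}))=0$. Combining the three computations gives $h^0=\chi=d-k-\tfrac{k(k-1)}{2}e$, and the displayed inequality follows at once since $\tfrac{k(k-1)}{2}e\geq0$ and $h^0\geq0$.

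The point where one must be careful is that $G^{(k)}$ itself is not yet known to be nef and big (that is the conclusion of the surrounding induction in Claim~\ref{mainclaim}), so one cannot apply Lemma~\ref{LrestrD} to $G^{(k)}$ directly; everything must be bootstrapped from the inductive hypothesis on $G^{(k-1)}$. The crucial input is precisely that $G^{(k-1)}\cdot D_i\geq1$ forces the nontriviality of $G^{(k-1)}|D$ and hence $h^1(Y;\scrO_Y(G^{(k-1)}))=0$; without this observation the restriction-sequence argument would only bound $h^1(G^{(k)})$ by $1$ rather than kill it.
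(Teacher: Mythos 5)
Your proof is correct, and its skeleton (Riemann--Roch plus vanishing of $h^1$ and $h^2$ bootstrapped from the inductive hypothesis that $G^{(k-1)}$ is nef and big) is the same as the paper's; the Riemann--Roch computation and the $h^2$ argument via $K_Y - G^{(k)} = -G^{(k-1)}$ coincide exactly. Where you diverge is the $h^1$ vanishing: the paper disposes of it in one line by Serre duality and Ramanujam, writing $h^1(Y;\scrO_Y(G^{(k)})) = h^1(Y;\scrO_Y(G^{(k-1)}+K_Y)) = h^1(Y;\scrO_Y(-G^{(k-1)})) = 0$ since $G^{(k-1)}$ is nef and big, whereas you run the restriction sequence $0 \to \scrO_Y(G^{(k)}) \to \scrO_Y(G^{(k-1)}) \to \scrO_Y(G^{(k-1)})|D \to 0$ and invoke Lemma~\ref{LrestrD} twice (surjectivity of restriction to kill the connecting map, and nontriviality of $G^{(k-1)}|D$, via $G^{(k-1)}\cdot D_i \geq 1$, to get $h^1(Y;\scrO_Y(G^{(k-1)}))=0$). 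The two routes rest on the same vanishing theorem — Ramanujam is what powers Lemma~\ref{LrestrD} internally — so yours is a repackaging rather than a new idea; it is slightly longer but has the mild virtue of quoting only results already proved in the paper rather than re-invoking Ramanujam, and your closing remark correctly identifies the one point of care, namely that nothing may be applied to $G^{(k)}$ itself before the induction closes.
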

\begin{proof} By the Riemann-Roch theorem,
$$\chi(Y; \scrO_Y(G^{(k)})) = \frac12((G^{(k)})^2 + G^{(k)}\cdot D) + 1= d- k -\frac{k(k-1)}{2}e.$$
Thus, it suffices to prove that $h^i(Y; \scrO_Y(G^{(k)})) = 0$ for $i>0$. But $G^{(k)} = G^{(k-1)} -D = G^{(k-1)} + K_Y$, and so $h^1(Y; \scrO_Y(G^{(k)}))= h^1(Y;\scrO_Y(-G^{(k-1)})) =0$ by Ramanujam's vanishing theorem and 
$$h^2(Y; \scrO_Y(G^{(k)}))= h^0(Y;\scrO_Y(-G^{(k-1)})) =0$$ since $G^{(k-1)}$ is nef and big.
\end{proof}

\begin{lemma}\label{lemma3.4} The following are equivalent:
\begin{enumerate}
\item[\rm(i)]  $G^{(k)}$ is nef.
\item[\rm(ii)] $(G^{(k)})^2\geq 0$.
\item[\rm(iii)] $\displaystyle d- k -\frac{k(k-1)}{2}e > 0$.
\item[\rm(iv)] $G^{(k)}$ is effective.  
\end{enumerate}
In this case, $G^{(k)} \cdot E'> 0$ for every exceptional curve $E'\neq E$.
\end{lemma}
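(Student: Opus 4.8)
The plan is to prove the four conditions equivalent by running the cycle $(\mathrm{i})\Rightarrow(\mathrm{ii})\Rightarrow(\mathrm{iii})\Rightarrow(\mathrm{iv})\Rightarrow(\mathrm{i})$, all under the standing inductive hypothesis that $G^{(k-1)}$ is nef and big; this is what licenses Lemma~\ref{lemma3.3}, which computes $h^0(Y;\scrO_Y(G^{(k)}))=\chi(Y;\scrO_Y(G^{(k)}))=d-k-\tfrac{k(k-1)}{2}e$. Two of the links are immediate. The implication $(\mathrm{i})\Rightarrow(\mathrm{ii})$ is the standard fact that a nef divisor has nonnegative self-intersection. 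And $(\mathrm{iii})\Leftrightarrow(\mathrm{iv})$ falls straight out of Lemma~\ref{lemma3.3}: by the Riemann--Roch computation there, condition (iii) is exactly the assertion $\chi(Y;\scrO_Y(G^{(k)}))>0$, and since $G^{(k)}\cdot D=1+ke\ge 1$ by Lemma~\ref{lemma3.1}(ii) forces $G^{(k)}\not\sim 0$, the equality $h^0=\chi$ shows that $\chi>0$ holds if and only if $G^{(k)}$ is (linearly equivalent to a nonzero) effective divisor.

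For $(\mathrm{ii})\Rightarrow(\mathrm{iii})$ I would feed $(G^{(k)})^2\ge 0$ into Riemann--Roch. Since $K_Y=-D$, Riemann--Roch reads $2\chi(Y;\scrO_Y(G^{(k)}))=(G^{(k)})^2+G^{(k)}\cdot D+2$; combining $(G^{(k)})^2\ge 0$ with $G^{(k)}\cdot D\ge 1$ gives $2\chi\ge 3$, and as $\chi$ is an integer this yields $\chi\ge 2>0$, which is (iii). Note that this link uses only Riemann--Roch and the intersection numbers of Lemma~\ref{lemma3.1}, not the inductive hypothesis.

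The crux is $(\mathrm{iv})\Rightarrow(\mathrm{i})$: deducing nefness of $G^{(k)}$ from effectiveness. First I would extract the bound $k\le d-1$: effectiveness together with Lemma~\ref{lemma3.3} gives $d-k-\tfrac{k(k-1)}{2}e=h^0\ge 1$, and since $e\ge 0$ and $k\ge 1$ the subtracted term is nonnegative, so $d-k\ge 1$. To check $G^{(k)}\cdot\Gamma\ge 0$ for every irreducible curve $\Gamma$ I would split into two cases. If $\Gamma^2\ge 0$, represent $G^{(k)}$ by a nonzero effective divisor $\sum a_{\Gamma'}\Gamma'$ with $a_{\Gamma'}\ge 0$; then $G^{(k)}\cdot\Gamma=\sum a_{\Gamma'}(\Gamma'\cdot\Gamma)$ is a sum of nonnegative terms, since distinct irreducible curves meet nonnegatively and the self-term contributes $a_\Gamma\Gamma^2\ge 0$. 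If $\Gamma^2<0$, recall that every irreducible curve of negative self-intersection on an anticanonical pair is a $-2$-curve, an interior exceptional curve, or a component of $D$, and that $C$ is the unique $-2$-curve by Assumption~\ref{ass1}; so $\Gamma$ is $C$, some $D_j$, the chosen $E$, or another exceptional curve $E'$. The intersection numbers of Lemma~\ref{lemma3.1} then finish the check: $G^{(k)}\cdot C=d-2\ge 0$ by (v), $G^{(k)}\cdot D_j\ge 0$ by (ii), $G^{(k)}\cdot E=d-k-1\ge 0$ by (iii) and the bound $k\le d-1$, and $G^{(k)}\cdot E'\ge d-k\ge 1>0$ by (iv). The last inequality is precisely the final clause of the lemma.

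I expect the only real obstacle to be this last implication, and within it the place where the hypotheses genuinely bite is the pair of exceptional-curve intersections $G^{(k)}\cdot E$ and $G^{(k)}\cdot E'$, which for large $k$ would be negative: the point is that effectiveness, routed through the Riemann--Roch count of Lemma~\ref{lemma3.3}, forces $k\le d-1$ and thereby rescues nonnegativity. Everything else is formal---the nonnegative-self-intersection curves are handled by effectiveness alone, and the negative curves are pinned down by the classification together with the numerology already recorded in Lemma~\ref{lemma3.1}.
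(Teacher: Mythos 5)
Your proof is correct and matches the paper's argument in all essentials: the same cycle of implications, the same Riemann--Roch identity for (ii)$\Rightarrow$(iii), Lemma~\ref{lemma3.3} for (iii)$\Leftrightarrow$(iv), and the classification of curves of negative self-intersection together with the numerology of Lemma~\ref{lemma3.1} for (iv)$\Rightarrow$(i), including the final clause via $G^{(k)}\cdot E'\geq d-k>0$. The one local difference is that where the paper invokes Lemma~\ref{lemma3.1}(vi) to rule out $E$ and $C$ as components of the effective representative (so that their intersections with $G^{(k)}$ are automatically nonnegative), you instead verify $G^{(k)}\cdot E=d-k-1\geq 0$ and $G^{(k)}\cdot C=d-2\geq 0$ directly, using the bound $k\leq d-1$ that effectiveness forces through Lemma~\ref{lemma3.3} together with $d\geq 2$; this is an equally valid, slightly more computational way to dispatch those two curves.
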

\begin{proof} (i) $\implies$ (ii): This is true of every nef divisor. (ii) $\implies$ (iii): As in the proof of Lemma~\ref{lemma3.3}, 
$$\frac12((G^{(k)})^2 + G^{(k)}\cdot D) + 1= d- k -\frac{k(k-1)}{2}e.$$
Moreover, $G^{(k)}\cdot D\geq 1$, so that, if $(G^{(k)})^2\geq 0$, then $d- k -k(k-1)e/2 > 0$. (iii) $\implies$ (iv): This follows from Lemma~\ref{lemma3.3}. (iv) $\implies$ (i): Suppose that $G^{(k)}$ is linearly equivalent to $\sum_{i=1}^na_iG_i$, where the $G_i$ are irreducible curves, $G_i\neq G_j$ for $i\neq j$, and the $a_i$ are positive integers. Clearly $G^{(k)}\cdot G \geq 0$ for every irreducible curve $G\neq G_i$ for some $i$. By   Lemma~\ref{lemma3.1}(vi), no $G_i$ can be $E$ or $C$, and hence $G^{(k)}\cdot E\geq 0$, $G^{(k)}\cdot C\geq 0$. If $G_i$ is an irreducible curve such that $(G_i)^2< 0$ and $G_i \neq E$ or $C$, then $G_i$ is  either a component $D_j$ of $D$  or an exceptional curve $E'\neq E$. In the first case, $G^{(k)}\cdot D_j \geq 0$ by Lemma~\ref{lemma3.1}(ii). In the second case, $G^{(k)} \cdot E'\geq d-k \geq h^0(Y; \scrO_Y(G^{(k)}))>0$ by  Lemma~\ref{lemma3.1}(iv) and  Lemma~\ref{lemma3.3}.  Finally, if $G_j^2 \geq 0$, then clearly $G^{(k)}\cdot G_j \geq 0$. Thus $G^{(k)}$ is nef.
\end{proof}

Next we show that, if $G^{(k)}$ is effective, then in fact $G^{(k)}$ is big, i.e.\ the inequality in Lemma~\ref{lemma3.4} is strict.

\begin{lemma}\label{lemma3.5} Suppose that $G^{(k)}$ is effective and hence nef. Then $(G^{(k)})^2> 0$, i.e.\ $G^{(k)}$ is big.
\end{lemma}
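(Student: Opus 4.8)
We must show that if $G^{(k)}$ is effective (hence nef, by Lemma 5.39), then $(G^{(k)})^2 > 0$. By the equivalences in Lemma 5.39, we already know $(G^{(k)})^2 \geq 0$; the content is to rule out equality.

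Let me think about what we're proving. We have $G^{(k)} = C + E - kD$, under the standing assumptions of Step III (Assumption 6.14 / "ass1"): $D$ is negative semidefinite with $-D^2 = e \geq 0$, $C$ is the unique $-2$-curve, every exceptional curve is interior and meets $C$ with multiplicity $\geq d \geq 2$, and $E$ is a fixed exceptional curve with $E \cdot C = d$.

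**Key computations at hand.** From Lemma 5.37 (lemma3.1):
$(G^{(k)})^2 = 2d - 3 - 2k - k^2 e$, and $G^{(k)} \cdot D = 1 + ke \geq 1$.

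From Lemma 5.38 (lemma3.3): $h^0(\scrO_Y(G^{(k)})) = \chi = d - k - \frac{k(k-1)}{2}e$ and all higher cohomology vanishes.

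**The approach.** Suppose for contradiction that $(G^{(k)})^2 = 0$ while $G^{(k)}$ is effective and nef. The natural move is to apply the Hodge index theorem together with the fact that $G^{(k)} \cdot D \geq 1 > 0$. Since $D$ is negative semidefinite with $D^2 = -e \leq 0$, I would look at the sublattice (or subspace) spanned by $G^{(k)}$ and $[D]$. If $(G^{(k)})^2 = 0$ and $G^{(k)}$ is nef, then $G^{(k)}$ lies on the boundary of the positive cone; the plan is to show $G^{(k)} \cdot D = 0$, contradicting $G^{(k)} \cdot D = 1 + ke \geq 1$.

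**Executing the Hodge-index argument.** The cleanest route: since $G^{(k)}$ is nef with $(G^{(k)})^2 = 0$, for any class $x$ we have that $G^{(k)}$ is "isotropic and nef," so $G^{(k)} \cdot x$ and the square of $x$ are constrained. Concretely, I would use Theorem 4.31 (Lnotbig): a nef line bundle $\scrO_Y(G)$ with $G^2 = 0$, $G$ effective and nonzero, and no component of $D$ a fixed component, forces $G = kC'$ for a smooth rational $C'$ with $(C')^2 = 0$, $C' \cdot D = 2$, or $G = k'F$ for a smooth elliptic $F$ with $F^2 = F \cdot D = 0$. But here $G^{(k)} \cdot D = 1 + ke$, which is odd when $e$ is even and in any case equals $1$ when $e = 0$; this is incompatible with both alternatives of Theorem 4.31, since those give $G \cdot D \in \{2k, 0\}$ — always even, and never equal to $1$. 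This is the crux: $G^{(k)} \cdot D \geq 1$ is forced, but is not of the form permitted for a nef divisor of self-intersection zero. The one hypothesis I must verify before invoking Theorem 4.31 is that no component of $D$ is a fixed component of $G^{(k)}$; this should follow from Corollary 4.18 (nocompD) using $\deg(G^{(k)}|D) = G^{(k)} \cdot D > 0$, or directly from the multidegree computation in Lemma 5.37(ii), which shows $G^{(k)} \cdot D_j \geq 0$ for all $j$ with strict inequality for $j = i$.

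**Main obstacle.** The delicate point is not the Hodge index inequality itself but confirming the applicability of Theorem 4.31 — namely that $G^{(k)}$ is genuinely nonzero and that no component of $D$ is fixed in $|G^{(k)}|$. The nonzero-ness follows since $h^0 = d - k - \binom{k}{2}e$ and, under the assumption $(G^{(k)})^2 = 0$ together with $(G^{(k)})^2 = 2d - 3 - 2k - k^2 e$, one checks $h^0 \geq 1$. Once Theorem 4.31 applies, the parity/value obstruction $G^{(k)} \cdot D = 1 + ke \notin \{0, 2k\}$ delivers the contradiction cleanly, so $(G^{(k)})^2 > 0$ as claimed.
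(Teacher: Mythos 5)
Your overall strategy --- reduce to Theorem~\ref{Lnotbig} and then rule out both of its alternatives --- is the same as the paper's, but the two steps you treat as routine are exactly where the content lies, and as written both fail. First, the hypothesis of Theorem~\ref{Lnotbig} that no component of $D$ is a fixed component of $|G^{(k)}|$ cannot be obtained from Corollary~\ref{nocompD}: that corollary assumes $L$ is nef \emph{and big}, and under your contradiction hypothesis $(G^{(k)})^2=0$ the divisor $G^{(k)}$ is precisely not big, so the appeal is circular. Your fallback, that $G^{(k)}\cdot D_j\geq 0$ for all $j$, proves nothing about fixed components: nonnegative (even zero) degree on a curve is perfectly compatible with that curve lying in the fixed part --- indeed the second half of Corollary~\ref{nocompD} describes a situation in which every component of $D$ is a fixed component while all the degrees vanish. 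The paper spends the first half of its proof establishing the stronger fact that $|G^{(k)}|$ has no fixed part at all: writing $G^{(k)}=G_m+G_f$, nefness and $(G^{(k)})^2=0$ force $G^{(k)}\cdot G_f=G_m\cdot G_f=G_m^2=0$; then $C$ and $E$ are excluded from $G_f$ by Lemma~\ref{lemma3.1}(vi), exceptional curves $E'\neq E$ are excluded by the last statement of Lemma~\ref{lemma3.4} (which gives $G^{(k)}\cdot E'>0$), and components of $D$ are excluded because a nonzero effective divisor supported on a proper subset of the components of $D$ has strictly negative square, whereas $G_f^2$ must vanish.

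Second, your concluding parity argument has a genuine hole. You need $G^{(k)}\cdot D=1+ke$ to be incompatible with the two outputs of Theorem~\ref{Lnotbig}, whose degrees on $D$ are $2n$ (for $G^{(k)}=nF$, $F$ a smooth rational curve with $F^2=0$, $F\cdot D=2$) and $0$. But when $e$ and $k$ are both odd, $1+ke$ is even, and the ruling case is not excluded; for instance $d=3$, $e=1$, $k=1$ gives $(G^{(1)})^2=0$ and $G^{(1)}\cdot D=2$, perfectly consistent with $G^{(1)}$ being such a curve $F$. (Your own phrasing concedes this: oddness is only asserted ``when $e$ is even.'') The paper closes the ruling case by geometry rather than parity: $|F|$ defines a morphism $\pi\colon Y\to\Pee^1$, and since $D$ is negative semidefinite the pair $(Y,D)$ cannot be minimal, so $Y$ is not a $\Pee^1$-bundle and some fiber of $\pi$ is reducible; that fiber contains an exceptional curve $E'$ with $F-E'$ effective, $E'\neq E$ by Lemma~\ref{lemma3.1}(vi), and then Lemma~\ref{lemma3.4} gives $(G^{(k)})^2 = n\,G^{(k)}\cdot F \geq n\,G^{(k)}\cdot E' >0$, the desired contradiction. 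You need this (or some substitute) to finish; the degree obstruction alone does not suffice.
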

\begin{proof} Suppose by contradiction that $(G^{(k)})^2 =0$.  We can write $G^{(k)} = G_m + G_f$, where $G_m$ is the moving part of $|G^{(k)}|$ and $G_f$ is the fixed component. We first show that $G_f =0$. Using 
$$0 = (G^{(k)})^2 = G^{(k)}\cdot G_m + G^{(k)}\cdot G_f = G^{(k)}\cdot G_f +G_f\cdot G_m + (G_m)^2$$
and the fact that $G^{(k)}\cdot G_f \geq 0$ since $G^{(k)}$ is nef, and that $G_m\cdot G_f$ and $(G_m)^2$ are $\geq 0$ since $G_m$ is nef, we see that $G^{(k)}\cdot G_f =G_f\cdot G_m = (G_m)^2 =0$. 
As $G^{(k)}-C$ is not effective,   $C$   is not an irreducible component of $G_f$, and likewise neither is $E$. Hence the irreducible components of $G_f$ are either exceptional curves $E'\neq E$ or components of $D$. If $E'$ is an exceptional curve not equal to $E$, then $G^{(k)}\cdot E'  > 0$ by Lemma~\ref{lemma3.4}. But then $E'$ cannot be a component of $G_f$, for then we would have $G^{(k)}\cdot G_f\geq G^{(k)} \cdot E' >0$. Thus the only possible components of $G_f$ are components $D_j$ of $D$ with $j\neq i$ (recall that $D_i$ is the unique component of $D$ such that $E\cdot D_i =1$ and hence $G^{(k)} \cdot D_i > 0$). Then $G_f =\sum_{j\in J}a_jD_j$, where $J$ is a proper subset of the index set of the components of $D$, possibly empty, and $a_j > 0$ for all $j\in J$. Then
$$0 = (G^{(k)})^2 =  (G_m+G_f)^2 = G_m^2 + 2G_m\cdot G_f + G_f^2 = G_f^2.$$
But $G_f^2 =  (\sum_{j\in J}a_jD_j)^2 < 0$ unless $J =\emptyset$ and $G_f = 0$. Hence $G^{(k)}$ has no fixed components.

We can then apply Theorem~\ref{Lnotbig} to $L=\scrO_Y(G^{(k)})$, as $|G^{(k)}|$ has no fixed components, and in particular no component of $D$ is a fixed component of $|G^{(k)}|$. Since $G^{(k)}\cdot D >0$,  $G^{(k)}$ is linearly equivalent to $nF$ for some positive integer $n$, where $F$ is a smooth rational curve, $F^2 =0$, and $|F|$ is a pencil, which thus defines a morphism $\pi\colon Y \to \Pee^1$. Since $(Y,D)$ is negative definite, there must exist a reducible fiber of $\pi$, and hence an exceptional curve $E'$  such that $F = E'+G$  where $G$ is effective.  But then $G^{(k)}$ is linearly equivalent to $nE'+nG$. By Lemma~\ref{lemma3.1}(vi), $E'\neq E$. Thus  
$$(G^{(k)} )^2 \geq n(G^{(k)} \cdot E') > 0,$$
 contradicting $(G^{(k)} )^2 =0$. So the assumption that $(G^{(k)})^2=0$ leads to a contradiction, and we must have $(G^{(k)})^2> 0$.
\end{proof}

 Combining Lemma~\ref{lemma3.4} and Lemma~\ref{lemma3.5}, we see that either $G^{(k)}$ is nef and big or $d = k + k(k-1)e/2$. So to prove Claim~\ref{mainclaim}, it will suffice to prove:

\begin{lemma}\label{lemma3.6} The case $d = k + k(k-1)e/2$ is impossible.
\end{lemma}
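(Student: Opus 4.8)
The plan is to argue by contradiction: suppose the boundary identity $d = k + \frac{k(k-1)}{2}e$ holds. First I would record its numerical consequences. By Lemma~\ref{lemma3.3} this identity is exactly the statement $h^0(Y;\scrO_Y(G^{(k)})) = \chi(Y;\scrO_Y(G^{(k)})) = 0$, so $G^{(k)}$ is not effective, and Lemma~\ref{lemma3.1}(i) gives $(G^{(k)})^2 = 2d-3-2k-k^2e = -ke-3 < 0$. On the other hand the inductive hypothesis asserts that $G^{(k-1)}$ is nef and big; substituting the boundary identity into Lemma~\ref{lemma3.1} and into Lemma~\ref{lemma3.3} applied with $k-1$ yields the clean values $(G^{(k-1)})^2 = (k-1)e - 1$, $G^{(k-1)}\cdot D = 1+(k-1)e$, and $h^0(Y;\scrO_Y(G^{(k-1)})) = 1 + (k-1)e$.

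The next step disposes of the degenerate ranges of the parameters. Since $d\geq 2$, the value $k=1$ is impossible, as it would force $d=1$. More generally, bigness of $G^{(k-1)}$ requires $(G^{(k-1)})^2 = (k-1)e - 1 > 0$, i.e.\ $(k-1)e \geq 2$; hence the case $e=0$ (the strictly negative semidefinite case) and the case $(k,e)=(2,1)$ are excluded outright, since there $(G^{(k-1)})^2 \leq 0$ contradicts the inductive hypothesis. So I may assume $e\geq 1$, $k\geq 2$, and $(k-1)e\geq 2$, which is the substantive case.

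In this range the plan is to draw a contradiction from the geometry of the nef and big linear system $|G^{(k-1)}|$ together with the minimality of $d$. Because $h^0(Y;\scrO_Y(G^{(k)})) = h^1(Y;\scrO_Y(G^{(k)})) = 0$ by Lemma~\ref{lemma3.3}, and $G^{(k-1)} = G^{(k)} + D$, the restriction sequence $0\to \scrO_Y(G^{(k)}) \to \scrO_Y(G^{(k-1)}) \to \scrO_Y(G^{(k-1)})|D \to 0$ shows that the restriction map $H^0(Y;\scrO_Y(G^{(k-1)})) \to H^0(D;\scrO_Y(G^{(k-1)})|D)$ is an isomorphism; here $\scrO_Y(G^{(k-1)})|D$ has nonnegative multidegree and total degree $1+(k-1)e$ by Lemma~\ref{lemma3.1}(ii), consistent with the count above via Lemma~\ref{Loncycle}. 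I would then analyze $|G^{(k-1)}|$ through Theorem~\ref{nefdivisors}: no component of $D$ is a fixed component (since $\deg(G^{(k-1)}|D) > 0$, by Corollary~\ref{nocompD}), and Lemma~\ref{lemma3.1}(vi) shows that neither $C$ nor $E$ is a fixed component. Since $C$ is the \emph{unique} $-2$-curve, both the fixed part and the base locus of $|G^{(k-1)}|$, and likewise the exceptional locus of the birational morphism defined by a large multiple of $G^{(k-1)}$, are severely constrained: the contracted curves can only be components $D_j$ with $D_j^2=-2$ together with interior exceptional curves orthogonal to $G^{(k-1)}$. The aim of this analysis is to produce on $Y$ either a second $-2$-curve, contradicting the uniqueness of $C$, or an interior exceptional curve $E'$ with $C\cdot E' < d$, contradicting the minimality of $d$ fixed in Assumption~\ref{ass1}.

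I expect this last structural step to be the main obstacle. The numerics alone are internally consistent — for instance the rank-two Hodge index inequality $(G^{(k-1)})^2\,D^2 - (G^{(k-1)}\cdot D)^2 < 0$ holds automatically — so the contradiction cannot be purely arithmetic and must come from the interplay of the positivity of $G^{(k-1)}$, the uniqueness of $C$, and the extremal choice of $E$. The cleanest route is likely to exhibit the cheap exceptional curve directly, either from a suitable member of $|G^{(k-1)}|$ or from a reducible fiber of an associated pencil among the curves contracted by $G^{(k-1)}$, in the spirit of the pencil argument used in the proof of Lemma~\ref{lemma3.5}.
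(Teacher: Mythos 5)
Your numerical setup is correct, and your handling of the degenerate cases is a genuine (if minor) improvement on the paper: in the boundary case one gets $(G^{(k-1)})^2=(k-1)e-1$, so the inductive hypothesis that $G^{(k-1)}$ is nef and big already kills $e=0$ and $(k,e)=(2,1)$ by pure arithmetic, a reduction the paper does not bother to make. But in the substantive range $e\geq 1$, $(k-1)e\geq 2$ your text stops being a proof: you state a target (produce a second $-2$-curve, or an exceptional curve $E'$ with $C\cdot E'<d$) and acknowledge that you do not know how to reach it. That is a genuine gap, and moreover the mechanism you are hoping for is not the one that works. The morphism defined by $|G^{(k-1)}|$ turns out to contract essentially nothing, so no contradiction can be mined from ``extra'' negative curves in its fibers or fixed part; the contradiction in this case is global, not local.

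The missing idea is adjunction combined with the classification of surfaces of minimal degree. Your own numbers give
$2p_a(G^{(k-1)})-2=(G^{(k-1)})^2-G^{(k-1)}\cdot D=((k-1)e-1)-(1+(k-1)e)=-2$,
so the boundary identity holds exactly when $p_a(G^{(k-1)})=0$, equivalently $\delta=N-1$, where $\delta=(G^{(k-1)})^2$ and $N=\dim|G^{(k-1)}|$. Since $G^{(k-1)}\cdot D=2+\delta\geq 3$, Theorem~\ref{nefdivisors} shows that $|G^{(k-1)}|$ has no fixed components or base points, that its general member is a smooth rational curve, and that it defines a birational morphism $\varphi\colon Y\to\Pee^N$ onto a surface of degree $\delta=N-1$, i.e.\ a surface of minimal degree: a Hirzebruch surface $\mathbb{F}_a$, a cone over a rational normal curve, or the Veronese surface. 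No exceptional curve is contracted by $\varphi$: if $E$ itself were contracted, hyperplanes through the point $\varphi(E)$ would produce effective members of $|G^{(k-1)}-E|$, contradicting Lemma~\ref{lemma3.1}(vi); and any $E'\neq E$ satisfies $G^{(k-1)}\cdot E'\geq d-k+1>0$ by Lemma~\ref{lemma3.1}(iv) together with $d\geq k$ from Lemma~\ref{lemma3.3}. Hence $\varphi$ is either an isomorphism or the minimal resolution of the cone, so $Y\cong\mathbb{F}_a$ or $\Pee^2$ --- a minimal surface, contradicting the existence of the interior exceptional curve $E$ (equivalently, contradicting $D^2\leq 0$). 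This is the step your sketch would need to supply; the ``pencil'' heuristic borrowed from Lemma~\ref{lemma3.5} does not apply here because $(G^{(k-1)})^2>0$, so $|G^{(k-1)}|$ is not composed with a pencil at all.
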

\begin{proof} For $k=1$, $k+k(k-1)e/2 =1$, but by assumption $d\geq 2$. Thus we may assume $k\geq 2$. By the inductive assumption, $G^{(k-1)}$ is nef and big. Consider the linear series $|G^{(k-1)}|$. By abuse  of notation, we denote by $G^{(k-1)}$ a general element of $|G^{(k-1)}|$. Let $\delta = (G^{(k-1)})^2$. We next compute the dimension $N$ of the linear system $|G^{(k-1)}|$. By the inductive hypothesis applied to $G^{(k-2)}$, or directly in case $k=2$ and $G^{(0)} = C+E$, we see that $h^i(Y; \scrO_Y(G^{(k-1)})) = 0$ for $i>0$, and hence that 
$$N=\dim |G^{(k-1)}| = \frac12 ((G^{(k-1)})^2 + G^{(k-1)}\cdot D) =  \frac12 (\delta + G^{(k-1)}\cdot D).$$
On the other hand, setting $g = p_a(G^{(k-1)})$, we have
$$g = \frac12 ((G^{(k-1)})^2 - G^{(k-1)}\cdot D) +1=  \frac12 (\delta - G^{(k-1)}\cdot D)+1.$$
Thus 
$$\delta - N =  \frac12 (\delta - G^{(k-1)}\cdot D) = g-1.$$
Plugging in from Lemma~\ref{lemma3.1} with $k$ replaced by $k-1$, we have
\begin{align*}
\frac12 (\delta - G^{(k-1)}\cdot D) &= \frac12(2d-1-2k-(k-1)^2e-1-(k-1)e) \\
&= d-k -\frac12k(k-1)e -1 = h^0(Y; \scrO_Y(G^{(k)})) -1.
\end{align*}
Thus  $g = \delta-N +1 = d - k - k(k-1)e/2$. Hence $d = k + k(k-1)e/2$ $\iff$ $g=0$ $\iff$ $\delta - N = -1$. Since $g=p_a(G^{(k-1)}) =0$, and hence 
$$G^{(k-1)}\cdot D = 2+ (G^{(k-1)})^2\geq 3,$$
the linear system $|G^{(k-1)}|$ has no fixed components or base locus, by Theorem~\ref{nefdivisors}.   Hence the general element $G^{(k-1)}$ of $|G^{(k-1)}|$ is a smooth rational curve. From the exact sequence
$$0 \to \scrO_Y \to \scrO_Y(G^{(k-1)}) \to \scrO_{G^{(k-1)}}(G^{(k-1)}) \to 0,$$
and the fact that $h^1(Y; \scrO_Y) = 0$, it follows that $|G^{(k-1)}|$ induces a birational morphism $\varphi \colon Y \to \Pee^N$, whose positive dimensional fibers are exactly the curves $G$ such that $G^{(k-1)}\cdot G = 0$. The image surface $\varphi(Y) =\overline{Y}$ has degree $\delta = N-1$, in other words $\overline{Y}$ is a surface of minimal degree in $\Pee^N$. In particular $\overline{Y}$ is normal, and is either a smooth Hirzebruch surface $\mathbb{F}_a$, the normal surface obtained by contracting the  negative section of $\mathbb{F}_a$,  or the Veronese surface and hence $\cong \Pee^2$. By Lemma~\ref{lemma3.1}(vi), since $k\geq 2$, $G^{(k-1)}-E$ is not effective. If $E'$ is an exceptional curve not equal to $E$, then $G^{(k-1)}\cdot E'\geq d-k+1 >0$. In both cases, no exceptional curve can be contracted via $\varphi$. But then either $\varphi$ is an isomorphism or $\varphi$ is the unique  minimal resolution of singularities of $\overline{Y}$. In either case, $Y$ is either $\mathbb{F}_a$ or $\Pee^2$, contradicting the assumption that $(Y,D)$ is negative definite. Thus we have proved Lemma~\ref{lemma3.6}, and hence Claim~\ref{mainclaim} and Theorem~\ref{maintheorem}. 
\end{proof}

\section{Automorphisms I:  an exact sequence}

\begin{definition}\label{autdef} For an anticanonical pair $(Y,D)$, let $\Lambda=\Lambda(Y,D)$  as in Definition~\ref{defLambda} and set $\widehat{\Lambda}= \widehat{\Lambda}(Y,D) = H^2(Y; \Zee)$, so that there is an inclusion $\Lambda \subseteq \widehat{\Lambda}$, in fact an exact sequence
$$0 \to \Lambda \to \widehat{\Lambda} \xrightarrow{f} \Zee^r \to F \to 0,$$
where $\Zee^r$ is the free $\Zee$-module generated by the components $D_i$, $f$ is defined by: the coefficient of $D_i$ in  $f(\alpha)$  is $\alpha\cdot D_i$, and $F$ is the cokernel of $f$. Thus, if the natural homomorphism $\bigoplus_i\Zee[D_i] \to \widehat{\Lambda}$ is a primitive embedding, then  $F=0$. If $(Y,D)$ is negative definite, then $F$ is finite, but for example if $(Y,D)$ is toric, then $F \cong \Zee^2$. If we need to specify the pair $(Y,D)$, we shall write $F(Y,D)$.
\end{definition}

\begin{definition}\label{defsomeauts} For an anticanonical pair $(Y,D)$, let $\Aut^+(Y,D)$ denote the group of automorphisms $\phi$  of $Y$ such that, for all $i$, $\phi(D_i) = D_i$, and such that $\phi$ preserves the orientation of $D$. Thus $\Aut^+(Y,D) = \Aut(Y,D)$ if $r\geq 3$.  Clearly, we have the following: If $(\widetilde{Y}, \widetilde{D})$ is a corner blowup of $(Y,D)$, then $\Aut^+(Y,D) \cong \Aut^+(\widetilde{Y}, \widetilde{D})$.

Note that there is a natural restriction homomorphism $\Aut^+(Y,D) \to \Aut(\widehat{\Lambda})$. We define
$$K = K(Y,D)= \Ker (\Aut^+(Y,D) \to \Aut(\widehat{\Lambda})).$$
In other words, $K$ is the subgroup of $\Aut^+(Y,D)$ which acts trivially on $\widehat{\Lambda}=H^2(Y; \Zee) =\Pic Y$. Finally, we let $\rho\colon \Aut^+(Y,D) \to \Aut^0D$ be the natural restriction homomorphism, and shall also use $\rho$ to denote the restriction of $\rho$ to $K$.
\end{definition}

We then have the following exact sequence \cite[Proposition 2.6]{GHK}:

\begin{theorem}\label{autexactseq} There is an exact sequence
$$0 \to K \xrightarrow{\rho} \Aut^0D \xrightarrow{f} \Hom (\widehat{\Lambda}, \mathbb{G}_m) \to \Hom (\Lambda, \mathbb{G}_m) \to  0,$$
where $\rho$ is as in the previous definition, the map $\Hom (\widehat{\Lambda}, \mathbb{G}_m) \to \Hom (\Lambda, \mathbb{G}_m)$ is the restriction homomorphism, dual to the inclusion $\Lambda \subseteq \widehat{\Lambda}$, and $f$ is defined by: under the canonical isomorphism $\Aut^0D \cong \mathbb{G}_m^r$ of (i) of Lemma~\ref{autisom}, $f$ is identified with the homomorphism
$$g(\lambda_1, \dots, \lambda_r)(\alpha) = \prod_{i=1}^r\lambda_i^{(\alpha\cdot D_i)}.$$
Finally, $K \cong F\spcheck =\Hom(F, \mathbb{G}_m)$, where  $F$ is as in Definition~\ref{autdef}. In particular, $K$ is a deformation invariant of the pair $(Y,D)$.
\end{theorem}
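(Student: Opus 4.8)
The plan is to derive the isomorphism $K\cong F\spcheck$ by dualizing against $\mathbb{G}_m$ the defining four-term lattice sequence
$$0 \to \Lambda \to \widehat{\Lambda} \xrightarrow{f} \Zee^r \to F \to 0$$
of Definition~\ref{autdef}. The functor $\Hom(-,\mathbb{G}_m)$ is an exact contravariant functor on finitely generated abelian groups, since $\mathbb{G}_m=\Cee^*$ is divisible and hence injective as a $\Zee$-module. Applying it therefore turns the exact sequence above into the exact sequence
$$0 \to \Hom(F,\mathbb{G}_m) \to \Hom(\Zee^r,\mathbb{G}_m) \xrightarrow{f\spcheck} \Hom(\widehat{\Lambda},\mathbb{G}_m) \to \Hom(\Lambda,\mathbb{G}_m) \to 0.$$

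The next step is to match this dual sequence term by term with the sequence in the statement. Using Lemma~\ref{autisom}(i) I identify $\Hom(\Zee^r,\mathbb{G}_m)=\mathbb{G}_m^r$ with $\Aut^0D$, the $i$-th coordinate $\lambda_i$ being the value of a character on the basis vector corresponding to $D_i$. Under this identification the dual map $f\spcheck$ sends $(\lambda_1,\dots,\lambda_r)$ to the character $\alpha\mapsto \prod_i\lambda_i^{(\alpha\cdot D_i)}$, which is exactly the homomorphism denoted $f$ (via $g$) in the theorem; and the rightmost map is restriction of characters, dual to the inclusion $\Lambda\subseteq\widehat{\Lambda}$. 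Thus the dual sequence is identical to the claimed sequence except that its leftmost term is $\Hom(F,\mathbb{G}_m)=F\spcheck$ in place of $K$. Comparing the two exact sequences, the leftmost term of each is the kernel of the map $\Aut^0D\to\Hom(\widehat{\Lambda},\mathbb{G}_m)$; hence $K\cong\Ker f=F\spcheck$.

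The one geometric input this argument relies on — and the main obstacle — is the identification established in the earlier part of the proof, namely that $\rho$ carries $K$ isomorphically onto $\Ker f$. Concretely this is the assertion that $\rho|_K$ is injective (an automorphism $\phi$ of $(Y,D)$ acting trivially on $\Pic Y=\widehat{\Lambda}$ and restricting to the identity on $D$ is the identity: being class-preserving it fixes every interior exceptional curve, hence descends compatibly through a sequence of blow-downs to a minimal or toric model, and the blown-up points, which lie on $D_{\mathrm{reg}}$ and are fixed because $\phi|_D=\Id$, pin down $\phi$), together with the assertion that the image of $\rho|_K$ is precisely the set of torus elements annihilating all intersection numbers $\alpha\cdot D_i$, that is, $\Ker f$. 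Everything else is the formal homological algebra above; in particular $\Ker f\spcheck=\Hom(F,\mathbb{G}_m)$ is just the statement that a character of $\Zee^r$ vanishing on $\im f$ factors through $\Zee^r/\im f=F$.

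Finally, deformation invariance is immediate from the description of $F$. In a family $(\mathcal{Y},\mathcal{D})\to S$ with trivial monodromy one identifies each $H^2(Y_t;\Zee)$ with the fixed lattice $\widehat{\Lambda}$, and since $\mathcal{D}$ is a relatively locally trivial normal crossings divisor the classes $[D_i]$ form a locally constant family. The homomorphism $f\colon\widehat{\Lambda}\to\Zee^r$, $\alpha\mapsto(\alpha\cdot[D_i])_i$, is then constant along $S$, so $F=\operatorname{Coker}f$ is a deformation invariant, and therefore so is $K\cong F\spcheck$.
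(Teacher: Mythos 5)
Your opening reduction coincides with the paper's: $\mathbb{G}_m$ is divisible, hence injective, so dualizing the lattice sequence of Definition~\ref{autdef} yields an exact sequence whose last three terms match those of the theorem, and the whole statement is thereby reduced to exactness at the first two spots, i.e.\ to the two assertions that $\rho$ is injective and that $\rho(K)=\Ker f$. The gap is that your proposal never proves the second assertion. You describe it as ``the identification established in the earlier part of the proof,'' but no earlier part of your argument establishes it; comparing the dual sequence with the sequence in the statement presupposes the statement's exactness at $\Aut^0D$, which is exactly what is at stake. Note the asymmetry between the two inclusions: $\rho(K)\subseteq\Ker f$ is essentially formal (if $\phi$ acts trivially on $\Pic Y$ then $(\phi|D)^*(L_\alpha|D)\cong L_\alpha|D$ for all $\alpha\in\widehat{\Lambda}$, which by (iii) and (iv) of Lemma~\ref{autisom} forces $f(\rho(\phi))=1$), but the reverse inclusion is a genuine extension problem: one must show that every automorphism of the one-dimensional object $D$ lying in $\Ker f$ is the restriction of an automorphism of the whole surface $Y$ acting trivially on $H^2(Y;\Zee)$. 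Nothing in the formal homological algebra produces such an extension, and asserting it is not a proof.

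This extension statement is where the paper does its real work. Writing $*(Y,D)$ for the statement ``$\rho$ is injective and $\Ker f=\rho(K)$,'' the paper verifies $*$ by hand for the toric pair $(\mathbb{F}_0,f_1+\sigma_1+f_2+\sigma_2)$, where $K\cong\mathbb{G}_m^2$ is the acting torus and $\Ker f$ is computed to be the set of $(\lambda_1,\lambda_2,\lambda_1^{-1},\lambda_2^{-1})$; it then shows that $*$ propagates in both directions under corner blowups (both $K$ and $\Ker f$ are unchanged, matched via $\iota(\lambda_1,\dots,\lambda_r)=(\lambda_1,\dots,\lambda_r,\lambda_1\lambda_r)$) and in one direction under interior blowups (both sides are cut down by the condition $\lambda_i=1$, using $K(\widetilde{Y},\widetilde{D})\cong\{\phi\in K(Y,D):\phi(p)=p\}$); Corollary~\ref{blowupanddown} then connects an arbitrary pair to the toric one by a zigzag of exactly these moves, which is needed because the interior-blowup step only goes one way. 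Your parenthetical sketch of injectivity (blow down to a minimal model, use that the blown-up points are fixed) is in the same inductive spirit, but it too omits the base-case verification on the minimal models; and in any case injectivity alone does not close the gap: without the surjectivity of $\rho|_K$ onto $\Ker f$, the isomorphism $K\cong F\spcheck$, and with it the claimed deformation invariance of $K$, remains unproven.
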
 
\begin{proof} Since $\mathbb{G}_m$ is divisible, it is injective, and hence the functor $\Hom(\cdot, \mathbb{G}_m)$ is exact. Applying this to the exact sequence of Definition~\ref{autdef}, we obtain an exact sequence
$$0\to  F\spcheck \to \Hom (\Zee^r, \mathbb{G}_m)\to \Hom (\widehat{\Lambda}, \mathbb{G}_m) \to \Hom (\Lambda, \mathbb{G}_m) \to  0.$$ 
The homomorphism $g\colon \Hom (\Zee^r, \mathbb{G}_m)\to \Hom (\widehat{\Lambda}, \mathbb{G}_m)$ is given by 
$$g(\lambda_1, \dots, \lambda_r)(\alpha) = \prod_{i=1}^r\lambda_i^{(\alpha\cdot D_i)}.$$ Using Lemma~\ref{autisom} to identify $\Hom (\Zee^r, \mathbb{G}_m)$ with $\Aut^0D$ and the homomorphism $g\colon \Hom (\Zee^r, \mathbb{G}_m)\to \Hom (\widehat{\Lambda}, \mathbb{G}_m)$ with $f$ gives the exactness except at the first stage. So if we show that $\rho$ is injective and identify $\Ker f$ with $\rho(K)$,   then  $K\cong  F\spcheck$ under the given identifications. By Corollary~\ref{blowupanddown}, it is enough to show the following: let $*(Y,D)$ be the statement, for the anticanonical pair $(Y,D)$, that $\rho$ is injective and that $\Ker f =\rho(K)$. Then:
\begin{enumerate}
\item[\rm(I)] If $Y\cong \mathbb{F}_0$ and  $D = f_1+\sigma_1+ f_2+\sigma_2$ is the union of two fibers in each of the rulings, then $*(Y,D)$ holds;
\item[\rm(II)] If $(\widetilde{Y}, \widetilde{D}) \to (Y,D)$ is a corner blowup, then  $*(Y,D)$ holds $\iff$ $*(\widetilde{Y}, \widetilde{D})$ holds;
\item[\rm(III)] If $(\widetilde{Y}, \widetilde{D}) \to (Y,D)$ is an interior  blowup and  $*(Y,D)$ holds, then   $*(\widetilde{Y}, \widetilde{D})$ holds.
\end{enumerate}

\noindent Proof of (I): In this case, $\widehat{\Lambda}= H^2(Y; \Zee) \cong \Zee \cdot f \oplus \Zee\cdot \sigma$ and $\Lambda = 0$. The homomorphism $f\colon \Aut^0D \cong \Hom (\Zee^4, \mathbb{G}_m)\to \Hom (\widehat{\Lambda}, \mathbb{G}_m)$ is given by 
$(\lambda_1, \lambda_2, \lambda_3, \lambda_4) \mapsto  \phi$, where $\phi(f) = \lambda_1\lambda_3$ and 
$\phi(\sigma) = \lambda_2\lambda_4$. Thus $\Ker f \cong \mathbb{G}_m^2$ is identified with the automorphisms of $D$ of the form $(\lambda_1, \lambda_2, \lambda_1^{-1}, \lambda_2^{-1})$, $\lambda_i \in \mathbb{G}_m$. On the other hand, since $\Aut^+(\mathbb{F}_0,D) \to  \Aut \widehat{\Lambda}$ is trivial in this case, $K \cong \Aut^+(\mathbb{F}_0,D) =\mathbb{G}_m^2$, and the map $\rho\colon  \Aut^+(\mathbb{F}_0,D)\to  \Aut^0D \cong  \mathbb{G}_m^4$ is clearly injective with image $\Ker f$.

\smallskip
\noindent Proof of (II): In this case,  $\Aut^+(Y,D) \cong \Aut^+(\widetilde{Y}, \widetilde{D})$ and hence $K(\widetilde{Y}, \widetilde{D}) \cong K(Y,D)$. Next, identifying $\Aut^0\widetilde{D}$ with $\Aut^0D \times \mathbb{G}_m$ in the obvious way, there is a commutative diagram  
$$\begin{CD}
K(Y,D) @>{\rho}>> \Aut^0D\cong \mathbb{G}_m^r\\
@V{\cong}VV @VV{\iota}V\\
K(\widetilde{Y}, \widetilde{D}) @>>{\widetilde{\rho}}> \Aut^0\widetilde{D}\cong \mathbb{G}_m^{r+1},
\end{CD}$$
where $\iota(\lambda_1, \dots, \lambda_r) = (\lambda_1, \dots, \lambda_r, \lambda_1 \lambda_r )$. Thus $\widetilde{\rho} =\iota\circ \rho$, so that $\rho$ is injective $\iff$ $\widetilde{\rho}$ is injective.   Moreover, if we denote by $\widetilde{f}$ the homomorphism $\Aut^0\widetilde{D} \to \Hom (\widehat{\Lambda}(\widetilde{Y},\widetilde{D} ), \mathbb{G}_m)$, then under the  identification  $$\Hom (\widehat{\Lambda}(\widetilde{Y},\widetilde{D} ), \mathbb{G}_m) \cong \Hom(\widehat{\Lambda}(Y,D), \mathbb{G}_m) \times \mathbb{G}_m,$$ we can identify $\widetilde{f}$ with the homomorphism
$$\widetilde{f}(\lambda_1, \dots, \lambda_r, \lambda_{r+1}) = (f(\lambda_1, \dots, \lambda_r), \lambda_1\lambda_r\lambda_{r+1}^{-1}).$$
Thus $\Ker \widetilde{f} =\iota(\Ker f)$. Since $\widetilde{\rho}(K(\widetilde{Y}, \widetilde{D})) = \iota(\rho(K(Y,D)))$, we see that $\Ker f = \rho(K(Y,D)$ $\iff$ $\Ker \widetilde{f} = \widetilde{\rho}(K(\widetilde{Y}, \widetilde{D}))$.  Hence $*(Y,D)$ holds $\iff$ $*(\widetilde{Y}, \widetilde{D})$ holds.
  
 \smallskip
\noindent Proof of (III): In this case, if $(\widetilde{Y}, \widetilde{D}) \to (Y,D)$ is an interior  blowup at the point $p\in D_i^{\text{int}}$ with exceptional set $E$, then from the definition of $F(Y,D)$ as the cokernel 
$$\widehat{\Lambda}(Y,D) \to \bigoplus_{j=1}^r\Zee\cdot [D_j] \to F(Y,D) \to 0,$$
we see that there is a surjection
$$\Zee \to  F(Y,D) \to F(\widetilde{Y}, \widetilde{D}) \to 0,$$
where the image of $\Zee$ in $F(Y,D)$ above corresponds to the image of the factor $\Zee\cdot [D_i]$.  Thus there is a commutative diagram with exact rows:
$$\begin{CD}
\bigoplus_{j=1}^r\Zee[D_j] @>>> F(Y,D) @>>> 0\\
@VVV @VVV \\
\bigoplus_{j\neq i}\Zee[D_j] @>>> F(\widetilde{Y}, \widetilde{D}) @>>> 0
\end{CD}$$
Dually, there is a commutative diagram
$$\begin{CD}
\mathbb{G}_m^r @<<<  F\spcheck(Y,D) @<<< 0\\
@AAA @AAA \\
\mathbb{G}_m^{r-1} @<<<  F\spcheck(\widetilde{Y}, \widetilde{D}) @<<< 0
\end{CD}$$
where the vertical map $\mathbb{G}_m^{r-1} \to \mathbb{G}_m^r$ is inclusion into the subgroup of $\mathbb{G}_m^r$ where the $i^{\text{th}}$ component is $1$.

To see the corresponding change for the group $K(Y,D)$,  first note that  $\Aut^0\widetilde{D} \cong \Aut^0D$, compatibly with  the homomorphisms $\rho\colon K(Y,D) \to \Aut^0D$ and $\widetilde{\rho} \colon K(\widetilde{Y}, \widetilde{D}) \to \Aut^0\widetilde{D}$.
If $\widetilde\phi\in K(\widetilde{Y}, \widetilde{D})$, then $\phi(E) = E$, $\widetilde\phi$ induces the identity on the subspace $\widehat{\Lambda}(Y,D)  = (E)^\perp$ of $\widehat{\Lambda}(\widetilde{Y},D)$, and $\widetilde\phi$ descends to an element $\phi$ of $K(Y,D)$ such that $\phi(p) = p$. Conversely, $\phi\in K(Y,D)$ is in the image of  $K(\widetilde{Y}, \widetilde{D})$ $\iff$ $\phi(p) = p$. Thus 
$$K(\widetilde{Y}, \widetilde{D})\cong  \{\phi \in K(Y,D): \phi(p) = p\}.$$
Thus, if $\rho\colon K(Y,D) \to \Aut^0D$ is injective, then so is $\widetilde{\rho}$.   Since any $\phi \in K(Y,D)$ already fixes two non-interior  points of $D_i$, the subgroup $\widetilde{\rho}(K(\widetilde{Y}, \widetilde{D}))$ can be identified with 
$$\{(\lambda_1, \dots, \lambda_r) \in \rho(K(Y,D)): \lambda_i = 1\}.$$ 
This exactly says that, if $\rho(K(Y,D))$ is identified with $ F\spcheck(Y,D)$, then the subgroup $\widetilde{\rho}(K(\widetilde{Y}, \widetilde{D}))$ is identified with $ F\spcheck(\widetilde{Y}, \widetilde{D})$. Hence $*(Y,D)$ implies $*(\widetilde{Y}, \widetilde{D})$, completing the proof of the theorem.
\end{proof}

\begin{corollary}\label{trivauts} For every $i$, $1\leq i\leq r$, choose $p_i\in D_i^{\text{\rm{int}}}$. Suppose that $\phi\in K(Y,D)$, i.e.\ that  $\phi\in \Aut^+(Y,D)$ and that $\phi^*\colon \widehat{\Lambda} \to \widehat{\Lambda}$ is the identity, and that $\phi(p_i) = p_i$ for every $i$. Then $\phi =\Id$.
\end{corollary}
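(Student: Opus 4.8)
The plan is to deduce the statement immediately from two facts already established: the injectivity of $\rho$ on $K$ from Theorem~\ref{autexactseq}, and the simple transitivity of $\Aut^0 D$ from part (ii) of Lemma~\ref{autisom}. In other words, the corollary is essentially a repackaging of these, and the only work is to feed the hypothesis $\phi(p_i) = p_i$ into the simple-transitivity statement.

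First I would observe that, since $\phi \in K(Y,D)$, the restriction $\rho(\phi)$ lies in $\Aut^0 D$, and that the exact sequence of Theorem~\ref{autexactseq} asserts precisely that $\rho$ is injective on $K$. Hence it suffices to prove that $\rho(\phi) = \Id_D$, i.e.\ that $\phi$ restricts to the identity on $D$. The hypothesis $\phi(p_i) = p_i$ for every $i$ says exactly that $\rho(\phi) \in \Aut^0 D$ fixes the point $(p_1, \dots, p_r) \in D_1^{\text{int}} \times \cdots \times D_r^{\text{int}}$.

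Next I would invoke Lemma~\ref{autisom}(ii): the group $\Aut^0 D$ acts simply transitively on $D_1^{\text{int}} \times \cdots \times D_r^{\text{int}}$, so the stabilizer of any point of this product is trivial. Applied to the fixed point $(p_1, \dots, p_r)$, this yields $\rho(\phi) = \Id_D$. Combining with the injectivity of $\rho$ then forces $\phi = \Id$, which is the desired conclusion.

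There is no substantive obstacle here, since all the heavy lifting was done in Theorem~\ref{autexactseq}. The only point deserving a moment's care is confirming that $\rho(\phi)$ genuinely lands in the \emph{neutral} component $\Aut^0 D$ (so that the simple-transitivity statement of Lemma~\ref{autisom}(ii) is applicable); this holds because $\phi \in \Aut^+(Y,D)$ preserves each component $D_i$ individually and preserves the chosen orientation of $D$, which is exactly the content of $\rho$ being defined as a map into $\Aut^0 D$.
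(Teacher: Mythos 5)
Your proof is correct and is essentially the paper's own argument: both reduce to showing $\rho(\phi)=\Id$ on $D$ (the paper phrases the trivial-stabilizer step directly as ``$\phi(p_i)=p_i$ implies $\phi|D_i=\Id$,'' which is exactly the simple transitivity of Lemma~\ref{autisom}(ii) that you cite), and then both conclude via the injectivity of $\rho$ from Theorem~\ref{autexactseq}.
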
 
\begin{proof} The hypothesis $\phi(p_i) = p_i$ for every $i$ implies that $\phi|D_i=\Id$ for every $i$, and hence that $\phi$ is in the kernel of the restriction homomorphism $\rho\colon K \to \Aut^0D$. By Theorem~\ref{autexactseq}, $\rho$ is injective, and hence $\phi=\Id$.
\end{proof}

Motivated by Corollary~\ref{trivauts}, we make the following definition:

\begin{definition}\label{defrigidified} Let $(Y,D)$ be an anticanonical pair. A \textsl{rigidification} of $(Y,D)$ is a choice of $p_i\in D_i^{\text{\rm{int}}}$ for every $i$.  Setting $p=(p_1, \dots, p_r)$, where $p_i\in D_i^{\text{\rm{int}}}$ for every $i$, we shall refer to the triple $(Y,D, p)$ as a \textsl{rigidified} triple. Isomorphisms   of rigidified triples are defined in the obvious way; note that  Corollary~\ref{trivauts} implies that every automorphism of a rigidified triple acting trivially on $\widehat{\Lambda}$ is the identity. 
\end{definition}

\section{Proof of the Torelli theorem}

Let $(Y,D)$ be an anticanonical pair and let $\varphi_Y\colon \Lambda \to \mathbb{G}_m$ be the period map. We begin by describing the possible lifts of the period map, which is an element of $\Hom(\Lambda, \mathbb{G}_m)$, to an element  of $\Hom(\widehat{\Lambda}, \mathbb{G}_m)$.

\begin{lemma}\label{lifts} For each $i$, $1\leq i\leq r$, let $p_i\in D_i^{\text{\rm{int}}}$ and let $p=(p_1, \dots, p_r)$, so that $(Y,D, p)$ is a rigidified triple. Define $\hat\varphi_{Y;p} \colon \widehat{\Lambda} \to \mathbb{G}_m$ by 
$$\hat\varphi_{Y;p}(\alpha) = \psi(L_\alpha|D \otimes \scrO_D(-\sum_{i=1}^r(\alpha\cdot D_i)p_i)), $$
where $\psi \colon \Pic^0D \to \mathbb{G}_m$ is the isomorphism defined by the orientation. Then:
\begin{enumerate}
\item[\rm(i)] $\hat\varphi_{Y;p}$ is a lift of $\varphi_Y \in \Hom(\Lambda, \mathbb{G}_m)$ to $\Hom(\widehat{\Lambda}, \mathbb{G}_m)$. 
\item[\rm(ii)]  Every lift of $\varphi_Y$ to $\Hom(\widehat{\Lambda}, \mathbb{G}_m)$ is equal to $\hat\varphi_{Y;p}$ for some choice of $p_i$.
\item[\rm(iii)] If $p_i'\in D_i^{\text{\rm{int}}}$, $1\leq i\leq r$,  then  $\hat\varphi_{Y;p} = \hat\varphi_{Y;p'}$ $\iff$ there exists a (unique) $\phi\in K(Y,D)$ such that $\phi(p_i)= p_i'$ for every $i$.
 \end{enumerate}
\end{lemma}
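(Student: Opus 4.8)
The plan is to prove (i) directly, then establish a single variation formula describing how $\hat\varphi_{Y;p}$ depends on the rigidification $p$, and finally read off (ii) and (iii) from that formula together with the exact sequence of Theorem~\ref{autexactseq}. Statement (i) is immediate: if $\alpha\in\Lambda$ then $\alpha\cdot D_i=0$ for all $i$, so the correction term $\scrO_D(-\sum_i(\alpha\cdot D_i)p_i)$ is trivial and $\hat\varphi_{Y;p}(\alpha)=\psi(L_\alpha|D)=\varphi_Y(\alpha)$; thus $\hat\varphi_{Y;p}$ restricts to $\varphi_Y$ on $\Lambda$. That $\hat\varphi_{Y;p}$ is a homomorphism on $\widehat{\Lambda}$ is formal, since $\alpha\mapsto L_\alpha|D$ and $\alpha\mapsto\scrO_D(-\sum_i(\alpha\cdot D_i)p_i)$ are both additive, their product has multidegree $(0,\dots,0)$ and so lies in $\Pic^0D$, and $\psi$ is a group isomorphism.

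The heart of the argument is the variation formula. Given rigidifications $p$ and $p'$, I would compute the ratio $\hat\varphi_{Y;p}(\alpha)/\hat\varphi_{Y;p'}(\alpha)$; the factors $L_\alpha|D$ cancel, and multiplicativity of $\psi$ together with Lemma~\ref{Pic0isomG} collapses it to $\prod_{i}\nu_i^{\alpha\cdot D_i}$, where $\nu_i=\psi(\scrO_D(p_i'-p_i))\in\mathbb{G}_m$. In the notation of Theorem~\ref{autexactseq} this is exactly the homomorphism $g(\nu_1,\dots,\nu_r)$. Moreover, by Corollary~\ref{Pic0toDint}, as $p_i'$ ranges over $D_i^{\text{int}}$ the value $\nu_i$ ranges over all of $\mathbb{G}_m$, independently for each $i$. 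Hence the collection $\{\hat\varphi_{Y;p'}\}$ as $p'$ varies is precisely the coset $\hat\varphi_{Y;p}\cdot\im g$.

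Part (ii) is then formal. The $\Hom(\cdot,\mathbb{G}_m)$ exact sequence of Theorem~\ref{autexactseq} identifies $\im g$ with $\Ker(\Hom(\widehat{\Lambda},\mathbb{G}_m)\to\Hom(\Lambda,\mathbb{G}_m))$, which is exactly the group by which any two lifts of $\varphi_Y$ differ. Since (i) shows $\hat\varphi_{Y;p}$ is one such lift and the variation formula shows the maps $\hat\varphi_{Y;p'}$ fill out the whole coset $\hat\varphi_{Y;p}\cdot\im g$, every lift of $\varphi_Y$ has the form $\hat\varphi_{Y;p'}$ for a suitable $p'$.

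For (iii), the variation formula gives $\hat\varphi_{Y;p}=\hat\varphi_{Y;p'}$ iff $g(\nu_1,\dots,\nu_r)$ is trivial, i.e.\ iff $(\nu_1,\dots,\nu_r)\in\Ker g$. Let $\phi\in\Aut^0D$ be the unique automorphism with $\phi(p_i)=p_i'$ for all $i$ (Lemma~\ref{autisom}(ii)); by Lemma~\ref{autisom}(iv), $F(\phi)=(\nu_1^{-1},\dots,\nu_r^{-1})$, so $(\nu_i)\in\Ker g$ iff $F(\phi)\in\Ker g$, which under the identification of $f$ with $g$ says $\phi\in\Ker f$. By Theorem~\ref{autexactseq}, $\Ker f=\rho(K)$ and $\rho|_K$ is injective; hence $\phi\in\rho(K)$ iff there is a unique $\phi_0\in K(Y,D)$ with $\rho(\phi_0)=\phi$, equivalently $\phi_0(p_i)=p_i'$ for every $i$, which is the claim. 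The main obstacle is bookkeeping rather than conceptual: I must fix the inverse/sign conventions so that the $\nu_i$ coming from the variation formula match the coordinates $F(\phi)$ of Lemma~\ref{autisom}(iv), and verify that the identification of $f$ with $g$ in Theorem~\ref{autexactseq} is the one induced by $F$. Once these normalizations are in place, (ii) and (iii) drop out of the exact sequence.
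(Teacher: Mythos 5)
Your proposal is correct and follows essentially the same route as the paper: (i) is immediate, (ii) uses the exactness of the $\Hom(\cdot,\mathbb{G}_m)$ sequence from Theorem~\ref{autexactseq} together with the fact that $p_i'\mapsto\psi(\scrO_D(p_i'-p_i))$ is a bijection $D_i^{\text{int}}\to\mathbb{G}_m$, and (iii) combines the variation formula with Lemma~\ref{autisom} and the identification $\Ker f=\rho(K)$. The only cosmetic difference is that the paper packages the key computation for (iii) as a separate claim ($f(\phi)\cdot\hat\varphi_{Y;p}=\hat\varphi_{Y;\phi(p)}$), whereas you fold the same use of Lemma~\ref{autisom}(iv) directly into your variation formula.
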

\begin{proof} Clearly $\hat\varphi_{Y;p}$ is a homomorphism from $\widehat{\Lambda}$ to $\mathbb{G}_m$ and, if $\alpha \in \Lambda$, then by definition $\hat\varphi_{Y;p}(\alpha) = \varphi_Y(\alpha)$. To see that every lift is of this form, let $\varphi'$ be another such lift. Then $\varphi' \cdot \hat\varphi_{Y;p}^{-1}$ is in the kernel of the restriction map $\Hom (\widehat{\Lambda}, \mathbb{G}_m) \to \Hom (\Lambda, \mathbb{G}_m)$,  and hence $\varphi' \cdot\hat\varphi_{Y;p} ^{-1}= f(\lambda)$, in the notation of Theorem~\ref{autexactseq}. Thus there exist $\lambda_1, \dots, \lambda_r \in \mathbb{G}_m$ such that $\varphi' \cdot\hat\varphi_{Y;p} ^{-1}(\alpha) = \prod_i\lambda_i^{(\alpha\cdot D_i)}$. On the other hand, given $p_i'\in D_i^{\text{\rm{int}}}$ and $p'=(p_1', \dots, p_r')$, clearly 
\begin{align*}
\hat\varphi_{Y;p'}(\alpha) \cdot \hat\varphi_{Y;p}^{-1}(\alpha) &= \psi(\scrO_D((\alpha\cdot D_i)(p_i'-p_i))) = \prod_{i=1}^r(\psi(\scrO_D(p_i'-p_i)))^{(\alpha\cdot D_i)} \\
&= \prod_{i=1}^r\lambda_i^{(\alpha\cdot D_i)},
\end{align*}
where $\lambda_i = \psi(\scrO_D(p_i'-p_i))$. It thus suffices to show that the morphism 
$$D_1^{\text{\rm{int}}} \times \cdots \times D_r^{\text{\rm{int}}}\to \mathbb{G}_m^r$$
defined by
$$(t_1, \dots, t_r) \mapsto (\psi(\scrO_D(t_1-p_1)), \dots, \psi(\scrO_D(t_r-p_r)))$$
is a bijection, which can be seen as follows: By (v) of Lemma~\ref{Loncycle}, given $i$ and the point $p_i$, every line bundle on $D$ of multidegree $(0,\dots, 0)$ on $D$ is of the form $\scrO_D(t_i-p_i)$ for a unique $t_i\in D_i^{\text{\rm{int}}}$.  Hence, given $\lambda_i\in \mathbb{G}_m$, there is a unique $t_i\in D_i^{\text{\rm{int}}}$ such that $\lambda_i = \psi(\scrO_D(t_i-p_i))$. 

Before proving (iii), we note:

\begin{claim}\label{8.2} Let $\phi \in \Aut^0D$. In the notation of Theorem~\ref{autexactseq}, $f(\phi)\cdot \hat\varphi_{Y;p} = \hat\varphi_{Y;\phi(p)}$, where $\phi(p) = (\phi(p_1) , \dots, \phi(p_r))$. 
\end{claim}
\begin{proof} By (iv) of Lemma~\ref{autisom} and the definition of $f$, for all $\alpha \in\widehat{\Lambda}$, 
\begin{align*}
f(\phi)(\alpha) &= \prod_{i=1}^r\psi(\scrO_D(-\phi(p_1)+ p_1))^{(\alpha\cdot D_1)} \cdots  \psi(\scrO_D(-\phi(p_r)+ p_r))^{(\alpha\cdot D_r)} \\
&= \psi(\scrO_D(\sum_{i=1}^r(\alpha\cdot D_i)(-\phi(p_i)+ p_i))).
\end{align*}
Thus, $(f(\phi) \cdot \hat\varphi_{Y;p})(\alpha)$ is equal to
\begin{gather*}
\psi(\scrO_D(\sum_{i=1}^r(\alpha\cdot D_i)(-\phi(p_i)+ p_i)))\cdot \psi(L_\alpha|D \otimes \scrO_D(-\sum_{i=1}^r(L_\alpha\cdot D_i)p_i))\\
 = \psi(L_\alpha|D \otimes \scrO_D(-\sum_{i=1}^r(L_\alpha\cdot D_i)\phi(p_i))) =  \hat\varphi_{Y;\phi(p)}(\alpha).
\end{gather*}
Hence $f(\phi)\cdot \hat\varphi_{Y;p} = \hat\varphi_{Y;\phi(p)}$ as claimed.
\end{proof}

Now suppose that $\hat\varphi_{Y;p} = \hat\varphi_{Y;p'}$.  By (ii) of Lemma~\ref{autisom}, given $p$ and $p'$, there is a unique $\phi\in \Aut^0D$  such that $\phi(p_i) = p_i'$ for all $i$. Then, by the claim above, 
$$f(\phi)\cdot \hat\varphi_{Y;p} = \hat\varphi_{Y;\phi(p)} = \hat\varphi_{Y;p'}=\hat\varphi_{Y;p}.$$
Hence $f(\phi) = 1$, i.e.\ $\phi \in \Ker f$. It follows from Theorem~\ref{autexactseq} that $\phi\in K(Y,D)$.
\end{proof}

 Next we consider the following situation: $(Y, D)$ is obtained from $(\overline{Y}, \overline{D})$ by a sequence of interior blowups, where $(\overline{Y}, \overline{D})$ is some anticanonical pair. We suppose that there exist  nonnegative integers $a_1, \dots, a_r$ such that $Y$ is obtained from $\overline{Y}$ by making $a_i$ interior blowups along $D_i$ for $i=1, \dots r$, and let $\pi\colon Y \to \overline{Y}$ be the blowup morphism. For simplicity, we first consider the case where the blowups are at distinct points $q_1, \dots, q_N$, where $N = a_1+\cdots + a_r$, i.e.\ there are no infinitely near blowups. Given $q_k$, we denote by $i(k)$ the unique $i$ such that $q_k \in D_i^{\text{\rm{int}}}$. Let $E_k$ be the exceptional divisor corresponding to blowing up the point $q_k$.  Then $H^2(Y; \Zee) \cong H^2(\overline{Y}; \Zee) \oplus \Zee^N$, where the ordering of the points $q_i$ gives a natural basis $[E_1], \dots , [E_N]$ of $\Zee^N$. 
 
 In case $(Y,D)$ is obtained from $(\overline{Y}, \overline{D})$ by blowups, some of which are infinitely near, let $q\in D_i^{\text{int}}$ and suppose that we make $b$ infinitely near blowups at $q$. Thus by convention $b$ of the points $q_k$, say  $q_{k_1}, \dots, q_{k_b}$, will be equal to the point $q\in D_i^{\text{\rm{int}}}$, where $i =i(k_j)$ for $j =1, \dots, b$. The result on $Y$ is a generalized exceptional   curve  $C_1+ \cdots + C_{b-1}+ E$ as in Definition~\ref{defgenexcep}. In this case, we again get generalized exceptional curves $E_k = C_k + C_{k+1} +\dots + E$ for $1\leq k\leq b$, with $E_b=E$, $E_k\cdot E_j = 0$, $k\neq j$, and $E_k^2=-1$. Here $E_k$ is the pullback to $Y$ of the exceptional curve in the $k^{\text{th}}$ blowup. Note that $\varphi_Y([C_j]) = 1$ for all $j$, and hence, for any lift $\hat\varphi_{Y;p}$ of $\varphi_Y$, $\hat\varphi_{Y;p}(E_k) =  \hat\varphi_{Y;p}(E_b)=  \hat\varphi_{Y;p}(E)$ for all $i$, $1\leq k \leq b$.
 
 In the above situation, let $(Y,D,p)$ be a rigidification of $(Y,D)$, where $p_i\in D_i^{\text{\rm{int}}}$,  $1\leq i\leq r$, and  $p=(p_1, \dots, p_r)$, and let   $\hat\varphi_{Y;p}$ be the extended period map of Lemma~\ref{lifts}. Restricting $\hat\varphi_{Y;p}$ to the subspace $\Zee^N$ defined above gives a homomorphism $\Zee^N \to \mathbb{G}_m$, and hence by applying $\psi^{-1}$, a homomorphism $\varepsilon \colon \Zee^N \to \Pic^0D$, or equivalently a point,  also denoted by $\varepsilon(\hat\varphi_{Y;p})$, in $(\Pic^0D)^N$. Moreover, by Corollary~\ref{Pic0toDint}, given the point $p_i \in D_i^{\text{\rm{int}}}$, there is an isomorphism $\tau_{p_i} \colon \Pic^0D \to D_i^{\text{\rm{int}}}$. Define   
 $$\tau_p\colon (\Pic^0D)^N \to D_{i(1)}^{\text{\rm{int}}}\times \cdot \times D_{i(N)}^{\text{\rm{int}}}$$
 by mapping the $k^{\text{th}}$ factor of $(\Pic^0D)^N$ to $D_{i(k)}^{\text{\rm{int}}}$ via $\tau_{p_{i(k)}}$. Summarizing, we have the following morphisms
 $$[E_k] \in \Zee^N \mapsto \psi^{-1}\circ \hat\varphi_{Y;p}([E_k]) \in \Pic^0D \mapsto \tau_{p_i(k)}\circ \psi^{-1}\circ \hat\varphi_{Y;p}([E_k]) \in  D_{i(k)}^{\text{\rm{int}}},$$
 and these fit together to give the point $\tau_p(\varepsilon(\hat\varphi_{Y;p}) )\in D_{i(1)}^{\text{\rm{int}}}\times \cdot \times D_{i(N)}^{\text{\rm{int}}}$.

 \begin{lemma}\label{calcextper} In the above notation,
 $$\tau_p(\varepsilon(\hat\varphi_{Y;p}) ) = (q_1, \dots, q_N).$$
 In particular, $\tau_p(\varepsilon(\hat\varphi_{Y;p}) )$ does not depend on $p$.
 \end{lemma}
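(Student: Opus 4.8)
The plan is to reduce the assertion to a single computation on each basis class $[E_k]$, because the composite $\tau_p\circ\varepsilon$ is built factor by factor: by construction the $k$-th component of $\tau_p(\varepsilon(\hat\varphi_{Y;p}))$ is exactly $\tau_{p_{i(k)}}\bigl(\psi^{-1}(\hat\varphi_{Y;p}([E_k]))\bigr)$, so it suffices to show this equals $q_k$ for every $k$. First I would evaluate $\hat\varphi_{Y;p}([E_k])$ straight from the formula of Lemma~\ref{lifts}. Since $E_k$ is an (interior or generalized) exceptional curve meeting $D$ only along the component $D_{i(k)}$, one has $[E_k]\cdot D_{i(k)}=1$ and $[E_k]\cdot D_j=0$ for $j\neq i(k)$, so the correction term collapses to $\scrO_D(-p_{i(k)})$, giving
$$\hat\varphi_{Y;p}([E_k]) = \psi\bigl(\scrO_Y(E_k)|D \otimes \scrO_D(-p_{i(k)})\bigr).$$

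The heart of the argument, and the step I expect to require the most care, is the identification $\scrO_Y(E_k)|D \cong \scrO_D(q_k)$ as line bundles on the cycle $D$. For a genuine (non-infinitely-near) interior blowup at the distinct point $q_k$, the exceptional curve $E_k$ meets $D$ transversally at the single point of $D_{i(k)}^{\text{int}}$ that maps to $q_k$ under the blowdown identification $D_{i(k)}\cong\overline{D}_{i(k)}$; since $E_k$ meets $D$ properly in the reduced Cartier divisor $q_k$, the tautological section of $\scrO_Y(E_k)$ restricts to a section of $\scrO_Y(E_k)|D$ with zero divisor $q_k$, whence $\scrO_Y(E_k)|D = \scrO_D(q_k)$ as line bundles on $D$. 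For an infinitely near tower I would use that $[E_k]$ is a sum of classes of $-2$-curves $C_j$ disjoint from $D$ together with the class of the innermost interior exceptional curve $E$; as $\scrO_Y(C_j)|D=\scrO_D$, only $E$ contributes, and tracing the successive proper transforms shows $E\cap D$ again maps to the repeated base point $q_k$, so the same conclusion holds. Combining, $\hat\varphi_{Y;p}([E_k]) = \psi(\scrO_D(q_k - p_{i(k)}))$.

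Finally I would unwind the definitions of $\varepsilon$ and $\tau_p$. Applying $\psi^{-1}$ gives $\varepsilon(\hat\varphi_{Y;p})([E_k]) = \scrO_D(q_k - p_{i(k)})$, and by Corollary~\ref{Pic0toDint} the isomorphism $\tau_{p_{i(k)}}$ sends a line bundle $L\in\Pic^0D$ to the unique point $q$ with $L\otimes\scrO_D(p_{i(k)})\cong\scrO_D(q)$; taking $L=\scrO_D(q_k-p_{i(k)})$ forces $q=q_k$. Hence the $k$-th component of $\tau_p(\varepsilon(\hat\varphi_{Y;p}))$ is $q_k$ for every $k$, which is exactly the asserted identity $\tau_p(\varepsilon(\hat\varphi_{Y;p})) = (q_1,\dots,q_N)$; and since the right-hand side is visibly independent of $p$, the last sentence of the lemma follows at once. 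The only genuinely delicate point is the bookkeeping in the infinitely near case, where one must verify that passing to the proper transforms does not change the restriction of $\scrO_Y(E_k)$ to $D$.
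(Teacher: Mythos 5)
Your proof is correct and follows essentially the same route as the paper's: identify the $k$-th component of $\varepsilon(\hat\varphi_{Y;p})$ as the line bundle $\scrO_D(q_k - p_{i(k)})$ (including the infinitely near case, where the $-2$-curves in the chain restrict trivially to $D$ so only the innermost exceptional curve contributes), then apply $\tau_{p_{i(k)}}$ via $\scrO_D(q_k-p_{i(k)})\otimes\scrO_D(p_{i(k)})=\scrO_D(q_k)$. The paper asserts the first identification simply ``by the definition of $\hat\varphi_{Y;p}$,'' whereas you spell out the geometric step $\scrO_Y(E_k)|D\cong\scrO_D(q_k)$ through the blowdown identification $D\cong\overline{D}$; this is welcome added detail, not a different argument.
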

\begin{proof} By the definition of $\hat\varphi_{Y;p}$, the $k^{\text{th}}$ component of $\varepsilon(\hat\varphi_{Y;p})$ is the line bundle $\scrO_D(q_k - p_{i(k)})$ (this includes the case where some of the blowups are infinitely near). Then $\tau_{p_{i(k)}}(\scrO_D(q_k - p_{i(k)})) = q_k$, since
$$\scrO_D(q_k - p_{i(k)}) \otimes \scrO_D(p_{i(k)})  = \scrO_D(q_k ).$$
Clearly, then,  $\tau_p(\varepsilon(\hat\varphi_{Y;p}) )$ does not depend on $p$.
\end{proof}

Now suppose that we have two anticanonical pairs $(Y, D)$ and $(Y', D')$, both obtained from $(\overline{Y}, \overline{D})$ by a sequence of interior blowups, where $(\overline{Y}, \overline{D})$ is a  taut anticanonical pair, with $\pi\colon Y \to \overline{Y}$ and $\pi'\colon Y' \to \overline{Y}$ the blowdowns. We suppose that there exist  nonnegative integers $a_1, \dots, a_r$ such that $Y$ is obtained from $\overline{Y}$ by making $a_i$ interior blowups along $\overline{D}_i^{\text{\rm{int}}}$ for $i=1, \dots r$, possibly infinitely near, and similarly for $Y'$ (for the same values of $a_i$). As before, we write the points for $Y$ as $q_1, \dots, q_N$, with $q_k\in  \overline{D}_{i(k)}^{\text{\rm{int}}}$, and similarly we denote the points in $Y'$ by $q_1', \dots, q_N'$. The ordering of the points defines a unique isomorphism $\gamma\colon H^2(Y'; \Zee) \cong H^2(Y; \Zee)$ with the property that $\gamma$ identifies the subspace $H^2(\overline{Y}; \Zee)$ of $H^2(Y'; \Zee)$ with the corresponding subspace of $H^2(Y;\Zee)$ and $\gamma([E_i']) = [E_i]$ for every $i$, where $E_i$ is the exceptional curve corresponding to $q_i$ and similarly for $E_i'$.

\begin{lemma}\label{prelimTorellilemma} In the above situation, suppose that the period maps are identified via $\gamma$, i.e.\ that $\varphi_{Y}\circ \gamma =\varphi_{Y'}$. Then there exist an isomorphism $\phi\in K(\overline{Y},\overline{D})$ and an isomorphism $\rho\colon Y\to Y'$ inducing the isomorphism $\gamma$ such that the following commutes:
$$\begin{CD}
Y @>{\rho}>> Y'\\
@V{\pi}VV @VV{\pi'}V\\
\overline{Y} @>{\phi}>> \overline{Y}.
\end{CD}$$
In particular, the pairs $(Y,D)$ and $(Y', D')$ are isomorphic by an isomorphism preserving the orientations of $D$ and $D'$.
\end{lemma}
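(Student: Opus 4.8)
The plan is to reduce everything to the extended period maps of the fixed base pair $(\overline{Y},\overline{D})$ and to recover the two sets of blowup points from them via Lemma~\ref{calcextper}. First I would fix an auxiliary rigidification of $(\overline{Y},\overline{D})$: choose $\bar p_i\in\overline{D}_i^{\text{int}}$ for each $i$, and transport these through the orientation-preserving isomorphisms of cycles $\pi|_D\colon D\to\overline{D}$ and $\pi'|_{D'}\colon D'\to\overline{D}$ (interior blowups do not change the cycle) to rigidifications $p$ of $(Y,D)$ and $p'$ of $(Y',D')$. The key compatibility I would record is that $\hat\varphi_{Y;p}\circ\pi^*=\hat\varphi_{\overline{Y};\bar p}$ as homomorphisms on $\widehat{\Lambda}(\overline{Y})$, and similarly for any rigidification of $Y'$; this holds because $\pi^*\alpha\cdot D_j=\alpha\cdot\overline{D}_j$, the restriction of $L_{\pi^*\alpha}$ to $D$ is the $\pi|_D$-pullback of $L_\alpha|_{\overline{D}}$, and $\psi$ is compatible with the induced isomorphism on $\Pic^0$.

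The central step is to produce, from the hypothesis $\varphi_Y\circ\gamma=\varphi_{Y'}$, a rigidification $\tilde p$ of $(Y',D')$ with $\hat\varphi_{Y;p}\circ\gamma=\hat\varphi_{Y';\tilde p}$. Since $\gamma$ is the identity on $H^2(\overline{Y})$ and sends $[E_k']$ to $[E_k]$, it carries $\{[D_i']\}$ to $\{[D_i]\}$ and hence maps $\Lambda(Y')$ isomorphically onto $\Lambda(Y)$; thus $\hat\varphi_{Y;p}\circ\gamma$ restricts on $\Lambda(Y')$ to $\varphi_Y\circ\gamma=\varphi_{Y'}$, so it is a lift of $\varphi_{Y'}$ and the existence of $\tilde p$ is exactly Lemma~\ref{lifts}(ii). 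Set $\bar{\tilde p}_i=\pi'(\tilde p_i)$ and let $\phi\in\Aut^0\overline{D}$ be the unique automorphism with $\phi(\bar p_i)=\bar{\tilde p}_i$ (Lemma~\ref{autisom}(ii)). On the exceptional classes, $\gamma[E_k']=[E_k]$ forces $\varepsilon(\hat\varphi_{Y';\tilde p})=\varepsilon(\hat\varphi_{Y;p})$; applying Lemma~\ref{calcextper} on both sides and using $\tau_{\bar{\tilde p}}=\phi\circ\tau_{\bar p}$ (Lemma~\ref{autisom}(v)) then yields $q_k'=\phi(q_k)$ for every $k$.

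Next I would show $\phi\in K(\overline{Y},\overline{D})$. Evaluating the identity $\hat\varphi_{Y;p}\circ\gamma=\hat\varphi_{Y';\tilde p}$ on $\pi'^*\widehat{\Lambda}(\overline{Y})$ and using $\gamma\circ\pi'^*=\pi^*$ there, together with the compatibility from the first paragraph, gives $\hat\varphi_{\overline{Y};\bar{\tilde p}}=\hat\varphi_{\overline{Y};\bar p}$. By Claim~\ref{8.2}, $\hat\varphi_{\overline{Y};\bar{\tilde p}}=f(\phi)\cdot\hat\varphi_{\overline{Y};\bar p}$, so $f(\phi)=1$; Theorem~\ref{autexactseq} identifies $\Ker f$ with $\rho(K(\overline{Y},\overline{D}))$, so $\phi$ lifts to an automorphism of $\overline{Y}$, still denoted $\phi$, lying in $K(\overline{Y},\overline{D})$, in particular acting trivially on $\widehat{\Lambda}(\overline{Y})$ and preserving the orientation.

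Finally, $\phi$ fixes each $\overline{D}_i$ and satisfies $\phi(q_k)=q_k'$. Since each infinitely-near tower is canonically determined by its point $q_k$ and the fixed curve $\overline{D}_{i(k)}$ — each successive center being the intersection of the new exceptional curve with the proper transform of $\overline{D}_{i(k)}$, as in Definition~\ref{defgenexcep} — the automorphism $\phi$ carries the tower over $q_k$ to that over $q_k'$ and so lifts to an isomorphism $\rho\colon Y\to Y'$ with $\pi'\circ\rho=\phi\circ\pi$. Because $\phi$ acts trivially on $\widehat{\Lambda}(\overline{Y})$ and $\rho$ sends $E_k$ to $E_k'$, one has $\rho^*=\gamma$, and $\rho$ preserves orientations since $\phi\in\Aut^+(\overline{Y},\overline{D})$. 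I expect the main obstacle to be the bookkeeping of the third step: propagating the single identity $\hat\varphi_{Y;p}\circ\gamma=\hat\varphi_{Y';\tilde p}$ through the $\pi^*$ and $\pi'^*$ compatibilities to force $f(\phi)=1$, while simultaneously checking that the point-recovery via Lemma~\ref{calcextper} is insensitive to the choice between the rigidifications $p'$ and $\tilde p$ on $Y'$.
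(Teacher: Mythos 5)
Your proposal is correct and follows essentially the same route as the paper's proof: fix a rigidification coming from $(\overline{Y},\overline{D})$, use the hypothesis $\varphi_Y\circ\gamma=\varphi_{Y'}$ together with Lemma~\ref{lifts}(ii) to view $\hat\varphi_{Y;p}\circ\gamma$ as an extended period map $\hat\varphi_{Y';\tilde p}$ of $Y'$, produce the automorphism $\phi\in K(\overline{Y},\overline{D})$, and recover $q_k'=\phi(q_k)$ from Lemma~\ref{calcextper} and Lemma~\ref{autisom}(v). The only cosmetic difference is that where the paper cites Lemma~\ref{lifts}(iii) directly to obtain $\phi\in K(\overline{Y},\overline{D})$, you inline its proof via Claim~\ref{8.2} and Theorem~\ref{autexactseq}, and you establish $q_k'=\phi(q_k)$ before, rather than after, verifying $\phi\in K(\overline{Y},\overline{D})$.
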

\begin{proof} Note that the morphisms $\pi$ and $\pi'$ identify $D$ and $D'$ with $\overline{D}$. Choose points $p_i \in D_i^{\text{\rm{int}}}= \overline{D}_i^{\text{\rm{int}}}$, giving a lift $\hat\varphi_{Y;p}$ of $\varphi_Y$. Via the isomorphism $\gamma\colon H^2(Y; \Zee) \cong H^2(Y'; \Zee)$, we can view  $\hat\varphi_{Y;p}$ as a lift of $\varphi_{Y'}$, necessarily of the form $\hat\varphi_{Y';p'}$ for some points $p_i'\in (D_i')^{\text{\rm{int}}}$. Under the identifications of $D$ and $D'$, we can view the $p_i$ and $p_i'$ as points of $\overline{D}_i$. Clearly, restricting $\hat\varphi_{Y;p}$ to the subspace $H^2(\overline{Y}; \Zee)$ gives $\hat\varphi_{\overline{Y};p}$, and similarly for $\hat\varphi_{Y';p'}$. By construction, 
$$\hat\varphi_{Y;p}\circ \gamma|H^2(\overline{Y}; \Zee) = \hat\varphi_{Y';p'}|H^2(\overline{Y}; \Zee),$$
so that $\hat\varphi_{\overline{Y};p} = \hat\varphi_{\overline{Y};p'}$ after viewing $p_i$ and $p_i'$ as points of $\overline{D}_i$. By (iii) of Lemma~\ref{lifts} applied to the pair $(\overline{Y},\overline{D})$, there exists a unique $\phi\in K(\overline{Y},\overline{D})$ such that $\phi(p_i) = p_i'$ for every $i$. We will construct an isomorphism $\rho \colon Y \to Y'$ such that $\pi'\circ \rho =\phi \circ\pi$.  Clearly, it suffices to show the following: if $q_1, \dots, q_N$ is the ordered set  of $a_i$ points blown up in $\overline{D}_i$ to obtain $Y$, and  $q_1', \dots, q_N'$ is the corresponding set for $Y'$, then, for every $i$, $q_i' =\phi(q_i)$. By Lemma~\ref{calcextper} and the fact that $\gamma([E_i']) = [E_i]$ for every $i$,
$$(q_1', \dots, , q_N') = \tau_{p'}(\varepsilon(\hat\varphi_{Y';p'})) = \tau_{p'}(\varepsilon(\hat\varphi_{Y;p})).$$
On the other hand, by (v) of Lemma~\ref{autisom},
$$\tau_{p'}(\varepsilon(\hat\varphi_{Y;p})) = \tau_{\phi(p)}(\varepsilon(\hat\varphi_{Y;p}))= \phi(\tau_{p}(\varepsilon(\hat\varphi_{Y;p}))) =(\phi(q_1), \dots,\phi(q_N)).$$
Thus $q_i' = \phi(q_i)$ as claimed. 
\end{proof}

\begin{theorem}[Torelli Theorem I]\label{firstTorelli} Let $(Y,D)$ and $(Y', D')$ be two labeled anticanonical pairs with $r(D) = r(D')$. Suppose that 
$$\gamma\colon   H^2(Y';\Zee) \to  H^2(Y;\Zee)$$ is an admissible integral isometry such that
\begin{enumerate}
\item[\rm(i)]  For all $i$,  $\gamma([D_i'])=[D_i]$.
\item[\rm(ii)]  Denoting the extension of $\gamma$ to an isometry $H^2(Y';\Ar)\to H^2(Y;\Ar)$ by $\gamma$ as well, we have $\gamma(\overline{\mathcal{A}}(Y'))= \overline{\mathcal{A}}(Y)$. 
\item[\rm(iii)] $\varphi_Y\circ \gamma =\varphi_{Y'}$.
\end{enumerate}
Then there is an isomorphism of labeled pairs $\rho\colon (Y,D) \to (Y',D')$, compatible with the orientations,  such that $\rho^* = \gamma$. Moreover,  if $\rho$ and $\rho'$ are two such isomorphisms, then there exists a $\phi \in K(Y', D')$ such that $\rho' =\phi \circ \rho$. Conversely, if $\phi \in K(Y', D')$, then $\rho' =\phi \circ \rho$ is an isomorphism from $Y$ to $Y'$ such that $(\rho')^* = \gamma$.
\end{theorem}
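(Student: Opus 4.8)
The plan is to reduce the existence of $\rho$ to Lemma~\ref{prelimTorellilemma}, which manufactures an orientation-preserving isomorphism of pairs as soon as $Y$ and $Y'$ are exhibited as interior blowups, with matching multiplicities, of a common taut pair $(\overline{Y},\overline{D})$ and $\gamma$ is the standard isometry attached to that data. Granting such a presentation, hypothesis (iii) is literally the period hypothesis of Lemma~\ref{prelimTorellilemma}, so that lemma yields $\rho$ with $\rho^{*}=\gamma$ compatible with the orientations. The bulk of the work is therefore the reduction, and I would run it as an induction on $-D^{2}$, i.e.\ on the number of blowups separating $(Y,D)$ from a minimal model, using hypothesis (ii) to keep the two blowdown towers parallel under $\gamma$. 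Corner blowups may be inserted or removed freely throughout, since a corner blowup induces $\Aut^{+}(Y,D)\cong\Aut^{+}(\widetilde{Y},\widetilde{D})$ and compatible identifications of the $\Lambda$'s and period homomorphisms (Definition~\ref{defsomeauts} and the remark following the definition of the period map).

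For the base of the induction I would use tautness. By Remark~\ref{blowdowntotaut} every pair is an interior blowup of a taut minimal pair except for $\mathbb{F}_{0},\mathbb{F}_{1},\mathbb{F}_{2}$ with self-intersection sequence $(8)$, $(2,2)$, or $(0,4)$; these finitely many cases have $\operatorname{rank}\Lambda\le 1$ and I would dispose of them by hand. A taut pair is infinitesimally rigid, so $\Lambda=0$ (Remark~\ref{tautrigid}); there condition (iii) is vacuous, and conditions (i), (ii) together with the classification of minimal models (Theorem~\ref{minimalist}) and Lemma~\ref{taut} force a unique isomorphism $\overline{Y}\cong\overline{Y}'$ of labeled pairs. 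This is what lets me take a \emph{single} taut model and regard both $Y$ and $Y'$ as interior blowups of it.

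For the inductive step, assume $Y$ is not one of the special cases, so that $Y$ carries an interior exceptional curve $E$. By (ii), $\gamma$ carries $\overline{\mathcal{A}}(Y')$ onto $\overline{\mathcal{A}}(Y)$, hence faces to faces; by the correspondence between faces of the nef cone and classes of irreducible negative curves (Proposition~\ref{constructnef} and its corollaries), and since $\gamma$ is an isometry fixing the $[D_{i}]$ and hence $K_{Y}$, the class $\gamma^{-1}([E])=[E']$ is an exceptional curve on $Y'$ with $E'\cdot D_{i}'=E\cdot D_{i}$ for all $i$. Blowing down $E$ and $E'$ to $(\overline{Y},\overline{D})$ and $(\overline{Y}',\overline{D}')$, the isometry $\gamma$ descends to $\overline{\gamma}$ on $[E]^{\perp}\cong H^{2}(\overline{Y}';\Zee)\to H^{2}(\overline{Y};\Zee)$, still satisfying (i) and (ii) because $\overline{\mathcal{A}}(\overline{Y})=\overline{\mathcal{A}}(Y)\cap[E]^{\perp}$ (and similarly for $\overline{\mathcal{A}}_{\mathrm{gen}}$), exactly as in the proof of Theorem~\ref{defequiv}. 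The one genuinely new point is that $\overline{\gamma}$ inherits (iii): this holds because $\Lambda(\overline{Y})$ sits naturally inside $\Lambda(Y)$ with $\varphi_{Y}|\Lambda(\overline{Y})=\varphi_{\overline{Y}}$, and $\gamma$ restricts to an isomorphism $\Lambda(\overline{Y}')\to\Lambda(\overline{Y})$. Iterating this matched blowdown to a common taut model produces ordered blow-up loci $q_{1},\dots,q_{N}$ on $D$ and $q_{1}',\dots,q_{N}'$ on $D'$ for which, by construction, $\gamma([E_{k}'])=[E_{k}]$ for all $k$; thus our given $\gamma$ \emph{is} the standard isometry of Lemma~\ref{prelimTorellilemma}, whose proof then matches the loci via the period data $\tau_{p}(\varepsilon(\hat\varphi))$ of Lemma~\ref{calcextper} and delivers $\rho$ with $\rho^{*}=\gamma$. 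The step I expect to be the main obstacle is precisely this organization of the two towers so that the descended $\overline{\gamma}$ keeps satisfying (i)--(iii) at every stage, together with the bookkeeping for infinitely near points, where the relevant $E_{k}$ are generalized exceptional curves rather than faces and so are pinned down only by the period, not by (ii).

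Finally I would settle the uniqueness and converse, which are formal. If $\rho,\rho'\colon(Y,D)\to(Y',D')$ are two isomorphisms of labeled pairs with $\rho^{*}=(\rho')^{*}=\gamma$, set $\phi=\rho'\circ\rho^{-1}\in\Aut(Y')$. Then $\phi$ preserves each $D_{i}'$ and the orientation, and $\phi^{*}=(\rho^{*})^{-1}\circ(\rho')^{*}=\gamma^{-1}\circ\gamma=\Id$ on $H^{2}(Y';\Zee)$, so $\phi\in K(Y',D')$ by Definition~\ref{defsomeauts} and $\rho'=\phi\circ\rho$. Conversely, for any $\phi\in K(Y',D')$ the map $\rho'=\phi\circ\rho$ is again an isomorphism of labeled, oriented pairs and $(\rho')^{*}=\rho^{*}\circ\phi^{*}=\gamma$.
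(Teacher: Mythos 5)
Your proposal is correct and follows essentially the same route as the paper's proof: match interior exceptional curves through hypothesis (ii), blow down the two towers in parallel (descending (i)--(iii) at each stage) until both pairs are exhibited as interior blowups of a common taut model, then invoke Lemma~\ref{prelimTorellilemma} via hypothesis (iii), with uniqueness and the converse handled through $K(Y',D')$ exactly as in the paper. One small correction: in the taut base case, conditions (i) and (ii) force a unique \emph{isometry}, but the isomorphism of pairs realizing it is unique only up to composition with elements of $K(\overline{Y},\overline{D})$ (which can even be positive-dimensional, e.g.\ for toric pairs), not literally unique; this is harmless, since any choice of identification of the two taut models suffices for the rest of your argument.
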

\begin{proof} We shall just write down the proof under the assumption that $(Y',D')$ is an interior blowup of a taut pair $(\overline{Y}', \overline{D}')$. The only remaining case is $Y'=\mathbb{F}_0$ or $Y'=\mathbb{F}_2$, with self-intersection sequence $(8)$ or $(2,2)$, and hence $Y$ is also either $\mathbb{F}_0$ or $\mathbb{F}_2$, and these cases can be handled directly, or reduced to the case where $(Y,D)$ is an interior blowup of a taut pair $(\overline{Y}, \overline{D})$ after making a single corner blowup.

First assume that $(Y',D')$ is taut, and hence that $(Y, D)\cong (Y',D')$ is also taut. By Remark~\ref{tautrigid},  $\Lambda(Y,D) =0$. The statement then reduces to the assertion that every integral isometry $\gamma\colon  H^2(Y';\Zee) \to  H^2(Y;\Zee)$ satisfying (i) and (ii) of the statement is induced by an isomorphism $\rho\colon (Y,D)\to (Y',D')$. This is easily reduced to the case where $(Y,D)$ and $(Y', D')$ are minimal ruled. In this case it is clear by inspection that there is exactly one integral isometry $\gamma\colon   H^2(Y';\Zee) \to  H^2(Y;\Zee)$ such   $\gamma([D_i'])=[D_i]$ for every $i$, and such a $\gamma$ is realized by an isomorphism of pairs 
$(Y', D')\to (Y,D)$, unique up to the action of $K(Y,D)$. Thus we may assume that $(Y',D')$ blows down to a taut pair, but is not itself taut.

The assumption that $\gamma(\overline{\mathcal{A}}(Y'))= \overline{\mathcal{A}}(Y)$ implies that $\alpha$ is the class of an interior exceptional curve $E'$ on $Y'$ $\iff$ $\gamma(\alpha)$ is the class of an interior exceptional curve $E$ on $Y$, and that $\beta$ is the class of a $-2$ curve on $Y'$ $\iff$ $\gamma(\beta)$ is the class of a  $-2$ curve on $Y$. Moreover, if $\alpha =[E']$ is an exceptional curve such that $E'\cdot D_i' =1$, then the corresponding exceptional curve $E$ on $Y$ satisfies $E\cdot D_i = 1$. If $Y_1'$ is the surface obtained by contracting such an interior exceptional curve $E'$ with $E'\cdot D_i'=1$, and $Y_1$ is the surface obtained by contracting the exceptional curve corresponding to $\gamma([E'])$, then $\gamma$ defines an integral isometry $\gamma_1$ from $H^2(Y';\Zee)$ to $H^2(Y; \Zee)$. If $\alpha_1$ is a wall of $\overline{\mathcal{A}}(Y_1')$ corresponding to an exceptional curve $F_1'$ on $Y_1'$, then the proper transform $F'$ of $F_1'$ in $Y'$ is either an exceptional curve disjoint from $E'$ or a $-2$ curve $C'$ such that $C'\cdot E' = 1$. Thus $\gamma([F'])$ is either the class of an exceptional curve $F$ disjoint from $E$ or a $-2$ curve $C$ such that $C\cdot E = 1$. Hence $\gamma(\alpha_1)$ is a wall of $\overline{\mathcal{A}}(Y_1)$. Likewise, if $\beta_1$ is a wall of $\overline{\mathcal{A}}(Y_1')$ corresponding to a $-2$ curve $C_1'$ on $Y_1'$, then the proper transform $C'$ of $C_1'$ in $Y'$ is a $-2$ curve, and $\gamma([C']) = [C]$, where $C$ is a $-2$ curve on $Y$ disjoint from $E$. It follows that $\gamma_1$ identifies the walls of $\overline{\mathcal{A}}(Y_1')$ with a subset of the walls of $\overline{\mathcal{A}}(Y_1)$ and a symmetric argument with $\gamma^{-1}$ shows that 
$$(\gamma_1)(\overline{\mathcal{A}}(Y_1')) = \overline{\mathcal{A}}(Y_1).$$

By an obvious inductive argument, given the blowdown $\pi'\colon (Y', D') \to (\overline{Y}', \overline{D}')$, there is an analogous blowdown  $\pi \colon (Y,D)\to(\overline{Y}, \overline{D})$ which has the same self-intersection sequence as $(\overline{Y}', \overline{D}')$, and hence is isomorphic to $(\overline{Y}', \overline{D}')$. Moreover, $\gamma$ induces an integral isometry $H^2(\overline{Y}'; \Zee) \to H^2(\overline{Y}; \Zee)$ satisfying the assumptions of Theorem~\ref{firstTorelli}. By the taut case described in the second paragraph of the proof, there exists an isomorphism $(\overline{Y}', \overline{D}') \to (\overline{Y}, \overline{D})$ inducing $\gamma$, and which we may then use to identify $(\overline{Y}', \overline{D}')$ with $(\overline{Y}, \overline{D}))$.  We are then in the situation described prior to Lemma~\ref{prelimTorellilemma}, where $Y$ and $Y'$ are both interior blowups of the same pair $(\overline{Y}, \overline{D})$ at $a_i$ points on $\overline{D}_i^{\text{\rm{int}}}$, and $\gamma$ induces an isometry from the image of $H^2(\overline{Y};\Zee)$ in $H^2(Y';\Zee)$ to the image of $H^2(\overline{Y};\Zee)$ in $H^2(Y;\Zee)$.   By construction,  the given isometry $\gamma$  from  $H^2(Y'; \Zee)$ to $H^2(Y; \Zee)$ is the one constructed just prior to the statement of Lemma~\ref{prelimTorellilemma}. By assumption, $\varphi_{Y}\circ \gamma =\varphi_{Y'}$. Thus, by Lemma~\ref{prelimTorellilemma}, there is an isomorphism $\rho\colon (Y,D) \to (Y', D')$, which satisfies $\rho^* =\gamma$.

Finally, if $\rho$ and $\rho'$ are  isomorphisms from $Y$ to $Y'$ such that $\rho^* = (\rho')^*$, then $\rho'\circ \rho^{-1} =\phi$ is an element of $\Aut^+(Y', D')$ acting trivially on $H^2(Y'; \Zee)$, and hence lies in $K(Y', D')$, and $\rho' =\phi\circ \rho$. Conversely, if $\phi \in K(Y', D')$, then $\phi\circ \rho$ is an isomorphism such that $(\phi\circ \rho)^* = \rho^*\circ \phi^* =\rho^*=\gamma$, as claimed.
\end{proof}

\begin{remark}\label{tautremark} In the proof of Theorem~\ref{firstTorelli}, we could assume that $(\overline{Y}, \overline{D})$ satisfies a weaker notion than tautness, since we know not only the self-intersection sequence of $\overline{D}$ but also the  ample cone of $\overline{Y}$, and can then reconstruct the pair $(\overline{Y}, \overline{D})$ from this data.
\end{remark}

\begin{theorem}[Torelli Theorem II]\label{secondTorelli} Let $(Y,D)$ and $(Y', D')$ be two labeled anticanonical pairs with $r(D) = r(D')$. Suppose that 
$$\gamma\colon  H^2(Y';\Zee) \to  H^2(Y;\Zee)$$ is an integral isometry such that\begin{enumerate}
\item[\rm(i)]  For all $i$,  $\gamma([D_i'])=[D_i]$.
\item[\rm(ii)]   $\gamma(\overline{\mathcal{A}}_{\text{\rm{gen}}}(Y'))= \overline{\mathcal{A}}_{\text{\rm{gen}}}(Y)$ (i.e.\ $\gamma$ is admissible). 
\item[\rm(iii)] $\varphi_Y\circ \gamma =\varphi_{Y'}$.
\end{enumerate}
Then there exists a unique   $w\in \mathsf{W}({\Delta_Y})$ and an isomorphism of labeled pairs $\rho\colon (Y,D) \to (Y',D')$, compatible with the orientations and unique up to composing with an element of $K(Y', D')$, such that $\rho^* = w\circ \gamma$.
\end{theorem}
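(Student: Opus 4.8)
The plan is to deduce Theorem~\ref{secondTorelli} from Theorem~\ref{firstTorelli} by pre-correcting $\gamma$ with a single element of the reflection group $\mathsf{W}(\Delta_Y)$. The only gap between the two theorems is that here we are only given $\gamma(\overline{\mathcal{A}}_{\text{gen}}(Y'))=\overline{\mathcal{A}}_{\text{gen}}(Y)$, whereas Torelli~I needs the stronger $\gamma(\overline{\mathcal{A}}(Y'))=\overline{\mathcal{A}}(Y)$; since $\overline{\mathcal{A}}(Y)$ is a fundamental domain for $\mathsf{W}(\Delta_Y)$ acting on $\overline{\mathcal{A}}_{\text{gen}}$ by Corollary~\ref{funddomain}, one $w$ will realign the nef cones. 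The first step is to check that $\gamma$ carries nodal classes to nodal classes, i.e.\ $\gamma(R^{\text{nod}}_{Y'})=R^{\text{nod}}_Y$. The argument of Corollary~\ref{preserveamp}(i), applied to $\gamma$ rather than an isometry of a single pair, gives $\gamma(R_{Y'})=R_Y$: if $r_{\beta'}$ preserves $\overline{\mathcal{A}}_{\text{gen}}(Y')$ then $r_{\gamma(\beta')}=\gamma\circ r_{\beta'}\circ\gamma^{-1}$ preserves $\gamma(\overline{\mathcal{A}}_{\text{gen}}(Y'))=\overline{\mathcal{A}}_{\text{gen}}(Y)$. Combining this with hypothesis (iii), which gives $\varphi_Y(\gamma(\beta'))=\varphi_{Y'}(\beta')$, and with the characterization $R^{\text{nod}}=\{\beta\in R:\varphi(\beta)=1\}$ of Corollary~\ref{preserveamp}(iii), we obtain $\gamma(R^{\text{nod}}_{Y'})=R^{\text{nod}}_Y$. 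Hence $\gamma$ conjugates $\mathsf{W}(\Delta_{Y'})$ onto $\mathsf{W}(\Delta_Y)$ and carries the family of reflecting walls $\{W^{\beta'}:\beta'\in R^{\text{nod}}_{Y'}\}$ inside $\overline{\mathcal{A}}_{\text{gen}}(Y')$ bijectively onto $\{W^{\beta}:\beta\in R^{\text{nod}}_{Y}\}$ inside $\overline{\mathcal{A}}_{\text{gen}}(Y)$.

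Next I would produce $w$. Choose an interior point $x'$ of $\overline{\mathcal{A}}(Y')$ lying on no wall $W^{\beta'}$ with $\beta'\in R^{\text{nod}}_{Y'}$; then $\gamma(x')\in\overline{\mathcal{A}}_{\text{gen}}(Y)$ lies on no nodal wall. By Corollary~\ref{funddomain} there is a unique $w\in\mathsf{W}(\Delta_Y)$ with $w(\gamma(x'))\in\overline{\mathcal{A}}(Y)$, in fact in its interior. Now $w\circ\gamma$ maps $\overline{\mathcal{A}}_{\text{gen}}(Y')$ homeomorphically onto $\overline{\mathcal{A}}_{\text{gen}}(Y)$ (admissibility of $\gamma$ together with Proposition~\ref{reflect} applied to $w$) and respects the nodal-wall structure by the first step, so it sends the open chamber containing $x'$ onto the open chamber containing $w(\gamma(x'))$; passing to closures yields $(w\circ\gamma)(\overline{\mathcal{A}}(Y'))=\overline{\mathcal{A}}(Y)$.

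I would then verify that $w\circ\gamma$ satisfies the three hypotheses of Theorem~\ref{firstTorelli}. It is an admissible integral isometry since $\gamma$ is admissible and $w$ preserves $\overline{\mathcal{A}}_{\text{gen}}$ by Proposition~\ref{reflect}. For (i), every generator $r_C$ of $\mathsf{W}(\Delta_Y)$ is the reflection in the class of a $-2$-curve $C$, which is orthogonal to all $[D_i]$ because $C\cap D=\emptyset$ (Definition~\ref{defminustwo}); hence $w([D_i])=[D_i]$ and $(w\circ\gamma)([D_i'])=w([D_i])=[D_i]$. Condition (ii) is exactly $(w\circ\gamma)(\overline{\mathcal{A}}(Y'))=\overline{\mathcal{A}}(Y)$, established above. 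For (iii), note that $w\circ\gamma$ maps $\Lambda(Y',D')=\{[D_i']\}^{\perp}$ into $\Lambda(Y,D)=\{[D_i]\}^{\perp}$, and for $\alpha'\in\Lambda(Y',D')$ Lemma~\ref{permapinvar} gives $\varphi_Y(w(\gamma(\alpha')))=\varphi_Y(\gamma(\alpha'))=\varphi_{Y'}(\alpha')$, so $\varphi_Y\circ(w\circ\gamma)=\varphi_{Y'}$. Theorem~\ref{firstTorelli} then yields an orientation-compatible isomorphism of labeled pairs $\rho\colon(Y,D)\to(Y',D')$ with $\rho^*=w\circ\gamma$, unique up to composition with an element of $K(Y',D')$.

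It remains to prove uniqueness of $w$, which I expect to be the cleanest point once the chamber picture is in place. Suppose $\rho_1^*=w_1\circ\gamma$ and $\rho_2^*=w_2\circ\gamma$ with $\rho_1,\rho_2$ isomorphisms of labeled pairs. Each $\rho_i^*$ preserves the nef cone, so both $w_1\circ\gamma$ and $w_2\circ\gamma$ carry $\overline{\mathcal{A}}(Y')$ onto $\overline{\mathcal{A}}(Y)$; therefore $w_2^{-1}w_1$ maps $\overline{\mathcal{A}}(Y)$ to itself, and since $\overline{\mathcal{A}}(Y)$ is a fundamental domain for $\mathsf{W}(\Delta_Y)$ this forces $w_1=w_2$. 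With $w$ fixed, the ambiguity of $\rho$ by $K(Y',D')$ is inherited verbatim from Theorem~\ref{firstTorelli}. The genuine work, and the only place needing care, is the second paragraph: confirming via the reflection-group theory of Corollary~\ref{funddomain} that $\gamma$ identifies $\overline{\mathcal{A}}(Y')$ with a single chamber of the $\mathsf{W}(\Delta_Y)$-tiling of $\overline{\mathcal{A}}_{\text{gen}}(Y)$, so that exactly one $w$ realigns it; everything else is formal bookkeeping or a direct citation of Theorem~\ref{firstTorelli}.
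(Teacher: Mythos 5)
Your proposal is correct and follows essentially the same route as the paper's proof: both first establish $\gamma(R^{\text{nod}}_{Y'}) = R^{\text{nod}}_Y$ from admissibility, hypothesis (iii), and Corollary~\ref{preserveamp}(iii), then use the resulting equivariance of $\gamma$ with respect to $\mathsf{W}(\Delta_{Y'})$ and $\mathsf{W}(\Delta_Y)$ together with the fundamental-domain property (Corollary~\ref{funddomain}) to produce the unique $w$ with $w\circ\gamma(\overline{\mathcal{A}}(Y')) = \overline{\mathcal{A}}(Y)$, invoke Lemma~\ref{permapinvar} to keep $\varphi_Y\circ(w\circ\gamma) = \varphi_{Y'}$, and finally apply Theorem~\ref{firstTorelli}. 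Your write-up simply fills in details (the chamber-point argument producing $w$, the verification of the hypotheses of Torelli I, and the uniqueness of $w$) that the paper leaves implicit.
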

\begin{proof} Since $\gamma(\overline{\mathcal{A}}_{\text{\rm{gen}}}(Y'))= \overline{\mathcal{A}}_{\text{\rm{gen}}}(Y)$, $\gamma(R_{Y'}) = R_Y$. Similarly, if $\beta' \in R_{Y'}$ and $\varphi_{Y'}(\beta') = 1$, then $\gamma(\beta') =\beta \in R_Y$ and $\varphi_Y(\beta) = 1$. By (iii) of Corollary~\ref{preserveamp},  the condition that $\beta' \in R_{Y'}$ and $\varphi_{Y'}(\beta') = 1$ is equivalent to the condition that $\beta'\in R^{\text{nod}}_{Y'}$ and hence $\beta' \in R^{\text{nod}}_{Y'}$ $\iff$ $\beta \in R^{\text{nod}}_Y$. Moreover, $\gamma$ is equivariant with respect to the actions of $\mathsf{W}({\Delta_{Y'}})$ on $H^2(Y';\Ar)$ and $\mathsf{W}({\Delta_Y})$ on $H^2(Y; \Ar)$, in the sense that, if $\beta' \in R^{\text{nod}}_{Y'}$ and  $\gamma(\beta') =\beta$, then  
$$\gamma\circ r_{\beta'} = r_\beta \circ \gamma.$$ 
In particular, since $\overline{\mathcal{A}}(Y')$ is a fundamental domain for the action of $\mathsf{W}({\Delta_{Y'}})$ on $\overline{\mathcal{A}}_{\text{\rm{gen}}}(Y')$, there exists a unique $w\in \mathsf{W}({\Delta_Y})$ such that $w\circ \gamma (\overline{\mathcal{A}}(Y'))= \overline{\mathcal{A}}(Y)$. By Lemma~\ref{permapinvar}, it is still the case that $\varphi_Y\circ (w\circ\gamma) =\varphi_{Y'}$. Thus, by the  Torelli Theorem I, there exists an isomorphism $\rho\colon Y \to Y'$, unique up to composing with an element of $K(Y', D')$ such that $\rho^* = w\circ \gamma$.
\end{proof}

As a corollary, there is a rigidified version using a choice of points $p_i\in D_i^{\text{\rm{int}}}$ as well as the corresponding lift of the period map.

\begin{theorem}[Torelli Theorem III]\label{thirdTorelli} Let $(Y,D)$ and $(Y', D')$ be two labeled anticanonical pairs with $r(D) = r(D')$. For each $i$, $1\leq i \leq r$, let $p_i\in D_i^{\text{\rm{int}}}$, with $p=(p_1, \dots, p_r)$,  and similarly let $p_i'\in (D_i')^{\text{\rm{int}}}$. Suppose that 
$$\gamma\colon  H^2(Y';\Zee) \to  H^2(Y;\Zee)$$ is an integral isometry such that
\begin{enumerate}
\item[\rm(i)]  For all $i$,  $\gamma([D_i'])=[D_i]$.
\item[\rm(ii)] $\gamma(\overline{\mathcal{A}}(Y'))= \overline{\mathcal{A}}(Y)$ resp.\   $\gamma(\overline{\mathcal{A}}_{\text{\rm{gen}}}(Y'))= \overline{\mathcal{A}}_{\text{\rm{gen}}}(Y)$. 
\item[\rm(iii)] $\hat\varphi_{Y;p}\circ \gamma =\hat\varphi_{Y';p'}$.
\end{enumerate}
Then there is a unique isomorphism of labeled pairs $\rho\colon (Y,D) \to (Y',D')$, compatible with the orientations,  such that $\rho(p_i) =p_i'$ and $\rho^* = \gamma$, resp.\    there exists a unique isomorphism $\rho\colon (Y,D) \to (Y',D')$  as above  such that $\rho(p_i) =p_i'$ and a unique  $w\in \mathsf{W}({\Delta_Y})$  such that  $\rho^* = w\circ \gamma$.
\end{theorem}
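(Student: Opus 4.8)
The plan is to deduce Torelli III directly from Torelli II (Theorem~\ref{secondTorelli}) by using the rigidification to pin down the isomorphism uniquely, thereby absorbing the ambiguity by $K(Y',D')$ that appears in the earlier statements. The key observation is that hypothesis (iii) here, $\hat\varphi_{Y;p}\circ \gamma = \hat\varphi_{Y';p'}$, is a \emph{lift} of the corresponding condition $\varphi_Y\circ\gamma = \varphi_{Y'}$ on $\Lambda$: restricting both sides to $\Lambda\subseteq\widehat{\Lambda}$ and using that $\hat\varphi_{Y;p}|\Lambda = \varphi_Y$ and $\hat\varphi_{Y';p'}|\Lambda = \varphi_{Y'}$ (part (i) of Lemma~\ref{lifts}), together with $\gamma(\Lambda(Y'))=\Lambda(Y)$ (which follows from $\gamma([D_i'])=[D_i]$), gives condition (iii) of Theorem~\ref{secondTorelli}. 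So in the ample-cone case we are immediately in the situation of Torelli I (Theorem~\ref{firstTorelli}), and in the generic-ample-cone case in the situation of Torelli II.

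First I would treat the case $\gamma(\overline{\mathcal{A}}(Y'))=\overline{\mathcal{A}}(Y)$. By Theorem~\ref{firstTorelli} there exists an isomorphism $\rho_0\colon(Y,D)\to(Y',D')$ with $\rho_0^*=\gamma$, and any two such differ by an element of $K(Y',D')$. The remaining task is to use the rigidification to choose the unique $\rho$ in this $K(Y',D')$-coset with $\rho(p_i)=p_i'$ for all $i$. Here I would exploit the compatibility of the lifted period map with the points: by Claim~\ref{8.2} and part (iii) of Lemma~\ref{lifts}, the condition $\hat\varphi_{Y;p}\circ\gamma = \hat\varphi_{Y';p'}$ means that $p'$ is the distinguished lift-of-period point matching $p$ under $\gamma$, while the coset $\{\phi\circ\rho_0:\phi\in K(Y',D')\}$ acts simply transitively on the possible images $(\rho(p_1),\dots,\rho(p_r))$ modulo the indeterminacy measured exactly by $K(Y',D')$. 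Pulling $\hat\varphi_{Y';p'}$ back via $\rho_0^*=\gamma$ produces $\hat\varphi_{Y;p}$, and since $\rho_0$ carries $p$ to some tuple $p''=(\rho_0(p_1),\dots,\rho_0(p_r))$ with $\hat\varphi_{Y';p''}=\hat\varphi_{Y';p'}$, part (iii) of Lemma~\ref{lifts} supplies a \emph{unique} $\phi\in K(Y',D')$ with $\phi(p_i'')=p_i'$; setting $\rho=\phi\circ\rho_0$ gives $\rho(p_i)=p_i'$ and $\rho^*=\gamma$. Uniqueness of $\rho$ follows because any automorphism of a rigidified triple acting trivially on $\widehat{\Lambda}$ is the identity (Corollary~\ref{trivauts}), so no nontrivial element of $K(Y',D')$ can fix all the $p_i'$.

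For the generic case $\gamma(\overline{\mathcal{A}}_{\text{\rm{gen}}}(Y'))=\overline{\mathcal{A}}_{\text{\rm{gen}}}(Y)$, I would run the same argument starting from Theorem~\ref{secondTorelli}, which produces a unique $w\in\mathsf{W}(\Delta_Y)$ and an isomorphism $\rho_0$ with $\rho_0^*=w\circ\gamma$, unique up to $K(Y',D')$. The point is that $w\in\mathsf{W}(\Delta_Y)$ satisfies $\varphi_Y(w(\alpha))=\varphi_Y(\alpha)$ by Lemma~\ref{permapinvar}, and in fact $\hat\varphi_{Y;p}\circ w = \hat\varphi_{Y;p}$ since $w$ is generated by reflections $r_C$ in $-2$-curve classes $[C]$, each of which is orthogonal to all $[D_i]$ and satisfies $\varphi_Y([C])=1$ (Lemma~\ref{performinustwo}); hence $w$ preserves the lifted period homomorphism as well. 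Thus $\hat\varphi_{Y;p}\circ(w\circ\gamma) = \hat\varphi_{Y';p'}$ still holds, and the $K(Y',D')$-normalization via the rigidification proceeds exactly as before to yield a unique $\rho$ with $\rho(p_i)=p_i'$ and $\rho^*=w\circ\gamma$.

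The main obstacle I anticipate is verifying that $w$ fixes the \emph{lifted} period map on all of $\widehat{\Lambda}$, not merely on $\Lambda$; this requires checking that each generating reflection $r_C$ acts trivially on $\hat\varphi_{Y;p}$, which in turn rests on $[C]\in\Lambda$ (so $r_C$ fixes the classes $[D_i]$ and the correction term $\sum_i(\alpha\cdot D_i)p_i$ in the definition of $\hat\varphi_{Y;p}$) combined with $\varphi_Y([C])=1$. Once this compatibility is in hand, the rest is a bookkeeping application of the simply-transitive structure encoded in Lemma~\ref{lifts}(iii) and Corollary~\ref{trivauts}, with no further geometric input.
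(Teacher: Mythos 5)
Your proof is correct and follows essentially the same route as the paper: invoke Theorem~\ref{firstTorelli} (resp.\ Theorem~\ref{secondTorelli}) to produce $\rho_0$ with $\rho_0^*=\gamma$ (resp.\ $w\circ\gamma$), and then use Lemma~\ref{lifts}(iii) together with Corollary~\ref{trivauts} to correct $\rho_0$ by the unique element of $K(Y',D')$ matching the rigidifications. The one point you flag as an obstacle and verify explicitly --- that each reflection $r_C$, and hence every $w\in\mathsf{W}(\Delta_Y)$, fixes the lifted homomorphism $\hat\varphi_{Y;p}$ on all of $\widehat{\Lambda}$ because $[C]\in\Lambda$ and $\varphi_Y([C])=1$ --- is exactly the step the paper's proof uses implicitly when it asserts $\hat\varphi_{Y;p}=\hat\varphi_{Y;\rho^{-1}(p')}$, so your writeup is if anything slightly more complete.
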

\begin{proof} We shall just write out the case where  $\gamma(\overline{\mathcal{A}}_{\text{\rm{gen}}}(Y'))= \overline{\mathcal{A}}_{\text{\rm{gen}}}(Y)$. By Theorem~\ref{secondTorelli}, there exists an isomorphism $\rho\colon (Y,D) \to (Y',D')$ of labeled pairs, compatible with the orientations, and a unique $w\in \mathsf{W}({\Delta_Y})$ such that $\rho^* = w\circ \gamma$. Using $\rho$ to identify $(Y,D)$ and  $(Y', D')$, we have points $p_i, \rho^{-1}(p_i')\in D_i^{\text{\rm{int}}}$ such that $\hat\varphi_{Y;p}  =\hat\varphi_{Y;\rho^{-1}(p')}$. By Lemma~\ref{lifts}(iii), there exists a unique $\phi\in K(Y,D)$ such that $\phi(p_i) = \rho^{-1}(p_i')$ for every $i$. Replacing $\rho$ by $\rho\circ \phi$ then gives an isomorphism of labeled pairs as desired, and it is unique by Corollary~\ref{trivauts}.
\end{proof}

We turn next to a version of the Torelli theorem in families. Because the group $K(Y,D)$ may well be nontrivial, even in the negative definite case, we cannot expect the Torelli theorem to hold in families. For example, the period map could be constant, so that all fibers are isomorphic to a fixed pair $(Y,D)$, but the total space could be induced from a nontrivial principal $K(Y,D)$-bundle. 
Thus it is essential to use the rigidified version of the Torelli theorem, Theorem~\ref{thirdTorelli}.
There are then analogues of results for $K3$ surfaces first established by Burns-Rapoport \cite{BurnsRapoport} (see also \cite{LooijengaPeters} or \cite{Beauville}), and we shall just sketch the corresponding arguments in our case.  

Fix a deformation type of pairs $(Y,D)$, and hence a lattice $\widehat{\Lambda}$,   a generic ample cone $\overline{\mathcal{A}}_{\text{\rm{gen}}}\subseteq \widehat{\Lambda} \otimes _\Zee\Ar$, and a corresponding set $R$ of roots.  As in \S3, for a reduced connected analytic space $S$, we consider  rigidified  families $(\mathcal{Y}, \mathcal{D}, \sigma)$ over $S$ within the given deformation equivalence class. 

\begin{definition} A \textsl{rigidified} family   $(\mathcal{Y}, \mathcal{D}, \sigma)$ over $S$ is a family  $\pi \colon (\mathcal{Y}, \mathcal{D} )\to S$ as in \S3 and an $r$-tuple $\sigma = (\sigma_1, \dots, \sigma_r)$ such that, for every $i$, $\sigma_i$ is a section of the morphism $\pi|\mathcal{D}_i \colon \mathcal{D}_i\to S$, whose image lies in the smooth locus of $\mathcal{D}$. Note that, as there are three labeled disjoint sections of $\pi|\mathcal{D}_i$, there is a canonical $S$-isomorphism $\mathcal{D}_i \cong S\times \Pee^1$, for $r\geq 2$, and similarly for the normalization of $\mathcal{D}$ when $r=1$. 
\end{definition}

We can also consider admissible markings $\theta$ of the family $(\mathcal{Y}, \mathcal{D}, \sigma)$ in the sense of Definition~\ref{defadmmarking} (for the underlying family over $S^{\text{red}}$).  Such a quadruple $(\mathcal{Y}, \mathcal{D}, \sigma, \theta)$ will be called \textsl{rigidified and admissibly marked}. Given a (not necessarily reduced or connected) analytic space $S$, the set of such quadruples $(\mathcal{Y}, \mathcal{D}, \sigma, \theta)$ over $S$    defines an (analytic) stack $\widehat{\mathbf{M}}$ in the obvious way, and there is a morphism from this stack to the stack $\mathbf{M}$ of triples $(\mathcal{Y}, \mathcal{D}, \theta)$, whose fiber over a family $\pi\colon \mathcal{Y}\to S$ is a principal homogeneous space over the group $\Hom(S, \mathbb{G}_m^r)/\Hom (S, K(Y,D))$.  In terms of functors of Artin rings, we have the corresponding functor $\mathbf{Def}_{Y; D_1, \dots,D_r;p}$, and it is prorepresented by a Kuranishi space $(\widehat {T},0)$. Here, the germ $(\widehat {T},0)$ is smooth, there is a smooth morphism $(\widehat {T},0)\to (T, 0)$, where $(T,0)$ is the Kuranishi space for the functor $\mathbf{Def}_{Y; D_1, \dots,D_r}$, and the fiber over $0$ is the germ $(\Sigma, \Id)$, where $\Sigma \subseteq \Aut^0D \cong \mathbb{G}_m^r$ is any germ at $\Id$ of a submanifold which is a slice to the quotient homomorphism $\Aut^0D \to \Aut^0D/K(Y,D)$.

Given a rigidified and admissibly marked family $(\mathcal{Y}, \mathcal{D}, \sigma, \theta)$ over a reduced connected base $S$, we can define the extended period map 
$$\widehat{\Phi}_S \colon S \to \Hom(\widehat{\Lambda}, \mathbb{G}_m),$$
 where $\widehat{\Lambda} =H^2(Y_s;\Zee)$ for a fixed fiber of $\pi$.   

\begin{lemma} $\widehat{\Phi}_S$ is holomorphic. There exists a smooth connected base $S$ and a rigidified and admissibly marked family over $S$ for which $\widehat{\Phi}_S$ is surjective. For the Kuranishi space $(\widehat {T},0)$ described above, there is a commutative diagram
$$\begin{CD}
(\Sigma, \Id) @>>> \mathbb{G}_m^r/K(Y,D)\\
@VVV @VVV\\
(\widehat {T},0) @>{\widehat{\Phi}_{\widehat {T}}}>> \Hom(\widehat{\Lambda}, \mathbb{G}_m)\\
@VVV @VVV\\
(T,0) @>{\Phi_T}>> \Hom(\Lambda, \mathbb{G}_m),
\end{CD}$$
where the morphism $(\Sigma, \Id) \to \mathbb{G}_m^r/K(Y,D)$ is identified with the local isomorphism given by projection of the slice to the quotient. 
Hence, viewing $\widehat{\Phi}_{\widehat {T}}$ as a map from $(\widehat {T},0)$ to $\Hom(\widehat{\Lambda}, \mathbb{G}_m)$, the differential of $\widehat{\Phi}_{\widehat {T}}$ is injective, i.e.\ the local Torelli theorem holds.  
\end{lemma}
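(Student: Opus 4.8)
The plan is to establish each of the four assertions in turn, leaning heavily on the structural results already proved, so that the ``Torelli in families'' statement reduces to bookkeeping plus the rigidified Torelli theorem.

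\textbf{Holomorphy of $\widehat{\Phi}_S$.} First I would adapt the proof of Theorem~\ref{perholom}. Given a base point $s_0\in S$ and a class $\alpha\in\widehat{\Lambda}$, shrink $S$ so that $L_\alpha$ extends to a line bundle $\mathcal{L}_\alpha$ on $\mathcal{Y}$ and so that $\mathcal{D}\cong S\times D$ compatibly with the sections $\sigma_i$. The rigidification $\sigma$ trivializes $\mathcal{D}_i\cong S\times\Pee^1$, so the twist $\scrO_{\mathcal{D}}(-\sum_i(\alpha\cdot D_i)\sigma_i)$ varies holomorphically in $S$; combined with the holomorphic family $s\mapsto \psi(\mathcal{L}_\alpha|D_s)\in\Pic^0D\cong\mathbb{G}_m$ from Theorem~\ref{perholom}, the formula for $\hat\varphi_{Y;p}$ in Lemma~\ref{lifts} shows $s\mapsto\hat\varphi_{Y_s;\sigma(s)}(\alpha)$ is holomorphic. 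Doing this for a basis of $\widehat{\Lambda}$ gives holomorphy of $\widehat{\Phi}_S$.

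\textbf{Surjectivity.} Here I would build on the universal family $(\mathcal{Y}^{\text{u}},\mathcal{D}^{\text{u}})\to(\mathbb{G}_m)^N$ used to prove Theorem~\ref{surjper}, add the three disjoint labeled sections coming from the marked points $0_i,\infty_i$ and (say) $1_i$ in each $D_i^{\text{int}}$ to rigidify it, and enlarge the parameter space to account for the extra freedom in the lift, i.e.\ take the base to be $(\mathbb{G}_m)^N\times(\mathbb{G}_m)^r$ where the second factor moves the rigidifying sections via the $\Aut^0D\cong\mathbb{G}_m^r$ action of Lemma~\ref{autisom}. Since the underlying period map $\Phi$ is an affine surjection onto $\Hom(\Lambda,\mathbb{G}_m)$ and Claim~\ref{8.2} shows that moving $p$ by $\phi\in\Aut^0D$ changes the lift by $f(\phi)$, while the image of $f$ is exactly the kernel of $\Hom(\widehat{\Lambda},\mathbb{G}_m)\to\Hom(\Lambda,\mathbb{G}_m)$ by Theorem~\ref{autexactseq}, the two affine maps together surject onto all of $\Hom(\widehat{\Lambda},\mathbb{G}_m)$. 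This is where I expect most of the care: one must check the combined map is genuinely affine with surjective differential, so that the elementary ``affine map of tori with surjective differential is surjective'' argument from the Theorem~\ref{surjper} sketch applies.

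\textbf{The commutative diagram and local Torelli.} The lower square commutes by definition, since restricting $\hat\varphi_{Y;p}$ to $\Lambda$ recovers $\varphi_Y$ (Lemma~\ref{lifts}(i)) and the vertical maps are the restriction maps. For the upper square, the identification $(\Sigma,\Id)\to\mathbb{G}_m^r/K(Y,D)$ is exactly the statement that the fiber of $(\widehat T,0)\to(T,0)$ is the slice $\Sigma$ to $\Aut^0D\to\Aut^0D/K(Y,D)$ described above, and Claim~\ref{8.2} identifies the resulting change in $\hat\varphi$ with the $f$-action, i.e.\ with the projection $\mathbb{G}_m^r\to\mathbb{G}_m^r/K(Y,D)\cong\ker(\Hom(\widehat\Lambda,\mathbb{G}_m)\to\Hom(\Lambda,\mathbb{G}_m))$; thus $\widehat\Phi_{\widehat T}$ restricted to the slice is this projection, which is a local isomorphism. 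Finally, the differential of $\widehat\Phi_{\widehat T}$ is injective: on the slice direction it is the local isomorphism just described, and modulo the slice it is the differential of $\Phi_T$, which is an isomorphism by the Corollary to Theorem~\ref{diffcomp}. A short diagram chase on the exact sequence of Theorem~\ref{autexactseq} at the level of tangent spaces then shows $(d\widehat\Phi_{\widehat T})_0$ is injective, proving the local Torelli theorem. The main obstacle is verifying the affine-surjectivity in the second step cleanly; the diagram and injectivity of the differential are then formal consequences of Theorem~\ref{autexactseq}, Claim~\ref{8.2}, and the corollary to Theorem~\ref{diffcomp}.
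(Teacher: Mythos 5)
Your proposal is correct and takes essentially the same route as the paper: holomorphy via the argument of Theorem~\ref{perholom}, surjectivity by combining the universal family from the proof of Theorem~\ref{surjper} with the freedom of moving the rigidification (Lemma~\ref{lifts} and Claim~\ref{8.2}), and the diagram plus local Torelli from Claim~\ref{8.2}, the identification of the fiber of $(\widehat{T},0)\to(T,0)$ with the slice $\Sigma$, and the exact sequence of Theorem~\ref{autexactseq}. The paper's proof is merely a terser statement of exactly these steps, so your fleshed-out version (including the affine-with-surjective-differential check for the enlarged base $(\mathbb{G}_m)^N\times(\mathbb{G}_m)^r$) fills in precisely what the paper leaves to the reader.
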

\begin{proof} One checks that $\widehat{\Phi}_{\widehat {T}}$ is holomorphic by an argument similar to that used to prove Theorem~\ref{perholom}. The surjectivity for an appropriate $S$ follows from Lemma~\ref{lifts} and the  surjectivity of the usual period map. Finally, to check the commutativity of the diagram and the rest of the assertions of the lemma, note that $\Aut^0D$ acts transitively on the fiber over $0$. More precisely, given the lift $(Y,D,p)$ of the pair $(Y,D)$ and $\phi\in \Aut^0D$, every possible lift is of the form $\phi\cdot (Y,D,p) = (Y,D, \phi(p))$, and the isotropy group at $(Y,D,p)$  is $K(Y,D)$. By Claim~\ref{8.2}, $\hat{\varphi}_{Y;\phi(p)} =f(\phi)\cdot \hat{\varphi}_{Y;p}$, identifying the fiber with $\mathbb{G}_m^r/K(Y,D)$, viewed as a subset of $\Hom(\widehat{\Lambda}, \mathbb{G}_m)$ via the action of multiplication.
\end{proof}

Let $\widehat{M}$ be the set of (isomorphism classes of) rigidified and admissibly marked pairs $(Y,D, p, \theta)$.

\begin{lemma} There is a natural structure of a (non-separated) complex manifold on $\widehat{M}$ and a universal family over $\widehat{M}$ so that $\widehat{M}$ is a fine moduli space, i.e.\  the stack $\widehat{\mathbf{M}}$ is representable by the space $\widehat{M}$. 
\end{lemma}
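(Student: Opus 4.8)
The plan is to construct the complex manifold structure on $\widehat{M}$ by gluing together the Kuranishi spaces $(\widehat{T},0)$ attached to each rigidified, admissibly marked pair, and then to verify the universal property. First I would use the preceding lemma: for each point $(Y,D,p,\theta)\in \widehat{M}$ the functor $\mathbf{Def}_{Y;D_1,\dots,D_r;p}$ is prorepresented by a smooth Kuranishi germ $(\widehat{T},0)$, carrying a tautological rigidified admissibly marked family. The local Torelli statement just proved — that the differential of $\widehat{\Phi}_{\widehat{T}}$ is injective — shows that the extended period map $\widehat{\Phi}_{\widehat{T}}$ is a local biholomorphism onto an open subset of $\Hom(\widehat{\Lambda},\mathbb{G}_m)$, so each Kuranishi neighborhood is identified with an open chart in the affine variety $\Hom(\widehat{\Lambda},\mathbb{G}_m)$.

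Next I would glue. Given two points of $\widehat{M}$ whose Kuranishi families contain a common fiber, the rigidified Torelli theorem (Theorem~\ref{thirdTorelli}) supplies a \emph{unique} isomorphism $\rho$ of the pairs compatible with $p$, $p'$ and realizing the admissible isometry $\gamma$ matching the markings; uniqueness here is exactly what Corollary~\ref{trivauts} guarantees for rigidified triples. This unique isomorphism gives a canonical identification of the two local charts on their overlap, so the transition maps are well-defined biholomorphisms and the cocycle condition holds automatically by the uniqueness clause of Theorem~\ref{thirdTorelli}. The result is a complex manifold structure on $\widehat{M}$ together with a family that restricts to the tautological family on each chart; because the charts are open subsets of $\Hom(\widehat{\Lambda},\mathbb{G}_m)$ and the extended period maps agree on overlaps, $\widehat{\Phi}$ is a globally defined local biholomorphism on $\widehat{M}$. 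One must flag that $\widehat{M}$ is only \emph{non-separated}: distinct points can have the same period point (e.g. pairs differing by a reflection in $\mathsf{W}(\Delta_Y)$, or by the discrepancy between $\overline{\mathcal{A}}(Y)$ and $\overline{\mathcal{A}}_{\mathrm{gen}}$), which is why no Hausdorff quotient is claimed.

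For the universal property, I would take an arbitrary rigidified, admissibly marked family $(\mathcal{Y},\mathcal{D},\sigma,\theta)$ over a base $S$ and produce the classifying map $S\to \widehat{M}$ sending $s$ to the isomorphism class of its fiber. That this map is holomorphic follows from versality of the Kuranishi families: locally on $S$ the given family is pulled back from $(\widehat{T},0)$, so the classifying map is locally the composite of the versal classifying map with a chart inclusion, hence holomorphic. That the given family is the pullback of the universal family is then immediate chart-by-chart, and the identification is \emph{unique} because any two identifications differ by an automorphism of a rigidified admissibly marked pair acting trivially on $\widehat{\Lambda}$, which is the identity by Corollary~\ref{trivauts}. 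This same rigidity is what upgrades the statement from a coarse to a \emph{fine} moduli space and shows the stack $\widehat{\mathbf{M}}$ has no nontrivial automorphisms, hence is representable by $\widehat{M}$.

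The main obstacle I expect is bookkeeping rather than conceptual: one must carefully check that the gluing data satisfy the cocycle condition and that the universal family descends, and both reduce cleanly to the uniqueness assertions in Theorem~\ref{thirdTorelli} and Corollary~\ref{trivauts}. The one subtlety worth isolating is that the charts are glued not by the period map alone but by the \emph{genuine isomorphisms} furnished by Torelli — two pairs with the same extended period point and the same admissible marking are canonically isomorphic, and it is this canonicity (the vanishing of rigidified automorphisms) that makes the manifold structure, rather than just a set-theoretic bijection onto a subset of $\Hom(\widehat{\Lambda},\mathbb{G}_m)$, well-defined. I would close by noting that the absence of automorphisms of the objects of $\widehat{\mathbf{M}}$ is precisely the representability criterion for the stack, so the fine moduli statement and the stack statement are two phrasings of the same fact.
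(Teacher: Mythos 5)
Your proposal is correct and follows essentially the same route as the paper: the paper's proof simply cites the standard Burns--Rapoport/Beauville/Looijenga--Peters construction for marked $K3$ surfaces together with Corollary~\ref{trivauts}, and your argument is that construction spelled out---gluing Kuranishi charts, with well-definedness of the transitions, the cocycle condition, and the fine moduli property all reducing to the rigidity of rigidified, admissibly marked pairs. The only cosmetic difference is that you invoke Theorem~\ref{thirdTorelli} to produce the gluing isomorphisms, which is harmless but redundant, since on overlaps the fibers are isomorphic by definition of $\widehat{M}$ and the essential input is the uniqueness supplied by Corollary~\ref{trivauts}.
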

\begin{proof} This follows from the arguments of \cite[(2.1)]{BurnsRapoport}, \cite[p.\ 142]{Beauville}, or \cite[(10.2)]{LooijengaPeters}, using the fact that rigidified and admissibly marked pairs have no nontrivial automorphisms (Corollary~\ref{trivauts}).
\end{proof}

\begin{remark} Given a taut pair $(\overline{Y}, \overline{D})$ such that $(Y,D)$ is an interior blowup of $(\overline{Y}, \overline{D})$, there are clearly elementary constructions (as in the proof of Theorem~\ref{surjper}) of a separated moduli space which is a product of copies of $\mathbb{G}_m$ and a ``universal" family over this space. It is not a fine moduli space if there are $-2$-curves on some deformation of $Y$, since, as in the $K3$ case,  the fine moduli space $\widehat{M}$ is  not separated if there are $-2$-curves on $Y$. 
\end{remark}

Let $\widehat{\Omega} =\Hom(\widehat{\Lambda}, \mathbb{G}_m)$ be the corresponding period space. Then via the construction of \cite[p.\ 243]{BurnsRapoport}, \cite[p.\ 145]{Beauville}, or \cite[p.\ 183]{LooijengaPeters}, there is a (non-separated) complex manifold $\widetilde{\widehat{\Omega}}$ and a holomorphic, \'etale map  $\widetilde{\widehat{\Omega}} \to \widehat{\Omega}$, whose fiber over a point $\varphi\in \Hom(\widehat{\Lambda}, \mathbb{G}_m)$ consists of the connected components of $\overline{\mathcal{A}}_{\text{\rm{gen}}} - \bigcup_{\beta\in R^{\text{nod}}_{\varphi}}W^\beta$, where 
$$R^{\text{nod}}_{\varphi} =\{\beta \in R: \varphi(\beta) = 1\}.$$
 (By  Corollary~\ref{preserveamp}(iii), if $\varphi =\varphi_Y$ for some pair $(Y,D)$, then $R^{\text{nod}}_{\varphi} = R^{\text{nod}}_{Y}$.) More precisely, let $K\widehat{\Omega}$ be the set of pairs $(\varphi, x)\in \widehat{\Omega}\times \mathcal{A}_{\text{\rm{gen}}}$ such that $x$ is not orthogonal to any $\beta \in R$ such that $\varphi(\beta) = 1$. Thus the fiber of $K\widehat{\Omega} \to \widehat{\Omega}$ over $\varphi$ is $\mathcal{A}_{\text{\rm{gen}}}(Y) - \bigcup_{\beta\in R^{\text{nod}}_{\varphi}}W^\beta$. It follows easily from the local finiteness of the walls $W^\beta$ that $K\widehat{\Omega}$ is an open subset of $\widehat{\Omega}\times \mathcal{A}_{\text{\rm{gen}}}$. Define $\widetilde{\widehat{\Omega}}$ to be the quotient of $K\widehat{\Omega}$ by the equivalence relation
  $(\varphi_1, x_1) \sim (\varphi_2, x_2)$ $\iff$ $\varphi_1 = \varphi_2$ and $x_1, x_2$ are in the same connected component of the fiber of $K\widehat{\Omega} \to \widehat{\Omega}$.  Note that the fiber of $\widetilde{\widehat{\Omega}} \to \widehat{\Omega}$ over $\varphi$ consists of one point $\iff$ there are no $\beta \in R$ such that $\varphi(\beta) = 1$; if $\varphi =\varphi_Y$, this is equivalent to the condition that there are no $-2$-curves on $Y$.

The period map $\widehat{\Phi}_{\widehat{M}}=\widehat{\Phi}\colon \widehat{M} \to \widehat{\Omega}$ lifts to a function $\widetilde{\widehat{\Phi}}\colon \widehat{M} \to \widetilde{\widehat{\Omega}}$, by sending $(Y, D, p, \theta)$ to the point $\hat{\varphi}_{Y;p}$ and to the connected component  of $\mathcal{A}_{\text{\rm{gen}}}(Y) - \bigcup_{\beta\in R^{\text{nod}}_{\varphi}}W^\beta$ defined by $ \mathcal{A}(Y)$.

\begin{theorem}\label{extendedpermap}  The lifted extended period map $\widetilde{\widehat{\Phi}}$ is a morphism, and in fact an isomorphism of non-separated complex manifolds.   
\end{theorem}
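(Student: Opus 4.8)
The plan is to show that $\widetilde{\widehat{\Phi}}$ is holomorphic, is a local biholomorphism, and is bijective; a bijective local biholomorphism of (possibly non-separated) complex manifolds is automatically an isomorphism with holomorphic inverse, so this suffices. Holomorphicity of the underlying period map $\widehat{\Phi}\colon\widehat M\to\widehat\Omega$ is the content of the preceding lemma (proved as in Theorem~\ref{perholom}). To lift this to $\widetilde{\widehat{\Phi}}$, I would work locally on $\widehat M$: after shrinking, choose a relatively ample class on the family, giving a holomorphic section $s\mapsto H_s$ into $\mathcal A(Y_s)\subseteq\mathcal A_{\text{\rm{gen}}}$. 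Then $s\mapsto(\widehat\Phi(s),\theta_s(H_s))$ is holomorphic into the open set $K\widehat\Omega\subseteq\widehat\Omega\times\mathcal A_{\text{\rm{gen}}}$, and $\widetilde{\widehat{\Phi}}$ is its composite with the quotient $K\widehat\Omega\to\widetilde{\widehat\Omega}$, which is a local biholomorphism; hence $\widetilde{\widehat{\Phi}}$ is holomorphic. That $\widetilde{\widehat{\Phi}}$ is a local isomorphism is then the local Torelli theorem of the preceding lemma, the injectivity of $d\widehat\Phi_{\widehat T}$, together with a dimension count: by Lemma~\ref{rankQ} and $Q(Y,D)=2+b_2(Y)-r$ one has $\operatorname{rank}\widehat\Lambda=b_2(Y)=\operatorname{rank}\Lambda+(r-s)$ with $s=\dim K$, so $\dim\widehat T=\dim T+(r-s)=\operatorname{rank}\Lambda+(r-s)=\operatorname{rank}\widehat\Lambda=\dim\widehat\Omega$; thus $d\widehat\Phi_{\widehat T}$ is an isomorphism, and since $\widehat T$ is a local chart for the fine moduli space $\widehat M$ and $\widetilde{\widehat\Omega}\to\widehat\Omega$ is \'etale, $\widetilde{\widehat{\Phi}}$ is a local biholomorphism.

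For \emph{injectivity} I would invoke Torelli III. Suppose $(Y,D,p,\theta)$ and $(Y',D',p',\theta')$ have the same image, and set $\gamma=\theta^{-1}\circ\theta'\colon H^2(Y';\Zee)\to H^2(Y;\Zee)$. Equality of the two images in $\widehat\Omega$ gives $\hat\varphi_{Y;p}\circ\gamma=\hat\varphi_{Y';p'}$; compatibility of the markings with the labelings gives $\gamma([D_i'])=[D_i]$; admissibility of both markings gives $\gamma(\overline{\mathcal A}_{\text{\rm{gen}}}(Y'))=\overline{\mathcal A}_{\text{\rm{gen}}}(Y)$; and equality of the recorded chambers gives $\gamma(\overline{\mathcal A}(Y'))=\overline{\mathcal A}(Y)$. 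By Theorem~\ref{thirdTorelli} in its $\overline{\mathcal A}$-form there is a unique isomorphism $\rho\colon(Y,D)\to(Y',D')$ with $\rho(p_i)=p_i'$ and $\rho^*=\gamma=\theta^{-1}\theta'$, that is $\theta'=\theta\circ\rho^*$. Hence $\rho$ is an isomorphism of rigidified, admissibly marked pairs, so the two points of $\widehat M$ coincide.

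For \emph{surjectivity} I would combine the surjectivity of the ordinary period map with the chamber structure of $\widetilde{\widehat\Omega}$. Given $(\varphi,\mathcal C)\in\widetilde{\widehat\Omega}$, use Theorem~\ref{surjper} to realize $\varphi|\Lambda$ by a pair $(Y,D)$ in the deformation type, and Lemma~\ref{lifts}(ii) to choose a rigidification $p$ with $\hat\varphi_{Y;p}=\varphi$ relative to an admissible marking $\theta_0$; the recorded chamber $\mathcal C_0=\theta_0(\mathcal A(Y))$ and the target $\mathcal C$ are both chambers of $\mathcal A_{\text{\rm{gen}}}-\bigcup_{\beta\in R^{\text{\rm{nod}}}_\varphi}W^\beta$, so $\mathcal C=w(\mathcal C_0)$ for some $w\in\mathsf W(R^{\text{\rm{nod}}}_\varphi)$. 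Replacing $\theta_0$ by $w\circ\theta_0$ moves the recorded chamber to $\mathcal C$; this leaves the period unchanged, since each generating reflection $r_\beta$ has $\varphi(\beta)=1$ and hence $\varphi\circ r_\beta=\varphi$, and it preserves admissibility and the compatibility with the labeling, since $r_\beta$ fixes each $[D_i]$ (as $\beta\in\Lambda$) and preserves $\overline{\mathcal A}_{\text{\rm{gen}}}$ by Corollary~\ref{preserveamp}. The resulting point of $\widehat M$ maps to $(\varphi,\mathcal C)$.

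The substantive inputs, local and global Torelli (Theorem~\ref{thirdTorelli}), are already available, so I expect the remaining difficulty to be organizational rather than conceptual. The step most in need of care is the surjectivity argument: matching the recorded ample-cone chamber by adjusting the marking through $\mathsf W(\Delta_Y)=\mathsf W(R^{\text{\rm{nod}}})$, and verifying that this adjustment leaves the period and the admissibility conditions intact, which rests precisely on $\varphi(\beta)=1$ and $\beta\cdot[D_i]=0$ for the relevant roots (cf.\ Lemma~\ref{performinustwo} and Lemma~\ref{permapinvar}). The one genuinely delicate analytic point is the holomorphicity and single-valuedness of the lift $\widetilde{\widehat\Phi}$ across the non-separated locus, but the explicit description of $\widetilde{\widehat\Omega}$ as a quotient of the open set $K\widehat\Omega$ makes this routine.
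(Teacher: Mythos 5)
Your proposal is correct and takes essentially the same route as the paper: continuity/holomorphicity of the lift via an ample class that persists nearby (this is exactly the content of the paper's ``openness of the K\"ahler cone,'' Lemma~\ref{Kconeopen}, phrased there through the open set $KS$ and the quotient topology), injectivity via Torelli III, surjectivity via surjectivity of the period map together with transitivity of $\mathsf{W}(R^{\text{nod}}_\varphi)$ on the chambers of the fiber, and \'etaleness from the local Torelli statement (your explicit dimension count $\dim\widehat{T}=\operatorname{rank}\widehat{\Lambda}$ is a nice addition the paper leaves implicit). One small correction: the quotient map $K\widehat{\Omega}\to\widetilde{\widehat{\Omega}}$ is not a local biholomorphism (it collapses the chamber factor, and $K\widehat{\Omega}$ is only a real manifold, being open in $\widehat{\Omega}\times\mathcal{A}_{\text{\rm{gen}}}$); what your argument actually uses is that this map is continuous and that $\widetilde{\widehat{\Omega}}\to\widehat{\Omega}$ is \'etale, which together do suffice to conclude that your locally defined composite is holomorphic and equals $\widetilde{\widehat{\Phi}}$.
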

\begin{proof} The main point is to check that $\widetilde{\widehat{\Phi}}$ is continuous in the given topologies. This follows from the ``openness of the K\"ahler cone" (\cite[pp.\ 118--119]{Beauville}, \cite[\S8]{LooijengaPeters}):

\begin{lemma}\label{Kconeopen} Let $\pi\colon (\mathcal{Y}, \mathcal{D}, \sigma, \theta)\to S$ be a rigidified, admissibly marked family over the reduced complex space  $S$ and let $Y_s =\pi^{-1}(s)$. Via the marking, identify $R^2\pi_*\Ar$ with $\widehat{\Lambda}_\Ar \times S$, and set $KS \subseteq \mathcal{A}_{\text{\rm{gen}}} \times S$ to be    
$$\{(x,s): x\in \mathcal{A}(Y_s)\}.$$
Then $KS$ is an open subset of $\mathcal{A}_{\text{\rm{gen}}} \times S$ and the projection $KS \to S$ is an open map. 
\end{lemma}
\begin{proof} Given a point $(x,s_0) \in KS$, there are only finitely many $\beta\in R$ such that $x\in W^\beta$, say $\beta_1, \dots, \beta_k$. Moreover $\varphi_{Y_{s_0}}(\beta_i) \neq 1$ for all $i$. Choose an open subset $V_1$ of $\mathcal{A}_{\text{\rm{gen}}}$ containing $x$ such that , for all $\beta \in R$, $W^\beta\cap V_1 \neq \emptyset$ $\iff$ $\beta = \beta_i$ for some $i$. Let $V_2$ be an open subset of $S$ such that $\varphi_{Y_s}(\beta_i) \neq 1$ for all $i$ and for all $s\in V_2$. 

There exists an $h\in \widehat{\Lambda}_\Q \cap V_1$ such that $Nh$ is ample on $Y_{s_0}$ for some $N\in \mathbb{N}$. After shrinking $V_2$, we can assume that the class $Nh$ defines an ample divisor on $Y_s$ for all $s\in V_2$. Then, for every $s\in V_2$ and every $y\in V_1$, $h\in \mathcal{A}(Y_s)$ and $h$ and $y$ are not separated by a wall $W^\beta$, $\beta \in R$, such that $\varphi_{Y_s}(\beta) = 1$. It follows that $y\in \mathcal{A}(Y_s)$ for all $y\in V_1$. Hence $V_1\times V_2\subseteq KS$, so that $KS$ is  open in $\mathcal{A}_{\text{\rm{gen}}} \times S$. The final statement is then clear.
\end{proof}

To see that $\widetilde{\widehat{\Phi}}$ is continuous, it suffices to check that, given a rigidified, admissibly marked family $\pi\colon (\mathcal{Y}, \mathcal{D}, \sigma, \theta)\to S$ over a  reduced complex space  $S$ the corresponding function $\widetilde{\widehat{\Phi}}_S \colon S \to \widetilde{\widehat{\Omega}}$ is continuous. There is a lifted continuous map $KS \to K\widehat{\Omega}$, defined in the natural way, and it induces the function $\widetilde{\widehat{\Phi}}_S \colon S \to \widetilde{\widehat{\Omega}}$ because the fibers of $KS \to S$ are all equivalent under the equivalence relation defining $\widetilde{\widehat{\Omega}}$. The continuity of $\widetilde{\widehat{\Phi}}_S$ now follows: if $U$ is an open subset of $\widetilde{\widehat{\Omega}}$, then   $\widetilde{\widehat{\Phi}}_S^{-1}(U)$ is obtained by taking the preimage of $U$ in $K\widehat{\Omega}$, pulling back to $KS$, and then projecting to $S$.  Thus $\widetilde{\widehat{\Phi}}^{-1}_S(U)$ is open by the definition of the quotient topology and the fact that projection $KS\to S$ is an open surjective map (Lemma~\ref{Kconeopen}). 

 It   follows similarly that $\widetilde{\widehat{\Phi}}$ is a morphism (cf.\  \cite[\S2]{BurnsRapoport}). Moreover, $\widetilde{\widehat{\Phi}}$ is surjective, by the surjectivity of the period map and the fact that $\mathsf{W}(R^{\text{nod}}_\varphi)$ acts (simply) transitively on the fiber of $\widetilde{\widehat{\Omega}} \to \widehat{\Omega}$ over $\varphi$.  It is injective by the Torelli theorem III, and it is everywhere \'etale. Hence $\widetilde{\widehat{\Phi}}$ is an isomorphism (between two non-separated complex manifolds).
\end{proof}

\begin{remark} There are various versions of the previous statements where we get rid of the rigidification and/or the marking, in terms of quotient stacks. The reader can consult \cite{GHK}, Section 6.
\end{remark}

 \begin{corollary}\label{Torelliinfamilies}  Let $\pi\colon (\mathcal{Y}, \mathcal{D}, \sigma) \to S$ and $\pi'\colon (\mathcal{Y}', \mathcal{D}', \sigma') \to S$ be two rigidified families over the reduced, connected analytic space $S$. Suppose that $$\gamma\colon  R^2(\pi')_*\Zee  \to  R^2\pi_*\Zee$$ is an isomorphism of local systems, preserving the intersection form, such that, for every $s\in S$,
\begin{enumerate}
\item[\rm(i)]  For all $i$,  $\gamma_s([D_i'])=[D_i]$.
\item[\rm(ii)] $\gamma_s(\overline{\mathcal{A}}(Y_s'))= \overline{\mathcal{A}}(Y_s)$. 
\item[\rm(iii)] $\hat\varphi_{Y_s;\sigma(s)}\circ \gamma_s =\hat\varphi_{Y_s';\sigma'(s)}$.
\end{enumerate}
Then there is a unique isomorphism   $\rho\colon (\mathcal{Y}, \mathcal{D}) \to (\mathcal{Y}', \mathcal{D}')$, compatible with the orientations,  such that $\rho\circ \sigma_i  =\sigma_i'$ and $\rho^* = \gamma$. 
\end{corollary}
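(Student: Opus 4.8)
The plan is to deduce the statement from the fiberwise Torelli Theorem III (Theorem~\ref{thirdTorelli}) together with the fact that the lifted extended period map $\widetilde{\widehat{\Phi}}\colon \widehat{M}\to\widetilde{\widehat{\Omega}}$ is an isomorphism of (non-separated) complex manifolds (Theorem~\ref{extendedpermap}) and that $\widehat{M}$ is a fine moduli space. Fiberwise, Theorem~\ref{thirdTorelli} already produces, for each $s\in S$, a unique isomorphism $\rho_s\colon(Y_s,D_s)\to(Y_s',D_s')$ with $\rho_s\circ\sigma(s)=\sigma'(s)$ and $\rho_s^*=\gamma_s$; the real content is that these fit together holomorphically. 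The obstruction to a direct global construction is that $R^2\pi_*\Zee$ may carry nontrivial monodromy, so no global marking need exist. I would therefore work over simply connected opens and glue.

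First I would fix, for each point of $S$, a connected simply connected open neighborhood $U$. Over $U$ the local system $R^2\pi_*\Zee$ is trivial, so I can choose a marking $\theta\colon R^2\pi_*\Zee|_U\to\underline{\widehat{\Lambda}}$; by the local constancy of the generic ample cone (Lemma~\ref{definv}) this marking is automatically admissible. I then set $\theta'=\theta\circ\gamma$, a marking of $\pi'|_U$. To see $\theta'$ is admissible, note that since $\gamma$ preserves the intersection form, carries $K_{Y_s'}=-\sum_i[D_i']$ to $K_{Y_s}=-\sum_i[D_i]$, and preserves the nef cone by hypothesis (ii), it carries effective numerical exceptional curves to effective ones by the characterization in Lemma~\ref{chareff}; hence $\gamma_s(\overline{\mathcal{A}}_{\text{\rm{gen}}}(Y_s'))=\overline{\mathcal{A}}_{\text{\rm{gen}}}(Y_s)$ for every $s$, so $\theta'$ is admissible. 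Thus $(\mathcal{Y},\mathcal{D},\sigma,\theta)|_U$ and $(\mathcal{Y}',\mathcal{D}',\sigma',\theta')|_U$ are rigidified, admissibly marked families, and since $\widehat{M}$ is a fine moduli space they define classifying morphisms $g,g'\colon U\to\widehat{M}$.

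The heart of the argument is to show $g=g'$, and for this it suffices to compare $\widetilde{\widehat{\Phi}}\circ g$ and $\widetilde{\widehat{\Phi}}\circ g'$. Transporting the extended period point $\hat\varphi_{Y_s';\sigma'(s)}$ to $\Hom(\widehat{\Lambda},\mathbb{G}_m)$ via $(\theta_s')^{-1}=\gamma_s^{-1}\circ\theta_s^{-1}$ and using hypothesis (iii) in the form $\hat\varphi_{Y_s';\sigma'(s)}=\hat\varphi_{Y_s;\sigma(s)}\circ\gamma_s$, the factor $\gamma_s$ cancels and the period point of the primed family equals that of the unprimed family; moreover hypothesis (ii) says $\theta_s\circ\gamma_s(\overline{\mathcal{A}}(Y_s'))=\theta_s(\overline{\mathcal{A}}(Y_s))$, so the chambers, and hence the lifts to $\widetilde{\widehat{\Omega}}$, coincide. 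Therefore $\widetilde{\widehat{\Phi}}\circ g=\widetilde{\widehat{\Phi}}\circ g'$, and since $\widetilde{\widehat{\Phi}}$ is an isomorphism (in particular injective) we conclude $g=g'$. Because $\widehat{M}$ is fine, the two families over $U$ are isomorphic via a unique isomorphism $\rho_U\colon\mathcal{Y}|_U\to\mathcal{Y}'|_U$ respecting the rigidifications and markings; marking-compatibility $\theta'=\theta\circ\rho_U^*$ together with $\theta'=\theta\circ\gamma$ forces $\rho_U^*=\gamma|_U$.

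Finally I would glue. Any two isomorphisms $\rho,\tilde\rho$ over an open set with $\rho^*=\tilde\rho^*=\gamma$ and $\rho\circ\sigma_i=\tilde\rho\circ\sigma_i=\sigma_i'$ differ by a fiberwise automorphism $\tilde\rho\circ\rho^{-1}$ which acts trivially on $H^2$ and fixes each $\sigma_i'(s)\in(D_i')^{\text{\rm{int}}}$, hence is the identity by Corollary~\ref{trivauts}. This rigidity shows the locally constructed $\rho_U$ agree on overlaps and patch to a global $\rho\colon(\mathcal{Y},\mathcal{D})\to(\mathcal{Y}',\mathcal{D}')$ with $\rho\circ\sigma_i=\sigma_i'$ and $\rho^*=\gamma$, and it also yields global uniqueness. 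I expect the genuine difficulty to be precisely this passage from the pointwise Torelli statement to a holomorphic family isomorphism in the presence of monodromy; it is resolved by packaging holomorphy and continuity into the isomorphism $\widetilde{\widehat{\Phi}}$ (which rests on the openness of the K\"ahler cone, Lemma~\ref{Kconeopen}) and then invoking Corollary~\ref{trivauts} to glue.
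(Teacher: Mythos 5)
Your proof is correct and follows essentially the same route as the paper: localize on $S$, choose compatible admissible markings, observe that hypotheses (ii) and (iii) force the two lifted extended period morphisms into $\widetilde{\widehat{\Omega}}$ to coincide, and then invoke Theorem~\ref{extendedpermap} together with the fine moduli property of $\widehat{M}$. The details you supply that the paper compresses into ``the result is local on $S$'' — the bridge from nef-cone preservation to admissibility via Lemma~\ref{chareff}, and the gluing/uniqueness step via Corollary~\ref{trivauts} — are exactly the right ones.
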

\begin{proof} The result is local on $S$ and hence we may assume that the families are compatibly and admissibly marked. In this case, the hypotheses imply that the period morphisms $\widetilde{\widehat{\Phi}}_S$ and $\widetilde{\widehat{\Phi}}'_S$ from $S$ to $\widetilde{\widehat{\Omega}}$ coincide. By  Theorem~\ref{extendedpermap}, the classfying morphisms from $S$ to $\widehat{M}$ coincide as well. Thus the families $(\mathcal{Y}, \mathcal{D}, \sigma)$ and $(\mathcal{Y}', \mathcal{D}', \sigma')$ are isomorphic via a unique $\rho$ such that $\rho^* = \gamma$.
\end{proof}

\section{Automorphisms II: admissible maps}

\begin{definition} Let $(Y, D)$ be an anticanonical pair. We define the group $O^+(\Lambda(Y,D)) = O^+(\Lambda)$ to be the group of integral isometries $\gamma$ of $H^2(Y;\Zee)$ such that  $\gamma([D_i]) = [D_i]$ for all $i$ and $\gamma(\mathcal{C}^+) =\mathcal{C}^+$. Note that such a $\gamma$ induces an integral isometry of $\Lambda$. The real points of the corresponding algebraic group will be denoted by  $O^+(\Lambda)_\Ar$. The element  $\gamma\in O^+(\Lambda)$ is \textsl{admissible} if in addition
$$\gamma (\overline{\mathcal{A}}_{\text{\rm{gen}}}(Y)) = \overline{\mathcal{A}}_{\text{\rm{gen}}}(Y),$$
i.e.\ $\gamma$ is admissible as an integral isometry from $H^2(Y;\Zee)$ to itself.
We denote the subgroup of all admissible isometries by $\Gamma= \Gamma(Y,D)$. Note that the groups $O^+(\Lambda)$ and $\Gamma(Y,D)$  only depend on the deformation type of the pair $(Y,D)$, and that $\mathsf{W}(R)$ is a subgroup of $\Gamma(Y,D)$.
\end{definition}

\begin{remark} Suppose that $(Y,D)$ is generic and that $\gamma\in O^+(\Lambda)$. Then $\gamma\in \Gamma(Y,D)$ $\iff$ $\gamma$ permutes the set of interior exceptional curves on $Y$.

In general, if $Y$ is allowed to have $-2$-curves and $\gamma \in \Gamma(Y,D)$, then there is a unique $w\in \mathsf{W}(\Delta_Y)$ such that $w\gamma(\overline{\mathcal{A}}(Y)) = \overline{\mathcal{A}}(Y)$. Then $w\gamma$ permutes the set of interior exceptional curves on $Y$ as well as the set of $-2$-curves 
(cf.\ the proof of Theorem~\ref{firstTorelli}).
\end{remark}

The group $\Gamma$ acts on $\Omega =\Hom(\Lambda, \mathbb{G}_m)$ and $\widehat{\Omega} =\Hom(\widehat{\Lambda}, \mathbb{G}_m)$ in the natural way, via $\gamma\cdot \varphi = \varphi\circ \gamma^{-1}$. However, this action fails in general to be properly discontinuous. Moreover $\Gamma$ acts on the space $K\widehat{\Omega}$ defined prior to the statement of Theorem~\ref{extendedpermap} via $\gamma\cdot (\varphi,x) = (\varphi\circ \gamma^{-1}, \gamma \cdot x)$, and this action preserves the equivalence relation $\sim$. Hence there is an induced $\Gamma$-action on $\widetilde{\widehat{\Omega}}$, and similarly for the non-rigidified analogue $\widetilde{\Omega}$ which is a non-separated \'etale cover of $\Omega$. Thus, there is a $\Gamma$-action on the fine moduli space $\widehat{M}$, given on the level of points by: if $\gamma\in \Gamma$, then 
$$\gamma \cdot(Y,D, p, \theta)= (Y,D, p, \gamma\circ\theta).$$
Under this action, the period map is $\Gamma$-equivariant.  Moreover, the $\Gamma$-action on $\widehat{M}$ lifts to an action on the universal bundle. On the level of functors, the action is given by 
$$\gamma \cdot(\mathcal{Y}, \mathcal{D},\sigma, \theta)= (\mathcal{Y}, \mathcal{D}, \sigma, \gamma\circ\theta).$$
In other words, $\gamma$ is the identity on the fibers of the family but changes the marking.
We then have:

\begin{theorem}\label{liftGamma} Let $(\widehat{\mathcal{Y}}, \widehat{\mathcal{D}}, \hat\sigma, \theta) \to \widehat{M}$ be the universal family of pairs over $\widehat{M}$. Then the $\Gamma$-action on $\widehat{M}$ lifts to a $\Gamma$-action on $(\widehat{\mathcal{Y}}, \widehat{\mathcal{D}}, \hat\sigma, \theta)$. \qed
\end{theorem}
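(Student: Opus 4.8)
The plan is to exploit the two facts established just above: that $\widehat{M}$ is a fine moduli space for rigidified, admissibly marked pairs, and that by Corollary~\ref{trivauts} such quadruples have no nontrivial automorphisms. Write $g_\gamma$ for the automorphism of $\widehat{M}$ induced by $\gamma\in\Gamma$, so that $g_\gamma([(Y,D,p,\theta)]) = [(Y,D,p,\gamma\circ\theta)]$. The central observation is that $g_\gamma$ alters only the marking and leaves the underlying rigidified pair $(Y,D,p)$ untouched; accordingly the lift $\tilde g_\gamma$ I aim to construct will be fiberwise the identity under the natural identifications and will encode nothing but the marking change, the content being that these fiberwise identities glue into a global isomorphism of total spaces covering $g_\gamma$.

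First I would check that post-composing the universal marking with $\gamma$ again yields an \emph{admissible} marking, so that $(\widehat{\mathcal{Y}}, \widehat{\mathcal{D}}, \hat\sigma, \gamma\circ\theta)$ is once more a rigidified, admissibly marked family over $\widehat{M}$. This is immediate from the definition of $\Gamma$: since $\gamma([D_i])=[D_i]$ and $\gamma(\overline{\mathcal{A}}_{\text{\rm{gen}}}) = \overline{\mathcal{A}}_{\text{\rm{gen}}}$, for every $t$ we have
$$(\gamma\circ\theta)\big(\overline{\mathcal{A}}_{\text{\rm{gen}}}(Y_t)\big) = \gamma\big(\theta(\overline{\mathcal{A}}_{\text{\rm{gen}}}(Y_t))\big) = \gamma(\overline{\mathcal{A}}_{\text{\rm{gen}}}) = \overline{\mathcal{A}}_{\text{\rm{gen}}},$$
and $\gamma\circ\theta$ carries the classes $[D_i]$ to $[D_i]$.

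Next I would invoke the universal property. The classifying morphism of the family $(\widehat{\mathcal{Y}}, \widehat{\mathcal{D}}, \hat\sigma, \gamma\circ\theta)$ agrees on points with $g_\gamma$, because its fiber over $[(Y,D,p,\theta)]$ is $(Y,D,p,\gamma\circ\theta)$, which represents $g_\gamma([(Y,D,p,\theta)])$. By fineness this classifying morphism \emph{is} $g_\gamma$, and the universal property supplies a canonical isomorphism of families
$$\Phi_\gamma \colon (\widehat{\mathcal{Y}}, \widehat{\mathcal{D}}, \hat\sigma, \gamma\circ\theta) \xrightarrow{\ \sim\ } g_\gamma^*(\widehat{\mathcal{Y}}, \widehat{\mathcal{D}}, \hat\sigma, \theta)$$
over $\widehat{M}$. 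Composing $\Phi_\gamma$ with the natural projection $g_\gamma^*\widehat{\mathcal{Y}} \to \widehat{\mathcal{Y}}$ that covers $g_\gamma$ produces an isomorphism
$$\tilde{g}_\gamma \colon (\widehat{\mathcal{Y}}, \widehat{\mathcal{D}}, \hat\sigma) \xrightarrow{\ \sim\ } (\widehat{\mathcal{Y}}, \widehat{\mathcal{D}}, \hat\sigma)$$
covering $g_\gamma$, and one records that $\tilde g_\gamma^*\theta = \gamma\circ\theta$, so that $\tilde g_\gamma$ is an automorphism of the full quadruple lying over $g_\gamma$.

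Finally I would verify that $\gamma \mapsto \tilde{g}_\gamma$ is a group action, that is $\tilde g_{\gamma_1}\circ\tilde g_{\gamma_2} = \tilde g_{\gamma_1\gamma_2}$ and $\tilde g_{\mathrm{id}} = \mathrm{id}$. Both sides cover $g_{\gamma_1\gamma_2} = g_{\gamma_1}\circ g_{\gamma_2}$ and induce the same transformation $\theta \mapsto (\gamma_1\gamma_2)\circ\theta$ of the universal marking; since a rigidified, admissibly marked pair has no nontrivial automorphisms (Corollary~\ref{trivauts}), two lifts covering the same base morphism and effecting the same marking change must agree. This uniqueness is the crux and the step I expect to demand the most care: it has to be phrased relatively over $\widehat{M}$ rather than fiber by fiber, and one must confirm that the canonical isomorphisms $\Phi_\gamma$ compose correctly, precisely where the rigidity of Corollary~\ref{trivauts} is indispensable. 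As a cross-check, the cocycle identity can also be read off from the $\Gamma$-equivariance of the period isomorphism $\widetilde{\widehat{\Phi}}$ of Theorem~\ref{extendedpermap}, which transports the tautological $\Gamma$-action on $\widetilde{\widehat{\Omega}}$ back to $\widehat{M}$ and hence to the universal family.
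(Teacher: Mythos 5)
Your proposal is correct and is essentially the argument the paper has in mind: the paper states the theorem with no written proof precisely because, once the $\Gamma$-action on families is defined as the identity on the underlying family with the marking replaced by $\gamma\circ\theta$, the lift follows tautologically from fineness of $\widehat{M}$ and the rigidity supplied by Corollary~\ref{trivauts}. Your unpacking — admissibility of $\gamma\circ\theta$, identification of the classifying morphism with $g_\gamma$, the canonical isomorphism to $g_\gamma^*$ of the universal family, and the cocycle identity via uniqueness of isomorphisms of rigidified marked families — is exactly that implicit argument made explicit.
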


\begin{remark} There exist nonempty $\Gamma$-invariant open subsets of $\widehat{\Lambda}_\Ar$ for which the $\Gamma$-action is properly discontinuous and for which the set of walls $\{W^\beta: \beta \in R\}$ is locally finite. For example, $\mathcal{C}^+$ and $\mathcal{A}_{\text{\rm{gen}}}$ have this property. Using this remark, one can find connected nonempty open subsets $U$ of $\widehat{\Omega}$ (in the classical topology but  \textbf{not} in general in the Zariski topology) such that (i) the \'etale morphism $\widetilde{\widehat{\Omega}} \to \widehat{\Omega}$ is an isomorphism over $U$ and (ii) $U$ is $\Gamma$-invariant  and the $\Gamma$-action on $U$ is properly discontinuous. After further shrinking $U$, we can assume that the $\Gamma$-action has no fixed points. For such a set $U$, we can identify $U$ with its preimage in $\widetilde{\widehat{\Omega}}$ and in $\widehat{M}$, and $U$ and its preimage in $\widehat{M}$ are separated. If $\widehat{\mathcal{Y}}|U \to U$ is the corresponding family, then there is an induced family 
$$(\widehat{\mathcal{Y}}|U)/\Gamma  \to U/\Gamma,$$
which is a family of pairs over $U$ whose monodromy group is $\Gamma$.

For more details on the possible open sets $U$, see \cite[II]{Looij} and \cite[\S7]{GHK}.
\end{remark} 

We turn now to the connection between $\Gamma(Y,D)$ and the automorphism group of the pair $(Y,D)$.

\begin{definition} We define the group of \textsl{Hodge isomorphisms} of $(Y,D)$ by
$$\operatorname{Hodge}(Y,D) = \{\gamma \in \Gamma(Y,D): \varphi_Y\circ \gamma = \varphi_Y\}.$$
In other words, $\operatorname{Hodge}(Y,D)$ is the subgroup of $\Gamma(Y,D)$ fixing the period homomorphism $\varphi_Y$.
\end{definition}

Then the following is a corollary of the Torelli theorem:

\begin{theorem}\label{autsofY} The group $\operatorname{Hodge}(Y,D)$ is isomorphic to a semidirect product:
 $$\operatorname{Hodge}(Y,D) \cong \mathsf{W}({\Delta_Y})\rtimes (\Aut^+(Y,D)/K) .$$
 \end{theorem}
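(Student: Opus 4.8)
The plan is to exhibit $\operatorname{Hodge}(Y,D)$ as the set of admissible isometries fixing $\varphi_Y$, and to show this group decomposes as the semidirect product via the Torelli theorem, with the normal subgroup $\mathsf{W}(\Delta_Y)$ being the reflection group generated by the $-2$-curves and the quotient being realized geometrically by the automorphism group $\Aut^+(Y,D)$ modulo the subgroup $K$ acting trivially on cohomology.

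First I would verify that $\mathsf{W}(\Delta_Y)$ is a normal subgroup of $\operatorname{Hodge}(Y,D)$. By Lemma~\ref{permapinvar}, every $w \in \mathsf{W}(\Delta_Y)$ satisfies $\varphi_Y \circ w = \varphi_Y$, and by Corollary~\ref{preserveamp}(ii) we have $w(\overline{\mathcal{A}}_{\text{gen}}) = \overline{\mathcal{A}}_{\text{gen}}$, so $\mathsf{W}(\Delta_Y) \subseteq \operatorname{Hodge}(Y,D)$. For normality, given $\gamma \in \operatorname{Hodge}(Y,D)$ and a $-2$-curve $C$ with class $\beta = [C]$, the equivariance $\gamma \circ r_\beta = r_{\gamma(\beta)} \circ \gamma$ holds; since $\gamma$ is admissible and $\varphi_Y \circ \gamma = \varphi_Y$, it sends $R^{\text{nod}}_Y = \{\beta \in R : \varphi_Y(\beta) = 1\}$ to itself by Corollary~\ref{preserveamp}(iii), and hence sends nodal classes to nodal classes. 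This shows $\gamma \mathsf{W}(\Delta_Y) \gamma^{-1} = \mathsf{W}(\Delta_Y)$.

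Next I would construct the splitting. Given $\gamma \in \operatorname{Hodge}(Y,D)$, Corollary~\ref{funddomain} says $\overline{\mathcal{A}}(Y)$ is a fundamental domain for the $\mathsf{W}(\Delta_Y)$-action on $\overline{\mathcal{A}}_{\text{gen}}$, so there is a unique $w \in \mathsf{W}(\Delta_Y)$ with $w\gamma(\overline{\mathcal{A}}(Y)) = \overline{\mathcal{A}}(Y)$. After replacing $\gamma$ by $w\gamma$ (which still fixes $\varphi_Y$ by Lemma~\ref{permapinvar}), the isometry $w\gamma$ now satisfies all three hypotheses of the Torelli Theorem I (Theorem~\ref{firstTorelli}): it fixes the classes $[D_i]$, preserves $\overline{\mathcal{A}}(Y)$, and fixes $\varphi_Y$. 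Therefore there exists $\rho \in \Aut^+(Y,D)$ with $\rho^* = w\gamma$, and this $\rho$ is unique up to composition with an element of $K = K(Y,D)$. This produces a well-defined map $\operatorname{Hodge}(Y,D) \to \Aut^+(Y,D)/K$; conversely the assignment $\rho \mapsto \rho^*$ embeds $\Aut^+(Y,D)/K$ into $\operatorname{Hodge}(Y,D)$ as a subgroup complementary to $\mathsf{W}(\Delta_Y)$, since $\rho^*$ preserves $\overline{\mathcal{A}}(Y)$ (automorphisms preserve ampleness) and fixes $\varphi_Y$ (automorphisms fix the period, as $L_\alpha|D$ transforms compatibly).

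The final step is to assemble these into the semidirect product structure. I would check that every $\gamma \in \operatorname{Hodge}(Y,D)$ factors uniquely as $w^{-1} \cdot \rho^*$ with $w \in \mathsf{W}(\Delta_Y)$ and $\rho^* \in \operatorname{image of }\Aut^+(Y,D)$, that the intersection $\mathsf{W}(\Delta_Y) \cap \{\rho^* : \rho \in \Aut^+(Y,D)\}$ is trivial (an element of $\mathsf{W}(\Delta_Y)$ preserving $\overline{\mathcal{A}}(Y)$ must be the identity, since $\overline{\mathcal{A}}(Y)$ is a fundamental domain and reflections move it off itself), and that $\mathsf{W}(\Delta_Y)$ is normal as established above. \textbf{The main obstacle} I anticipate is the bookkeeping around the kernel $K$: one must confirm that the image of $\Aut^+(Y,D)$ under $\rho \mapsto \rho^*$ is precisely $\Aut^+(Y,D)/K$ rather than $\Aut^+(Y,D)$ itself, which relies on $K$ being exactly the subgroup acting trivially on $\widehat{\Lambda}$ (Definition~\ref{defsomeauts}), and then verifying that this quotient embeds as a genuine subgroup of $\operatorname{Hodge}(Y,D)$ transverse to $\mathsf{W}(\Delta_Y)$; the uniqueness clause in Theorem~\ref{firstTorelli} is exactly what guarantees injectivity of $\Aut^+(Y,D)/K \hookrightarrow \operatorname{Hodge}(Y,D)$.
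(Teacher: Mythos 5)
Your proposal is correct and follows essentially the same route as the paper: the paper's proof simply cites Torelli Theorem II to write each $\gamma \in \operatorname{Hodge}(Y,D)$ uniquely as $w \circ \overline{\psi}$ with $w \in \mathsf{W}(\Delta_Y)$ and $\overline{\psi} \in \Aut^+(Y,D)/K$, while you unpack that step into its ingredients (the fundamental-domain property of $\overline{\mathcal{A}}(Y)$ from Corollary~\ref{funddomain} plus Torelli Theorem I), which is exactly how Torelli II is deduced from Torelli I in the paper. The remaining verifications (normality via preservation of $R^{\text{nod}}_Y$, triviality of the intersection, injectivity of $\Aut^+(Y,D)/K \hookrightarrow \operatorname{Hodge}(Y,D)$ via the kernel $K$) match the paper's argument in substance.
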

 \begin{proof} Clearly,   $\mathsf{W}({\Delta_Y})$ and $\Aut^+(Y,D)/K$ are subgroups of $\operatorname{Hodge}(Y,D)$ and it is easy to check that $\mathsf{W}({\Delta_Y})$ is a normal subgroup of $\operatorname{Hodge}(Y,D)$. By Theorem~\ref{secondTorelli}, if $\gamma \in \operatorname{Hodge}(Y,D)$, then there exists a unique $w\in \mathsf{W}({\Delta_Y})$ and a $\psi \in \Aut^+(Y,D)$, unique up to an element of $K$, such that $\gamma = w\circ\psi$.   Thus, every element of $\operatorname{Hodge}(Y,D)$ is uniquely written as $w\circ \overline{\psi}$, where $\overline{\psi}$ is the image of $\psi$ in $\Aut^+(Y,D)/K$, and so the group $\operatorname{Hodge}(Y,D)$ is isomorphic to the semi-direct product of $\mathsf{W}({\Delta_Y})$ and $\Aut^+(Y,D)/K$
 \end{proof} 
 
 \begin{corollary} Given a deformation type of anticanonical pairs, let  $(Y_0, D_0)$ be the unique isomorphism class within the given deformation type such that $\varphi_{Y_0}=1$. Then
 $$\Gamma(Y,D) = \Gamma(Y_0, D_0) \cong \mathsf{W}({R_{Y_0}})\rtimes (\Aut^+(Y_0,D_0)/K).$$
 \end{corollary}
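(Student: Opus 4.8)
The plan is to derive the corollary from Theorem~\ref{autsofY} by identifying the correct special fiber and checking that the two groups in question genuinely coincide. First I would invoke the surjectivity of the period map (Theorem~\ref{surjper}) together with the uniqueness part of the Torelli theorem to justify that, within a fixed deformation type, there is a pair $(Y_0,D_0)$ whose period homomorphism is trivial, i.e.\ $\varphi_{Y_0}=1$, and that this pair is unique up to isomorphism. Indeed, given any two pairs with $\varphi=1$ in the same deformation type, the identity isometry on $\widehat{\Lambda}$ (which is admissible, since $\overline{\mathcal{A}}_{\text{gen}}$ is a deformation invariant) satisfies all three hypotheses of Theorem~\ref{secondTorelli}, producing an isomorphism of labeled pairs; so $(Y_0,D_0)$ is well-defined.

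Next I would observe that $\Gamma(Y,D)=\Gamma(Y_0,D_0)$ is immediate from the definition, since $\Gamma$ depends only on $\widehat{\Lambda}$, the classes $[D_i]$, and $\overline{\mathcal{A}}_{\text{gen}}$, all of which are deformation invariants of the pair. This is the easy first equality. The substance lies in the second isomorphism, where I would apply Theorem~\ref{autsofY} to the specific pair $(Y_0,D_0)$. The key point is that for this pair $\operatorname{Hodge}(Y_0,D_0)=\Gamma(Y_0,D_0)$: by definition $\operatorname{Hodge}(Y_0,D_0)=\{\gamma\in\Gamma(Y_0,D_0):\varphi_{Y_0}\circ\gamma=\varphi_{Y_0}\}$, and since $\varphi_{Y_0}=1$ the condition $\varphi_{Y_0}\circ\gamma=\varphi_{Y_0}$ holds vacuously for every $\gamma$. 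Therefore $\operatorname{Hodge}(Y_0,D_0)$ is all of $\Gamma(Y_0,D_0)$, and Theorem~\ref{autsofY} yields
$$\Gamma(Y_0,D_0)=\operatorname{Hodge}(Y_0,D_0)\cong \mathsf{W}({\Delta_{Y_0}})\rtimes(\Aut^+(Y_0,D_0)/K).$$

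Finally I would reconcile notation: the statement of the corollary writes $\mathsf{W}(R_{Y_0})$ rather than $\mathsf{W}(\Delta_{Y_0})$. Here I would appeal to Corollary~\ref{preserveamp}(iii), which shows $R^{\text{nod}}_{Y_0}=\{\beta\in R:\varphi_{Y_0}(\beta)=1\}$; since $\varphi_{Y_0}=1$, this forces $R^{\text{nod}}_{Y_0}=R_{Y_0}$, so that the nodal classes fill out all roots. Because $\mathsf{W}(R^{\text{nod}}_{Y_0})=\mathsf{W}(\Delta_{Y_0})$ by the standard reflection-group argument recorded in the definition of nodal classes, we get $\mathsf{W}(\Delta_{Y_0})=\mathsf{W}(R_{Y_0})$, completing the identification. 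I expect the main obstacle to be purely a matter of care rather than depth: ensuring that the uniqueness of $(Y_0,D_0)$ and the triviality of the Hodge condition are invoked with the correct admissibility hypotheses, and that the identification $R^{\text{nod}}_{Y_0}=R_{Y_0}$ is applied cleanly. No new geometric input beyond the cited results should be needed.
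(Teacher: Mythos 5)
Your proposal is correct and follows essentially the same route as the paper: apply Theorem~\ref{autsofY} to $(Y_0,D_0)$, note that $\operatorname{Hodge}(Y_0,D_0)=\Gamma(Y_0,D_0)$ because $\varphi_{Y_0}$ is the constant homomorphism, and identify $\mathsf{W}(\Delta_{Y_0})=\mathsf{W}(R^{\text{nod}}_{Y_0})=\mathsf{W}(R_{Y_0})$ via Corollary~\ref{preserveamp}(iii). Your additional verification of the existence and uniqueness of $(Y_0,D_0)$ via surjectivity of the period map and Torelli Theorem II is a sound supplement to what the paper leaves implicit.
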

 \begin{proof} This follows from Theorem~\ref{autsofY} since $\operatorname{Hodge}(Y_0,D_0)=\Gamma(Y_0, D_0)$ because $\varphi_{Y_0}$ is the constant homomorphism and $\mathsf{W}({\Delta_{Y_0}})= \mathsf{W}({R^{\text{nod}}_{Y_0}})= \mathsf{W}(R_{Y_0})$ by (iii) of Corollary~\ref{preserveamp}.
 \end{proof}
 
For the rest of this section, we describe various results and examples pertaining to the group $\Gamma(Y,D)$ of admissible isometries. To get a feel for the size of $\Gamma(Y,D)$, we prove the following:
 
 \begin{theorem}\label{finiteorbits} Let $(Y,D)$ be a generic  anticanonical pair with self-inter\-sec\-tion sequence $(d_1, \dots, d_r)$ and let $\mathcal{E}(Y,D)$ be the set of interior exceptional curves of $Y$, or equivalently the set of walls of $\overline{\mathcal{A}}_{\text{\rm{gen}}}(Y)$ not corresponding to components of $D$. Then $\Gamma(Y,D)$ acts on $\mathcal{E}(Y,D)$ and the number of $\Gamma(Y,D)$-orbits for this action is finite. More precisely, if $\mathcal{X}_i$ denotes the finite set of deformation types for anticanonical pairs with self-intersection sequence $(d_1, \dots, d_{i-1}, d_i + 1, d_{i+1}, \dots d_r)$, then there is an injection
 $$\mathcal{E}(Y,D)/\Gamma(Y,D) \hookrightarrow \coprod_{i=1}^r\mathcal{X}_i.$$
 \end{theorem}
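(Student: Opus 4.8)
The plan is to set up a natural map from the set $\mathcal{E}(Y,D)$ of interior exceptional curves to the disjoint union $\coprod_i \mathcal{X}_i$ of deformation types, and then show it descends to an injection on $\Gamma(Y,D)$-orbits. Given an interior exceptional curve $E \in \mathcal{E}(Y,D)$, let $i$ be the unique index such that $E \cdot D_i = 1$ (this exists and is unique since $E \cdot D = 1$). Contracting $E$ produces the interior blowdown $(\overline{Y}, \overline{D})$, whose self-intersection sequence is $(d_1, \dots, d_{i-1}, d_i + 1, d_{i+1}, \dots, d_r)$ by Lemma~\ref{effectofblowup}(i). Assign to $E$ the deformation type of $(\overline{Y}, \overline{D})$ as an element of $\mathcal{X}_i \subseteq \coprod_i \mathcal{X}_i$. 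The sets $\mathcal{X}_i$ are finite by the theorem on finiteness of deformation types (each is the set of deformation types with a fixed self-intersection sequence), so the target is finite.

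The key step is to check that this assignment is $\Gamma(Y,D)$-invariant and then injective on orbits. For invariance: if $\gamma \in \Gamma(Y,D)$, then $\gamma$ permutes the interior exceptional curves of $Y$ (since $\gamma$ is admissible, it preserves $\overline{\mathcal{A}}_{\text{\rm{gen}}}(Y)$ and hence its walls, and fixes each $[D_i]$, so it preserves the index $i$ attached to $E$). I would argue that $\gamma$, contracting $E$ versus contracting $\gamma(E)$, induces an admissible isometry of the two blowdowns that fixes the classes $[\overline{D}_j]$; by Theorem~\ref{defequiv}, the two blowdowns are deformation equivalent, so they define the same point of $\mathcal{X}_i$. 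This shows the map factors through $\mathcal{E}(Y,D)/\Gamma(Y,D)$. For injectivity: suppose $E$ and $E'$ are two interior exceptional curves, attached to the same index $i$, whose blowdowns $(\overline{Y}, \overline{D})$ and $(\overline{Y}', \overline{D}')$ are deformation equivalent. By Theorem~\ref{defequiv} there is an admissible integral isometry $\overline{\gamma} \colon H^2(\overline{Y}'; \Zee) \to H^2(\overline{Y}; \Zee)$ fixing the boundary classes. I would extend $\overline{\gamma}$ to an isometry $\gamma$ of $H^2(Y; \Zee)$ by sending $[E']$ to $[E]$, using the orthogonal decompositions $H^2(Y;\Zee) \cong H^2(\overline{Y};\Zee) \oplus \Zee[E]$ and likewise for $Y'$; since $E \cdot D_i = E' \cdot D_i = 1$ this extension fixes all $[D_j]$. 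The content is that $\gamma$ is admissible, i.e.\ $\gamma(\overline{\mathcal{A}}_{\text{\rm{gen}}}(Y)) = \overline{\mathcal{A}}_{\text{\rm{gen}}}(Y)$, which follows from the compatibility $\overline{\mathcal{A}}_{\text{\rm{gen}}}(\overline{Y}) = \overline{\mathcal{A}}_{\text{\rm{gen}}}(Y) \cap [E]^\perp$ established in the proof of Theorem~\ref{defequiv}, together with the fact that $\overline{\gamma}$ already matches the generic ample cones of the blowdowns. Then $\gamma \in \Gamma(Y,D)$ and $\gamma([E']) = [E]$, so $E$ and $E'$ lie in the same $\Gamma(Y,D)$-orbit.

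The main obstacle I anticipate is the injectivity step, specifically the careful verification that the abstract isometry $\gamma$ built by combining $\overline{\gamma}$ with $[E'] \mapsto [E]$ is genuinely admissible and not merely an isometry fixing the boundary classes. One must confirm that the generic ample cone of the total space $Y$ is determined by the generic ample cone of the blowdown together with the single wall $W^{[E]}$; this is exactly the content of the inductive wall-matching argument in the proof of Theorem~\ref{defequiv}, and the correct way to invoke it is to run that argument backwards, using that admissibility of $\gamma$ on $Y$ is equivalent to admissibility of $\overline{\gamma}$ on $\overline{Y}$ once the exceptional classes are matched. A secondary subtlety is bookkeeping when there are infinitely near or coincident blowups, but since $(Y,D)$ is assumed generic there are no $-2$-curves, so the interior exceptional curves are honest disjoint $-1$-curves and the orthogonal decomposition is clean; this keeps the argument free of the generalized-exceptional-curve complications.
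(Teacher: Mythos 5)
Your setup and your invariance argument coincide with the paper's proof: the map $E \mapsto$ (deformation type of the blowdown $(\overline{Y},\overline{D})$), landing in $\mathcal{X}_i$ where $E\cdot D_i = 1$, and the descent to orbits via Theorem~\ref{defequiv}, are exactly right. The gap is in the injectivity step, precisely where you anticipated trouble. Having extended $\overline{\gamma}$ to an isometry $\gamma$ of $H^2(Y;\Zee)$ with $\gamma([E'])=[E]$, you justify admissibility of $\gamma$ by asserting that admissibility upstairs is \emph{equivalent} to admissibility of $\overline{\gamma}$ downstairs once the exceptional classes are matched, ``running the wall-matching argument of Theorem~\ref{defequiv} backwards.'' But only one direction of that equivalence is established there: admissibility upstairs implies admissibility downstairs, via $\overline{\mathcal{A}}_{\text{\rm{gen}}}(\overline{Y}) = \overline{\mathcal{A}}_{\text{\rm{gen}}}(Y)\cap [E]^\perp$. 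The converse is exactly what injectivity needs, and the wall-matching argument does not reverse, because the walls of $\overline{\mathcal{A}}_{\text{\rm{gen}}}(Y)$ are \emph{not} exhausted by $W^{[E]}$ together with proper transforms of walls of $\overline{\mathcal{A}}_{\text{\rm{gen}}}(\overline{Y})$: an interior exceptional curve $F$ on $Y$ with $F\cdot E = k>0$ maps to a curve of self-intersection $k^2-1\geq 0$ on $\overline{Y}$ passing through the blown-up point, so its wall is invisible downstairs. Knowing that $\gamma$ preserves the codimension-one slice $\overline{\mathcal{A}}_{\text{\rm{gen}}}(Y)\cap[E]^\perp$ and the single extra wall $W^{[E]}$ therefore says nothing a priori about those walls, and the claimed equivalence is left unproved.

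The paper closes this gap not with cone geometry but with Lemma~\ref{chareff}. Note first that the blowdown $\overline{Y}'$ is again generic (a $-2$-curve there is disjoint from $\overline{D}'$, hence misses the blown-up point and would survive to $Y$). Take an ample class $x'$ in the interior of $\overline{\mathcal{A}}_{\text{\rm{gen}}}(\overline{Y}')$; its pullback $H'$ to $Y$ is a nef class with $H'\cdot [E'] = 0$ and $H'\cdot D > 0$, so by Lemma~\ref{chareff} the effective numerical exceptional classes on $Y$ are exactly the numerical exceptional classes $\alpha$ with $\alpha\cdot H'\geq 0$. Since $\overline{\gamma}$ carries $\overline{\mathcal{A}}_{\text{\rm{gen}}}(\overline{Y}')$ onto $\overline{\mathcal{A}}_{\text{\rm{gen}}}(\overline{Y})$, the class $\gamma(H')$ is the pullback of the ample class $\overline{\gamma}(x')$ on $\overline{Y}$, hence again nef on $Y$ with positive degree on $D$. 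Applying Lemma~\ref{chareff} twice gives: $\alpha$ is effective $\iff$ $\alpha\cdot H'\geq 0$ $\iff$ $\gamma(\alpha)\cdot\gamma(H')\geq 0$ $\iff$ $\gamma(\alpha)$ is effective. Thus $\gamma$ permutes the effective numerical exceptional classes, and since it fixes each $[D_i]$ and preserves $\mathcal{C}^+$, it is admissible; the rest of your argument then goes through. So the statement you wanted is true, but the mechanism that proves it is the numerical characterization of effectiveness by a single nef divisor, not a reversal of the wall-by-wall comparison.
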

 \begin{proof} It suffices to prove the existence of the injection in the final sentence of the statement of Theorem~\ref{finiteorbits}. If $E$ is an interior exceptional curve on $Y$, let $(\overline{Y}, \overline{D})$ be the anticanonical pair obtained by contracting $E$. If $E\cdot D_i=1$, then $(\overline{Y}, \overline{D})$ has self-intersection sequence $(d_1, \dots, d_{i-1}, d_i + 1, d_{i+1}, \dots d_r)$ and thus corresponds to a point of $X_i$. In particular, there is a well-defined map $\mathcal{E}(Y,D)\to \coprod_{i=1}^r\mathcal{X}_i$. First, we claim that this map factors through the action of $\Gamma(Y,D)$. In fact, for $\gamma\in \Gamma(Y,D)$,  let $E'$ be the exceptional curve corresponding to $\gamma([E])$ and let $(\overline{Y}', \overline{D}')$ be the anticanonical pair obtained by blowing down $E'$. Then $\gamma$ induces an isometry $\bar\gamma\colon H^2(\overline{Y}; \Zee) \to H^2(\overline{Y}'; \Zee)$ which takes $\overline{\mathcal{A}}_{\text{\rm{gen}}}(\overline{Y}) =  \overline{\mathcal{A}}_{\text{\rm{gen}}}(Y)\cap [E]^\perp$ to $\overline{\mathcal{A}}_{\text{\rm{gen}}}(\overline{Y}')$. By Theorem~\ref{defequiv}, $(\overline{Y}, \overline{D})$ and $(\overline{Y}', \overline{D}')$ are deformation equivalent, and hence define the same point of $\mathcal{X}_i$. Thus we get a well-defined function $\mathcal{E}(Y,D)/\Gamma(Y,D) \to \coprod_{i=1}^r\mathcal{X}_i$.

 To see that this function is an injection, let $E_1$ and $E_2$ be two exceptional curves on $Y$, and suppose that $(\overline{Y}_1, \overline{D}_2)$ is deformation equivalent to $(\overline{Y}_2, \overline{D}_2)$, where $(\overline{Y}_i, \overline{D}_i)$ is the anticanonical pair obtained by blowing down $E_i$.  Then, again by Theorem~\ref{defequiv}, there exists an integral isometry $\bar\gamma \colon 
 H^2(\overline{Y}_1; \Zee) \to H^2(\overline{Y}_2; \Zee)$ taking $\overline{\mathcal{A}}_{\text{\rm{gen}}}(\overline{Y}_1)$ to $\overline{\mathcal{A}}_{\text{\rm{gen}}}(\overline{Y}_2)$ and identifying the corresponding components of the cycles. Identifying $H^2(\overline{Y}_i; \Zee)$ with $[E_i]^\perp \subseteq H^2(Y;\Zee)$, we can then extend $\bar\gamma$ to an integral isometry $\gamma\colon H^2(Y; \Zee) \to H^2(Y; \Zee)$ taking $[E_1]$ to $[E_2]$. Clearly it suffices to prove that $\gamma$ is admissible, i.e.\ that $\gamma(\overline{\mathcal{A}}_{\text{\rm{gen}}}(Y))= \overline{\mathcal{A}}_{\text{\rm{gen}}}(Y)$. But, if $x_1\in \overline{\mathcal{A}}_{\text{\rm{gen}}}(\overline{Y}_1)$ is the class of an ample divisor  on $\overline{Y}_1$, then the corresponding point of $H^2(Y;\Zee)$ is the class of a nef divisor $H_1$ on $Y$ such that $H_1\cdot D > 0$. By Lemma~\ref{chareff},
 the effective numerical exceptional curves on $Y$ are   the numerical exceptional curves $\alpha$ such that $H_1\cdot  \alpha \geq 0$. Clearly, $\bar\gamma (x_1)=x_2\in H^2(\overline{Y}_2; \Zee)$ is also the class of an ample divisor, so that $\alpha$ is an effective  numerical exceptional curve $\iff$ $\alpha\cdot H_1 \geq 0$ $\iff$ $\gamma(\alpha)\cdot \gamma(H_1) \geq 0$ $\iff$ $\gamma(\alpha)$  is an effective  numerical exceptional curve. Thus $\gamma(\overline{\mathcal{A}}_{\text{\rm{gen}}}(Y))= \overline{\mathcal{A}}_{\text{\rm{gen}}}(Y)$ and so $\gamma \in \Gamma(Y,D)$. It follows that $E_1$ and $E_2$ are in the same $\Gamma(Y,D)$-orbit, and hence that $\mathcal{E}(Y,D)/\Gamma(Y,D) \to \coprod_{i=1}^r\mathcal{X}_i$ is injective.  
  \end{proof}
 
 \begin{corollary} For a generic anticanonical pair $(Y,D)$, $\Gamma(Y,D)$ is infinite $\iff$ there are infinitely many interior exceptional curves on $Y$, or equivalently, there are infinitely many walls of $\overline{\mathcal{A}}_{\text{\rm{gen}}}(Y)$.
 \end{corollary}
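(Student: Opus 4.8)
The plan is to deduce the corollary directly from Theorem~\ref{finiteorbits} together with the orbit--stabilizer principle. Recall that for a generic pair, $\mathcal{E}(Y,D)$ is the set of interior exceptional curves on $Y$, which coincides with the set of walls of $\overline{\mathcal{A}}_{\text{\rm{gen}}}(Y)$ not corresponding to components of $D$; since there are only finitely many components $D_i$, the set $\mathcal{E}(Y,D)$ is infinite if and only if there are infinitely many walls. So the only substantive equivalence to establish is: $\Gamma(Y,D)$ is infinite $\iff$ $\mathcal{E}(Y,D)$ is infinite. This splits into the two implications below.

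For the direction that infinitely many exceptional curves forces $\Gamma(Y,D)$ to be infinite, I would invoke Theorem~\ref{finiteorbits}, which provides an injection $\mathcal{E}(Y,D)/\Gamma(Y,D) \hookrightarrow \coprod_{i=1}^r \mathcal{X}_i$, where each $\mathcal{X}_i$ is a \emph{finite} set of deformation types (finiteness of the $\mathcal{X}_i$ is exactly the content of the theorem asserting finitely many deformation types for a fixed self-intersection sequence). Hence $\mathcal{E}(Y,D)/\Gamma(Y,D)$ is finite: the group $\Gamma(Y,D)$ has only finitely many orbits on $\mathcal{E}(Y,D)$. If $\mathcal{E}(Y,D)$ were infinite, then at least one of these finitely many orbits would be infinite, and an infinite orbit of a group action forces the acting group to be infinite. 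This gives the implication $\mathcal{E}(Y,D)$ infinite $\implies \Gamma(Y,D)$ infinite.

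For the reverse direction, suppose $\mathcal{E}(Y,D)$ is finite; I must show $\Gamma(Y,D)$ is finite. The key observation is that every $\gamma \in \Gamma(Y,D)$ satisfies $\gamma([D_i]) = [D_i]$ for all $i$ and $\gamma(\overline{\mathcal{A}}_{\text{\rm{gen}}}(Y)) = \overline{\mathcal{A}}_{\text{\rm{gen}}}(Y)$, so $\gamma$ permutes the walls of $\overline{\mathcal{A}}_{\text{\rm{gen}}}(Y)$; since $Y$ is generic these walls are precisely the classes $[D_i]$ (fixed by $\gamma$) together with the classes $[E]$ for $E \in \mathcal{E}(Y,D)$. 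Thus $\gamma$ induces a permutation of the finite set $\mathcal{E}(Y,D)$, giving a homomorphism $\Gamma(Y,D) \to \mathrm{Sym}(\mathcal{E}(Y,D))$ to a finite symmetric group. It remains to bound the kernel: if $\gamma$ fixes every class $[E]$, $E \in \mathcal{E}(Y,D)$, and every $[D_i]$, then $\gamma$ fixes all the wall classes of $\overline{\mathcal{A}}_{\text{\rm{gen}}}(Y)$. Since $Y$ is not minimal in the interesting range, these classes span $H^2(Y;\Q)$ (the classes of exceptional curves and components of $D$ together span the rational cohomology of a non-minimal rational surface), so $\gamma$ acts as the identity on $H^2(Y;\Q)$ and hence on $H^2(Y;\Zee)$, forcing $\gamma = \mathrm{Id}$. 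Therefore the homomorphism is injective and $\Gamma(Y,D)$ is finite.

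The main obstacle I anticipate is the spanning claim needed to control the kernel in the reverse direction: one must verify that the wall classes really span $H^2(Y;\Q)$ when $\mathcal{E}(Y,D)$ is finite. For a minimal $Y$ ($\Pee^2$ or $\mathbb{F}_N$) there are no interior exceptional curves and the components $[D_i]$ alone may fail to span, so these low-rank cases should be checked directly (and indeed for minimal $Y$ the group $\Gamma(Y,D)$ is visibly finite by inspection of Theorem~\ref{minimalist}). Once $Y$ is non-minimal, the existence of at least one interior exceptional curve meeting a given $D_i$, combined with the classes of the components of $D$, is enough to generate a finite-index sublattice of $H^2(Y;\Zee)$, which suffices. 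Assembling the two implications, and folding in the elementary identification of the size of $\mathcal{E}(Y,D)$ with the number of walls, completes the proof.
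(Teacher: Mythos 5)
Your forward implication coincides with the paper's: Theorem~\ref{finiteorbits} gives finitely many $\Gamma(Y,D)$-orbits on $\mathcal{E}(Y,D)$, so if $\mathcal{E}(Y,D)$ is infinite then some orbit is infinite and $\Gamma(Y,D)$ must be infinite. The reverse implication is where you depart from the paper, and it contains a genuine gap. Your whole argument rests on the claim that, for a non-minimal generic pair with $\mathcal{E}(Y,D)$ finite, the classes $[D_i]$ together with the classes of the interior exceptional curves span $H^2(Y;\Q)$, so that the kernel of $\Gamma(Y,D)\to \operatorname{Sym}(\mathcal{E}(Y,D))$ is trivial. You do not prove this claim, and the justification you sketch --- that one interior exceptional curve meeting some $D_i$, together with the classes of the components of $D$, generates a finite-index sublattice of $H^2(Y;\Zee)$ --- is false: that set has rank at most $r+1$, while $b_2(Y)$ can be arbitrarily large relative to $r$. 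A concrete instance: blow up $\Pee^2$ at five general points of a nodal cubic. Then $r=1$, $b_2(Y)=6$, the pair is generic, $\mathcal{E}(Y,D)$ is finite (the sixteen exceptional curves of a degree-four del Pezzo surface), yet $[D]$ together with any single exceptional class spans only a rank-two subspace. In this example the full collection of wall classes does span $H^2(Y;\Q)$, but that is precisely the statement requiring proof.

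The spanning claim is in fact true, but establishing it takes an argument of essentially the same weight as the proof you are trying to replace: for instance, an induction on blowdowns in which one checks (a) that genericity is inherited under interior and corner blowdowns; (b) that if $Y$ is generic, every interior exceptional curve on a blowdown $\overline{Y}$ misses the blown-up point --- a curve through it with multiplicity one would have proper transform a $-2$-curve on $Y$ --- and hence lifts to an interior exceptional curve on $Y$; and (c) that the minimal model can be chosen to be $\Pee^2$ or a case (ii) pair of Theorem~\ref{minimalist} (via Remark~\ref{blowdowntotaut}), since for the generic case (iii) pairs on $\mathbb{F}_0$ the components alone do not span. The paper sidesteps the spanning question entirely: it fixes one interior exceptional curve $E_1$, observes that its stabilizer $\overline{\Gamma}$ has finite index in $\Gamma(Y,D)$ because the orbit of $E_1$ is finite, identifies $\overline{\Gamma}$ with $\Gamma(\overline{Y},\overline{D})$ for the pair obtained by contracting $E_1$ (using the method of proof of Theorem~\ref{finiteorbits}), and inducts on the number of interior blowdowns needed to reach a minimal model. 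A minor further point: for a generic pair the walls of $\overline{\mathcal{A}}_{\text{\rm{gen}}}(Y)$ are the interior exceptional classes together with only those $[D_i]$ having $D_i^2<0$; this does not damage your kernel computation, since every element of $\Gamma(Y,D)$ fixes all the $[D_i]$ by definition, but your description of the set of walls is slightly off.
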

 \begin{proof} $\impliedby$: This is clear from Theorem~\ref{finiteorbits}.
 
\smallskip
 \noindent $\implies$: It suffices the show that, if there are only finitely many interior exceptional curves on $Y$, then $\Gamma(Y,D)$ is finite. We argue by induction on the number of interior blowdowns of $Y$ to reach a minimal model, where there are no exceptional curves and $\Gamma =\{\Id\}$. Suppose that there are only finitely many interior exceptional curves on $Y$, say $E_1, \dots, E_k$. Then the stabilizer $\overline{\Gamma}$ of $E_1$ has finite index in $\Gamma(Y,D)$ and it will suffice to show that $\overline{\Gamma}$ is finite. But, if $(\overline{Y}, \overline{D})$ is the anticanonical pair obtained by contracting $E_1$, then there are only finitely many interior exceptional curves on $\overline{Y}$, corresponding to the exceptional curves on $Y$ not meeting $E_1$. Hence, by the inductive hypothesis, $\Gamma(\overline{Y}, \overline{D})$ is finite. The method of proof of Theorem~\ref{finiteorbits} then identifies $\Gamma(\overline{Y}, \overline{D})$ with $\overline{\Gamma}$. Hence $\overline{\Gamma}$ and therefore $\Gamma(Y,D)$ are finite.
 \end{proof}
 
  \begin{corollary} Let $(Y,D)$ be a generic anticanonical pair and let $\mathcal{E}_k(Y,D)$ be the set of ordered $k$-tuples $(E_1, \dots, E_k)$, where each $E_i$ is an interior exceptional curve and $E_i\cdot E_j =0$ for all $i\neq j$. Then $\Gamma(Y,D)$ acts on $\mathcal{E}_k(Y,D)$ and the number of $\Gamma(Y,D)$-orbits for this action is finite.  
 \end{corollary}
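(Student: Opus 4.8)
The plan is to argue by induction on $k$, taking the base case $k=1$ to be Theorem~\ref{finiteorbits}. First I would check that the action is well defined: since $(Y,D)$ is generic, every $\gamma\in\Gamma(Y,D)$ permutes the interior exceptional curves on $Y$ (as noted in the Remark following the definition of $\Gamma(Y,D)$), and being an isometry it preserves the relations $E_i\cdot E_j=0$. Writing $\gamma(E)$ for the interior exceptional curve whose class is $\gamma([E])$, the formula $\gamma\cdot(E_1,\dots,E_k)=(\gamma(E_1),\dots,\gamma(E_k))$ defines an action of $\Gamma(Y,D)$ on $\mathcal{E}_k(Y,D)$.

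For the inductive step I would consider the $\Gamma(Y,D)$-equivariant forgetful map $\pi\colon \mathcal{E}_k(Y,D)\to \mathcal{E}_{k-1}(Y,D)$ dropping the last entry. By the standard counting principle for an equivariant map, the set of $\Gamma(Y,D)$-orbits of $\mathcal{E}_k(Y,D)$ lying over a fixed orbit of a tuple $\mathbf{E}'=(E_1,\dots,E_{k-1})$ is in bijection with the orbits, under the stabilizer $\Gamma_{\mathbf{E}'}\subseteq\Gamma(Y,D)$, of the set
$$S(\mathbf{E}')=\{E\ :\ E\text{ an interior exceptional curve},\ E\cdot E_i=0\text{ for }1\le i\le k-1\}.$$
Since there are only finitely many orbits on $\mathcal{E}_{k-1}(Y,D)$ by the inductive hypothesis, it suffices to prove that $S(\mathbf{E}')/\Gamma_{\mathbf{E}'}$ is finite for each $\mathbf{E}'$.

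To this end I would contract the disjoint interior exceptional curves $E_1,\dots,E_{k-1}$ simultaneously, obtaining an anticanonical pair $(\overline{Y},\overline{D})$; it is again generic, since a $-2$-curve on $\overline{Y}$ would have proper transform either a $-2$-curve on $Y$ (if it avoids the blown-up points) or an irreducible curve of self-intersection $\le-3$ (if not), both impossible by the classification of negative curves on $Y$. Under the orthogonal splitting $H^2(Y;\Zee)=H^2(\overline{Y};\Zee)\oplus\bigoplus_i\Zee[E_i]$ one has $\overline{\mathcal{A}}_{\text{\rm{gen}}}(\overline{Y})=\overline{\mathcal{A}}_{\text{\rm{gen}}}(Y)\cap\bigcap_i[E_i]^\perp$, exactly as in the proof of Theorem~\ref{finiteorbits} for a single curve, so the pushforward $E\mapsto\overline{E}$ identifies $S(\mathbf{E}')$ with the set $\mathcal{E}(\overline{Y},\overline{D})$ of interior exceptional curves on $\overline{Y}$. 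Restriction to $\bigcap_i[E_i]^\perp=H^2(\overline{Y};\Zee)$ then gives a homomorphism $\Gamma_{\mathbf{E}'}\to\Gamma(\overline{Y},\overline{D})$ intertwining the two actions; I would show it is an isomorphism, injectivity being immediate from the orthogonal splitting and surjectivity following as in Theorem~\ref{finiteorbits} by extending any $\bar\gamma\in\Gamma(\overline{Y},\overline{D})$ to the isometry of $H^2(Y;\Zee)$ fixing each $[E_i]$ and checking, via Lemma~\ref{chareff}, that it is admissible. Consequently $S(\mathbf{E}')/\Gamma_{\mathbf{E}'}\cong\mathcal{E}(\overline{Y},\overline{D})/\Gamma(\overline{Y},\overline{D})$, which is finite by Theorem~\ref{finiteorbits}. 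Summing these finitely many contributions over the finitely many orbits of $\mathcal{E}_{k-1}(Y,D)$ completes the induction. The main obstacle will be this last identification: one must verify that simultaneously contracting several disjoint exceptional curves behaves like the single-curve contraction of Theorem~\ref{finiteorbits}, i.e.\ that the stabilizer of the ordered tuple maps isomorphically onto $\Gamma(\overline{Y},\overline{D})$ compatibly with the bijection $S(\mathbf{E}')\cong\mathcal{E}(\overline{Y},\overline{D})$, which is precisely where the admissibility criterion of Lemma~\ref{chareff} does the real work.
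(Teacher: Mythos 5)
Your proposal is correct, but it is organized differently from what the paper intends. The paper's proof is a single line --- ``this follows by the same method as the proof of Theorem~\ref{finiteorbits}'' --- meaning the direct generalization: send an ordered $k$-tuple $(E_1,\dots,E_k)$ to the deformation type of the pair obtained by contracting \emph{all} the $E_i$ at once, check via Theorem~\ref{defequiv} that this descends to $\Gamma(Y,D)$-orbits, and prove injectivity by extending an admissible isometry of the contracted pairs to one of $H^2(Y;\Zee)$ sending $[E_i]$ to $[E_i']$, with admissibility verified by the nef-class argument of Lemma~\ref{chareff}. You instead run an induction on $k$: you fiber $\mathcal{E}_k(Y,D)$ over $\mathcal{E}_{k-1}(Y,D)$, identify the fiber $S(\mathbf{E}')$ with $\mathcal{E}(\overline{Y},\overline{D})$ for the $(k-1)$-fold contraction, and identify the stabilizer $\Gamma_{\mathbf{E}'}$ with $\Gamma(\overline{Y},\overline{D})$, so that Theorem~\ref{finiteorbits} can be quoted as a black box rather than having its proof repeated at level $k$. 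The technical heart is the same in both versions (the identity $\overline{\mathcal{A}}_{\text{\rm{gen}}}(\overline{Y})=\overline{\mathcal{A}}_{\text{\rm{gen}}}(Y)\cap\bigcap_i[E_i]^\perp$ under interior blowdown, and the Lemma~\ref{chareff} admissibility check for the extension-by-identity), but the bookkeeping differs: the paper's route gives an explicit injection of the orbit set into a finite set of deformation types, hence an explicit bound, while yours trades that for a cleaner reduction to the $k=1$ statement at the cost of the extra verifications you correctly flag --- that $(\overline{Y},\overline{D})$ is again generic (which is what makes $S(\mathbf{E}')\to\mathcal{E}(\overline{Y},\overline{D})$ bijective, since an exceptional curve on $\overline{Y}$ through a blown-up point would produce a forbidden curve of self-intersection $\leq -2$ meeting $D$ non-negatively on $Y$), and that restriction $\Gamma_{\mathbf{E}'}\to\Gamma(\overline{Y},\overline{D})$ is an isomorphism intertwining the actions. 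Both of these you handle correctly, so the argument goes through.
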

 \begin{proof} This follows by the same method as the proof of Theorem~\ref{finiteorbits}.
 \end{proof}
 
  \begin{corollary} The group $\Gamma(Y,D)$ acts on the set of roots $R_Y$ and the number of orbits for this action is finite.
 \end{corollary}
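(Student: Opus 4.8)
The plan is to build a $\Gamma(Y,D)$-equivariant surjection onto $R_Y$ from a $\Gamma(Y,D)$-invariant set of pairs of disjoint exceptional curves, and then to invoke the finiteness of orbits for such pairs established in the preceding corollary. First I would observe that both $R_Y$ (Definition~\ref{defroots}) and the group $\Gamma(Y,D)$ depend only on the deformation type of $(Y,D)$, so that the $\Gamma(Y,D)$-set $R_Y$ is, up to isomorphism, unchanged within the deformation class. Hence I may replace $(Y,D)$ by a generic pair, so that $Y$ carries no $-2$-curves and $\overline{\mathcal{A}}_{\text{gen}}(Y)=\overline{\mathcal{A}}(Y)$. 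That $\Gamma(Y,D)$ does act on $R_Y$ is exactly Corollary~\ref{preserveamp}(i).

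Next I would dispose of the exceptional case. If $Y\cong \mathbb{F}_0$ or $\mathbb{F}_2$ with $\Lambda$ of rank one, then any $\beta\in\Lambda$ with $\beta^2=-2$ generates $\Lambda$, so $R_Y=\{\pm\beta\}$ has at most two elements and the conclusion is immediate. So assume $Y$ is not of this form; then, by Theorem~\ref{maintheorem}, every root $\beta$ can be written on the generic $Y$ as $\beta=[E_1]-[E_2]$ with $E_1,E_2$ disjoint interior exceptional curves meeting the same component of $D$.

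For the main construction, let $\mathcal{E}_2^{\flat}(Y,D)\subseteq \mathcal{E}_2(Y,D)$ be the set of ordered pairs $(E_1,E_2)$ of disjoint interior exceptional curves meeting the same component $D_i$ of $D$ (recall every interior exceptional curve $E$ satisfies $E\cdot D=1$, hence meets a unique $D_i$). Since an element $\gamma\in\Gamma(Y,D)$ fixes each class $[D_i]$ and, on a generic $Y$, permutes the interior exceptional curves (by the Remark following the definition of $\Gamma$), both the disjointness condition and the ``same component'' condition are $\Gamma(Y,D)$-invariant, so $\mathcal{E}_2^{\flat}(Y,D)$ is a $\Gamma(Y,D)$-invariant subset of $\mathcal{E}_2(Y,D)$. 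I then define $\Psi\colon \mathcal{E}_2^{\flat}(Y,D)\to\Lambda$ by $\Psi(E_1,E_2)=[E_1]-[E_2]$. A direct computation gives $([E_1]-[E_2])^2=-2$ and $([E_1]-[E_2])\cdot[D_j]=0$ for all $j$, and by the Example following Theorem~\ref{mainprop} such a class is a root; thus $\Psi$ lands in $R_Y$. Since $\Psi(\gamma E_1,\gamma E_2)=\gamma\Psi(E_1,E_2)$, the map $\Psi$ is $\Gamma(Y,D)$-equivariant, and by the previous paragraph it is surjective onto $R_Y$.

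Finally I would conclude: a $\Gamma(Y,D)$-invariant subset of a set with finitely many $\Gamma(Y,D)$-orbits again has finitely many orbits, so $\mathcal{E}_2^{\flat}(Y,D)$ has finitely many orbits by the preceding corollary in the case $k=2$; since a surjective equivariant map carries orbits onto orbits, $R_Y=\Psi(\mathcal{E}_2^{\flat}(Y,D))$ has finitely many $\Gamma(Y,D)$-orbits. The step I expect to require the most care is the reduction to a generic pair: I must use the deformation invariance of both $R_Y$ and $\Gamma(Y,D)$ so that the representation $\beta=[E_1]-[E_2]$ furnished by Theorem~\ref{maintheorem}(ii) for a very general deformation actually produces genuine exceptional curves on the pair in hand, making $\Psi$ well defined and surjective.
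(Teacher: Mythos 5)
Your proof is correct and follows essentially the same route as the paper: after disposing of the case $Y\cong\mathbb{F}_0$ or $\mathbb{F}_2$, both arguments use the $\Gamma(Y,D)$-equivariant surjection $(E_1,E_2)\mapsto [E_1]-[E_2]$ from ordered pairs of disjoint exceptional curves meeting the same component of $D$ onto $R_Y$ (furnished by Theorem~\ref{maintheorem}), and then conclude from the finiteness of orbits on such pairs given by the preceding corollary. Your explicit reduction to a generic pair, using the deformation invariance of $R_Y$ and $\Gamma(Y,D)$, merely makes precise a step the paper leaves implicit.
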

 \begin{proof} The corollary is clearly true in case $Y =\mathbb{F}_0$ or $\mathbb{F}_2$. In all other cases, let $\mathcal{X}$ be the set of ordered pairs $(E_1, E_2)$, where $E_1$ and $E_2$ are disjoint exceptional curves and $[E_1]-[E_2]$ is a root, or equivalently $E_1\cdot D_i = 1$ $\iff$ $E_2\cdot D_i =1$. By Theorem~\ref{maintheorem}, the natural map $\mathcal{X}\to R_Y$ is surjective, and it is clearly equivariant for the action of $\Gamma(Y,D)$. By the previous corollary, the set $\mathcal{X}/\Gamma(Y,D)$ is finite, and hence $R_Y/\Gamma(Y,D)$ is finite as well.
 \end{proof}
 
 \begin{remark} In particular, if $R_Y$ is infinite, then $\Gamma(Y,D)$ is infinite as well. However, an example due to Gross-Hacking-Keel shows that, at least in the strictly negative semidefinite case, the converse need not hold (Corollary~\ref{negsemiexs}). In this example, $R_Y=\emptyset$. It is natural to look for negative definite examples, and to ask if (by analogy with the case of $K3$ surfaces), in case $R_Y$ is finite and nonempty,  must $\Gamma(Y,D)$ be finite as well?
 \end{remark}

  The next result is due to Looijenga \cite{Looij}:
 
 \begin{theorem} Suppose that $(Y,D)$ is negative definite and that $r(D)\leq 5$. Then $\Gamma(Y,D) = \mathsf{W}(R)$.
 \end{theorem}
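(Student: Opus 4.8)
The plan is to reduce the statement to a triviality assertion about automorphisms and then to an explicit root‑system computation. By the corollary to Theorem~\ref{autsofY}, applied to the distinguished pair $(Y_0,D_0)$ in the deformation type of $(Y,D)$ for which $\varphi_{Y_0}=1$, we have
$$\Gamma(Y,D) = \Gamma(Y_0, D_0) \cong \mathsf{W}(R) \rtimes (\Aut^+(Y_0, D_0)/K).$$
Since the inclusion $\mathsf{W}(R)\subseteq \Gamma(Y,D)$ always holds, the theorem is equivalent to the assertion that the complementary factor $A:=\Aut^+(Y_0,D_0)/K$ is trivial, i.e.\ that every orientation‑preserving automorphism of $(Y_0,D_0)$ fixing each $D_i$ acts trivially on $H^2(Y_0;\Zee)$.

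First I would give $A$ a geometric meaning. Since every element of $\Gamma$ fixes each $[D_i]$ and, in the negative definite case, $H^2(Y_0;\Q)=\bigl(\bigoplus_i\Q[D_i]\bigr)\oplus\Lambda_\Q$, the group $\Gamma$ acts faithfully on $\Lambda$, and by Lemma~\ref{detbyres} preserving $\overline{\mathcal{A}}_{\text{gen}}$ is equivalent to preserving $\overline{\mathcal{A}}_{\text{gen}}\cap\Lambda_\Ar$, a salient cone with nonempty interior in $\Lambda_\Ar$. The reflection group $\mathsf{W}(R)$ (which equals $\mathsf{W}(\Delta_{Y_0})$ because $\varphi_{Y_0}=1$) subdivides this cone by the root hyperplanes $W^\beta$, and $\overline{\mathcal{A}}(Y_0)\cap\Lambda_\Ar$ is a fundamental chamber by Corollary~\ref{funddomain}. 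By the standard structure theory of groups generated by reflections, $A$ is exactly the stabilizer of this chamber. It therefore permutes the walls of the chamber, preserving their type: the outer walls are the projections of the interior exceptional curves $E$ (these bound $\overline{\mathcal{A}}_{\text{gen}}$ itself), and the inner walls are the root hyperplanes $W^{[C]}$ of the $-2$‑curves $C$. Because $A$ fixes every $[D_i]$ and is an isometry, it permutes the exceptional curves while preserving the intersection numbers $E\cdot D_i$, that is, respecting which component of $D$ each meets; this is the action studied in Theorem~\ref{finiteorbits}. As the facet normals of a salient cone span $\Lambda_\Ar$, the map from $A$ to the group of type‑ and label‑preserving symmetries of this configuration of negative curves is injective.

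It remains to show that, for $r\le 5$ in the negative definite case, this configuration admits no nontrivial symmetry fixing the $[D_i]$. This is the combinatorial heart of the argument and the step where the hypothesis $r\le 5$ is essential. The finiteness that makes it a bounded problem is furnished by the earlier results: there are finitely many deformation types with a given self‑intersection sequence, and by Theorem~\ref{finiteorbits} $\Gamma$ has finitely many orbits on exceptional curves, so via Theorem~\ref{defequiv} one may pass to a generic member and analyze the chamber using the classification of minimal pairs (Theorem~\ref{minimalist}) together with Theorem~\ref{maintheorem}, which exhibits each root as a difference of two disjoint exceptional curves. I would then run through the negative definite self‑intersection sequences of length at most $5$, identify $R$ with an explicit root system following Looijenga, and check case by case that the only symmetry of the fundamental chamber fixing all the $D_i$‑walls is the identity, whence $A=1$. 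The main obstacle is precisely this case analysis: unlike the formal reduction above it is genuinely arithmetic, and the bound $r\le 5$ is sharp, since for larger $r$ the lattice $\Lambda$ becomes big enough to carry admissible isometries that are not products of root reflections, so that $A$ may be nontrivial.
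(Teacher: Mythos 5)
Your opening reduction is correct. By the (unlabeled) corollary to Theorem~\ref{autsofY}, $\Gamma(Y,D)=\mathsf{W}(R)\rtimes A$ with $A=\Aut^+(Y_0,D_0)/K$, and since the decomposition is a semidirect product (so $A\cap \mathsf{W}(R)=\{1\}$), the theorem is indeed equivalent to $A=\{1\}$; identifying $A$ with the stabilizer in $\Gamma(Y,D)$ of the chamber $\overline{\mathcal{A}}(Y_0)$ is also legitimate, via Corollary~\ref{funddomain} and the fact that a reflection group acts simply transitively on its chambers. This is a genuine, Torelli-based variant of the paper's first step, which instead quotes the characterization (from \cite{Looij}, or \cite[Theorem 3.13]{Fried3}) that an isometry preserving $\mathcal{C}^+$ and fixing the $[D_i]$ is admissible if and only if it preserves $R$, and then uses a fundamental domain $\mathcal{O}$ for $\mathsf{W}(R)$ to replace an arbitrary $\gamma$ by a chamber-preserving element $w\gamma$. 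Either way one arrives at the same residual problem: an isometry fixing every $[D_i]$ and preserving the fundamental chamber must be the identity.

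That residual problem, however, is the entire content of the theorem --- it is exactly where $r\le 5$ enters --- and your proposal does not solve it; it restates it and defers to a ``case by case'' check that cannot be run as described. There are infinitely many negative definite self-intersection sequences of length at most $5$ (the $d_i$ are unbounded below), so there is no finite list of lattices $\Lambda$ or root systems $R$ to enumerate; moreover the chamber $\overline{\mathcal{A}}(Y_0)$ may have infinitely many walls (negative definite pairs can carry infinitely many interior exceptional curves), and the finiteness results you invoke do not bound the stabilizer computation: Theorem~\ref{finiteorbits} concerns orbits of the very group $\Gamma(Y,D)$ one is trying to compute and says nothing about the stabilizer of a chamber. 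What the paper supplies, and what is missing here, is a uniform lattice-theoretic mechanism: Looijenga's root basis $B$ for $R$ is a finite diagram with exactly $r$ branches, the $i$-th branch attached to $D_i$ with length governed by $D_i^2$, so the chamber stabilizer consists of diagram automorphisms; a nontrivial one must interchange two branches $i\neq j$ with $D_i^2=D_j^2$; and the class $\varepsilon_i\in \Lambda^*/\Lambda$ dual to the end vertex of the $i$-th branch coincides with the image of the dual of $[D_i]$ in the discriminant form, so fixing all $[D_i]$ would force $\varepsilon_i=\varepsilon_j$ --- which an explicit (tedious but uniform in the $d_i$) computation rules out precisely when $r\le 5$. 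Without this discriminant-form argument, or a substitute for it, the outline does not yield the theorem. A minor additional point: your closing assertion that the bound $r\le 5$ is sharp is not established by anything in the paper, whose counterexamples occur at $r=7$ and $r=8$.
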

 \begin{proof} Let $\gamma \in \Gamma(Y,D)$. By \cite{Looij} or \cite[Theorem 3.13]{Fried3}, $\gamma \in \Gamma(Y,D)$ $\iff$ $\gamma(\mathcal{C}^+) =  \mathcal{C}^+$, $\gamma([D_i]) = [D_i]$ for all $i$, and $\gamma(R) = R$. Hence, if $\mathcal{O}$ is a fundamental domain for $\mathsf{W}(R)$, corresponding to a root basis $B$ as in \cite{Looij}, there exists a $w\in \mathsf{W}(R)$ such that $w\gamma(\mathcal{O}) =\mathcal{O}$, and hence $w\gamma$ is given by a diagram automorphism of the diagram corresponding to $B$. Moreover $w\gamma([D_i] =[D_i]$ for every $i$. We claim that this implies that $w\gamma=\Id$, and hence that $\gamma = w^{-1} \in \mathsf{W}(R)$.
 
 First note that, by the discussion of the root bases $B$ in \S2 of Part I of \cite{Looij}, the diagram of $B$ has $r$ branches  corresponding to the $r$ components of $D$, and the $i^{\text{th}}$ branch has length depending on the self-intersection number $D_i^2$. If there is a nontrivial diagram automorphism, then there exist $i\neq j$ such that $D_i^2 = D_j^2$ and the diagram automorphism maps the  $i^{\text{th}}$ branch to the $j^{\text{th}}$ branch. Hence, if $\varepsilon_i$ is the element in the finite discriminant form $\Lambda^*/\Lambda$ which is dual to the end component of the $i^{\text{th}}$ branch, then $\gamma(\varepsilon_i) = \varepsilon_j$. On the other hand, by the explicit descriptions in \cite[Part I, \S2]{Looij}, it is easy to see that the element $\varepsilon_i \in  \Lambda^*/\Lambda$ is identified with the dual to the element $[D_i]$ in the finite discriminant form associated to the span of the $[D_i]$. Since $\gamma([D_i]) = [D_i]$ for every $i$, this is only possible if $\varepsilon_i  = \varepsilon_j$. But a straightforward if tedious calculation in case $r\leq 5$ shows that, for $i\neq j$, the dual elements $\varepsilon_i$ and $\varepsilon_j$ are distinct elements of the finite discriminant form. Hence $w\gamma$ is the trivial diagram automorphism, and so is the identity on $\Lambda$. Since $w\gamma([D_i]) =[D_i]$ for every $i$, $w\gamma=\Id$, and hence $\gamma   \in \mathsf{W}(R)$.
 \end{proof}
 
 The next result shows that, in certain circumstances, $\Gamma(Y,D)$ is an arithmetic group:
 
 \begin{theorem}\label{automadm} Suppose that one of the following conditions holds:
 \begin{enumerate}
 \item[\rm(a)] $(Y,D)$ is not negative definite.
 \item[\rm(b)] $D^2=-1$.
 \end{enumerate}
 Then:
 \begin{enumerate}
 \item[\rm(i)]  $\Gamma(Y,D)=O^+(\Lambda)$. 
 \item[\rm(ii)] $R_Y = \{\beta \in \Lambda: \beta^2=-2\}$.
 \end{enumerate}
\end{theorem}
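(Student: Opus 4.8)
The plan is to prove (i) and (ii) simultaneously by reducing each to the single assertion that the relevant isometries \emph{permute the set of effective numerical exceptional curves}. Indeed, $\overline{\mathcal{A}}_{\text{\rm{gen}}}$ is cut out in $\mathcal{C}^+$ by the walls $[D_i]^\perp$ together with the walls $\alpha^\perp$ for $\alpha$ an effective numerical exceptional curve. An element $\gamma\in O^+(\Lambda)$ fixes every $[D_i]$ and preserves $\mathcal{C}^+$, so $\gamma$ is admissible iff it carries the set of effective--numerical--exceptional walls to itself. Likewise, for $\beta\in\Lambda$ with $\beta^2=-2$ the reflection $r_\beta(x)=x+(x\cdot\beta)\beta$ fixes each $[D_i]$ (since $\beta\perp[D_i]$) and preserves $\mathcal{C}^+$, so by the argument of Proposition~\ref{reflect} it follows that $\beta$ is a root iff $r_\beta$ permutes the effective numerical exceptional curves. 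As $\Gamma(Y,D)\subseteq O^+(\Lambda)$ and $R_Y\subseteq\{\beta\in\Lambda:\beta^2=-2\}$ by definition, it remains to establish the reverse inclusions, i.e.\ the permutation property for every such $\gamma$ and $r_\beta$.

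In case (a) this is immediate. After blowing down any exceptional components of $D$ (which affects neither $\Lambda$, $R_Y$, $\Gamma(Y,D)$ nor the signature, by the reduction used in the proof of Lemma~\ref{detbyres}), we may assume no component of $D$ is exceptional. Then Lemma~\ref{caseslemma}(i),(ii) tells us that a numerical exceptional curve $\alpha$ is effective $\iff\alpha\cdot v\geq 0$, where $v=[D_i]$ for a fixed component with $D_i^2\geq 0$ in the non--negative--semidefinite case and $v=[D]$ in the strictly negative semidefinite case. In either case $v$ lies in the span of the $[D_i]$, hence is fixed by every $\gamma\in O^+(\Lambda)$ and by every $r_\beta$; thus $\gamma(\alpha)\cdot v=\alpha\cdot v=r_\beta(\alpha)\cdot v$, so effectivity is preserved. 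This proves (i) and (ii) at once.

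Case (b) rests on the following key lemma: on a very general deformation $(Y,D)$ of the given type every numerical exceptional curve $\alpha\neq K_Y$ is effective. Granting it, if $\alpha$ is an effective numerical exceptional curve (so $\alpha\neq K_Y$, as $K_Y=-[D]$ is not effective), then $\gamma(\alpha)$ and $r_\beta(\alpha)$ are numerical exceptional curves different from $K_Y$ (both $\gamma$ and $r_\beta$ fix $K_Y$), hence effective; so both permutation properties hold. Since $\overline{\mathcal{A}}_{\text{\rm{gen}}}$, admissibility and the root property are deformation invariants (Lemma~\ref{definv}, Definition~\ref{defroots}), (i) and (ii) then follow for the whole deformation type. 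To prove the key lemma, suppose $\alpha\neq K_Y$ is numerical exceptional but not effective. From $\chi(L_\alpha)=1$ and $h^0(L_\alpha)=0$ we get $h^2(L_\alpha)=h^0(K_Y-\alpha)>0$, so $F:=K_Y-\alpha$ is effective and nonzero. Here $D^2=-1$ enters decisively: $F^2=D^2+1=0$ and $F\cdot D=-F\cdot K_Y=0$, whence $p_a(F)=1$. Analyzing $F$ as in Theorem~\ref{Lnotbig} produces a smooth elliptic curve $E$ with $E^2=E\cdot D=0$, necessarily disjoint from $D$; then $[E]\in\Lambda\setminus\{0\}$ and $\varphi_Y([E])=\psi(\scrO_Y(E)|D)=1$. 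But a very general $Y$ has $\varphi_Y$ injective (the locus of homomorphisms with nontrivial kernel is a countable union of proper subtori, avoided by a very general point in view of the surjectivity of the period map, Theorem~\ref{surjper}), forcing $[E]=0$, a contradiction.

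The main obstacle is precisely this last divisor--theoretic step: passing from the effective square--zero class $F$ with $F\cdot D=0$ and $p_a(F)=1$ to an honest smooth elliptic curve disjoint from $D$. One must control the fixed part of $|F|$ and rule out rigid genus--one configurations assembled from components of $D$ and exceptional curves; this is exactly where genericity (no $-2$--curves, and more sharply $\Ker\varphi_Y=0$) is needed. Once $E$ is extracted the contradiction with injectivity of $\varphi_Y$ is clean, so the entire weight of case (b) lies in the structure of square--zero effective divisors orthogonal to $D$ on a very general pair with $D^2=-1$.
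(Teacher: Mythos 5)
Your opening reduction and your case (a) are correct and essentially identical to the paper's argument: both reduce (i) and (ii) to showing that every $\gamma\in O^+(\Lambda)$ (in particular every reflection $r_\beta$ with $\beta\in\Lambda$, $\beta^2=-2$) permutes the effective numerical exceptional curves, and case (a) is then handled via Lemma~\ref{caseslemma}(i),(ii) exactly as in the paper. The gap is in case (b): your key lemma is \emph{false}, and not just on special members of the deformation type but on every one. Let $E$ be any exceptional curve on $Y$ (one exists, since $K_Y^2=D^2=-1<8$ forces $Y$ to be non-minimal); by adjunction $E^2=-1$ and $E\cdot D=1$. Set
$$\alpha \;=\; K_Y-[D]-[E]\;=\;-2[D]-[E].$$
Then $\alpha^2=4D^2+4\,[D]\cdot[E]+E^2=-4+4-1=-1$ and $\alpha\cdot K_Y=-\alpha\cdot[D]=-(2-1)=-1$, so $\alpha$ is a numerical exceptional curve; $\alpha\neq K_Y$, since otherwise $[D]+[E]=0$, impossible for a nonzero effective class; and $\alpha$ is not effective, since $-\alpha=2[D]+[E]$ is effective and nonzero. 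No genericity assumption removes this class, because $D$ and $E$ exist on every deformation. Tracing your own proof of the lemma with this $\alpha$: the divisor $F=K_Y-\alpha=D+E$ is effective with $F^2=F\cdot D=0$ and $p_a(F)=1$, but it is exactly one of the ``rigid genus-one configurations assembled from components of $D$ and exceptional curves'' you proposed to rule out. It cannot be ruled out: $D$ is a fixed component of $|D+E|$, so Theorem~\ref{Lnotbig} does not apply, no elliptic curve disjoint from $D$ can be extracted, and there is no conflict with $\Ker\varphi_Y=0$ because $\scrO_Y(D+E)|D$ is not the restriction of a divisor supported away from $D$. So the step you flagged as the ``main obstacle'' is not a technical difficulty to be overcome; the characterization ``effective $=$ numerical exceptional and $\neq K_Y$'' is simply not the correct $O^+$-invariant description of effectivity when $D^2=-1$.

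The repair is to stay in the lattice, which is what the paper does. Your Riemann--Roch computation does prove a true and useful dichotomy: for every numerical exceptional $\alpha$, at least one of $\alpha$, $K_Y-\alpha$ is effective, and never both (their sum is $K_Y=-[D]$). Since $\gamma$ fixes $K_Y$, it preserves each pair $\{\alpha,\,K_Y-\alpha\}$; the entire content of case (b) is to show $\gamma$ cannot swap the effective and non-effective members, and the dichotomy alone cannot do this. The paper's observation is that, because $D^2=-1$, the class $\alpha+[D]=-(K_Y-\alpha)$ is isotropic, and for $\alpha$ not in the $\Zee$-span of the $[D_i]$ (the only classes that matter, since an exceptional curve whose class lies in that span is a component of $D$), Lemma~\ref{caseslemma}(iii) combined with the light cone lemma, applied to a nef and big class $x\in\Lambda_\Ar$ as in Proposition~\ref{constructnef} (so that $x\cdot(\alpha+[D])=x\cdot\alpha$), gives: $\alpha$ is effective $\iff$ $\alpha+[D]$ lies on the boundary of $\mathcal{C}^+$ rather than of $-\mathcal{C}^+$. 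That condition is visibly preserved by every $\gamma\in O^+(\Lambda)$, since $\gamma([D])=[D]$ and $\gamma(\mathcal{C}^+)=\mathcal{C}^+$, which finishes (i), and then (ii) follows as in your first paragraph. In your counterexample the criterion is consistent: $\alpha+[D]=-(D+E)$ lies on the boundary of $-\mathcal{C}^+$, confirming $\alpha$ is the non-effective member of its pair.
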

 \begin{proof} Note that, if (i) holds and $\beta \in \Lambda$ satisfies $\beta^2=-2$, then the reflection $r_\beta\in O^+(\Lambda)$. Thus, by (i), $r_\beta \in \Gamma(Y,D)$, and hence $\beta \in R_Y$ by definition. Thus it suffices to prove (i). Moreover, it suffices to prove that, for every $\gamma\in O^+(\Lambda)$ and for every effective numerical exceptional curve $\alpha$, $\gamma(\alpha)$ is again an effective numerical exceptional curve. Clearly $\gamma(\alpha)^2 = \gamma(\alpha)\cdot [K_Y] = -1$, since $\gamma([K_Y]) = [K_Y]$. Thus it suffices to prove in either case (a) or case (b) that $\gamma(\alpha)$ is effective.  By a straightforward reduction,   we may further assume that no component of $D$ is an exceptional curve.  
 
 First suppose that $(Y,D)$ is not negative semidefinite. Then there is a component $D_i$ of $D$ such that $D_i^2\geq 0$. By (i) of Lemma~\ref{caseslemma}, a numerical exceptional curve $\alpha$ is effective $\iff$ $[D_i] \cdot \alpha \geq 0$ $\iff$ $[D_i] \cdot \gamma(\alpha) \geq 0$, since $\gamma([D_i]) =[D_i]$. Thus $\alpha$ is effective $\iff$ $\gamma(\alpha)$ is effective. 
  A similar argument works in case $D$ is strictly negative semidefinite, using (ii) of Lemma~\ref{caseslemma}.
  
  Finally assume that $(Y,D)$ is negative definite and that $D^2=-1$. As in the proof of Lemma~\ref{detbyres}, it suffices to show that $\alpha$ preserves the set of all effective numerical exceptional curves which are not in the $\Zee$-span of the $[D_i]$. Fix a nef and big  $\Ar$-divisor $x$ in $\Lambda_\Ar$ such that $x\cdot [C]> 0$ for all irreducible curves $C$ not equal to $D_i$ for some $i$. By (iii) of Lemma~\ref{caseslemma}, such a numerical exceptional curve $\alpha$ is effective $\iff$ $\alpha \cdot x \geq 0$. Now 
  $$(\alpha+[D])^2 = -1+2-1 =0,$$
  and hence $\alpha + [D]$ lies on the boundary of $\mathcal{C}$, the positive cone. Moreover, by the light cone lemma, $\alpha + [D]$ lies on the boundary of $\mathcal{C}^+$  $\iff$ $(\alpha + [D])\cdot x > 0$ $\iff$ $(\alpha + [D])\cdot x \geq  0$ $\iff$ $\alpha\cdot x \geq 0$ $\iff$ $\alpha$ is effective. If $\gamma\in O^+(\Lambda)$, $\alpha + [D]$ lies on the boundary of $\mathcal{C}^+$  $\iff$ $\gamma(\alpha + [D])$ lies on the boundary of $\mathcal{C}^+$. Hence $\gamma$ preserves the set of all effective numerical exceptional curves which are not in the $\Zee$-span of the $[D_i]$, so that $\gamma (\overline{\mathcal{A}}_{\text{\rm{gen}}} ) = \overline{\mathcal{A}}_{\text{\rm{gen}}}$.
  \end{proof}
 
 We next consider the case where $D$ is strictly negative semidefinite: $D =\sum_{i=0}^{r-1}D_i$,  where $D_i^2=-2$ for all $i$, and $r>1$. Then $D^2=0$, and either $\scrO_Y(D)|D$ is nontrivial (i.e.\ $\varphi_Y([D]) \neq 1$) and $h^0(Y; \scrO_Y(D)) = 1$, or $\scrO_Y(D)|D \cong \scrO_D$ (i.e.\ $\varphi_Y([D]) = 1$), $h^0(Y; \scrO_Y(D)) = 2$,  and $Y$ is a rational elliptic surface. In any case it is easy to check that $Y$ is the blowup of $\Pee^2$ at $9$ points and that $[D]^\perp/\Zee[D] \cong (-E_8)$. In fact, if $\sigma_0$ is an exceptional curve and $\sigma _0 \cdot D_0 = 1$, then 
$$H^2(Y, \Zee) \cong \Zee[D] \oplus \Zee[\sigma_0] \oplus \{[D], [\sigma_0]\}^\perp,$$
where $\{[D], [\sigma_0]\}^\perp \cong -E_8$ is the unique even, negative definite unimodular lattice of rank $8$ up to isometry.

\medskip
\noindent\textbf{Convention:} To avoid an endless number of signs in what follows, we shall take as a convention that root lattices are \textbf{negative definite}. In particular, every root in a root lattice has square $-2$. Thus, we will denote by $E_8$ what was denoted $-E_8$ above.

\medskip
The possible deformation types of the pairs $(Y,D)$ correspond to embeddings of the lattice $A_{r-1}$ into $E_8$:

\begin{proposition} Let $(Y,D)$ be an anticanonical pair of length $r$ and such that $D_i^2 = -2$ for all $i$. Then there is a bijection between the set of deformation types of the pairs $(Y,D)$ and the set of embeddings of the root lattice  of type $A_{r-1}$ into that of type $E_8$ modulo the action of the Weyl group of $E_8$.
\end{proposition}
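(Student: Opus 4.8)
The plan is to make the correspondence explicit, establish well-definedness and injectivity by lattice theory together with the Torelli-type rigidity already available in this case, and then reduce surjectivity to a geometric realization problem. First I would construct the map. Since $D_i\cdot K_Y = -2 - D_i^2 = 0$, each class $[D_i]$ lies in $K_Y^\perp$, and because $[D]=-K_Y$ lies in the radical of $K_Y^\perp$, the image $\overline{[D_i]}$ of $[D_i]$ in $K_Y^\perp/\Zee[D]\cong E_8$ is well defined and has square $-2$. The relations $[D_i]\cdot[D_{i\pm1}]=1$, $[D_i]\cdot[D_j]=0$ otherwise, and $\sum_i[D_i]=[D]$ show that $\overline{[D_0]},\dots,\overline{[D_{r-1}]}$ are the vertices of a $\widetilde{A}_{r-1}$ cycle with $\sum_i\overline{[D_i]}=0$; hence any $r-1$ of them are the simple roots of a copy of $A_{r-1}\subseteq E_8$ (note $r-1\le 8$, consistent with $r\le 9$). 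Fixing an abstract isometry $K_Y^\perp/\Zee[D]\cong E_8$, which is canonical up to $\operatorname{Aut}(E_8)=\mathsf{W}(E_8)$ since $-\Id\in\mathsf{W}(E_8)$, produces the embedding $A_{r-1}\hookrightarrow E_8$ attached to $(Y,D)$, well defined modulo $\mathsf{W}(E_8)$.

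To see the map descends to deformation types, note that if $(Y,D)$ and $(Y',D')$ are deformation equivalent then by Theorem~\ref{defequiv} there is an admissible isometry $\gamma$ with $\gamma([D_i'])=[D_i]$, hence $\gamma([D'])=[D]$ and $\gamma(K_{Y'})=K_Y$; thus $\gamma$ induces an isometry of $E_8$ carrying one embedding to the other, so the two embeddings agree modulo $\mathsf{W}(E_8)$. For injectivity, suppose the embeddings attached to $(Y,D)$ and $(Y',D')$ are $\mathsf{W}(E_8)$-equivalent. I would identify $H^2(Y;\Zee)$ and $H^2(Y';\Zee)$ with the fixed unimodular lattice $I_{1,9}$ so that $K_Y,K_{Y'}$ both map to a fixed primitive isotropic $K_0$ (all such vectors form a single $O(I_{1,9})$-orbit), reducing the problem to the group $O(I_{1,9})_{K_0}$ of isometries fixing $K_0$. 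This group sits in an exact sequence $1\to T\to O(I_{1,9})_{K_0}\to \mathsf{W}(E_8)\to 1$, where $T\cong E_8$ is generated by the Eichler transvections $E(K_0,v)$, $v\in K_0^\perp$, acting on $K_0^\perp$ by $x\mapsto x+(x\cdot v)K_0$. Lifting a $\mathsf{W}(E_8)$-element realizing the matching of embeddings gives $\gamma_0$ with $\gamma_0([D_i'])=[D_i]+c_iK_0$; since $\sum_i\overline{[D_i]}=0$ forces $\sum_ic_i=0$, the assignment $\overline{[D_i]}\mapsto c_i$ is a well-defined element of $A_{r-1}^{\ast}$, and by unimodularity of $E_8$ the restriction $E_8=E_8^{\ast}\to A_{r-1}^{\ast}$ is surjective, so there is $v\in K_0^\perp$ with $\overline{[D_i]}\cdot v=c_i$. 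Correcting $\gamma_0$ by $E(K_0,v)$ yields $\gamma$ with $\gamma([D_i'])=[D_i]$ and $\gamma(K_0)=K_0$. Since $\gamma$ fixes $[D]=-K_Y$, it preserves $\mathcal{C}^+$, and since effectivity of a numerical exceptional class here is governed solely by its pairing with $[D]$ (Lemma~\ref{caseslemma}(ii), exactly as in the proof of Theorem~\ref{automadm}(a)), $\gamma$ is admissible. By Theorem~\ref{defequiv}, $(Y,D)$ and $(Y',D')$ are then deformation equivalent.

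Finally, for surjectivity, I would realize a given embedding $A_{r-1}\hookrightarrow E_8$ geometrically. Because the embedding is a deformation invariant that does not involve $\varphi_Y$, and because the period map is surjective (Theorem~\ref{surjper}), within any prospective deformation type I may arrange $\varphi_Y([D])=1$, so that $\scrO_Y(D)|D\cong\scrO_D$, $h^0(\scrO_Y(D))=2$, and $Y$ is a rational elliptic surface with $D$ an $I_r$ fiber. The question thus reduces to: every embedding $A_{r-1}\hookrightarrow E_8$ (modulo $\mathsf{W}(E_8)$) is cut out by an $I_r$ fiber of some rational elliptic surface. I would lift the chosen simple roots $\overline\alpha_0,\dots,\overline\alpha_{r-2}$ and $\overline\alpha_{r-1}=-\sum_{i<r-1}\overline\alpha_i$ to classes $[D_i]\in I_{1,9}$ with $\sum_i[D_i]=-K$, and then produce an honest cycle of smooth rational curves in these classes, the existence of the surface following from the realizability of anticanonical cycles in a minimal model (Theorem~\ref{minimalist}, Corollary~\ref{cor3.5}) together with the surjectivity of the period map forcing the prescribed classes to be represented by $(-2)$-curves. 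I expect this realization to be the main obstacle, as it requires passing from a purely lattice-theoretic embedding back to an actual configuration of curves; the cleanest route is to invoke the classical correspondence (Oguiso–Shioda) between sublattices of $E_8$ and configurations of singular fibers of rational elliptic surfaces, which guarantees that every $A_{r-1}\hookrightarrow E_8$ arises from an $I_r$ fiber.
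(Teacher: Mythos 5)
Your construction of the map, its deformation invariance (via Theorem~\ref{defequiv}), and your injectivity argument in the \emph{primitive} cases are correct, and the latter is genuinely different from the paper's route: you use pure lattice theory (lifting a Weyl element of $E_8$ to $O(I_{1,9})_{K_0}$ and correcting by Eichler transvections), then admissibility via Lemma~\ref{caseslemma}(ii) exactly as in Theorem~\ref{automadm}(a), then Theorem~\ref{defequiv}; the paper instead identifies rational elliptic surfaces with a fixed generic fiber $F$ with $\Hom(E_8,F)/\mathsf{W}(E_8)$ and proves connectedness of the locus $\{\varphi:\Ker\varphi=\Lambda'\}$. However, your key step contains a false claim: unimodularity of $E_8$ does \emph{not} make the restriction $E_8=E_8^{\ast}\to A_{r-1}^{\ast}$ surjective. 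From $0\to\Lambda'\to E_8\to E_8/\Lambda'\to 0$ one gets that the cokernel of $E_8^{\ast}\to(\Lambda')^{\ast}$ is $\operatorname{Ext}^1(E_8/\Lambda',\Zee)$, i.e.\ dual to the torsion of $E_8/\Lambda'$; surjectivity holds only when $\Lambda'$ is primitive in $E_8$. By Proposition~\ref{semidefcomps}(iii) this torsion is $\Zee/2\Zee$ for the imprimitive $A_7\subseteq E_8$ ($r=8$) and $\Zee/3\Zee$ for $A_8\subseteq E_8$ ($r=9$). Concretely, if $\xi\in E_8\setminus\Lambda'$ satisfies $m\xi=\sum_i a_i\overline{[D_i]}$, then every $v\in E_8$ has $\sum_i a_i(\overline{[D_i]}\cdot v)=m(\xi\cdot v)\equiv 0 \pmod m$, so a tuple $(c_i)$ with $\sum_i c_i=0$ but $\sum_i a_ic_i\not\equiv 0\pmod m$ cannot be corrected by any Eichler transvection (and such tuples exist, e.g.\ for $r=9$ the $a_i$ reduce to $(2,1,0,2,1,0,2,1,0)$ mod $3$). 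Thus your injectivity argument breaks down in exactly the two imprimitive cases --- which are precisely the cases the paper's proof flags as requiring ``a somewhat more involved argument,'' carried out there using the $SL_2(\Zee)$-monodromy action on the $m$-torsion points of $F$. Repairing your argument would require showing that the residue $\sum_i a_ic_i\bmod m$ vanishes for a suitable choice of the Weyl element $w$, equivalently that a certain mod-$m$ height invariant agrees for any two geometric configurations with equivalent embeddings; this is genuinely additional content and does not follow from lattice unimodularity.

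There is also a gap on the surjectivity side. Your self-contained fallback does not work as sketched: Theorem~\ref{surjper} concerns deformations of a pair that already exists, and Theorem~\ref{mainprop}(iii) converts \emph{roots}, i.e.\ classes in $\Lambda=\{[D_i]\}^{\perp}$, into $-2$-curves --- neither can produce an anticanonical cycle whose \emph{component} classes are prescribed elements of $I_{1,9}$. So as written, surjectivity rests entirely on the external citation of the classification of $I_r$-fiber configurations, whereas the paper's argument is self-contained: given $\Lambda'$, the locus $\{\varphi\in\Hom(E_8,F):\Ker\varphi=\Lambda'\}$ is a dense open (in particular nonempty) subset of $\Hom(E_8/\Lambda',F)$, and the corresponding elliptic surface has an $I_r$ fiber realizing the embedding. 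Finally, a small but genuine imprecision: primitive isotropic vectors of $I_{1,9}$ do \emph{not} form a single $O(I_{1,9})$-orbit (the quotient $v^{\perp}/\Zee v$ can be the odd lattice $\langle -1\rangle^{\oplus 8}$ as well as $E_8$); what you need is that the vectors with $v^{\perp}/\Zee v\cong E_8$ form a single orbit, or more simply that $(H^2(Y;\Zee),K_Y)\cong(I_{1,9},K_{\mathrm{std}})$ for every rational surface via a choice of exceptional configuration.
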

\begin{proof} (Sketch.) Given a deformation type of $(Y,D)$, it is easy to check that $[D]\in \Lambda$ is a primitive isotropic vector and that $[D]^\perp/\Zee\cdot [D] \cong E_8$. Thus, there is an associated embedding of the lattice 
$$(\Zee[D_1]+ \cdots + \Zee[D_r])/\Zee\cdot [D] \cong A_{r-1}$$
 into $E_8$, well-defined up to the choice of an isomorphism from $[D]^\perp/\Zee\cdot [D]$ to $E_8$, and hence well-defined up to the group of isometries of $E_8$, which is $\mathsf{W}(E_8)$.

To go the other way, we must show that an embedding of the root lattice of type $A_{r-1}$ into $E_8$, modulo isometries of $E_8$, determines a deformation type of pairs $(Y,D)$ consistent with the above construction. Note that, if $(Y,D)$ is such a pair and $\varphi_Y([D]) =1$, then  $Y$ is a rational elliptic surface and  $D$ is a fiber of type I${}_r$ in Kodaira's notation. Hence the $j$-invariant of $Y$ is non-constant.  Fixing a smooth elliptic curve $F$ with generic $j$-invariant, the space of rational elliptic surfaces with a fiber isomorphic to $F$ is identified with $\Hom(E_8, F)/\mathsf{W}(E_8)$ by analogy with the period map for anticanonical pairs: Identify $E_8$ with $[F]^\perp/\Zee\cdot [F]$, where $[F]\in H^2(Y; \Zee)$. Then there is a well defined homomorphism $\varphi_{(Y,F)}\colon [F]^\perp \to J^0(F) \cong F$, defined on line bundles $L$ such that $\deg(L|F) =0$ by sending $L$ to $L|F$, and it descends to  $[F]^\perp/\Zee\cdot [F]$ since $\scrO_Y(F)|F$ is trivial. 

Let $\iota$ be an embedding of the root lattice  of type $A_{r-1}$ into $E_8$, with image $\Lambda'$. If $\varphi\in \Hom(E_8, F)$ satisfies $\Ker \varphi =\Lambda'$, then it is straightforward to check that the corresponding elliptic surface has a (unique) fiber of type I${}_r$, and thus determines a deformation type of pairs $(Y,D)$. Moreover, there is an exact sequence
$$0\to \Hom(E_8/\Lambda', F) \to \Hom(E_8, F) \to \Hom (\Lambda', F).$$
If $\Lambda'$ is a primitive sublattice of $E_8$, then $\Hom(E_8/\Lambda', F)$ is connected and the space of   $\varphi\in \Hom(E_8, F)$ such that $\Ker \varphi =\Lambda'$ is a dense open subset of $\Hom(E_8/\Lambda', F)$ and is thus connected as well. In this case, the embedding $\iota$ determines a unique deformation type. If $\Lambda'$ is not a primitive sublattice of $E_8$, then in any event the torsion subgroup of $E_8/\Lambda'$ is cyclic of order $m$, say. A somewhat more involved argument with the  action of $SL_2(\Zee)$, the monodromy group for the ``universal" family of elliptic curves,
on the set of torsion points of $F$ of order exactly $m$   then handles this case.
\end{proof}

We next classify the embeddings of the root lattice  of type $A_{r-1}$ into that of type $E_8$.

\begin{proposition}\label{semidefcomps} {\rm(i)} For $r\leq 9$ and $r\neq 8$, there is a unique embedding of the root lattice corresponding to $A_{r-1}$ into the root lattice of $E_8$ up to the action of $\mathsf{W}(E_8)$, the automorphism group of the $E_8$ lattice. For $r=8$, there are two possible embeddings of the root lattice of $A_7$ in the root lattice of $E_8$ up to the action of $\mathsf{W}(E_8)$, exactly one of which is primitive.

\smallskip
\noindent {\rm(ii)} The set of elements of square $-2$ in the orthogonal complement to the root lattice of $A_{r-1}$ into the root lattice of $E_8$ spans the orthogonal complement over $\Zee$, unless $r=7$ or $r=8$ and for the primitive embedding of the root lattice of type $A_7$ into the root lattice of $E_8$. The possible root systems are as follows: 
\medskip

\begin{tabular}{|l|c|}\hline
r, type of embedding & type of root system in $A_{r-1}^\perp$ \\ 
\hline
\hline
$r=2$ & $E_7$ \\ \hline
$r=3$ & $E_6$ \\ \hline
$r=4$ & $D_5$ \\ \hline
$r=5$ & $A_4$ \\ \hline
$r=6$ & $A_1+A_2$ \\ \hline
$r=7$ & $A_1$ \\ \hline
$r=8$, the primitive embedding & $\emptyset$ \\ \hline
$r=8$, the imprimitive embedding & $A_1$ \\ \hline
$r=9$  & $\emptyset$ \\ \hline
\end{tabular}

\medskip
\noindent {\rm(iii)} The torsion subgroup of the quotient of $E_8$ by the image of the root lattice of $A_{r-1}$ is isomorphic to $\Zee/2\Zee$, for $r=8$ and the imprimitive embedding, to $\Zee/3\Zee$, for $r=9$, and is trivial in all other cases.
\medskip

\end{proposition}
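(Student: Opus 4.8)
The plan is to recast the entire statement as a problem about sub-root-systems of $E_8$ and then extract the arithmetic using Nikulin's theory of discriminant forms.

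\textbf{Reduction to subsystems.} Since $E_8$ is an even lattice whose only vectors of square $-2$ are its $240$ roots, and since the root lattice $A_{r-1}$ is generated by vectors of square $-2$, any lattice embedding $A_{r-1}\hookrightarrow E_8$ must carry roots to roots; its image is therefore a sub-root-system of $E_8$ of type $A_{r-1}$, and conversely every such subsystem gives an embedding. Thus classifying embeddings modulo $\mathsf{W}(E_8)$ is the same as classifying sub-root-systems of type $A_{r-1}$ modulo $\mathsf{W}(E_8)$ (the reflections in the roots of the subsystem lie in $E_8$ and already realize $\mathsf{W}(A_{r-1})$, so it is the image and not the labeling that matters). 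Part (i) is then precisely Dynkin's classification of the type-$A$ subsystems of $E_8$: a single conjugacy class for each $r\neq 8$ with $r\leq 9$, and exactly two for $r=8$. I would either quote Dynkin or reprove this inductively, using that $\mathsf{W}(E_8)$ is transitive on roots and, at each stage, controlling the ways to enlarge an $A_k$-subsystem by analyzing the root system of its orthogonal complement.

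\textbf{Orthogonal complements.} To identify the roots inside $A_{r-1}^\perp$ I would run the Borel--de Siebenthal algorithm on the affine diagram $\tilde E_8$, whose marks are $1,2,3,4,5,6,4,2,3$: deleting a node of prime mark produces the maximal-rank subsystems $A_1+E_7$, $D_8$, $A_2+E_6$, $A_8$, and $A_4+A_4$, and iterating produces the remaining ones ($A_3+D_5$, $A_5+A_2+A_1$, and so on). Reading off the summand complementary to $A_{r-1}$ yields the table. For $r\leq 6$ the complement is a genuine root lattice ($E_7,E_6,D_5,A_4,A_1+A_2$ respectively), which by definition is spanned over $\Zee$ by its roots; this settles the spanning claim of (ii) in those cases, and likewise for the imprimitive $A_7$ (complement $A_1$) and, vacuously, for $r=9$ (complement $0$).

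\textbf{Discriminant bookkeeping.} For the two exceptional cases in (ii) and for the torsion in (iii) I would pass to the discriminant form. Writing $S=A_{r-1}$ and letting $S^{\mathrm{sat}}$ be its primitive closure in the unimodular lattice $E_8$, one has that the torsion subgroup of $E_8/S$ is isomorphic to $S^{\mathrm{sat}}/S$, and comparison of discriminants gives $|S^{\mathrm{sat}}/S|^2=\operatorname{disc}(A_{r-1})/|\operatorname{disc}(A_{r-1}^\perp)|=r/|\operatorname{disc}(A_{r-1}^\perp)|$. In particular $|S^{\mathrm{sat}}/S|^2$ divides $r$, so nontrivial torsion can occur only when $r$ is divisible by a perfect square, that is, only for $r=8=2^3$ and $r=9=3^2$; this explains at once why all other complements are primitive. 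The complement has rank $9-r$, and together with its root system this pins it down: for $r=7$ it is a rank-$2$ lattice of discriminant $7$ whose only roots form an $A_1$, while for the primitive $A_7$ it is the rank-one lattice $\langle -8\rangle$, which has no roots at all; in both cases the roots fail to span, which are exactly the exceptions in (ii). Feeding the discriminants into the formula gives the torsion: trivial in general, $\Zee/2\Zee$ for the imprimitive $A_7$ (where $|S^{\mathrm{sat}}/S|^2=8/2=4$), and $\Zee/3\Zee$ for $A_8$, which has index $3$ in $E_8$ so that the whole quotient is torsion.

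\textbf{The main obstacle.} The genuinely delicate point is the conjugacy classification underlying (i): proving there is a single $\mathsf{W}(E_8)$-orbit of $A_{r-1}$-subsystems for each $r\neq 8$ and exactly two for $r=8$, with the two $A_7$-embeddings distinguished by whether the complement contains a root (equivalently, by primitivity). Nikulin's uniqueness theorems are cleanest for \emph{indefinite} overlattices and do not apply directly to the definite lattice $E_8$, so this step really rests on Dynkin's tables or a careful transitivity argument with the Weyl group. Everything else---the Borel--de Siebenthal reading of complements and the rank-and-discriminant computations---is routine once the conjugacy classes are in hand.
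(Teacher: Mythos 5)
Your core reduction is the same as the paper's: the paper also classifies the embeddings by viewing them as closed subsystems produced by the Borel--de Siebenthal procedure applied to $\widetilde{E}_8$, handles $r\neq 8$ via the fact that type-$A$ inclusions into type-$A$ root lattices are Weyl equivalent, and exhibits the two $A_7$'s exactly as you do (the subdiagram of $E_8$ obtained by deleting $\alpha_2$, and the subdiagram $A_7+A_1$ of $\widetilde{E}_8$ obtained by deleting $\alpha_3$). Where you genuinely differ is in parts (ii) and (iii): the paper proves primitivity of the first $A_7$ by observing that it is spanned by part of a $\Zee$-basis of $E_8$, imprimitivity of the second by factoring it through $A_7\subset E_7$ of index two, and simply asserts the index three for $A_8$; your saturation formula $|S^{\mathrm{sat}}/S|^2=\operatorname{disc}(A_{r-1})/|\operatorname{disc}(A_{r-1}^\perp)|=r/|\operatorname{disc}(A_{r-1}^\perp)|$ derives all of (iii), the primitivity statements, and the two exceptional complements in (ii) (the rank-two discriminant-$7$ lattice for $r=7$, and $\langle -8\rangle$ for the primitive $A_7$) by uniform arithmetic. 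That is cleaner and more complete than what the paper writes down, at the cost of quoting standard facts about saturations and discriminants in unimodular lattices; the delicate conjugacy classification in (i) rests, in your write-up and in the paper's alike, on the Dynkin/Borel--de Siebenthal tables.

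Two small repairs are needed. First, your shortcut ``nontrivial torsion can occur only when $r$ is divisible by a perfect square, that is, only for $r=8$ and $r=9$'' overlooks $r=4$: since $4=2^2$, the divisibility argument alone does not rule out torsion there, and you must actually feed $\operatorname{disc}(D_5)=4$ into your formula to get $|S^{\mathrm{sat}}/S|^2=4/4=1$; so primitivity for $r=4$ does not follow ``at once,'' though it does follow from the same formula plus the table. Second, your parenthetical reducing embeddings to their images invokes only that reflections in the image roots realize $\mathsf{W}(A_{r-1})$; since $\Aut(A_{r-1})=\pm\mathsf{W}(A_{r-1})$ for $r\geq 3$, two embeddings with the same image can also differ by the diagram flip, and to absorb it you need an element of $\mathsf{W}(E_8)$ restricting to $-\Id$ on the image (for instance $-\Id$ itself, which lies in $\mathsf{W}(E_8)$). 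Both are one-line fixes and do not affect the structure of your argument.
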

\begin{proof} By the Borel-de Siebenthal procedure, every embedding  of the root lattice corresponding to $A_{r-1}$ in that of $E_8$ is obtained by an inclusion of the Dynkin diagram of $A_{r-1}$ in the extended Dynkin diagram $\widetilde{E}_8$. For $r\neq 8$, using the easy fact that every two inclusions of a root lattice of type $A_k$ in one of type $A_\ell$ are Weyl equivalent, it is easy to check that every two such inclusions are equivalent under the action of $\mathsf{W}(E_8)$.

In case $r=8$,  there are two possible embeddings of the root lattice of $A_7$ in the root lattice of $E_8$. The first one corresponds to the subdiagram  $A_7$ of $E_8$ obtained by deleting the vertex corresponding to $\alpha_2$ (in the notation of Bourbaki) and is a primitive embedding. The second corresponds to the subdiagram $A_7 + A_1$ of the extended Dynkin diagram $\widetilde{E}_8$, obtained by deleting the vertex corresponding to $\alpha_3$. In the first case, there is no root in the orthogonal complement of the $A_7$ lattice, while in the second case the orthogonal complement of the $A_7$ lattice contains the roots $\pm \alpha_1$. Note also that, in the first case, the embedding of the $A_7$ lattice is primitive, since it is spanned by a subset of a $\Zee$-basis for the $E_8$ lattice. In the second case, the inclusion of the $A_7$ lattice factors through the inclusion of the $A_7$ lattice in the $E_7$ lattice (again given by the Borel-de Siebenthal procedure) and is thus of index two in its saturation. Finally, when $r=9$, the embedding of the $A_8$ lattice in the $E_8$ lattice has index three. 
\end{proof}

\begin{remark} In the case where $Y$ is elliptic, the list of possibilities in Proposition~\ref{semidefcomps} has been enumerated by Miranda-Persson in a more precise form \cite[Theorem 4.1]{MirandaPersson}.
\end{remark}

Next we describe the relevant lattice theory. Fixing an exceptional curve $\sigma_0$ as above, let $\Lambda_0 = \{[D], [\sigma_0]\}^\perp \cong E_8$. Let $\delta_0, \dots, \delta_{r-1}$ be the images of $[D_i]$ in $\Lambda_0$ under the projection, so that $\delta_0+\dots + \delta_{r-1} = 0$, and let $\overline{\Lambda}$ be the orthogonal complement in $\Lambda_0$ of $\delta_1, \dots, \delta_{r-1}$.

\begin{lemma}\label{detailsinsemidef} With notation as above,
\begin{enumerate}
\item[\rm(i)] $\Lambda =\Lambda(Y,D) = \Zee [D]\oplus \overline{\Lambda}$ as an orthogonal direct sum.
\item[\rm(ii)] The group $K = K(Y,D)$ of Definition~\ref{defsomeauts} is trivial unless $r=8$ and the embedding of the $E_7$ lattice is not primitive, in which case $K\cong \Zee/2\Zee$, or $r=9$, in which case $K\cong \Zee/3\Zee$. 
\item[\rm(iii)] If $\Gamma = \Gamma(Y,D)$ is the group of admissible isometries of $Y$, then there is an injection $\iota \colon \overline{\Lambda} \to \Gamma$, such that, after identifying $\overline{\Lambda}$ with its image in $\Gamma$, 
$$\Gamma = \overline{\Lambda} \rtimes G,$$
where $G$ is the (finite) group of integral isometries of $\overline{\Lambda}$ which are induced from an integral isometry of $\Lambda_0$, and the action of $G$ on $\overline{\Lambda}$ is the standard one.
\end{enumerate}
\end{lemma}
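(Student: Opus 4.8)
The plan is to reduce everything to the orthogonal splitting $\widehat\Lambda = H^2(Y;\mathbb{Z}) = U\oplus\Lambda_0$ with $U = \mathbb{Z}[D]\oplus\mathbb{Z}[\sigma_0]$, and then to read off each assertion from the position of the classes $[D_i]$ relative to this splitting. The numerics I would record first are $[D]\cdot[D_i] = -K_Y\cdot D_i = 0$ (adjunction on the $(-2)$-curve $D_i$), $[D]^2 = 0$, $[\sigma_0]\cdot[D] = -K_Y\cdot\sigma_0 = 1$ and $[\sigma_0]^2 = -1$, so $U$ is unimodular and $[D]$ is isotropic; and, since $\sigma_0\cdot D = 1$ forces $\sigma_0\cdot D_i = 0$ for $i\neq 0$, writing $[D_i] = a_i[D]+\delta_i$ with $\delta_i\in\Lambda_0$ gives $\delta_i\cdot\delta_j = D_i\cdot D_j$, so that $N := \langle\delta_1,\dots,\delta_{r-1}\rangle\cong A_{r-1}$ and $\delta_0 = -\sum_{i\geq1}\delta_i$. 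For (i) I would then use $\sum_i[D_i] = [D]$ to see that the span $M$ of the $[D_i]$ has the same orthogonal complement as $\{[D],\delta_1,\dots,\delta_{r-1}\}$; since $[D]^\perp = \mathbb{Z}[D]\oplus\Lambda_0$ and a vector $c[D]+v$ ($v\in\Lambda_0$) is orthogonal to every $\delta_i$ exactly when $v\in\overline\Lambda$, this yields $\Lambda = \mathbb{Z}[D]\oplus\overline\Lambda$, orthogonally. This step is routine.

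For (ii) I would invoke Theorem~\ref{autexactseq}, which gives $K\cong F\spcheck$. First the $[D_i]$ are linearly independent (the only relation forced by the form is $\sum[D_i] = [D]$, and $[D]$ is non-torsion), so $M$ is free of rank $r$; as $\ker f = \Lambda$ has rank $10-r$, the cokernel $F$ of $f\colon\widehat\Lambda\to M\spcheck$, $\alpha\mapsto(\alpha\cdot[D_i])_i$, is finite and $K\cong F\spcheck\cong F$. The crux is to identify $F$ with $(\Lambda_0/N)_{\mathrm{tors}}$: here $f([D]) = 0$ and $f([\sigma_0])$ is the coordinate functional dual to $[D_0]$, so modding out by it discards the $D_0$-slot and identifies $F$ with the cokernel of the restriction $\Lambda_0\cong\Lambda_0\spcheck\to N\spcheck$ (unimodularity of $E_8$), which is $\operatorname{Ext}^1(\Lambda_0/N,\mathbb{Z})\cong(\Lambda_0/N)_{\mathrm{tors}}$. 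Proposition~\ref{semidefcomps}(iii) then reads off the three cases $\mathbb{Z}/2\mathbb{Z}$, $\mathbb{Z}/3\mathbb{Z}$, $0$.

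For (iii) I would first use Theorem~\ref{automadm}(i): since $(Y,D)$ is negative semidefinite but not definite, $\Gamma = O^+(\Lambda)$, so admissibility is automatic and I only need isometries of $\widehat\Lambda$ fixing every $[D_i]$ and preserving $\mathcal{C}^+$. The translations come from Eichler transvections: for $v\in\overline\Lambda$ set $\iota(v)(x) = x - (x\cdot[D])v + (x\cdot v)[D] - \tfrac12 v^2(x\cdot[D])[D]$, which is integral (as $\Lambda_0$ is even), unipotent (hence preserves $\mathcal{C}^+$), fixes $[D]$, satisfies $\iota(v)([D_i]) = [D_i]+(v\cdot\delta_i)[D] = [D_i]$, composes additively, and is injective (evaluate on $[\sigma_0]$); thus $\iota\colon\overline\Lambda\hookrightarrow\Gamma$. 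Any $\gamma\in\Gamma$ fixes $[D]$ and each $\delta_i$, hence induces $\bar\gamma\in O(\Lambda_0)$ with $\bar\gamma|_N = \operatorname{id}$, and restriction to $\overline\Lambda = N^\perp$ defines a homomorphism $\Gamma\to O(\overline\Lambda)$ whose image is, by construction, the group $G$ of the statement (isometries of $\overline\Lambda$ extending to $\Lambda_0$ fixing $N$), finite since $\overline\Lambda$ is negative definite. The two points to prove are that the kernel is exactly $\iota(\overline\Lambda)$ and that the map splits. For the kernel, I would classify all isometries fixing $[D]$ and trivial on $[D]^\perp/\mathbb{Z}[D]$ by writing $\gamma([\sigma_0]) = c[D]+[\sigma_0]+w$ and solving the isometry relations (forcing $c = -\tfrac12 w^2$, again needing evenness), showing they are precisely the $\iota(w)$ with $w\in\Lambda_0$, and then imposing $\gamma([D_i]) = [D_i]$ to cut down to $w\in\overline\Lambda$. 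For the splitting I would lift each $g\in G$ to $s(g) = \operatorname{id}_U\oplus h\in\Gamma$, where $h\in O(\Lambda_0)$ is the unique extension of $g$ fixing $N$; this fixes the $[D_i] = a_i[D]+\delta_i$ and acts trivially on $U$, hence lies in $\Gamma$ and maps to $g$, and $s$ is a homomorphism. Finally the conjugation identity $s(g)\,\iota(v)\,s(g)^{-1} = \iota(h(v)) = \iota(g(v))$ (naturality of transvections) shows the $G$-action on $\overline\Lambda$ is the standard one, giving $\Gamma = \overline\Lambda\rtimes G$.

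The main obstacle is the bookkeeping in (iii): pinning down the kernel of $\Gamma\to G$ as the transvection subgroup, which rests on the explicit classification of isometries stabilizing the isotropic vector $[D]$ and acting trivially on $[D]^\perp/\mathbb{Z}[D]$, together with the evenness of $\Lambda_0$ guaranteeing integrality of both the transvections and the splitting. The remaining points, namely that $\Gamma\to G$ is onto (equivalently that every element of $G$ extends to an isometry of $\widehat\Lambda$ fixing the $[D_i]$, which the splitting $s$ supplies) and that $G$ is finite, are then immediate.
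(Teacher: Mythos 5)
Your proof is correct and takes essentially the same approach as the paper's: (i) is the same orthogonal-complement computation in the splitting $\widehat\Lambda = (\Zee[D]\oplus\Zee[\sigma_0])\oplus\Lambda_0$; (ii) reduces $K\cong F\spcheck$ from Theorem~\ref{autexactseq} to the torsion of $\Lambda_0/N\cong E_8/A_{r-1}$ and quotes Proposition~\ref{semidefcomps}(iii); and (iii) combines Theorem~\ref{automadm} with the stabilizer analysis of the isotropic vector $[D]$, exactly as the paper does. In particular your Eichler transvections $\iota(v)$ are precisely the paper's isometries $a_\lambda$ (up to the sign convention $\iota(v)=a_{-v}$), and your splitting $s(g)=\operatorname{id}_U\oplus h$ is how the paper realizes $G$ inside $\Gamma$; the paper merely packages the quotient step as a direct factorization $\psi = a_\lambda\circ\eta$ with $\lambda=\psi([\sigma_0])-[\sigma_0]-(a+1)[D]$, rather than as your kernel--image--splitting argument for $\Gamma\to O(\overline{\Lambda})$.
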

\begin{proof}  (i): Clearly $\Zee [D]\oplus \overline{\Lambda} \subseteq \Lambda$. Given $\lambda \in \Lambda$, there exists an $n\in \Zee$ such that $\lambda -n[D] \in \{[\sigma_0], [D]\}^\perp$, and hence $\lambda -n[D] \in \overline{\Lambda}$. Thus  $\Lambda  = \Zee [D]\oplus \overline{\Lambda}$.

\smallskip
\noindent (ii): By definition, $K$ is dual to the group $F$ which  is the kernel of the natural map $\widehat{\Lambda} \to \Zee^r$ which sends $\lambda \in \lambda$ to $(\lambda\cdot [D_0], \dots, \lambda\cdot [D_{r-1}])$. Thus $F =\{0\}$ $\iff$ the $\Zee$-span of $[D_0], \dots, [D_{r-1}]$ is primitively embedded in $\widehat{\Lambda}$, and in general $F \cong \operatorname{Ext}^1(L, \Zee)$, where $L = \widehat{\Lambda}/\Zee[D_0] + \cdots + \Zee[D_{r-1}]$. Since the $[D_i]$ are contained in $\Lambda$, which is a primitive sublattice of $\widehat{\Lambda}$, it is enough to look at the torsion in $\Lambda/\Zee[D_0] + \cdots + \Zee[D_{r-1}]$. Now an elementary argument shows that $\Lambda/\Zee[D_0] + \cdots + \Zee[D_{r-1}]\cong  \Lambda_0/\Zee[D_1] + \cdots + \Zee[D_{r-1}]$. The result then follows from Proposition~\ref{semidefcomps}(iii).

\smallskip
\noindent (iii): Note that every integral isometry of $\widehat{\Lambda} = H^2(Y; \Zee)$ which fixes the classes $[D_i]$ fixes $[D]$ and hence the positive cone $\mathcal{C}^+$, since $[D]^2 = 0$. Then automatically such isometries are elements of $\Gamma$, by Theorem~\ref{automadm}.  Clearly $G$ can be viewed as a subgroup of such isometries, hence $G \subseteq \Gamma$. Next, given $\lambda \in \overline{\Lambda}$, define $a_\lambda \colon \widehat{\Lambda} \to \widehat{\Lambda}$ via:
\begin{align*}
a_\lambda([\sigma_0]) & = [\sigma_0] + \lambda - \frac{\lambda^2}{2}[D];\\
a_\lambda([D]) &= [D];\\
a_\lambda(\alpha) & =\alpha - (\alpha \cdot \lambda) [D], \text{ if $\alpha \in \Lambda_0$}.
\end{align*}
It is easy to check that $a_\lambda$ is an isometry of $\widehat{\Lambda}$ and that $\lambda \mapsto a_\lambda$ defines an injective homomorphism $\iota \colon \overline{\Lambda} \to \Gamma$.

Given an automorphism $\psi\colon \widehat{\Lambda} \to \widehat{\Lambda}$ such that $\psi([D_i]) = [D_i]$ for every $i$, let $\psi(\sigma_0) = \sigma$. Then $\sigma^2 =\sigma_0^2 = -1$, $\sigma\cdot [D] = \sigma_0 \cdot [D] = 1$, $\sigma\cdot [D_i] = \sigma_0 \cdot [D_i]$ for all $i$, $\sigma \cdot \sigma_0 = a$ for some $a\in \Zee$. Let $\lambda = \sigma -\sigma_0 -(a+1)[D]$.  Then $\lambda \in \Lambda_0$ and $\lambda\cdot [D_i] =0$ for all $i$, so that $\lambda \in \widehat{\Lambda}$. Moreover, $a_\lambda^{-1}\circ \psi = \eta$, where $\eta(\sigma_0) = \sigma_0$ and $\eta([D]) = [D]$. Hence $\eta \in G$, and every $\psi \in \Gamma$ can be written as $a_\lambda \cdot \eta$ for $\lambda \in \overline{\Lambda}$ and $\eta\in G$. It is easy to see that this decomposition is unique, i.e.\ that $\iota(\overline{\Lambda}) \cap G =\{\Id\}$. Finally, from
$$\eta \circ a_\lambda \circ \eta^{-1} = a_{\eta(\lambda)},$$
we see that $\iota(\overline{\Lambda})$ is a normal subgroup of $\Gamma$ and that the action of $G$ on $\iota(\overline{\Lambda})\cong \overline{\Lambda}$ is the standard one.
\end{proof}

\begin{remark} Note that $a_\lambda$ is a Hodge isometry of $Y$ for every $\lambda \in \overline{\Lambda}$  $\iff$ $\varphi_Y([D]) =1$, i.e.\ $\iff$ $Y$ is elliptic and $D$ is a fiber. In this case, the Mordell-Weil group of $Y$ is the quotient of $\Lambda_0$ by the classes of the irreducible components of reducible fibers not meeting $\sigma_0$. Thus, the Mordell-Weil group is isomorphic to a quotient of $\Lambda_0/\Zee[D_1] + \cdots + \Zee[D_{r-1}]$, and for generic elliptic surfaces of this type, the Mordell-Weil group is isomorphic to $\Lambda_0/\Zee[D_1] + \cdots + \Zee[D_{r-1}]$ (which has torsion exactly when $r=7$ and the $A_7$ lattice is not primitively embedded in the $E_8$ lattice or when $r=9$). Up to a finite discrepancy, then, the generic Mordell-Weil group is $\overline{\Lambda}$.
\end{remark}

We then have the following   \cite[Example 5.6 and Example 5.3]{GHK}.

\begin{corollary}\label{negsemiexs} If $(Y,D)$ is strictly negative semidefinite, then $\Gamma(Y,D) = \mathsf{W}(R_Y)$  except for the cases $r=7$ or $r=8$ and the embedding of $A_7$ is primitive. In both of these cases, $\mathsf{W}(R_Y)$ has infinite index in $\Gamma(Y,D)$. In the first case, $R_Y$ is given by $\widetilde{A}_1$ and  $\mathsf{W}(R_Y)$ is the affine Weyl group of $A_1$. In the second case, $R_Y =\emptyset$ and hence $\mathsf{W}(R_Y) =\{1\}$, but $\Gamma(Y,D)$ is infinite. 
\end{corollary}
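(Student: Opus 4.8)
The plan is to reduce the entire statement to an explicit computation inside $\Lambda = \Lambda(Y,D)$, using the two structural results already established: the orthogonal splitting $\Lambda = \Zee[D]\oplus\overline{\Lambda}$ of Lemma~\ref{detailsinsemidef}(i) and the description $\Gamma(Y,D) = \overline{\Lambda}\rtimes G$ of Lemma~\ref{detailsinsemidef}(iii). Since $(Y,D)$ is strictly negative semidefinite it is in particular not negative definite, so Theorem~\ref{automadm} applies and yields both $\Gamma(Y,D) = O^+(\Lambda)$ and, crucially, $R_Y = \{\beta\in\Lambda : \beta^2 = -2\}$. As $[D]$ is isotropic and lies in the radical of $\Lambda$, writing $\beta = n[D] + \bar\beta$ with $\bar\beta\in\overline{\Lambda}$ gives $\beta^2 = \bar\beta^2$; hence $R_Y = \{\,n[D] + \bar\beta : n\in\Zee,\ \bar\beta\in\Phi\,\}$, where $\Phi\subseteq\overline{\Lambda}$ denotes the set of roots (square $-2$ vectors) of $\overline{\Lambda}$, whose type is recorded in Proposition~\ref{semidefcomps}(ii).

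Next I would identify $\mathsf{W}(R_Y)$ with the affine Weyl group of $\Phi$. A direct computation in the basis $\{[D],[\sigma_0]\}\cup\Lambda_0$ shows that for a root $\bar\beta$ one has $r_{n[D]+\bar\beta} = a_{n\bar\beta}\circ r_{\bar\beta}$, where $a_{n\bar\beta}$ is the transvection of Lemma~\ref{detailsinsemidef}(iii) and $r_{\bar\beta}\in G$ is the ordinary reflection (note $\bar\beta\perp[D],[\sigma_0]$, so $r_{\bar\beta}$ fixes both $[D]$ and $[\sigma_0]$ and restricts to an $E_8$-reflection on $\Lambda_0$). Consequently $\mathsf{W}(R_Y)$ is generated by the finite reflections $r_{\bar\beta}$ and the translations $a_{\bar\beta}$; pushing all transvections to the left via $w a_\lambda w^{-1} = a_{w\lambda}$ shows that $\mathsf{W}(R_Y) = Q(\Phi)\rtimes W(\Phi)$, where $W(\Phi)=\langle r_{\bar\beta}\rangle\subseteq G$ is the Weyl group and $Q(\Phi)\subseteq\overline{\Lambda}$ is the root lattice (the coroot lattice coincides with it since $\Phi$ is simply laced). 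Comparing the two semidirect products inside $\Gamma = \overline{\Lambda}\rtimes G$ gives the index formula $[\Gamma:\mathsf{W}(R_Y)] = [\overline{\Lambda}:Q(\Phi)]\cdot[G:W(\Phi)]$, so the whole statement reduces to deciding when each factor is $1$.

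The first factor is settled by Proposition~\ref{semidefcomps}(ii): $Q(\Phi)=\overline{\Lambda}$ exactly when the roots span $\overline{\Lambda}$, i.e.\ for all $r$ other than $r=7$ and the primitive embedding in $r=8$. For these $r$ one then has $\Aut(\overline{\Lambda})=\Aut(\Phi)$, since any isometry permutes the square $-2$ vectors and these generate $\overline{\Lambda}$; thus $W(\Phi)\subseteq G\subseteq\Aut(\Phi)$, and it remains to prove $G=W(\Phi)$. Here I would invoke the gluing description $E_8\supseteq A_{r-1}\oplus\overline{\Lambda}$: an isometry of $\overline{\Lambda}$ lies in $G$ iff it extends to an isometry of $\Lambda_0\cong E_8$ fixing each $\delta_i$, equivalently iff it acts as the identity on the glue subgroup of the discriminant form $\overline{\Lambda}^*/\overline{\Lambda}$. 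Elements of $W(\Phi)$ act trivially on the discriminant and so always extend, whereas the nontrivial coset representatives of $\Aut(\Phi)/W(\Phi)$ — the diagram automorphisms present for $E_6$, $D_5$, $A_4$, $A_2$, together with $-\Id$ on a simple factor as relevant — act as $-1$ on the cyclic discriminant group that is glued isomorphically to that of $A_{r-1}$, hence fail to fix the gluing and are excluded from $G$. This forces $G=W(\Phi)$ and therefore $\mathsf{W}(R_Y)=\Gamma$ in every non-exceptional case. I expect this discriminant-form bookkeeping, carried out case by case from the data of Proposition~\ref{semidefcomps}, to be the main obstacle, as it is exactly the point where the values $r=7,8$ are forced to behave differently.

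Finally I would dispose of the two exceptional cases directly. For $r=8$ with the primitive embedding, $\Phi=\emptyset$, so $R_Y=\emptyset$ and $\mathsf{W}(R_Y)=\{1\}$, while $\Gamma\supseteq\overline{\Lambda}\cong\Zee$ is infinite; hence the index is infinite. For $r=7$, $\Phi$ is of type $A_1$, so $R_Y=\{\,n[D]\pm\bar\alpha : n\in\Zee\,\}$ is precisely the affine root system $\widetilde{A}_1$ and $\mathsf{W}(R_Y)=Q(\Phi)\rtimes W(\Phi)$ is the infinite dihedral group, i.e.\ the affine Weyl group of $A_1$; since $\operatorname{rank}Q(\Phi)=1<2=\operatorname{rank}\overline{\Lambda}$, the factor $[\overline{\Lambda}:Q(\Phi)]$ is infinite, so $\mathsf{W}(R_Y)$ has infinite index in $\Gamma$. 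These conclusions match all the assertions of the corollary.
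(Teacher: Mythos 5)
Your proposal is correct, and it proves strictly more than the paper writes out. The paper's own proof is deliberately partial: it says ``we shall just write out the argument in the two exceptional cases,'' and for those cases it argues exactly as you do --- square $-2$ classes in $\Lambda = \Zee[D]\oplus\overline{\Lambda}$ are the translates $n[D]+\bar\beta$ of square $-2$ classes $\bar\beta\in\overline{\Lambda}$ (using Theorem~\ref{automadm} to know these are all roots), so $R_Y=\emptyset$ for $r=8$ primitive while $\Gamma\supseteq\overline{\Lambda}$ is infinite, and for $r=7$ the group $\mathsf{W}(R_Y)\cong\Zee\bar\beta\rtimes\{\pm1\}$ sits compatibly inside $\Gamma=\overline{\Lambda}\rtimes G$ with $\Zee\bar\beta$ of infinite index in the rank-two lattice $\overline{\Lambda}$. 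Where you go beyond the paper is in the non-exceptional cases, which the paper leaves to the reader (attributing the full statement to Gross--Hacking--Keel): your identity $r_{n[D]+\bar\beta}=a_{n\bar\beta}\circ r_{\bar\beta}$ gives the clean identification $\mathsf{W}(R_Y)=Q(\Phi)\rtimes W(\Phi)$ inside $\Gamma=\overline{\Lambda}\rtimes G$, whence the index factorization $[\overline{\Lambda}:Q(\Phi)]\cdot[G:W(\Phi)]$, and then Proposition~\ref{semidefcomps}(ii) kills the first factor while the discriminant-form computation kills the second. Two small points of bookkeeping in that last step: for $r=2$ there is no outer automorphism to exclude at all, since $-\Id\in W(E_7)$ and hence $\Aut(\overline{\Lambda})=W(\Phi)$; and for the imprimitive cases $r=8$ (imprimitive) and $r=9$ your gluing criterion should be stated with the saturation of $\Zee\delta_1+\cdots+\Zee\delta_{r-1}$ in $\Lambda_0$ (an isometry fixing the $\delta_i$ fixes their saturated span pointwise), though in fact these cases need no gluing argument because $\operatorname{rank}\overline{\Lambda}\leq 1$ forces $\Aut(\overline{\Lambda})=W(\Phi)$ directly. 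With those remarks, your case-by-case check ($E_6$, $D_5$, $A_4$, $A_1+A_2$: the outer automorphism acts as $-1$ on a cyclic discriminant group of order $>2$, hence does not extend) is exactly the ``straightforward if tedious'' verification the paper omits, so your write-up is a genuine completion of the published argument rather than just a reproduction of it.
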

\begin{proof} We shall just write out the argument in the two exceptional cases where $\mathsf{W}(R_Y)$ has infinite index in $\Gamma(Y,D)$. Clearly, if $\beta\in \Lambda$ and $\beta^2=-2$, then the image $\bar{\beta}$ of $\beta$ in $\overline{\Lambda}$ satisfies $(\bar\beta)^2=-2$. Conversely, if  $\bar\beta \in \overline{\Lambda}$ satisfies $(\bar\beta)^2=-2$, then, for every $k\in \Zee$, $\bar\beta + k[D] =\beta\in \Lambda$ satisfies $\beta^2=-2$. Thus, if there are no such elements $\bar\beta \in \overline{\Lambda}$, then $R_Y =\emptyset$. This handles the case $r=8$ for the primitive embedding. For $r=7$, it is clear that $\mathsf{W}(R_Y)$ is the affine Weyl group of $A_1$, which is the semidirect product $\Zee\bar\beta \rtimes \{\pm 1\}$, where $\bar\beta$ is a root in $\overline{\Lambda}$.  Moreover this semidirect product decomposition is compatible with that of Lemma~\ref{detailsinsemidef}(iii). Since $\overline{\Lambda}$ has rank two, $\Zee\bar\beta$ has infinite index in $\overline{\Lambda}$, and hence  $\mathsf{W}(R_Y)$ has infinite index in $\Gamma(Y,D)$. 
\end{proof}

\begin{example} Finally, we give a negative definite example where $\mathsf{W}(R_Y)$ has infinite index in $\Gamma(Y,D)$ (and is itself infinite). For this example, $D^2=-1$ and $r=8$, so that the self-intersection sequence of $(Y,D)$ is $(-3, -2, -2, -2, -2, -2, -2, -2)$. Although we shall not need to know this, as in the case of Proposition~\ref{semidefcomps}, the possible deformation types of such pairs $(Y,D)$ correspond to embeddings of $A_7$ into $[D]^\perp$, which is a lattice $\Upsilon$ of type $E_{10}$ with our sign conventions and hence has signature $(1, 9)$. As a lattice, $\Upsilon$ is isomorphic to $U\oplus  E_8$, where $U$ is the standard rank two even unimodular hyperbolic lattice, and there is a diagram of vectors of square $-2$ of type $T_{2,3,7}$ in the notation of \cite{Gabrielov}. The embeddings of the $A_7$ lattice in $\Upsilon$ can be classified. There are two such embeddings up to the action of $O^+(\Upsilon)$ (which is in fact generated by reflections in the elements of square $-2$), as can easily be checked directly by hand. The first is a primitive embedding corresponding to realizing $A_7$ as a subdiagram of  $T_{2,3,7}$. The second is imprimitive, and corresponds to first realizing $A_7$ as an index two sublattice of $E_7$. By Theorem~\ref{automadm}, since $D^2 =-1$,   $\Gamma(Y,D)=O^+(\Lambda)$ and every element of square $-2$ in $\Lambda$ is a root, for both deformation types of pairs $(Y,D)$.

We shall only be concerned with the primitive case. In this case, we can realize the pairs $(Y,D)$ concretely as follows: beginning with a nodal cubic in $\Pee^2$, make $7$ infinitely near blowups at the node, then blow up a point on the last exceptional curve and blow up two more points which are smooth points of the proper transform of the cubic. Thus there are $7$ $-2$-curves $D_1, \dots, D_7$, with the class of $D_i=E_i-E_{i+1}$, and the proper transform $D_0$ of the nodal cubic  is $3H - 2E_1-E_2 - \cdots - E_7 - E_9-E_{10}$. (Here and in what follows, the $E_i$ refer to exceptional curves, not lattices.) Thus 
$$D= D_0 + D_1 + \cdots + D_7 = 3H -  E_1-E_2 - \cdots - E_7 - E_8 -E_9-E_{10},$$ 
$[D]^\perp = \Upsilon$, and  $\Lambda$  is the orthogonal complement in $\Upsilon=E_{10}$ of $E_1-E_2, \dots, E_7-E_8$, 

\begin{proposition} The group generated by reflections about the elements of square $-2$ in $\Lambda$ has infinite index in $O^+(\Lambda)$. Hence $\mathsf{W}(R_Y)$ has infinite index in $\Gamma(Y,D)=O^+(\Lambda)$.
\end{proposition}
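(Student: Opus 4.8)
The plan is to reduce the statement to a question about a single rank-three lattice and then decide reflectivity by hyperbolic geometry. Since $D^2=-1$, Theorem~\ref{automadm} gives $\Gamma(Y,D)=O^+(\Lambda)$ and $R_Y=\{\beta\in\Lambda:\beta^2=-2\}$, so $\mathsf{W}(R_Y)$ is exactly the subgroup $W\subseteq O^+(\Lambda)$ generated by reflections in all $(-2)$-vectors of $\Lambda$, and the assertion to prove is $[O^+(\Lambda):W]=\infty$. First I would compute $\Lambda$ explicitly from the given model. Writing a class as $v=aH+b(E_1+\cdots+E_8)+b_9E_9+b_{10}E_{10}$, the conditions $v\cdot(E_i-E_{i+1})=0$ for $1\le i\le 7$ and $v\cdot D=0$ become $b_1=\cdots=b_8=b$ together with $3a+8b+b_9+b_{10}=0$; this presents $\Lambda$ as a rank-three lattice of signature $(1,2)$ and determinant $\pm 8$. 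Exhibiting the primitive isotropic vector $e=-3H+(E_1+\cdots+E_8)+E_{10}$, a hyperbolic partner for it inside $\Lambda$, and the vector $g=-8H+3(E_1+\cdots+E_8)$ with $g^2=-8$ and $g\perp e$, one checks directly that $\Lambda\cong U\oplus\langle-8\rangle$, where $e$ spans an isotropic line and $e^\perp=\Zee e\oplus\Zee g$.

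Next I would pass to the hyperbolic plane. The group $O^+(\Lambda)$ is an arithmetic lattice acting on the hyperbolic plane $\mathbb{H}^2$ associated to $\Lambda_\Ar$, hence has finite covolume (Borel--Harish-Chandra); since $\Lambda$ is isotropic the quotient is non-compact and has cusps. The reflection subgroup $W$ is generated by reflections, and a standard fact in the theory of hyperbolic reflection groups (Vinberg) is that $W$ has finite index in $O^+(\Lambda)$ if and only if its Coxeter fundamental polygon has finite area, i.e.\ $\Lambda$ is reflective. The line $[e]$ is a genuine cusp of $O^+(\Lambda)$: one writes down the explicit unipotent Eichler transvection $t$ with $t(e)=e$ and $t(g)=g+8e$, which fixes $[e]$ and shows the cusp stabilizer is infinite.

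The crux is then a cusp obstruction. If $W$ had finite index, the covering $\mathbb{H}^2/W\to\mathbb{H}^2/O^+(\Lambda)$ would be finite, so the cusp $[e]$ would lift to a cusp of $\mathbb{H}^2/W$; equivalently, some $O^+(\Lambda)$-translate $[e']$ of $[e]$ would be an ideal vertex of the Coxeter polygon. At an ideal vertex the two bounding edges are reflection walls $W^{\beta_1},W^{\beta_2}$ asymptotic to the ideal point, which forces $\beta_1,\beta_2\in (e')^\perp$ to be $(-2)$-vectors. Since the condition ``$(e')^\perp$ contains a root'' is invariant under $O^+(\Lambda)$, this would force $e^\perp$ to contain a $(-2)$-vector. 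But every $v\in e^\perp=\Zee e\oplus\Zee g$ has $v^2=-8c^2$ for some $c\in\Zee$, which is never $-2$, so $e^\perp$ contains no root. This contradiction shows $[O^+(\Lambda):W]=\infty$, i.e.\ $\mathsf{W}(R_Y)$ has infinite index in $\Gamma(Y,D)=O^+(\Lambda)$.

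The main obstacle is making the cusp argument rigorous and self-contained: one must use that a finite-covolume reflection group in $\mathbb{H}^2$ has each of its cusps at an ideal vertex whose two asymptotic walls come from roots lying in the perpendicular of the corresponding isotropic vector, and that cusps are shared by commensurable lattices. An equivalent and perhaps more elementary route, avoiding covering-space language, is to run Vinberg's algorithm with controlling vector $e$ (or a nearby interior point) and show that it never produces a second wall through the cusp $[e]$, so the Coxeter polygon acquires a free infinite edge and therefore has infinite area; the computation that $e^\perp$ contains no $(-2)$-vector is precisely what prevents the algorithm from closing up that cusp.
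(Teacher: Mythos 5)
Your proposal is correct, and its computational core is identical to the paper's: your $e$, $f$, $g$ are, up to sign, exactly the paper's $\gamma_2$, $\gamma_1$, $\gamma_3$, both proofs begin by identifying $\Lambda\cong U\oplus\langle-8\rangle$, and your opening reduction via Theorem~\ref{automadm} is also the paper's. The divergence is in the last step. The paper disposes of non-reflectivity by citing Vinberg \cite{Vinberg} (see also \cite[Lemma 4.8]{Dolgachev}): for $U\oplus(n)$ with $n<0$, the $(-2)$-reflections generate a finite-index subgroup of $O^+(\Lambda)$ if and only if $n=-2$. You instead prove what is needed directly by a cusp obstruction: if $\mathsf{W}(R_Y)$ had finite index it would be a finite-covolume planar reflection group, each of its cusps would sit at an ideal vertex of its Coxeter polygon pinched between two reflection walls orthogonal to the corresponding isotropic vector, and the computation $e^\perp=\Zee e\oplus\Zee g$ (so all squares there are $-8c^2$) rules this out at the cusp $[e]$. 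Your route is longer but self-contained modulo standard hyperbolic geometry (the chamber is a fundamental domain, as in Corollary~\ref{funddomain}; finite index forces finite area and, by Siegel's theorem, a finite-sided polygon; parabolic points of a finite-covolume group are equivalent to ideal vertices of such a domain), and it explains why $n=-2$ is the one exception: there $g$ itself is a root that closes up the cusp. In effect you are reproving, in this special case, the direction of Vinberg's theorem that the paper quotes as a black box.

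One step needs more care than your write-up gives it: the assertion that the two walls at the ideal vertex come from vectors of square $-2$. This is not automatic from their being walls of a fundamental domain, and in this lattice the danger is real: $e^\perp$ contains primitive vectors of square $-8$, namely $g$ and $4e+g$, whose reflections are integral isometries of $\Lambda$ and whose mirrors pass exactly through the cusp $[e]$. So the same argument run with the full reflection group of $\Lambda$ in place of $\mathsf{W}(R_Y)$ would collapse, since the cusp does get closed up by $(-8)$-walls; your contradiction genuinely uses that only $(-2)$-reflections are allowed. The repair is to define the Coxeter polygon as the closure of a connected component of the complement of the $(-2)$-mirrors alone; Bourbaki's theorem (the same fact underlying Corollary~\ref{funddomain}) makes this a fundamental domain for $\mathsf{W}(R_Y)$ whose walls are $(-2)$-mirrors by construction. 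Equivalently, one can check that every reflection lying in $\mathsf{W}(R_Y)$ is a $(-2)$-reflection, for instance because $(-2)$-reflections act trivially on the discriminant group $\Lambda^*/\Lambda\cong\Zee/8\Zee$, whereas the reflections in $g$ or $4e+g$ act by $-1$ on it. With this point made explicit, your argument is sound.
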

\begin{proof} Let $\gamma_1 =[3H - \sum_{i=1}^8E_i - E_9]$, 
$\gamma_2=[3H - \sum_{i=1}^8E_i - E_{10}]$, and 
$\gamma_3=[8H - 3\sum_{i=1}^8E_i]$. 
A calculation shows that $\gamma_i \in \Lambda$, $1\leq i\leq 3$,  $\gamma_1^2 =\gamma_2^2 =0$, $\gamma_1\cdot \gamma_2 =1$, $\gamma_1\cdot \gamma_3 = \gamma_2\cdot \gamma_3 = 0$, and $\gamma_3^2=-8$. Since $\gamma_3$ is clearly primitive, $\Lambda$ is isomorphic to $U\oplus (-8)$, where $(n)$ denotes the  rank one lattice generated by a vector of square $n$. But a result due to Vinberg \cite{Vinberg} (see also \cite[Lemma 4.8]{Dolgachev}) shows that, for a lattice $\Lambda$ isomorphic to $U\oplus (n)$ with $n<0$, the reflections about the vectors of square $-2$ generate a finite index subgroup of $O^+(\Lambda)$ $\iff$ $n=-2$.  
\end{proof} 

\begin{remark} It is shown in \cite[Example 5.5]{GHK} that, for every integer $k$, the element $(4k^2-1)\gamma_1 + \gamma_2 + k\gamma_3$ is the class of a $-2$-curve on $Y_0$, where $(Y_0, D_0)$ is the unique anticanonical pair in the above deformation type for which $\varphi_{Y_0} =1$ (and for an appropriate labeling of the generalized exceptional curves on $Y_0$). In particular, there are infinitely many $-2$-curves on $Y_0$.
\end{remark}

\begin{remark} For the imprimitive embedding of the $A_7$ lattice in $E_{10}$, $\Lambda \cong  U\oplus (-2)$ and in fact $\Gamma(Y,D) = \mathsf{W}(R_Y)$.
\end{remark}

\end{example}

\bigskip
\noindent
Department of Mathematics \\
Columbia University \\
New York, NY 10027 \\
USA

\bigskip
\noindent
{\tt rf@math.columbia.edu}

\end{document}